\newcommand{\blind}{1}
\theoremstyle{plain}
\newtheorem{theorem}{Theorem}
\newtheorem{proposition}{Proposition}
\newtheorem{corollary}{Corollary}
\newtheorem{lemma}{Lemma}
\theoremstyle{remark}
\newtheorem{definition}{Definition}
\newtheorem{remark}{Remark}
\newtheorem{assumption}{Assumption}
\newcommand\mcc[1]{\multicolumn{2}{>{$}c<{$}}{#1}}
\def\spacingset#1{\renewcommand{\baselinestretch}
{#1}\small\normalsize} \spacingset{1.1}
\begin{document}






\def\spacingset#1{\renewcommand{\baselinestretch}
{#1}\small\normalsize} \spacingset{1.1}

\if1\blind
{
 
 \centering{\bf\Large Asymptotic Theory for Linear Functionals of Kernel Ridge Regression}\\
  \vspace{0.2in}
  \centering{Rui Tuo$^{a}$, Lu Zou$^{b}$\vspace{0.2in}\\
  $^{a}$Department of Industrial and Systems Engineering, Texas A\&M University, ruituo@tamu.edu \\
        $^{b}$ School of Management, Shenzhen Polytechnic University, lzou@connect.ust.hk \\
        }
    \date{\vspace{-7ex}}
 
} \fi

\bigskip

\begin{abstract}
An asymptotic theory is established for linear functionals of the predictive function given by kernel ridge regression, when the reproducing kernel Hilbert space is equivalent to a Sobolev space. The theory covers a wide variety of linear functionals, including point evaluations, evaluation of derivatives, $L_2$ inner products, etc. We establish the upper and lower bounds of the estimates and their asymptotic normality. 
We show the asymptotic normality of these estimators under mild conditions, which enables uncertainty quantification of a wide range of frequently used plug-in estimators. The theory also implies that the minimax $L_\infty$ error of kernel ridge regression can be attained under $\lambda\sim n^{-1}\log n$. 
\end{abstract}

\noindent%
{\it Keywords}: Non-parametric regression; Smoothing parameters; Sobolev spaces; Global regression errors. 



\section{Introduction}

Consider a nonparametric regression model 
\begin{eqnarray}\label{model}
    y_i=f(x_i)+e_i
\end{eqnarray}
with $e_i$'s being independent and identically distributed random errors with mean zero and a finite variance $\sigma^2$. Here $x_i$'s can be deterministic or random inputs independent of $e_i$'s. Nonparametric regression aims to estimate $f$ from data $(x_i,y_i),i=1,\ldots,n$.

Kernel ridge regression (KRR) is defined as
\begin{eqnarray}\label{KRR:1}
    \hat{f}:=\operatorname*{argmin}_{v\in\mathcal{H}}\frac{1}{n}\sum_{i=1}^n(y_i-v(x_i))^2+\lambda \|v\|^2_{\mathcal{H}},
\end{eqnarray}
given data $(x_i,y_i)_{i=1}^n$, where $\mathcal{H}$ is the reproducing kernel Hilbert space generated by a kernel function $K$, and $\lambda>0$ is called the smoothing parameter. We use the notation $\|\cdot\|_\mathcal{H}$ and $\langle\cdot,\cdot\rangle_\mathcal{H}$ to denote the norm and the inner product of $\mathcal{H}$, respectively. It is well known that $\hat{f}$ is a good estimator for $f$ under mild conditions.

In many real-world problems, the quantity of interest is a linear functional of $f$, denoted by $l(f)$, such as an  evaluation or a derivative of $f$ at a pre-specified point, or an integral of $f$. Sometimes, the quantity of interest is nonlinear in $f$ by itself, but is closely related to a linear functional. For instance, the maximizer of $f$ is the zero point of the gradient function of $f$. Plug-in estimators are widely used in practice, that is, to estimate $l(f)$ by $l(\hat{f})$. This work aims at providing theoretical justification and a framework of uncertainty quantification for these plug-in estimators.

\subsection{Problem of Interest and Overview of Our Results}

In this work, we consider the asymptotic properties of a linear functional of $\hat{f}-f$ defined as general as 
\begin{eqnarray}\label{PoI}
    l(\hat{f}-f):=\langle \hat{f}-f,g\rangle_\mathcal{H},
\end{eqnarray}
for some $g\in\mathcal{H}$.
This includes many examples of practical interest, e.g., 
$L_2$ inner products $\int_\Omega (\hat{f}-f)(x)h(x)dx=\left\langle \hat{f}-f,\int_\Omega K(\cdot,x)h(x)dx\right\rangle_\mathcal{H}$,
point evaluations $(\hat{f}-f)(x)=\left\langle \hat{f}-f,K(\cdot,x)\right\rangle_\mathcal{H}$, 
point evaluations of derivatives $\frac{\partial}{\partial x_i}(\hat{f}-f)(x)=\left\langle \hat{f}-f,\frac{\partial}{\partial x_i}K(\cdot,x)\right\rangle_\mathcal{H}$.

As we shall study theoretical properties as $n\rightarrow \infty$, the input and output data, the minimizer $\hat{f}$, and the tuning parameter $\lambda$ should all naturally be dependent on $n$. In addition, unless otherwise specified, the true function $f$ can depend on $n$ as well. While keeping this fact in mind, we shall omit the subscript $n$ for the sake of notational convenience throughout this article.
Below is a summary of our major contributions.

\begin{enumerate}
    \item 
We develop a new method to investigate the asymptotic properties of a single linear functional of the form $\langle \hat{f},g\rangle_{\mathcal{H}}$ to answer the following questions: 1) How large is the bias and variance of $\langle \hat{f},g\rangle_\mathcal{H}$ as an estimator of $\langle f,g\rangle_\mathcal{H}$; 2) What is an appropriate rate of $\lambda$ to facilitate the estimation of $\langle f,g\rangle_\mathcal{H}$; and 3) Is $\langle \hat{f},g\rangle_\mathcal{H}$ asymptotically normal?

While our theory depicts a more general picture, we give Table \ref{tab:summary} to highlight a few cases of particular practical interests. It can be seen that our theory gives the exact rate of convergence and the central limit theorem for these statistics under a wide range of $\lambda$. It also shows that $\lambda\sim n^{-1}$  balances the variance and the worst-case bias regardless of the specific linear functional.

\begin{table}[h]
\resizebox{0.99\textwidth}{!}{
    \centering
    \begin{tabular}{c|c|c|c|c}
    \hline
    \multirow{2}{*}{Functional} & \multicolumn{2}{c|}{Upper \& lower rates} & \multirow{2}{*}{Range of $\lambda$} & \multirow{2}{*}{Central limit theorem}\\
    \cline{2-3}
    & Variance & Worst-case bias & &\\
        \hline
        Point evaluation & $n^{-1}\lambda^{-\frac{d}{2m}}$ & $\lambda^{\frac{1}{2}-\frac{d}{4m}}$ & \multirow{ 3}{*}{\makecell{$\lambda=O(1)$ \\ $\lambda^{-1}=O(n^{\frac{2m}{d}})$}} & \multirow{ 3}{*}{\makecell{Valid if $\lambda=o(1)$ and \\ $\lambda^{-1}=o(n^{\frac{2m}{d}})$}}  \\
         \cline{1-3}
        Derivative evaluation &  $n^{-1}\lambda^{-\frac{d+2|\alpha|}{2m}}$ & $\lambda^{\frac{1}{2}-\frac{d+2|\alpha|}{4m}}$  &  &   \\
         \cline{1-3}
        $L_2$ inner product &  $n^{-1}$ & \makecell{No more than\\ $\lambda^{\frac{1}{2}}$}  &  &   \\
         \hline
    \end{tabular}}
    \caption{Summary of asymptotic properties of linear functionals of practical interest, where $d=$ input dimension, $m=$ smoothness, $|\alpha|=$ total order of derivatives. Exact upper and lower rates of convergence are given, except for the worst-case bias for the $L_2$ inner product. Discussions regarding this matter is made in Section \ref{sec:semiparametric} of Supplementary Material.}
    \label{tab:summary}
\end{table}

\item Our asymptotic theory for linear functionals can be employed to find upper and lower bounds for uniform errors as well. In this work, we examine the global error of the KRR regression as well as the derivatives, in terms of $\sup_{x\in\Omega}|D^\alpha \hat{f}(x)-D^\alpha f(x)|$. An exact rate of convergence is given when the noise is normally distributed. We show that with $\lambda\sim n^{-1}\log n$, the resulting rate of convergence is $(n^{-1}\log n)^{\frac{1}{2}-\frac{d+2|\alpha|}{4m}}$, matching the known minimax rate in \cite{nemirovski2000topics}.
This result implies that $\lambda$ reaches the $L_\infty$-minimax rate differs from the one that reaches the $L_2$-minimax rate.

\item Our theory can be leveraged to cover some non-linear functionals that can be linearized asymptotically, such as $\max_{x\in\Omega}f(x)$.
\end{enumerate}

The remainder of this article is organized as follows. We review the related work in Section \ref{subsec:literature}. In Section \ref{sec:BnV}, we introduce the bias-variance decomposition of the problem. 

The main results of our theory are presented in Section \ref{sec:improved}, in terms of the general theory of the upper and lower bounds and asymptotic normality. In Section \ref{sec:examples}, we present several examples to illustrate the scope of the proposed framework. In Section \ref{sec:other}, we employ our theory to obtain some uniform error bounds for KRR and investigate a nonlinear problem to further demonstrate the applicability of our theory. 

Numerical studies and an analysis of real-world data are presented in Section \ref{sec:NS}.
The Supplementary Materials provide a more in-depth review of the literature, other related results, detailed discussions of a key assumption, and all technical proofs.

\subsection{Related Work}\label{subsec:literature}
KRR was initially introduced in the context of spline models \citep{wahba1978improper} and support vector machines \citep{boser1992training}, due to its innate capacity to accommodate complex patterns and nonlinear relationships. 

\textit{Error bounds for KRR.} The minimax convergence rates for KRR in $L_2$  are well established in the existing literature; see, e.g., \cite{caponnetto2007optimal,smale2007learning,steinwart2009optimal,mendelson2010regularization}, among many others.  Although there has been rich literature on the theoretical guarantees of KRR, theory on functionals of KRR estimators is scarce. The closely related work is \cite{liu2023estimation}, which offers a non-asymptotic analysis of the plug-in KRR estimator for its partial mixed derivatives. This paper develops a general theory on rates of convergence and statistical inference covering a diverse set of linear functionals, which includes derivatives considered in \cite{liu2023estimation}.  Another series of work related to this paper delves into linear functional regression \citep{cai2012minimax,yuan2010reproducing}. Nevertheless, this literature often assumes the linear functional as the $L_2$ inner product of the input data with a slope function, and primarily focuses on the asymptotic properties of the slope function.  Some linear functionals in terms of the $L_2$ inner product fall into the semiparametric regime, see \cite{kosorok2008introduction,geer2000empirical}. Our theory also extends these results by weakening the requirements for the smoothness of the function in the $L_2$ inner product. 

\textit{Statistical inference for KRR.}  Another approach uses KRR for statistical inference, often investigating Gaussian approximation for KRR and its variants. Starting with \cite{huang2003local}, which established pointwise asymptotic normality for the polynomial B-spline estimator, several works have studied constructing uniform confidence bands assuming the objective function lies in an RKHS; see \citep{shang2013local,cheng2013joint, zhao2021statistical}. The uniform asymptotic inference results in this literature rely on expressing the KRR estimator through an orthonormal basis. Our result yields pointwise asymptotic normality for KRR under weaker conditions.  Furthermore, we demonstrate that many other linear functionals of KRR also exhibit  asymptotic normality under both fixed and random designs. The existing literature on statistical inference for KRR has mainly focused on regression functions. The relevant work in this area is \cite{liu2023estimationanova}, which introduced a plug-in KRR estimator to estimate derivatives of a smoothing spline ANOVA model and provided convergence rates and asymptotic normality. Their estimation and inference theorem relies on the tensor structure and the equivalent kernel technique \citep{messer1993new,silverman1984spline}. However, this method cannot be directly applied to non-tensor product structures like the Mat\'ern kernels. Instead, we do not assume a tensor product structure and our analysis also covers derivatives of more general orders. A more detailed discussion of related literature is deferred to the Supplementary Material.

\section{Bias and Variance}\label{sec:BnV}

For simplicity, we introduce the following notation. For any $A=(a_1,\ldots,a_m)^T$ and $B=(b_1,\ldots,b_l)^T$, denote $K(A,B)=(K(a_i,b_j))_{i j}$. Denote $X=(x_1,\ldots,x_n)^T$ and $Y=(y_1,\ldots,y_n)^T$.
Then the representer's theorem \citep{scholkopf2001generalized,wahba1990spline} provides an explicit expression of $\hat{f}$ in (\ref{KRR:1}) as 
$\hat{f}(x)=K(x,X)(K(X,X)+\lambda n I)^{-1}Y$.
Thus, we have
$\langle \hat{f},g\rangle_\mathcal{H}=g^T(X)(K(X,X)+\lambda n I)^{-1}Y$,
where $g^T(X)=(g(x_1),\ldots,g(x_n))$. Now split $Y=F+E=:(f(x_1),\ldots,f(x_n))^T+(e_1,\ldots,e_n)^T$. Then
\[\langle \hat{f},g\rangle_\mathcal{H}=g^T(X)(K(X,X)+\lambda n I)^{-1}F+g^T(X)(K(X,X)+\lambda n I)^{-1}E.\]
Let $\mathbb{E}_E$ and $\operatorname{Var}_E$ be the expectation and variance operators with respect to $E$, respectively. Note that $X$ is independent of $E$, if $X$ is random at all. Taking expectation or variance with respect to $E$ will leave $X$ as is. 
We call the quantity in (\ref{key:bias}) the \textit{bias}, denoted as $\operatorname{BIAS}$:
\begin{eqnarray}\label{key:bias}
 \operatorname{BIAS}:=\mathbb{E}_E\langle \hat{f}-f,g\rangle_\mathcal{H}=g^T(X)(K(X,X)+\lambda n I)^{-1}F-\langle f,g\rangle_\mathcal{H}. 
\end{eqnarray}
We call (\ref{key:variance}) the \textit{variance term}.
\begin{eqnarray}\label{key:variance}
    \langle \hat{f}-\mathbb{E}_E\hat{f},g\rangle_\mathcal{H}=g^T(X)(K(X,X)+\lambda n I)^{-1}E.
\end{eqnarray}
The term (\ref{key:VAR}) is called the \textit{variance}, denoted as $\operatorname{VAR}$:
\begin{eqnarray}\label{key:VAR}
   \operatorname{VAR}:=\operatorname{Var}_E \langle \hat{f}-f,g\rangle_\mathcal{H}=\sigma^2g^T(X)(K(X,X)+\lambda n I)^{-2}g(X).
\end{eqnarray}
A primary objective of this study is to quantify  $\operatorname{BIAS}$ and $\operatorname{VAR}$ as the sample size tends to infinity. It is important to note that, unlike $\operatorname{VAR}$, $\operatorname{BIAS}$ is dependent on the underlying true function $f$. Sometimes, we want to emphasize this dependency by denoting the bias as $\operatorname{BIAS}_f$, when the interest lies in understanding the lower bounds of the \textit{worst case bias} over the RKHS unit ball, defined as $\sup_{\|f\|_\mathcal{H}\leq 1}|\operatorname{BIAS}_f|$.
To analyze the bias and variance, this work introduces an innovative tool called \textit{noiseless kernel ridge regression}, which is detailed in Section \ref{sec:proofs_main} of the Supplementary Materials.

\section{Main Results}\label{sec:improved}

In this section, we will present three types of major theoretical results: the upper bounds in Section \ref{sec:upper}, the lower bounds in Section \ref{sec:lower}, and the asymptotic normality results in Section \ref{sec:AN}.
First, we introduce a set of assumptions in Section \ref{sec:assumptions}.

\subsection{Assumptions}\label{sec:assumptions}

While the proposed techniques can be applied in other settings, in this work, we only consider the situations when $\mathcal{H}$ is equivalent to a (fractional) Sobolev space (see Section \ref{sec:spaces} of the Supplementary Materials), leading to Assumption \ref{assum:matern}.

\begin{assumption}\label{assum:matern}
    The input domain $\Omega$ is a convex and compact subset of $\mathbb{R}^d$ with a non-empty interior.
    In addition, $\mathcal{H}$ is equal to a (fractional) Sobolev space with order $m$ (satisfying $m>d/2$), denoted by $H^m$, with equivalent norms. 
\end{assumption}

The condition $m>d/2$ is to ensure that $H^m$ is embedded into the space of continuous functions, according to the Sobolev embedding theorem. This embedding is necessary because otherwise, the point evaluation $f(x)$ is mathematically not well-defined.
The spaces $\mathcal{H}$ and $H^m$ are equivalent if $K$ is an isotropic Mat\'ern kernel with smoothness $\nu=m-d/2$, under the regularity conditions for $\Omega$ in Assumption \ref{assum:matern}; see \citep{wendland2004scattered}.

Now we formally introduce the smoothness requirement of $g$. The intuition behind Assumption \ref{assum:f} is that $g$ has to be smoother than the baseline smoothness of $\mathcal{H}$. More discussion is deferred to Sections \ref{lemma:relationship}-\ref{sec:A2} in the Supplementary Material.

\begin{assumption}\label{assum:f}
    There exist constants $C_g>0$ and $\delta\in(0,1]$, such that for each $v\in\mathcal{H}$,
    \begin{eqnarray}\label{smoothness}
        |\langle g,v\rangle_{\mathcal{H}}|\leq C_g\|v\|^{\delta}_{L_2}\|v\|^{1-\delta}_{\mathcal{H}}.
    \end{eqnarray}
\end{assumption}

Note that (\ref{smoothness}) is always true if $\delta=0$, by plugging in $C_g=\|g\|_{\mathcal{H}}$, which imposes no extra conditions. This is why we need $\delta>0$. As $\|\cdot\|_\mathcal{H}$ is stronger than $\|\cdot\|_{L_2}$, a larger $\delta$ fulfilling Assumption \ref{assum:f} can imply that Assumption \ref{assum:f} is also true for a smaller $\delta$. As we will see later, the larger $\delta$ is, we can expect the more improvements in the rates of convergence.
In Section \ref{sec:examples}, we will give the corresponding $\delta$ value for each of the aforementioned linear functionals.

We also need regularity conditions for the input sites. In this work, the design points can be either random or fixed, provided that Assumption \ref{assum:norms} holds.

\begin{assumption}\label{assum:norms}
If $X$ is random, $X$ is independent of $E$.
    Besides, there exists $C_1>0$, and for each $\epsilon>0$, there exists $C_\epsilon>0$, both independent of $n$ and $X$, such that $\mathbb{P}(\Xi_\epsilon)\geq 1-\epsilon$, where $\Xi_\epsilon$ denotes the event
    \begin{eqnarray}
        &&\|v\|_{L_2}\leq \max\left\{C_1 \|v\|_{n},C_\epsilon n^{-m/d}\|v\|_{\mathcal{H}}\right\},\label{l2_to_n}\\
        &&\|v\|_n\leq \max\left\{C_1\|v\|_{L_2},C_\epsilon n^{-m/d}\|v\|_{\mathcal{H}}\right\}.\label{n_to_l2}
    \end{eqnarray}
    for all $v\in \mathcal{H}$.
\end{assumption}

 In Section \ref{sec:sufficient} of the Supplementary Material, we give some sufficient conditions for Assumption \ref{assum:norms}. Specifically, Assumption \ref{assum:norms} holds for 1) \textit{random designs} whose points are independent and identically distributed samples from a probability density bounded away from zero and infinity, and 2) \textit{fixed designs} that are quasi-uniform.

It is worth noting that in Assumption \ref{assum:norms}, the probability is taken with regard to the randomness of $X$, and in case $X$ is deterministic, the norm inequalities (\ref{l2_to_n}) and (\ref{n_to_l2}) should hold unconditionally.
To obtain the improved rates and the upper bounds, condition (\ref{l2_to_n}) alone suffices. The lower bounds and the asymptotic normality will also need condition (\ref{n_to_l2}).

Connecting the $\|\cdot\|_n$ and the $\|\cdot\|_{L_2}$ norms is crucial in the theory of a variety of nonparametric regression methods; see \cite{huang2003local,geer2000empirical} for example. In Assumption \ref{assum:norms}, the event $\Xi_\epsilon$ serves as a set of high probability such that $\|\cdot\|_n$ and $\|\cdot\|_{L_2}$ are comparable. Lemma \ref{lemma:var} shows a simple but important consequence of Assumption \ref{assum:norms}.

\begin{lemma}\label{lemma:var}
    With Assumption \ref{assum:norms} and the conditions $\sigma^2\neq 0$ and $g\neq 0$, we have $\operatorname{VAR}\neq 0$ with probability tending to one, as $n\rightarrow\infty$.
\end{lemma}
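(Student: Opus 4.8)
The goal is to show that $\operatorname{VAR} = \sigma^2 g^T(X)(K(X,X)+\lambda n I)^{-2}g(X)$ is strictly positive with probability tending to one. Since $\sigma^2 \neq 0$, this reduces to showing that $g(X) \neq 0$, i.e., that $g$ does not vanish at all the design points $x_1,\dots,x_n$, with probability tending to one; indeed, $(K(X,X)+\lambda n I)^{-2}$ is positive definite, so the quadratic form is zero if and only if $g(X) = 0$.

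\textbf{Approach.} The plan is to argue by contraposition: if $g(X) = 0$, then $\|g\|_n = 0$, and I want to use the norm equivalence in Assumption \ref{assum:norms}, specifically inequality (\ref{l2_to_n}), to force $\|g\|_{L_2}$ to be small. Applying (\ref{l2_to_n}) to $v = g$ on the event $\Xi_\epsilon$ gives $\|g\|_{L_2} \leq \max\{C_1 \|g\|_n, C_\epsilon n^{-m/d}\|g\|_{\mathcal{H}}\} = \max\{0, C_\epsilon n^{-m/d}\|g\|_{\mathcal{H}}\} = C_\epsilon n^{-m/d}\|g\|_{\mathcal{H}}$. Since $g$ is fixed (it does not depend on $n$) and $g \neq 0$, we have $\|g\|_{L_2} > 0$ a fixed positive constant, while $C_\epsilon n^{-m/d}\|g\|_{\mathcal{H}} \to 0$ as $n \to \infty$ because $m > d/2 > 0$. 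Hence for $n$ large enough (depending on $\epsilon$), the inequality $\|g\|_{L_2} \leq C_\epsilon n^{-m/d}\|g\|_{\mathcal{H}}$ is violated, so the event $\{g(X) = 0\}$ cannot occur on $\Xi_\epsilon$. Therefore $\mathbb{P}(g(X) = 0) \leq \mathbb{P}(\Xi_\epsilon^c) \leq \epsilon$ for all $n$ sufficiently large, and since $\epsilon > 0$ is arbitrary, $\mathbb{P}(g(X) \neq 0) \to 1$, which gives $\mathbb{P}(\operatorname{VAR} \neq 0) \to 1$.

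\textbf{Key steps in order.} First, reduce $\operatorname{VAR} \neq 0$ to $g(X) \neq 0$ using positive-definiteness of $(K(X,X)+\lambda n I)^{-2}$ and $\sigma^2 \neq 0$. Second, observe $g(X) = 0 \iff \|g\|_n = 0$. Third, fix $\epsilon > 0$, work on $\Xi_\epsilon$, apply (\ref{l2_to_n}) with $v = g$ to deduce $\|g\|_{L_2} \leq C_\epsilon n^{-m/d}\|g\|_{\mathcal{H}}$ whenever $g(X) = 0$. Fourth, use that $g$ is a fixed nonzero element of $\mathcal{H} \hookrightarrow L_2$ so $\|g\|_{L_2}$ is a fixed positive number while the right-hand side vanishes, producing a contradiction for large $n$; conclude $\mathbb{P}(g(X)=0) \leq \epsilon$ eventually. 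Fifth, let $\epsilon \downarrow 0$.

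\textbf{Main obstacle.} The argument is essentially routine once the reduction is made; the only subtlety worth care is the dependence of $g$ on $n$. The excerpt notes that "unless otherwise specified, the true function $f$ can depend on $n$," but $g$ here is the fixed element defining the linear functional of interest, so I would treat $\|g\|_{L_2}$ and $\|g\|_{\mathcal{H}}$ as $n$-independent constants; if one wanted to allow $g$ to vary with $n$, one would instead need a uniform lower bound on $\|g\|_{L_2}/\|g\|_{\mathcal{H}}$ bounded away from $n^{-m/d}$, but that is not needed for the statement as written. A second minor point is that the event $\Xi_\epsilon$ in Assumption \ref{assum:norms} holds unconditionally (probability one) in the fixed-design case, so the conclusion there is immediate; the probabilistic phrasing is only needed for random designs.
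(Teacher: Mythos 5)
Your proposal is correct and follows essentially the same route as the paper's proof: reduce $\operatorname{VAR}=0$ to $g(X)=0$ via positive definiteness of $(K(X,X)+\lambda nI)^{-2}$, then use (\ref{l2_to_n}) on $\Xi_\epsilon$ together with the fact that $\|g\|_{L_2}>C_\epsilon n^{-m/d}\|g\|_{\mathcal H}$ for large $n$ to rule this out. The only cosmetic difference is that you argue by contraposition while the paper argues directly that $\|g\|_{L_2}\le C_1\|g\|_n$ must hold on $\Xi_\epsilon$ for large $n$.
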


\subsection{Upper Bounds}\label{sec:upper}

We shall use the following notation for asymptotic orders. For (possibly random) sequences $a_n,b_n>0$, we denote $a_n\lesssim b_n$ if $a_n/b_n$ is bounded in probability; denote $a_n\gtrsim b_n$ if $b_n\lesssim a_n$; and $a_n\asymp b_n$ if $a_n\lesssim b_n$ and $a_n\gtrsim b_n$.

\begin{theorem}\label{Coro:rate}
Suppose $\lambda\gtrsim n^{-{2m}/d}$. Under Assumptions \ref{assum:matern}-\ref{assum:norms}, we have
\begin{eqnarray}
    |\operatorname{BIAS}|&=&O_\mathbb{P}(\lambda^{\frac{\delta}{2}}\|f\|_\mathcal{H}),\label{BIASupper}\\
    \operatorname{VAR}&=&O_\mathbb{P}(\sigma^2n^{-1}\lambda^{\delta-1}).\label{VARupper}
\end{eqnarray}
\end{theorem}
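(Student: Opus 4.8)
The plan is to control $\operatorname{BIAS}$ and $\operatorname{VAR}$ separately, using the ``noiseless kernel ridge regression'' device mentioned in Section \ref{sec:BnV} together with Assumptions \ref{assum:matern}--\ref{assum:norms}. For the bias, observe that $g^T(X)(K(X,X)+\lambda n I)^{-1}F$ is itself the evaluation of an inner product $\langle \tilde f_\lambda, g\rangle_{\mathcal H}$, where $\tilde f_\lambda$ is the minimizer of the \emph{noiseless} penalized least-squares problem $\frac1n\sum_i (f(x_i)-v(x_i))^2+\lambda\|v\|_{\mathcal H}^2$. Hence $\operatorname{BIAS}=\langle \tilde f_\lambda - f, g\rangle_{\mathcal H}$, and I would apply Assumption \ref{assum:f} directly with $v=\tilde f_\lambda-f$ to get $|\operatorname{BIAS}|\le C_g\|\tilde f_\lambda-f\|_{L_2}^{\delta}\|\tilde f_\lambda-f\|_{\mathcal H}^{1-\delta}$. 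It remains to bound the two norms of the noiseless KRR error: the standard energy inequality for ridge regression gives $\|\tilde f_\lambda\|_{\mathcal H}\le\|f\|_{\mathcal H}$, hence $\|\tilde f_\lambda-f\|_{\mathcal H}\lesssim\|f\|_{\mathcal H}$, and a source-condition / interpolation argument combined with the norm-equivalence (\ref{l2_to_n}) of Assumption \ref{assum:norms} yields $\|\tilde f_\lambda-f\|_{L_2}\lesssim\lambda^{1/2}\|f\|_{\mathcal H}$ in the regime $\lambda\gtrsim n^{-2m/d}$ (this is where the lower bound on $\lambda$ is used, to absorb the $C_\epsilon n^{-m/d}\|v\|_{\mathcal H}$ term). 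Substituting gives $|\operatorname{BIAS}|\lesssim\lambda^{\delta/2}\|f\|_{\mathcal H}$, which is (\ref{BIASupper}).

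For the variance, by (\ref{key:VAR}) we need $\sigma^2 g^T(X)(K(X,X)+\lambda n I)^{-2}g(X)$. The key observation is that this quadratic form equals $\sigma^2\cdot\frac1n\|P_\lambda g\|_n^2$-type quantity; more precisely, writing $M=K(X,X)+\lambda n I$, one has $g^T(X)M^{-2}g(X)=\frac1{\lambda n}\bigl(g^T(X)M^{-1}g(X)-g^T(X)M^{-1}K(X,X)M^{-1}g(X)\bigr)$, and each piece can be interpreted through the noiseless KRR operator applied to $g$, i.e. in terms of $\|\hat g_\lambda\|_{\mathcal H}^2$ and $\|\hat g_\lambda\|_n^2$ where $\hat g_\lambda$ is noiseless KRR run on data $(x_i,g(x_i))$. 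Bounding these using $\|\hat g_\lambda\|_{\mathcal H}\le\|g\|_{\mathcal H}$, the $L_2$-error bound $\|\hat g_\lambda-g\|_{L_2}\lesssim\lambda^{1/2}\|g\|_{\mathcal H}$ from the same argument as above, and then converting $\|\cdot\|_{L_2}$ to $\|\cdot\|_n$ via (\ref{l2_to_n}), leads to $\operatorname{VAR}\lesssim\sigma^2 n^{-1}\lambda^{-1}\cdot\lambda^{\delta}=\sigma^2 n^{-1}\lambda^{\delta-1}$; the extra factor $\lambda^{\delta}$ over the naive $n^{-1}\lambda^{-1}$ is exactly what Assumption \ref{assum:f} buys, and is obtained by feeding the smoothness of $g$ into the $L_2$ estimate of the relevant residual rather than its $\mathcal H$-norm.

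I expect the main obstacle to be the variance bound: unlike the bias, it is not literally an inner product against $g$, so I need the algebraic identity expressing $g^T(X)M^{-2}g(X)$ through noiseless KRR quantities, and then I must carefully track how the factor $\lambda^{\delta}$ emerges — it requires applying Assumption \ref{assum:f} not to $g$ itself but to an appropriate projected/residual function, and verifying that that function indeed lies in $\mathcal H$ with controlled norm. A secondary technical point is that all the norm-equivalence steps hold only on the high-probability event $\Xi_\epsilon$; since the conclusions are stated with $O_{\mathbb P}$, this is handled by noting that $\mathbb P(\Xi_\epsilon)\ge 1-\epsilon$ for arbitrary $\epsilon$, so working on $\Xi_\epsilon$ suffices to conclude boundedness in probability. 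Throughout, the condition $\lambda\gtrsim n^{-2m/d}$ is what guarantees the ``kernel'' term $n^{-m/d}\|v\|_{\mathcal H}$ in Assumption \ref{assum:norms} is dominated by the $\lambda^{1/2}$-scale terms, so that the maxima in (\ref{l2_to_n}) collapse to the desired side.
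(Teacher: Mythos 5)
Your bias argument is correct and in fact takes a slightly different route from the paper: the paper writes $\operatorname{BIAS}=\langle \hat g-g,f\rangle_{\mathcal H}$ (noiseless KRR applied to $g$, see Lemma \ref{lemma:relationship}) and then needs the \emph{improved} rate $\|\hat g-g\|_{\mathcal H}\lesssim\lambda^{\delta/2}$ before applying Cauchy--Schwarz, whereas you write $\operatorname{BIAS}=\langle\tilde f_\lambda-f,g\rangle_{\mathcal H}$, apply Assumption \ref{assum:f} to the residual $\tilde f_\lambda-f$, and only need the standard rates $\|\tilde f_\lambda-f\|_n\le\lambda^{1/2}\|f\|_{\mathcal H}$ and $\|\tilde f_\lambda-f\|_{\mathcal H}\lesssim\|f\|_{\mathcal H}$ from the basic energy inequality plus (\ref{l2_to_n}). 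Both give $\lambda^{\delta/2}\|f\|_{\mathcal H}$; yours avoids the improved-rate machinery for this half.

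The variance part has a genuine gap. The quantitative inputs you commit to --- $\|\hat g_\lambda\|_{\mathcal H}\le\|g\|_{\mathcal H}$ and the \emph{standard} rate $\|\hat g_\lambda-g\|_{L_2}\lesssim\lambda^{1/2}\|g\|_{\mathcal H}$ --- do not produce $\lambda^{\delta-1}$. Your identity $M^{-2}=\tfrac{1}{\lambda n}(M^{-1}-M^{-1}KM^{-1})$ gives $g^T(X)M^{-2}g(X)=\tfrac{1}{\lambda n}\langle \hat g, g-\hat g\rangle_{\mathcal H}\le\tfrac{1}{\lambda n}\langle g,g-\hat g\rangle_{\mathcal H}\le\tfrac{C_g}{\lambda n}\|g-\hat g\|_{L_2}^{\delta}\|g-\hat g\|_{\mathcal H}^{1-\delta}$, and feeding in $\|g-\hat g\|_{L_2}\lesssim\lambda^{1/2}$, $\|g-\hat g\|_{\mathcal H}\lesssim 1$ yields only $\operatorname{VAR}\lesssim\sigma^2 n^{-1}\lambda^{\delta/2-1}$, which is strictly weaker than (\ref{VARupper}). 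The missing ingredient is the bootstrapped improved rate for the noiseless KRR residual itself: starting from the basic inequality $\|\hat g-g\|_n^2+\lambda\|\hat g-g\|_{\mathcal H}^2\le 2\lambda\langle g,\hat g-g\rangle_{\mathcal H}$, one applies Assumption \ref{assum:f} to $v=\hat g-g$, converts $\|\cdot\|_{L_2}$ to $\|\cdot\|_n$ via (\ref{l2_to_n}), and \emph{solves the resulting coupled system} in the two unknowns $\|\hat g-g\|_n$ and $\|\hat g-g\|_{\mathcal H}$, obtaining $\|\hat g-g\|_n\lesssim\lambda^{(1+\delta)/2}$ and $\|\hat g-g\|_{\mathcal H}\lesssim\lambda^{\delta/2}$ (this is Lemma \ref{Th:improved} in the supplement; the hypothesis $\lambda\gtrsim n^{-2m/d}$ places you in the smoothing regime). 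Once you have that, the cleanest finish is the exact identity $\operatorname{VAR}=\sigma^2 n^{-1}\lambda^{-2}\|\hat g-g\|_n^2$ (from $\hat g(x_i)-g(x_i)=-\lambda n u_i$ with $u=M^{-1}g(X)$, Lemma \ref{lemma:relationship}), which immediately gives $\sigma^2 n^{-1}\lambda^{-2}\cdot\lambda^{1+\delta}=\sigma^2 n^{-1}\lambda^{\delta-1}$ --- no decomposition of $M^{-2}$ is needed. You correctly flagged the variance as the main obstacle and correctly guessed that Assumption \ref{assum:f} must be applied to a residual rather than to $g$, but the proposal never supplies the self-improvement step that upgrades $\lambda^{1/2}$ to $\lambda^{(1+\delta)/2}$, and without it the claimed exponent is not reached.
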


\subsection{Lower Bounds}\label{sec:lower}

It is not surprising that $\operatorname{VAR}$ should have a lower bound, in view of the classic statistical theory such as the Cram\'er-Rao lower bound.
Here we would like to pursue a lower bound as close as possible to the upper bound in Theorem \ref{Coro:rate}.

Note that the upper bounds of the rate of convergence depend on the best $\delta$ value that ensures Assumption \ref{assum:f}. Intuitively, a lower bound should rely on a $\delta$ value that disallows for (\ref{smoothness}) in Assumption \ref{assum:f}. 
To elaborate on the condition to be introduced, we first present an equivalent statement of Assumption \ref{assum:f}. For notational simplicity, we use the convention $\frac{0}{0}=0$ throughout this article. 

\begin{proposition}\label{Prop:equivalence}
    Under Assumption \ref{assum:matern}, given $g\in\mathcal{H}$ and $\delta\in(0,1]$, $\sup_{v\in\mathcal{H}}\frac{|\langle g,v\rangle_\mathcal{H}|}{\|v\|_{L_2}^\delta\|v\|^{1-\delta}_\mathcal{H}}$ is finite if and only if  for each $R>0$, 
    \begin{eqnarray}\label{delta_eqv}
        \sup_{\|v\|_\mathcal{H}\leq R\|v\|_{L_2}}\frac{|\langle g,v\rangle_\mathcal{H}|}{\|v\|_{L_2}}\leq C R^{1-\delta},
    \end{eqnarray}
    for some constant $C>0$ independent of $R$.
\end{proposition}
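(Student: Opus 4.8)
The plan is to prove both implications of Proposition~\ref{Prop:equivalence} by direct manipulation, treating the supremum as a scale-invariant quantity in $v$.

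\textbf{Setup and scale invariance.} First I would observe that the quantity $Q(v) := \frac{|\langle g,v\rangle_\mathcal{H}|}{\|v\|_{L_2}^\delta\|v\|_\mathcal{H}^{1-\delta}}$ is invariant under scaling $v\mapsto cv$ for $c\neq 0$, and that it is also well-defined on the cone $\{v : v \neq 0\}$ with the convention $\frac{0}{0}=0$ handling the case $\langle g,v\rangle_\mathcal{H}=0$ (and $\|v\|_{L_2}$ can only vanish if $v=0$ since $\|\cdot\|_\mathcal{H}\gtrsim\|\cdot\|_{L_2}$ under Assumption~\ref{assum:matern}). Likewise, for fixed $R$ the quantity $\frac{|\langle g,v\rangle_\mathcal{H}|}{\|v\|_{L_2}}$ restricted to the region $\{v : \|v\|_\mathcal{H}\leq R\|v\|_{L_2}\}$ is scale-invariant. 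The strategy is to reparametrize: for any $v\neq 0$, let $R(v) := \|v\|_\mathcal{H}/\|v\|_{L_2} \in [c_0,\infty)$ for some $c_0>0$ (again by norm equivalence $\|\cdot\|_\mathcal{H}\gtrsim\|\cdot\|_{L_2}$). Then
\[
Q(v) = \frac{|\langle g,v\rangle_\mathcal{H}|}{\|v\|_{L_2}}\cdot\left(\frac{\|v\|_{L_2}}{\|v\|_\mathcal{H}}\right)^{1-\delta} = \frac{|\langle g,v\rangle_\mathcal{H}|}{\|v\|_{L_2}}\, R(v)^{-(1-\delta)}.
\]
This identity is the crux: it converts the weighted ratio defining Assumption~\ref{assum:f} into the unweighted ratio of \eqref{delta_eqv} times a power of $R(v)$.

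\textbf{Forward direction.} Suppose $\sup_{v\in\mathcal{H}} Q(v) =: C < \infty$. Fix $R>0$ and take any $v$ with $\|v\|_\mathcal{H}\leq R\|v\|_{L_2}$, i.e.\ $R(v)\leq R$. From the identity above, $\frac{|\langle g,v\rangle_\mathcal{H}|}{\|v\|_{L_2}} = Q(v)\,R(v)^{1-\delta} \leq C\,R(v)^{1-\delta}\leq C R^{1-\delta}$, using $1-\delta\geq 0$ and $R(v)\leq R$. Taking the supremum over such $v$ gives \eqref{delta_eqv} with the same constant $C$.

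\textbf{Reverse direction.} Conversely, suppose \eqref{delta_eqv} holds for every $R>0$ with a constant $C$ independent of $R$. Given any $v\neq 0$, set $R=R(v)=\|v\|_\mathcal{H}/\|v\|_{L_2}$; then $v$ belongs to the feasible set for this $R$ (with equality in the constraint), so $\frac{|\langle g,v\rangle_\mathcal{H}|}{\|v\|_{L_2}}\leq C R(v)^{1-\delta}$. Multiplying by $R(v)^{-(1-\delta)}$ and using the identity yields $Q(v)\leq C$. Since $v$ was arbitrary, $\sup_{v\in\mathcal{H}} Q(v)\leq C<\infty$.

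\textbf{Main obstacle.} The only genuinely delicate point is that the constant $C$ in \eqref{delta_eqv} must be \emph{independent of $R$} for the reverse implication to work — if it were allowed to grow with $R$ (say like $R^{1-\delta}$ or worse), the argument would collapse, since we plug in $R=R(v)$ which ranges over an unbounded set. I would emphasize in the write-up that this uniformity is exactly what makes the two statements equivalent, and note that the lower-$R$ range is automatically harmless because $R(v)\geq c_0>0$ is bounded below (so no issue at $R\to 0$). A secondary bookkeeping point is the edge case $\langle g,v\rangle_\mathcal{H}=0$, which is covered by the $\frac{0}{0}=0$ convention and makes both sides trivially consistent. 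No compactness or Hilbert-space structure beyond the norm equivalence of Assumption~\ref{assum:matern} is needed.
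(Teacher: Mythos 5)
Your proof is correct and follows essentially the same route as the paper: the forward direction is identical, and the reverse direction rests on the same identity $Q(v)=\frac{|\langle g,v\rangle_\mathcal{H}|}{\|v\|_{L_2}}\,(\|v\|_\mathcal{H}/\|v\|_{L_2})^{-(1-\delta)}$. The only difference is that you substitute $R=\|v\|_\mathcal{H}/\|v\|_{L_2}$ exactly, whereas the paper peels over dyadic annuli $2^{i-1}\leq\|v\|_\mathcal{H}/\|v\|_{L_2}\leq 2^i$; your version is slightly cleaner and even gives a marginally better constant.
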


Our lower bounds rely on the reversed direction of the inequality (\ref{delta_eqv}), showing in Assumption \ref{assum:tau}.

\begin{assumption}\label{assum:tau}
    For some $\tau\in (0,1]$, there exist constants $C_0>0$ and $R_0>0$ such that
      $ \sup_{\|v\|_\mathcal{H}\leq R\|v\|_{L_2}}\frac{\langle g,v\rangle_\mathcal{H}}{\|v\|_{L_2}}> C_0 R^{1-\tau}$,
    for each $R\geq R_0$.
\end{assumption}

It is worth noting that Assumption \ref{assum:tau} implies that $g\neq 0$.
In view of Proposition \ref{Prop:equivalence}, if Assumptions \ref{assum:f} and \ref{assum:tau} are both true, we clearly have $\delta\leq \tau$. As opposed to Assumption \ref{assum:f}, a smaller $\tau$ fulfilling Assumption \ref{assum:tau} can imply that Assumption \ref{assum:tau} is also true for a larger $\tau$. The case $\tau=1$ is trivially true provided that $g\neq 0$, for $R_0= \|g\|_\mathcal{H}/\|g\|_{L_2}$ and $C_0=\|g\|^2_\mathcal{H}/\|g\|_{L_2}$. It is not hard to imagine that $\tau$ plays an important role in characterizing our lower bound of the rate of convergence in Theorem \ref{Th:lower}.

\begin{theorem}\label{Th:lower}
   Suppose Assumptions \ref{assum:matern}-\ref{assum:tau} hold. Then for each $\epsilon>0$, there exist constants $A_1,A_2,A_3>0$ depending only on $C_0,C_1,C_g,C_\epsilon,R_0,\delta$, and $\tau$, such that, on the event $\Xi_\epsilon$ introduced in Assumption \ref{assum:norms}, for any $n$ and $\lambda$ satisfying $A_1n^{-2m/d}\leq \lambda \leq A_2$, we have
   $\operatorname{VAR}\geq A_3\sigma^2n^{-1}\lambda^{\frac{\delta(\tau-1)}{\tau}}$.
  
\end{theorem}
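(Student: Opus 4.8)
The plan is to establish the lower bound on $\operatorname{VAR}$ by exhibiting, on the event $\Xi_\epsilon$, a single favorable test function $v^\ast\in\mathcal{H}$ that simultaneously has a large inner product with $g$ and a controlled $\|\cdot\|_n$-norm, then pushing this through the quadratic form $g^T(X)(K(X,X)+\lambda n I)^{-2}g(X)$. Recall that $\operatorname{VAR}=\sigma^2 g^T(X)(K(X,X)+\lambda nI)^{-2}g(X)$, so it suffices to lower bound $\|(K(X,X)+\lambda nI)^{-1}g(X)\|_2^2$. The reproducing-kernel identity gives $g^T(X)(K(X,X)+\lambda nI)^{-1}v(X) = \langle g, P_\lambda v\rangle_\mathcal{H}$-type expressions, where $P_\lambda$ is the noiseless-KRR smoothing operator; more concretely, for the vector $w:=(K(X,X)+\lambda nI)^{-1}g(X)$ and any $v\in\mathcal{H}$, one has $w^T v(X)=\langle \tilde v, g\rangle$ after identifying $v(X)$ through the representer machinery. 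So the strategy is: choose $v^\ast$ attaining (nearly) the supremum in Assumption \ref{assum:tau} at a radius $R$ to be optimized, so that $\langle g,v^\ast\rangle_\mathcal{H}> C_0 R^{1-\tau}\|v^\ast\|_{L_2}$ and $\|v^\ast\|_\mathcal{H}\leq R\|v^\ast\|_{L_2}$; normalize so $\|v^\ast\|_{L_2}=1$.

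Next I would convert the $L_2$ bound on $v^\ast$ into an $\|\cdot\|_n$ bound using inequality (\ref{n_to_l2}) from Assumption \ref{assum:norms}: on $\Xi_\epsilon$, $\|v^\ast\|_n\leq\max\{C_1\|v^\ast\|_{L_2},C_\epsilon n^{-m/d}\|v^\ast\|_\mathcal{H}\}\leq \max\{C_1, C_\epsilon n^{-m/d}R\}$. This is where the constraint $\lambda\geq A_1 n^{-2m/d}$ and the relationship $R\sim\lambda^{-1/2}$-ish enters: we pick $R$ of order $\lambda^{-1/2}$ (the scale at which the spectral filter $(\,\cdot\,+\lambda n)^{-1}$ transitions), so that $n^{-m/d}R\lesssim n^{-m/d}\lambda^{-1/2}\lesssim 1$ precisely when $\lambda\gtrsim n^{-2m/d}$, making $\|v^\ast\|_n\lesssim 1$. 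Then by Cauchy–Schwarz applied to the Euclidean inner product,
\begin{eqnarray*}
\langle g,v^\ast\rangle_{\text{eff}} \;\lesssim\; \|(K(X,X)+\lambda nI)^{-1}g(X)\|_2\;\big\|(K(X,X)+\lambda nI)^{1/2}v^\ast(X)\big\|_2,
\end{eqnarray*}
and the second factor is bounded by $(\|v^\ast\|_n^2 \cdot n + \lambda n\|v^\ast\|_\mathcal{H}^2)^{1/2}\lesssim (n + \lambda n R^2)^{1/2}\lesssim n^{1/2}$ again using $\lambda R^2\lesssim 1$. Meanwhile the left side is $\gtrsim\langle g,v^\ast\rangle_\mathcal{H}\gtrsim R^{1-\tau}\asymp\lambda^{-(1-\tau)/2}$, modulo a careful accounting of how the noiseless-KRR approximation $P_\lambda v^\ast\approx v^\ast$ affects the inner product — this replacement costs a factor controlled by $\|v^\ast - P_\lambda v^\ast\|$, which is again $O(\lambda^{1/2}\|v^\ast\|_\mathcal{H})=O(\lambda^{1/2}R)=O(1)$, hence harmless up to constants if $R$ is chosen with a small enough implicit constant, or more robustly by choosing $R=A\lambda^{-1/2}$ with $A$ large and tracking signs via the one-sided nature of Assumption \ref{assum:tau}.

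Combining, $\|(K(X,X)+\lambda nI)^{-1}g(X)\|_2\gtrsim \lambda^{-(1-\tau)/2} n^{-1/2}$, so $\operatorname{VAR}\gtrsim\sigma^2 n^{-1}\lambda^{-(1-\tau)} \cdot\lambda^{?}$; reconciling the exponent with the claimed $n^{-1}\lambda^{\delta(\tau-1)/\tau}$ requires choosing the radius $R$ not as $\lambda^{-1/2}$ but as $\lambda^{-1/(2\tau)\cdot\text{(something)}}$ — more precisely, one balances the two places $R$ appears (the gain $R^{1-\tau}$ versus the cost $\lambda R^2 \lesssim 1$, i.e. $R\lesssim\lambda^{-1/2}$) against the additional loss incurred because $v^\ast$ need not itself be in the effective range of the filter; feeding $R\asymp\lambda^{-1/(2)}$ through and then using $\delta\le\tau$ (Proposition \ref{Prop:equivalence}) to rewrite the exponent in the stated form $\frac{\delta(\tau-1)}{\tau}$ is the bookkeeping that yields the theorem. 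The main obstacle I anticipate is exactly this exponent reconciliation: getting from a crude $\lambda^{-(1-\tau)}$-type gain to the sharper $\lambda^{\delta(\tau-1)/\tau}$ requires using \emph{both} Assumption \ref{assum:f} (upper, via $\delta$) and Assumption \ref{assum:tau} (lower, via $\tau$) together — presumably by splitting $v^\ast$ into a ``bulk'' piece where the $\delta$-bound is tight and a ``tail'' piece, choosing the split radius to optimize — and carefully ensuring every estimate holds deterministically on $\Xi_\epsilon$ rather than merely in probability, so that the constants $A_1,A_2,A_3$ depend only on the listed structural constants. The rest — the Cauchy–Schwarz step, the norm-equivalence substitution, and the noiseless-KRR approximation bounds — I expect to be routine given the tools already set up in the paper.
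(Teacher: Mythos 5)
Your skeleton is the right one --- the paper also lower-bounds $\operatorname{VAR}=\sigma^2 g^T(X)(K(X,X)+\lambda nI)^{-2}g(X)$ by Cauchy--Schwarz against a test vector $v(X)$, picks $v$ from Assumption \ref{assum:tau} at a radius $R$, and uses (\ref{n_to_l2}) on $\Xi_\epsilon$ to control $\|v\|_n$ by $\|v\|_{L_2}$ under the side condition $C_\epsilon n^{-m/d}\|v\|_\mathcal{H}/\|v\|_{L_2}\le C_1$ (which is where $\lambda\gtrsim n^{-2m/d}$ enters). But there is a genuine gap at exactly the point you flag: the exponent. The missing ingredient is the noiseless KRR of $g$ itself. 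The identity $v^T(X)(K(X,X)+\lambda nI)^{-1}g(X)=\langle v,\hat g\rangle_\mathcal{H}$, with $\hat g$ the noiseless-KRR smoother applied to $g$ (not to $v^\ast$), reduces the problem to lower-bounding $\langle v,\hat g\rangle_\mathcal{H}/\|v\|_n$, and then one writes $\langle v,\hat g\rangle_\mathcal{H}=\langle v,g\rangle_\mathcal{H}+\langle v,\hat g-g\rangle_\mathcal{H}$ and invokes the \emph{improved rate} $\|\hat g-g\|_\mathcal{H}\le 2C_gC_1^\delta\lambda^{\delta/2}$ (Lemma \ref{Th:improved}, which is where Assumption \ref{assum:f} and hence $\delta$ enters). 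This yields
\begin{equation*}
\frac{\langle v,\hat g\rangle_\mathcal{H}}{\|v\|_n}\;\ge\;\frac{C_0R^{1-\tau}}{C_1}-2C_gC_1^{\delta-1}\lambda^{\delta/2}R,
\end{equation*}
and the correct balance is between the gain $R^{1-\tau}$ and the \emph{approximation-error} cost $\lambda^{\delta/2}R$, forcing $R\asymp\lambda^{-\delta/(2\tau)}$ and giving $\operatorname{VAR}\gtrsim\sigma^2n^{-1}R^{2(1-\tau)}=\sigma^2n^{-1}\lambda^{\delta(\tau-1)/\tau}$ directly --- no bulk/tail splitting, and no use of $\delta\le\tau$ to ``rewrite'' anything.

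Your version instead balances $R^{1-\tau}$ against the constraint $\lambda R^2\lesssim1$ coming from your $(K(X,X)+\lambda nI)^{1/2}$ factorization, which pins $R\asymp\lambda^{-1/2}$ and produces the wrong rate $\lambda^{-(1-\tau)}$; that factorization is also not a valid Cauchy--Schwarz as written (inserting $(K+\lambda nI)^{\pm1/2}$ changes the first factor to $\|(K+\lambda nI)^{-1/2}g(X)\|_2$, not the quantity you need). Likewise, your plan to control ``$\|v^\ast-P_\lambda v^\ast\|$'' smooths the wrong function: $v^\ast$ has no extra smoothness, so that error is only $O(\lambda^{1/2}R)=O(1)$ and destroys the bound, whereas smoothing $g$ exploits Assumption \ref{assum:f} to get the crucial extra factor $\lambda^{\delta/2}$. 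The speculative bulk/tail decomposition at the end does not repair this; the $\delta$-dependence enters solely through $\|\hat g-g\|_\mathcal{H}=O(\lambda^{\delta/2})$ and the resulting choice of $R$.
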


The trivial case $\tau=1$ leads to a ``parametric-rate'' lower bound $\operatorname{VAR}\gtrsim \sigma^2 n^{-1}$, which is not surprising. Besides, it is particularly interesting when $\delta=\tau$, as the lower rate in Theorem \ref{Th:lower} coincides with the upper rate in Theorem \ref{Coro:rate}. This leads to Theorem \ref{Coro:varrate}. We will show in Section \ref{sec:examples} that $\delta=\tau$ is indeed true for many examples of practical interest.

\begin{theorem}\label{Coro:varrate}
    Suppose $g\in \mathcal{H}$ satisfies Assumptions \ref{assum:f} and \ref{assum:tau} with $\delta=\tau$. Besides, Assumptions \ref{assum:matern} and \ref{assum:norms} hold. Then for each $\epsilon>0$, there exist constants $A_1,A_2,A_3,A_4>0$ depending only on $C_0,C_1,C_g,C_\epsilon,R_0,\delta$, and $\tau$, such that, on the event $\Xi_\epsilon$ introduced in Assumption \ref{assum:norms}, for any $n$ and $\lambda$ satisfying $A_1n^{-2m/d}\leq \lambda \leq A_2$, we have
   $A_3\sigma^2n^{-1}\lambda^{\delta-1}\leq \operatorname{VAR}\leq A_4\sigma^2n^{-1}\lambda^{\delta-1}$.
\end{theorem}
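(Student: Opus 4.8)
The plan is to derive Theorem~\ref{Coro:varrate} as an immediate consequence of the two preceding results, Theorem~\ref{Coro:rate} and Theorem~\ref{Th:lower}, once we specialize to the case $\delta=\tau$. The upper bound is already essentially in hand: by Theorem~\ref{Coro:rate}, under Assumptions~\ref{assum:matern}--\ref{assum:norms} and $\lambda\gtrsim n^{-2m/d}$, we have $\operatorname{VAR}=O_\mathbb{P}(\sigma^2 n^{-1}\lambda^{\delta-1})$. To turn this $O_\mathbb{P}$ statement into the clean ``on the event $\Xi_\epsilon$'' form with an explicit constant $A_4$, I would revisit the proof of Theorem~\ref{Coro:rate} and observe that its bound on $\operatorname{VAR}$ is actually established deterministically on $\Xi_\epsilon$ (the randomness only enters through whether $\Xi_\epsilon$ occurs); this is exactly the structure already used in Theorem~\ref{Th:lower}. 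So the first step is to restate the upper half of Theorem~\ref{Coro:rate} in the event-conditional form, extracting a constant $A_4>0$ depending only on $C_1,C_g,C_\epsilon,\delta$, valid for $A_1' n^{-2m/d}\le\lambda\le A_2'$.

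The second step is to invoke Theorem~\ref{Th:lower} directly. Since $g$ satisfies Assumption~\ref{assum:tau} with parameter $\tau$, Theorem~\ref{Th:lower} gives, on $\Xi_\epsilon$ and for $A_1 n^{-2m/d}\le\lambda\le A_2$, the bound $\operatorname{VAR}\ge A_3\sigma^2 n^{-1}\lambda^{\delta(\tau-1)/\tau}$. Now substitute $\tau=\delta$: the exponent becomes $\delta(\delta-1)/\delta=\delta-1$, so the lower bound reads $\operatorname{VAR}\ge A_3\sigma^2 n^{-1}\lambda^{\delta-1}$, matching the exponent of the upper bound exactly.

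The third and final step is bookkeeping: take $A_1$ to be the larger of the lower thresholds coming from the two theorems, $A_2$ the smaller of the two upper thresholds, and relabel $A_3,A_4$ as the lower and upper constants respectively; all of these depend only on $C_0,C_1,C_g,C_\epsilon,R_0,\delta,\tau$ as claimed, since that is already true of the constants furnished by Theorems~\ref{Coro:rate} and~\ref{Th:lower}. One should also note that Assumptions~\ref{assum:f} and~\ref{assum:tau} with $\delta=\tau$ are mutually consistent — Proposition~\ref{Prop:equivalence} shows Assumption~\ref{assum:f} is equivalent to the upper estimate \eqref{delta_eqv} with exponent $1-\delta$, while Assumption~\ref{assum:tau} is the matching lower estimate with exponent $1-\tau$, so the hypotheses are non-vacuous precisely when the two exponents agree.

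I do not anticipate a genuine obstacle here, since the theorem is a corollary; the only point requiring care is the first step — confirming that the upper bound in Theorem~\ref{Coro:rate} is proved on the \emph{same} high-probability event $\Xi_\epsilon$ used for the lower bound, rather than merely in probability, so that the two one-sided inequalities can be asserted simultaneously on one event. If the proof of Theorem~\ref{Coro:rate} were only phrased probabilistically, one would instead combine $\Xi_\epsilon$ with the (arbitrarily high) probability event on which the upper bound holds, intersect them, and adjust $\epsilon$; this costs nothing but a sentence.
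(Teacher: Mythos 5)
Your proposal is correct and follows the same route as the paper, which presents Theorem~\ref{Coro:varrate} as an immediate combination of the upper bound in Theorem~\ref{Coro:rate} and the lower bound in Theorem~\ref{Th:lower} specialized to $\delta=\tau$. Your key point of care is also right: the upper bound in Theorem~\ref{Coro:rate} descends from Lemma~\ref{Th:improved} together with Lemma~\ref{lemma:relationship}, and Lemma~\ref{Th:improved} is stated deterministically on $\Xi_\epsilon$ (with the restriction $\lambda\gtrsim n^{-2m/d}$ placing $\lambda$ in the smoothing regime), so both one-sided bounds indeed hold on the same event.
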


Now we consider the bias term. First, we note that the bias depends on the underlying true function $f$. If $f\equiv 0$, we can clearly see $\operatorname{BIAS}=0$. A more meaningful study of the lower bounds for bias is to consider the \textit{worst-case bias}. To define a worst-case bias, we imagine the application of KRR to a family of
models having the form of equation (\ref{model}), but with different $f$. Nevertheless, the same $g$ and parameter $\lambda$ are used for each model. For each $f$, denote the corresponding bias by $\operatorname{BIAS}_f$. we Theorem \ref{Coro:biasrate} provides a lower bound for the worst-case bias over the unit ball of $\mathcal{H}$.

\begin{theorem}\label{Coro:biasrate}
    Suppose Assumptions \ref{assum:matern}-\ref{assum:tau} hold. Then for each $\epsilon>0$, there exist constants $A_1,A_2,A_3>0$ depending only on $C_0,C_1,C_g,C_\epsilon,R_0,\delta$, and $\tau$, such that, on the event $\Xi_\epsilon$ introduced in Assumption \ref{assum:norms}, for any $n$ and $\lambda$ satisfying $A_1n^{-2m/d}\leq \lambda \leq A_2$, we have
    \begin{eqnarray}\label{supbiaslower1}
        \sup_{\|f\|_\mathcal{H}\leq 1}|\operatorname{BIAS}_f|\geq\begin{cases}
               A_3 \lambda^{\frac{2\tau-2\delta+\delta^2-\delta^2\tau}{2\tau(1-\delta)}} & \text{ if } \delta<1\\
               A_3 \lambda & \text{ if } \delta=1
           \end{cases};
    \end{eqnarray}
    and in particular, if $\delta=\tau<1$, 
    \begin{eqnarray}\label{supbiaslower}
        A_3\lambda^\frac{\delta}{2}\leq \sup_{\|f\|_\mathcal{H}\leq 1}|\operatorname{BIAS}_f|\leq A_4\lambda^\frac{\delta}{2},
    \end{eqnarray}
    for some $A_4$ depending only on $C_0,C_1,C_g,C_\epsilon,R_0$, and $\delta$.
\end{theorem}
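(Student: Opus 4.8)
The plan is to reduce the worst-case bias lower bound to a carefully chosen single function $f$, and then to invoke the previously developed machinery — in particular the noiseless KRR tool and the equivalence statement of Proposition \ref{Prop:equivalence} — to control $\operatorname{BIAS}_f$ from below. Recall from (\ref{key:bias}) that $\operatorname{BIAS}_f = g^T(X)(K(X,X)+\lambda n I)^{-1}F - \langle f,g\rangle_\mathcal{H}$, and that $g^T(X)(K(X,X)+\lambda n I)^{-1}F = \langle \tilde f, g\rangle_\mathcal{H}$ where $\tilde f$ is the \emph{noiseless KRR} estimate of $f$, i.e. the minimizer of $\frac1n\sum_i(f(x_i)-v(x_i))^2+\lambda\|v\|_\mathcal{H}^2$. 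Hence $\operatorname{BIAS}_f = \langle \tilde f - f, g\rangle_\mathcal{H}$, and the task is to exhibit $f$ with $\|f\|_\mathcal{H}\le 1$ making $|\langle \tilde f - f, g\rangle_\mathcal{H}|$ as large as the claimed rate.

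The key idea is to choose $f$ itself to be (a scalar multiple of) the "bad" function $v=v_R$ witnessing Assumption \ref{assum:tau}: for a suitable $R$ — to be optimized as a power of $\lambda$ — pick $v_R$ with $\|v_R\|_\mathcal{H}\le R\|v_R\|_{L_2}$ and $\langle g,v_R\rangle_\mathcal{H} > C_0 R^{1-\tau}\|v_R\|_{L_2}$, normalized so that $\|v_R\|_\mathcal{H}=1$ (so $\|v_R\|_{L_2}\ge 1/R$). Then set $f=v_R$. The quantity $\langle \tilde f - f, g\rangle_\mathcal{H}$ splits as $\langle \tilde f - f, g\rangle_\mathcal{H} = -\langle f - \tilde f, g\rangle_\mathcal{H}$, and since $f-\tilde f$ is the "residual" of noiseless KRR, one has the standard bound $\|f-\tilde f\|_\mathcal{H}\lesssim \|f\|_\mathcal{H}$ and the refined bound $\|f-\tilde f\|_{L_2}\lesssim (\sqrt\lambda + \text{sampling error})\|f\|_\mathcal{H}$, controlled on $\Xi_\epsilon$ via (\ref{l2_to_n}) together with the defining variational inequality of $\tilde f$. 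The point is that $f-\tilde f$ lies essentially in the same "low-frequency" cone as $v_R$ — i.e. $\|f-\tilde f\|_\mathcal{H}\le R'\|f-\tilde f\|_{L_2}$ for $R'$ comparable to $R$ — as long as $R$ is not taken larger than the critical scale $\lambda^{-1/2}$; one then feeds $f-\tilde f$ into Assumption \ref{assum:f} (which holds with exponent $\delta$) to get $|\langle g, f-\tilde f\rangle_\mathcal{H}|\le C_g\|f-\tilde f\|_{L_2}^\delta\|f-\tilde f\|_\mathcal{H}^{1-\delta}$, but crucially one must also compare this with the \emph{genuine} value $\langle g, f\rangle_\mathcal{H} > C_0 R^{1-\tau}\|f\|_{L_2}\ge C_0 R^{-\tau}$, and argue that $\tilde f$ cannot cancel this — i.e. $|\langle g,\tilde f\rangle_\mathcal{H}|$ is strictly smaller — so that $|\operatorname{BIAS}_f|=|\langle g,f\rangle_\mathcal{H}-\langle g,\tilde f\rangle_\mathcal{H}|$ retains a constant fraction of $C_0 R^{-\tau}$. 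Optimizing $R$ as a power of $\lambda$ — balancing the gain $R^{-\tau}$ from Assumption \ref{assum:tau} against the loss incurred because $\tilde f$ approximates $f$ increasingly well as $R$ decreases (the error $\|f-\tilde f\|_{L_2}$ scaling like $\sqrt\lambda\,R$ or $\sqrt\lambda$ depending on the regime) — produces the exponent $\frac{2\tau-2\delta+\delta^2-\delta^2\tau}{2\tau(1-\delta)}$ when $\delta<1$, and the linear rate $\lambda$ in the boundary case $\delta=1$ (where Assumption \ref{assum:f} forces $f-\tilde f$ into $L_2$ at full strength $\sqrt\lambda$ with no interpolation slack). The matching upper bound $\sup_{\|f\|_\mathcal{H}\le 1}|\operatorname{BIAS}_f|\le A_4\lambda^{\delta/2}$ when $\delta=\tau<1$ is immediate from (\ref{BIASupper}) of Theorem \ref{Coro:rate}, so (\ref{supbiaslower}) follows once (\ref{supbiaslower1}) is established and one checks that the exponent collapses to $\delta/2$ when $\tau=\delta$.

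The main obstacle I anticipate is the "non-cancellation" step: showing that the noiseless KRR approximant $\tilde f$ of the adversarial $f=v_R$ does not accidentally reproduce $\langle g, v_R\rangle_\mathcal{H}$, so that the difference $\langle g, v_R - \tilde f\rangle_\mathcal{H}$ stays bounded below. This requires a quantitative two-sided comparison: Assumption \ref{assum:f} gives an upper bound on $|\langle g, v_R-\tilde f\rangle_\mathcal{H}|$ of order $\|v_R-\tilde f\|_{L_2}^\delta$ (times a bounded $\mathcal{H}$-factor), while Assumption \ref{assum:tau} gives the lower bound $\langle g, v_R\rangle_\mathcal{H}\gtrsim R^{-\tau}$; one needs the former to be a \emph{small} fraction of the latter, which constrains $R$ from above (roughly $R\lesssim\lambda^{-1/2}$) and forces a delicate tracking of how $\|v_R-\tilde f\|_{L_2}$ depends jointly on $R$ and $\lambda$ on the high-probability event $\Xi_\epsilon$. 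A secondary technical point is ensuring all constants $A_1,A_2,A_3$ depend only on the listed structural constants and not on $n$, $\lambda$, or the particular witnessing sequence $v_R$ — this is handled by working entirely through the norm inequalities (\ref{l2_to_n})–(\ref{n_to_l2}) and the scale-invariant form of Assumptions \ref{assum:f} and \ref{assum:tau}.
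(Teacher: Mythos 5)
Your identity $\operatorname{BIAS}_f=\langle \tilde f-f,g\rangle_{\mathcal H}$ (with $\tilde f$ the noiseless KRR of $f$) is correct, and the upper bound in (\ref{supbiaslower}) via Theorem \ref{Coro:rate} is fine. But the core of your lower-bound argument --- the ``non-cancellation'' step you yourself flag --- is a genuine gap, and as set up it cannot be closed. With $f=v_R$ the witness of Assumption \ref{assum:tau} (normalized to $\|v_R\|_{\mathcal H}=1$) and $R\asymp\lambda^{-\delta/(2\tau)}$, the quantity you want to preserve, $\langle g,f\rangle_{\mathcal H}\gtrsim C_0R^{-\tau}\asymp\lambda^{\delta/2}$, and the only available control on the perturbation, $|\langle g,f-\tilde f\rangle_{\mathcal H}|\le C_g\|f-\tilde f\|_{L_2}^{\delta}\|f-\tilde f\|_{\mathcal H}^{1-\delta}\lesssim\lambda^{\delta/2}$, are of exactly the same order; nothing in Assumptions \ref{assum:f} or \ref{assum:tau} rules out that $\langle g,\tilde f\rangle_{\mathcal H}$ equals $\langle g,f\rangle_{\mathcal H}$ up to a term of strictly smaller order, i.e.\ that the bias for this particular $f$ is $o(\lambda^{\delta/2})$ or even zero. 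Moreover, if your heuristic did go through it would yield a lower bound $\gtrsim\lambda^{\delta/2}$ for all $\delta\le\tau$, which is strictly stronger than the stated exponent $\frac{2\tau-2\delta+\delta^2-\delta^2\tau}{2\tau(1-\delta)}$ when $\delta<\tau$ --- a sign that the optimization of $R$ you gesture at does not actually produce the claimed rate.

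The paper avoids this trap by working on the $g$-side. By Lemma \ref{lemma:relationship} one has $\operatorname{BIAS}_f=\langle \hat g-g,f\rangle_{\mathcal H}$ with $\hat g$ the noiseless KRR of $g$, and Cauchy--Schwarz is attained at $f\propto\hat g-g$, so $\sup_{\|f\|_{\mathcal H}\le1}|\operatorname{BIAS}_f|=\|\hat g-g\|_{\mathcal H}$ exactly (Corollary \ref{coro:worstcase}); the supremum over $f$ is resolved in closed form rather than by exhibiting one adversarial $f$. The lower bound on $\|\hat g-g\|_{\mathcal H}$ is then Lemma \ref{Th:KRRlower}: first $\|\hat g-g\|_n$ is bounded below via Theorem \ref{Th:lower}, where the witness $v$ of Assumption \ref{assum:tau} is paired against $\hat g=g+(\hat g-g)$ and the perturbation $|\langle v,\hat g-g\rangle_{\mathcal H}|\le R\|v\|_{L_2}\cdot 2C_gC_1^{\delta}\lambda^{\delta/2}$ carries an extra factor of $R^{\tau}\lambda^{\delta/2}$ relative to the main term $C_0R^{1-\tau}\|v\|_{L_2}$ --- so shrinking $R$ below $\lambda^{-\delta/(2\tau)}$ genuinely beats the cancellation, unlike in your setup. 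The $\|\cdot\|_n$ lower bound is then converted into an $\|\cdot\|_{\mathcal H}$ lower bound through the basic inequality $\|\hat g-g\|_n^2\le2\lambda C_gC_1^{\delta}\|\hat g-g\|_n^{\delta}\|\hat g-g\|_{\mathcal H}^{1-\delta}$ from the proof of Lemma \ref{Th:improved}; solving this for $\|\hat g-g\|_{\mathcal H}$ is precisely what generates the exponent $\frac{2\tau-2\delta+\delta^2-\delta^2\tau}{2\tau(1-\delta)}$ and the separate case $\delta=1$. This conversion step is entirely absent from your sketch and is where the stated rate actually comes from.
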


\begin{remark}
There is a sharp transition in the lower bounds (\ref{supbiaslower1}) between the case $\delta<1$ and $\delta=1$, 
showing completely different rates of convergence. Despite the weird appearance, this gap in the rate of convergence is genuine! When $\delta=1$, there exists a \textit{semiparametric effect} that may significantly boost the rate of convergence of the bias so that $\sup_{\|f\|_\mathcal{H}\leq 1}|\operatorname{BIAS}_f|$ can become much smaller than the lower bound suggested in (\ref{supbiaslower}). It is implied in the literature concerning the semiparametric properties of KRR (e.g., \cite{mammen1997penalized,tuo2015efficient,geer2000empirical}) that there exist cases with $\delta=1$, such that $\operatorname{BIAS}=o(n^{-1/2})$ whenever $n^{-1}\lesssim\lambda=o(n^{-1/2})$, which definitely violates (\ref{supbiaslower}). The semiparametric effect improves the bias rate of convergence through a mechanism different from what we have discussed.
Further investigations in Section \ref{sec:semiparametric} of the Supplementary Materials also show that the lower bound (\ref{KRRlowerHequal}) for $\delta=1$ cannot be improved in general.     
\end{remark}

\subsection{Discussion on the choice of $\lambda$}

In view of Theorems \ref{Coro:rate}, \ref{Coro:varrate} and \ref{Coro:biasrate} we may choose  $\lambda\asymp n^{-1}$ to balance the worst-case bias and the variance  when $\delta=\tau<1$. For $\delta=1$, the variance becomes $O(n^{-1})$, the parametric rate, regardless of the choice of $\lambda$. From Theorem \ref{Coro:rate}, a suitable choice of $\lambda$ in this case would be $n^{-{2m}/d}\lesssim\lambda\lesssim n^{-1}$. 
Note that this differs from $\lambda\asymp n^{-\frac{2m}{2m+d}}$,  the optimal order of magnitude of $\lambda$ for $\|\hat{f}-f\|_{L_2}$ to reach the minimax rate of convergence \citep{Stone80}.
Of course, we would also expect that the actual $|\operatorname{BIAS}_f|$ for a specific $f$ can be much smaller than the worst-case bias. 

Theorem \ref{Th:oBIAS} shows that $\operatorname{BIAS}$ decays faster than the rate indicated by Theorem \ref{Coro:rate} for fixed $f$.
\begin{theorem}\label{Th:oBIAS}
    If $f$ is fixed across all $n$ and $\lambda=o(1)$, under the conditions of Theorem \ref{Coro:rate}, $|\operatorname{BIAS}|=o_\mathbb{P}(\lambda^{\delta/2})$.
\end{theorem}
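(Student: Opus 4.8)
\textbf{Proof proposal for Theorem \ref{Th:oBIAS}.}

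The plan is to revisit the bias formula (\ref{key:bias}) and exploit the fact that, for a \emph{fixed} $f$, the quantity $\langle f,g\rangle_\mathcal{H}$ is a fixed number and the ``noiseless KRR'' operator applied to $f$ converges to $f$ in a strong enough sense that the $\lambda^{\delta/2}$ rate of Theorem \ref{Coro:rate} is not tight. Concretely, let $\tilde{f}$ denote the noiseless KRR estimator (the tool introduced in Section \ref{sec:proofs_main} of the Supplementary Materials), so that $\operatorname{BIAS}=\langle \tilde f - f, g\rangle_\mathcal{H}$ on the relevant high-probability event. The upper bound $|\operatorname{BIAS}|=O_\mathbb{P}(\lambda^{\delta/2}\|f\|_\mathcal{H})$ in Theorem \ref{Coro:rate} is obtained by applying Assumption \ref{assum:f} to $v=\tilde f - f$, giving $|\langle g, \tilde f - f\rangle_\mathcal{H}|\le C_g\|\tilde f - f\|_{L_2}^\delta\|\tilde f - f\|_\mathcal{H}^{1-\delta}$, and then bounding $\|\tilde f - f\|_{L_2}\lesssim \lambda^{1/2}\|f\|_\mathcal{H}$ and $\|\tilde f - f\|_\mathcal{H}\lesssim \|f\|_\mathcal{H}$. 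The point is that both of these bounds are \emph{crude for fixed $f$}: as $\lambda=o(1)$, noiseless KRR is consistent, so $\|\tilde f - f\|_\mathcal{H}=o(1)$ rather than $O(1)$, and correspondingly $\|\tilde f - f\|_{L_2}=o(\lambda^{1/2})$.

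First I would record the two refined convergence statements for noiseless KRR with fixed $f$: (i) $\|\tilde f - f\|_\mathcal{H}\to 0$, and (ii) $\|\tilde f - f\|_{L_2}=o(\lambda^{1/2})$, both as $\lambda=o(1)$ (with the implicit $n,\lambda$ coupling $\lambda\gtrsim n^{-2m/d}$ from Theorem \ref{Coro:rate}). Statement (i) is the standard consistency of regularized interpolation: writing the variational characterization of $\tilde f$, one gets $\|\tilde f\|_\mathcal{H}\le \|f\|_\mathcal{H}$ and a decomposition showing $\|\tilde f - f\|_\mathcal{H}^2 \le 2\langle f - \tilde f, f\rangle_\mathcal{H}$, which tends to zero by a density/approximation argument (approximate $f$ in $\mathcal{H}$-norm by a smoother element on which the bias provably vanishes). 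Statement (ii) then follows by feeding (i) back into the $L_2$ error bound: the $L_2$ error of noiseless KRR is controlled by $\lambda^{1/2}$ times the $\mathcal{H}$-norm of the part of $f$ not yet captured, which is now $o(1)$; alternatively one splits $f = f_{\text{low}} + f_{\text{high}}$ with $\|f_{\text{high}}\|_\mathcal{H}$ small and uses linearity of $\tilde{f}$ in $f$ together with Theorem \ref{Coro:rate} applied separately to each piece.

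With (i) and (ii) in hand, the conclusion is immediate: on $\Xi_\epsilon$,
\[
|\operatorname{BIAS}| = |\langle g, \tilde f - f\rangle_\mathcal{H}| \le C_g \|\tilde f - f\|_{L_2}^\delta \|\tilde f - f\|_\mathcal{H}^{1-\delta} = o(\lambda^{\delta/2})\cdot o(1)^{1-\delta} = o_\mathbb{P}(\lambda^{\delta/2}),
\]
where for $\delta=1$ only factor (ii) is used and for $\delta<1$ both factors contribute (and in fact either one alone already suffices to beat $\lambda^{\delta/2}$). Since $\mathbb{P}(\Xi_\epsilon)\ge 1-\epsilon$ for every $\epsilon$, this upgrades to the stated $o_\mathbb{P}$ bound over the whole probability space.

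The main obstacle is establishing (ii), the sharpened $L_2$ rate $\|\tilde f - f\|_{L_2} = o(\lambda^{1/2})$: the naive bound only gives $O(\lambda^{1/2})$, and squeezing out the extra $o(1)$ factor requires a genuine approximation argument that trades off how well a smoother surrogate approximates $f$ against the bias incurred for that surrogate. The cleanest route is probably the split-$f$ device combined with the linearity of noiseless KRR in its target and the fact that for fixed smooth surrogates the bias is genuinely $O(\lambda)$ (not merely $O(\lambda^{\delta/2})$), but care is needed because the surrogate must be chosen depending on a prescribed tolerance and then held fixed while $\lambda\to 0$, so the order of quantifiers ($\forall \eta\,\exists$ surrogate $\,\exists \lambda_0\,\forall \lambda<\lambda_0$) has to be managed explicitly. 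I expect the Supplementary Material's noiseless-KRR machinery already contains the relevant $L_2$ and $\mathcal{H}$-norm error bounds in a form that makes both (i) and (ii) short corollaries.
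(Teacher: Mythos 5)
Your strategy is sound and can be completed, but it is a genuinely different route from the paper's. The paper never touches the noiseless KRR of $f$: it keeps $\operatorname{BIAS}=\langle \hat{g}-g,f\rangle_\mathcal{H}$ with $\hat{g}$ the noiseless KRR of $g$ (Lemma \ref{lemma:relationship}), expands both $f$ and $\hat{g}-g$ in the eigenbasis of the integral operator, inserts a slowly decaying weight sequence $\xi_i\downarrow 0$ adapted to the fixed $f$ (possible precisely because $\sum_i c_i^2/\rho_i<\infty$), applies a weighted Cauchy--Schwarz, and splits the resulting sum into a finite head controlled by $\|\hat{g}-g\|_{L_2}^2=o(\lambda^{\delta})$ (via $\|\hat g-g\|_n=O(\lambda^{(1+\delta)/2})$ and $\lambda=o(1)$) plus a tail controlled by $\epsilon\|\hat{g}-g\|_\mathcal{H}^2=\epsilon\, O(\lambda^{\delta})$. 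That argument reuses only the bounds already proved in Lemma \ref{Th:improved} and needs no new facts about noiseless KRR. You instead flip the roles by the symmetry $g^T(X)(K+\lambda nI)^{-1}F=F^T(K+\lambda nI)^{-1}g(X)$, apply Assumption \ref{assum:f} to $v=\tilde f-f$ (which, note, is not how the paper derives Theorem \ref{Coro:rate}, though it is a valid alternative derivation), and then must prove two auxiliary lemmas: (i) $\|\tilde f-f\|_\mathcal{H}=o(1)$ and (ii) $\|\tilde f-f\|_{L_2}=o(\lambda^{1/2})$. Both are true and provable exactly as you sketch, with the ``smoother surrogate'' taken to be an eigenfunction truncation $f_1=\sum_{i\le N}c_i\eta_i$, which satisfies the source condition of Proposition \ref{Prop:series} for every $\kappa$, so that Lemma \ref{Th:improved} applies to the low piece and the basic inequality $\|\tilde f_2-f_2\|_n\le\lambda^{1/2}\|f_2\|_\mathcal{H}$ to the high piece; the passage from $\|\cdot\|_n$ to $\|\cdot\|_{L_2}$ uses Assumption \ref{assum:norms} together with $n^{-m/d}\lesssim\lambda^{1/2}$ and (i). What each approach buys: yours is more transparent about \emph{why} fixing $f$ improves the rate and, for $\delta<1$, is arguably lighter since the soft $\mathcal{H}$-consistency (i) alone already gives $O(\lambda^{\delta/2})\cdot o(1)$; the paper's is more economical overall because the head/tail split in the eigenbasis does the work of both (i) and (ii) in one stroke, avoiding the sharpened $L_2$ rate entirely. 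The one place you must be careful is exactly the one you flag: for $\delta=1$ your argument stands or falls on (ii), and the quantifier order (fix tolerance, fix truncation level $N$, then send $n\to\infty$, then send the tolerance to zero, all on the events $\Xi_\epsilon$ whose constants depend on $\epsilon$) must be managed explicitly — this is the same double-limit bookkeeping that appears at the end of the paper's proof.
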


More explicit improved rates for $\operatorname{BIAS}$ are given in Section \ref{sec:further} of the Supplementary Materials under extra smoothness conditions of $f$. In view of these results, when $\lambda\asymp n^{-1}$ is used, the bias will become negligible compared with the variance term. This, however, may not be disadvantageous when the statistical inference is of interest. We will see in Section \ref{sec:AN} that the variance term is asymptotically normal. In this case, an asymptotically negligible bias enables us to construct an asymptotically unbiased confidence interval.

\subsection{Asymptotic Normality}\label{sec:AN}

In this section, we provide sufficient conditions under which the statistic $\langle \hat{f},g\rangle_\mathcal{H}$ is asymptotically normal. Because the bias is nonrandom given $X$, we only consider the asymptotic distribution of the variance term $g^T(X)(K(X,X)+\lambda n I)^{-1}E$. 
We use the notion ``$\xrightarrow{\mathscr{L}}$'' to denote the convergence in distribution.

\begin{theorem}\label{Th:AN}
Suppose $\sigma^2\in(0,\infty)$ is independent of $n$, and $g\neq 0$. The design points $X$ are either deterministic, or random but independent of the random error $E$.
    Under Assumptions \ref{assum:matern}-\ref{assum:tau}, we have the central limit theorem
   \begin{eqnarray}\label{CLT}
       \frac{1}{\sqrt{\operatorname{VAR}}}g^T(X)(K(X,X)+\lambda n I)^{-1}E\xrightarrow{\mathscr{L}} N(0,1),  \text{ as } n\rightarrow\infty,
   \end{eqnarray} 
   provided that $\lambda=o(1)$ and
    \begin{eqnarray}\label{conditionlambda}
        \lambda^{-1}=o\left(n^{\frac{2m}{d+2m(1-\delta/\tau)}}\right).
    \end{eqnarray}
    In particular, if $\delta=\tau$, (\ref{conditionlambda}) becomes $\lambda^{-1}=o(n^{\frac{2m}{d}})$.
\end{theorem}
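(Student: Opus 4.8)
The plan is to verify the Lindeberg condition for the triangular array of independent summands
\[
S_n=\sum_{i=1}^n w_i e_i, \qquad w=(w_1,\dots,w_n)^T:=\bigl(K(X,X)+\lambda n I\bigr)^{-1}g(X),
\]
conditionally on $X$, so that $S_n/\sqrt{\operatorname{VAR}}\xrightarrow{\mathscr{L}}N(0,1)$. Since the $e_i$ are i.i.d.\ with mean zero and variance $\sigma^2$, $\operatorname{Var}_E(S_n)=\sigma^2\|w\|_2^2=\operatorname{VAR}$, and the classical Lindeberg--Feller theorem for weighted sums reduces the claim to showing that the \emph{maximal weight ratio} is asymptotically negligible,
\[
\frac{\max_{1\le i\le n} w_i^2}{\sum_{i=1}^n w_i^2}\;=\;\frac{\|w\|_\infty^2}{\|w\|_2^2}\;\xrightarrow{\ \mathbb P\ }\;0 .
\]
(For i.i.d.\ errors with finite variance this negligibility of the maximal weight is exactly what Lindeberg needs; a clean reference is the Hájek--Šidák CLT.) So the whole theorem comes down to controlling $\|w\|_\infty/\|w\|_2$ from above, and this is where Assumptions \ref{assum:matern}--\ref{assum:tau} and the condition (\ref{conditionlambda}) must enter.

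For the denominator I would invoke the lower bound already in hand: $\|w\|_2^2=\sigma^{-2}\operatorname{VAR}\ge A_3 n^{-1}\lambda^{\delta(\tau-1)/\tau}$ on $\Xi_\epsilon$ by Theorem \ref{Th:lower} (valid once $A_1 n^{-2m/d}\le\lambda\le A_2$, which holds eventually under $\lambda=o(1)$ and (\ref{conditionlambda})). For the numerator I need an upper bound on $\|w\|_\infty=\max_i|w_i|$. Here $w_i$ is, up to the factor $\lambda n$, the value at $x_i$ of the ``noiseless KRR'' interpolant-type object $\xi:=\operatorname{argmin}_{v\in\mathcal H}\frac1n\sum_j(v(x_j)-g(x_j))^2+\lambda\|v\|_{\mathcal H}^2$ — more precisely the representer expansion gives $w=(K(X,X)+\lambda nI)^{-1}g(X)$ and $\xi(x)=K(x,X)w$, so $\lambda n\, w_i=g(x_i)-\xi(x_i)$ when $g$ itself lies in $\mathcal H$; one can then bound $|w_i|\le (\lambda n)^{-1}\bigl(\|g\|_\infty+\|\xi\|_\infty\bigr)$ and use the Sobolev embedding $\|\cdot\|_\infty\lesssim\|\cdot\|_{\mathcal H}$ (Assumption \ref{assum:matern}, $m>d/2$) together with the standard fact $\|\xi\|_{\mathcal H}\le\|g\|_{\mathcal H}$. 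This gives the crude bound $\|w\|_\infty\lesssim (\lambda n)^{-1}\|g\|_{\mathcal H}$, hence
\[
\frac{\|w\|_\infty^2}{\|w\|_2^2}\;\lesssim\;\frac{(\lambda n)^{-2}}{n^{-1}\lambda^{\delta(\tau-1)/\tau}}\;=\;\frac{1}{n\,\lambda^{2+\delta(\tau-1)/\tau}}\;=\;\frac{1}{n\,\lambda^{2-\delta/\tau+\delta}} .
\]
This tends to $0$ precisely when $\lambda^{-1}=o\bigl(n^{1/(2+\delta-\delta/\tau)}\bigr)$; comparing with (\ref{conditionlambda}), which reads $\lambda^{-1}=o\bigl(n^{2m/(d+2m(1-\delta/\tau))}\bigr)$, I see the crude bound is off — the exponent $2m/(d+2m(1-\delta/\tau))$ is larger than $1/(2+\delta-\delta/\tau)$ in the regime of interest — so I would need the \emph{sharper} pointwise bound on $\xi$ coming from the refined analysis of noiseless KRR (the same machinery behind Theorems \ref{Coro:rate}--\ref{Th:lower}), namely something like $\|w\|_\infty\lesssim (\lambda n)^{-1}\lambda^{\delta/2}\cdot(\text{lower-order factors})$ or a direct estimate $\max_i w_i^2\lesssim n^{-1}\lambda^{\delta-1}\cdot n^{-1}\lambda^{-d/(2m)}\cdot(\dots)$, combining the variance-type decay with a Sobolev/Nikolskii inequality converting an $\ell_2$ bound on $w$ into an $\ell_\infty$ bound at the cost of the factor $n^{-1}\lambda^{-d/(2m)}$ (the ``effective dimension'' ratio). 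Substituting the correct numerator then yields exactly the exponent in (\ref{conditionlambda}).

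So the skeleton is: (i) reduce the CLT to $\|w\|_\infty^2/\|w\|_2^2\to0$ via Lindeberg; (ii) lower-bound the denominator by Theorem \ref{Th:lower}; (iii) upper-bound the numerator by interpreting $w$ through noiseless KRR and applying the Sobolev embedding plus the sharp sup-norm control of the noiseless-KRR error developed for the earlier theorems; (iv) optimize the exponents and read off (\ref{conditionlambda}); (v) specialize $\delta=\tau$. The main obstacle is step (iii): the naive embedding bound loses too much, and getting $\|w\|_\infty$ down to the level that produces the stated threshold $n^{2m/(d+2m(1-\delta/\tau))}$ requires the same delicate interplay between the RKHS norm, the $L_2$ norm, and the discrete $\|\cdot\|_n$ norm (through Assumption \ref{assum:norms}, including the reverse inequality (\ref{n_to_l2})) that powers the lower-bound proof — essentially one must show the mass of $w$ cannot concentrate on a few coordinates, which is a quantitative non-degeneracy statement about the spectrum of $K(X,X)+\lambda nI$ localized against $g(X)$. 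Everything else is bookkeeping: handling the event $\Xi_\epsilon$ of probability $\ge1-\epsilon$ and letting $\epsilon\downarrow0$ to upgrade conditional convergence to unconditional convergence in distribution, and checking that $\lambda=o(1)$ together with (\ref{conditionlambda}) indeed forces $\lambda$ eventually into the admissible window $[A_1n^{-2m/d},A_2]$.
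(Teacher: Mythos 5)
Your skeleton is the same as the paper's: reduce the CLT to the Lindeberg condition in the form $\max_i w_i^2/\sum_i w_i^2\to 0$ (the paper's Lemma \ref{lemma:CLT}, via the identity $\lambda n\,w_i=g(x_i)-\hat g(x_i)$ for the noiseless-KRR residual), lower-bound the denominator by Theorem \ref{Th:lower}, upper-bound the numerator by a sup-norm estimate on $\hat g-g$, and read off (\ref{conditionlambda}). You correctly diagnose that the crude bound $\|w\|_\infty\lesssim(\lambda n)^{-1}\|g\|_{\mathcal H}$ is too lossy, and the "direct estimate" you guess, $\max_i w_i^2\lesssim n^{-1}\lambda^{\delta-1}\cdot n^{-1}\lambda^{-d/(2m)}$, is in fact exactly the right order: dividing by the lower bound $\|w\|_2^2\gtrsim n^{-1}\lambda^{\delta(\tau-1)/\tau}$ gives $n^{-1}\lambda^{\delta/\tau-1-d/(2m)}\to 0$, which is precisely $\lambda^{-1}=o\bigl(n^{2m/(d+2m(1-\delta/\tau))}\bigr)$. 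So the exponent bookkeeping you left implicit does close.

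The one step you do not supply is the derivation of that sharp numerator bound, and here you somewhat misdiagnose what is needed: it is not the "delicate interplay" behind the lower-bound proof, nor a non-concentration statement about the spectrum of $K(X,X)+\lambda nI$. It is a direct upper-bound computation from tools the paper already has: apply the interpolation inequality (\ref{interpolation}), $\|\hat g-g\|_{L_\infty}\le A\|\hat g-g\|_{L_2}^{1-d/(2m)}\|\hat g-g\|_{\mathcal H}^{d/(2m)}$, convert $\|\hat g-g\|_{L_2}$ to $\max\{C_1\|\hat g-g\|_n,\,C_\epsilon n^{-m/d}\|\hat g-g\|_{\mathcal H}\}$ via (\ref{l2_to_n}), and plug in the improved rates $\|\hat g-g\|_n\lesssim\lambda^{(1+\delta)/2}$, $\|\hat g-g\|_{\mathcal H}\lesssim\lambda^{\delta/2}$ from Lemma \ref{Th:improved}; under $\lambda^{-1}=O(n^{2m/d})$ this yields $\|\hat g-g\|_{L_\infty}=O(\lambda^{(1+\delta)/2-d/(4m)})$ and hence $\max_i w_i^2=(\lambda n)^{-2}\max_i(\hat g-g)^2(x_i)\lesssim n^{-2}\lambda^{\delta-1-d/(2m)}$, which is your conjectured bound. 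With that lemma cited, your argument is complete and coincides with the paper's proof; as written, the key quantitative estimate is asserted rather than proved.
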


Theorem \ref{Th:AN} conveys two important messages. First, $\lambda\asymp n^{-\frac{2m}{2m+d}}$, the optimal order of magnitude of $\lambda$ to reach the minimax rate of $\|\hat{f}-f\|_{L_2}$, always entails the asymptotic normality of the variance term. Second, if $\delta=\tau$, the variance term enjoys asymptotic normality for almost all choices of $\lambda$ under the assumption of Theorem \ref{Coro:rate}.

The asymptotic normality (\ref{CLT}) can be used to construct an asymptotic confidence interval for the ``biased true value'' $\mathbb{E}_E\langle \hat{f},g\rangle_\mathcal{H}$. In practice, more interest lies in building confidence intervals for the true value $\langle {f},g\rangle_\mathcal{H}$. This can be done if the bias is asymptotically negligible compared with the variance term. In view of Theorem \ref{Th:oBIAS}, when $\delta=\tau$, $\operatorname{BIAS}^2/{\operatorname{VAR}}\xrightarrow{p} 0$ as $n\rightarrow \infty$, under the choice $\lambda\asymp n^{-1}$. Suppose $\hat{\sigma}^2$ is a consistent estimate of $\sigma^2$, such as 
$\hat{\sigma}^2=\frac{1}{n}\sum_{i=1}^n(y_i-\hat{f}(x_i))^2$.
Then we can estimate $\operatorname{VAR}$ with
$\widehat{\operatorname{VAR}}=\hat{\sigma}^2g^T(X)(K(X,X)+\lambda n I)^{-2}g(X)$.
So the suggested $1-\alpha$ confidence interval for $\langle f,g\rangle_\mathcal{H}$ is
$\left[\langle \hat{f},g\rangle_\mathcal{H}-z_{\alpha/2}\sqrt{\widehat{\operatorname{VAR}}},\langle \hat{f},g\rangle_\mathcal{H}+z_{\alpha/2}\sqrt{\widehat{\operatorname{VAR}}},\right]$,
where $z_{\alpha/2}$ denotes the $\alpha/2$ upper quantile of the standard normal distribution.

\section{Examples}\label{sec:examples}
In this section, we present several examples to demonstrate the breadth of the proposed framework, including special cases of practical interest. 

\subsection{Point Evaluations}\label{sec:point}
Consider the point evaluation $l(f)=f(x_0)$ for some $x_0\in\Omega$. We have
\begin{eqnarray}\label{VARpoint}
    \operatorname{VAR}=\sigma^2 K(x_0,X)(K(X,X)+\lambda n I)^{-2}K(X,x_0).
\end{eqnarray}

We use the interpolation inequality (Theorem 3.8 of \cite{adams2003sobolev}; also see \cite{brezis2019sobolev} for non-integer $m$)
\begin{eqnarray}\label{interpolation}
    \|v\|_{L_\infty}\leq A\|v\|^{1-\frac{d}{2m}}_{L_2}\|v\|^{\frac{d}{2m}}_{H^m},
\end{eqnarray}
which holds for all $v\in H^m$ and some constant $A>0$, provided that $m>d/2$. Because $f(x_0)\leq \|f\|_{L_\infty}$, the interpolation inequality implies that Assumption \ref{assum:f} is true with $\delta=1-\frac{d}{2m}$. On the other hand, it can also be shown that $\tau=1-\frac{d}{2m}$ if $x_0$ is an interior point of $\Omega$.
Hence, we have the following result.

\begin{theorem}\label{Th:point}
    Suppose Assumptions \ref{assum:matern} and \ref{assum:norms} are true. Suppose $\lambda=o(1)$ and $\lambda^{-1}=o(n^{\frac{2m}{d}})$. Let $x_0$ be an interior point of $\Omega$ and $\operatorname{VAR}$ be as in (\ref{VARpoint}). Then, we have
    \begin{enumerate}
        \item $\operatorname{VAR}\asymp \sigma^2 n^{-1}\lambda^{-\frac{d}{2m}}$.
        \item $\sup_{\|f\|_\mathcal{H}\leq 1}\left|\mathbb{E}_E\hat{f}(x_0)-f(x_0)\right| \asymp \lambda^{\frac{1}{2}-\frac{d}{4m}}$.
        \item Regarding $\sigma^2$ as a positive constant, under the optimal order $\lambda\asymp n^{-1}$, 
        \[\sup_{\|f\|_\mathcal{H}\leq 1}|\hat{f}(x_0)-f(x_0)|\asymp n^{-\frac{1}{2}+\frac{d}{4m}}.\]
        \item In addition, if $\sigma^2>0$ and $\lambda=o(n^{-1})$, $(\operatorname{VAR})^{-\frac{1}{2}}(\hat{f}(x_0)-f(x_0))\xrightarrow{\mathscr{L}}N(0,1)$.
    \end{enumerate}
\end{theorem}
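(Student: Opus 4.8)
The plan is to obtain Theorem \ref{Th:point} as a direct specialization of the general theory in Sections \ref{sec:upper}--\ref{sec:AN}, by verifying that the point-evaluation functional $g = K(\cdot, x_0)$ satisfies Assumptions \ref{assum:f} and \ref{assum:tau} with the sharp exponents $\delta = \tau = 1 - \frac{d}{2m}$. First I would establish that $g \in \mathcal H$: since $K$ is the reproducing kernel of $\mathcal H = H^m$, we have $K(\cdot, x_0) \in \mathcal H$ with $\langle v, K(\cdot, x_0)\rangle_\mathcal H = v(x_0)$ for all $v \in \mathcal H$. Then Assumption \ref{assum:f} with $\delta = 1 - \frac{d}{2m}$ follows immediately from the interpolation inequality \eqref{interpolation}: $|\langle v, g\rangle_\mathcal H| = |v(x_0)| \le \|v\|_{L_\infty} \le A \|v\|_{L_2}^{1-d/(2m)}\|v\|_{H^m}^{d/(2m)}$, and since $\|\cdot\|_{H^m}$ is equivalent to $\|\cdot\|_\mathcal H$ under Assumption \ref{assum:matern}, we get $|\langle v,g\rangle_\mathcal H| \le C_g \|v\|_{L_2}^\delta \|v\|_\mathcal H^{1-\delta}$ for a suitable $C_g$.

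The more delicate half is verifying Assumption \ref{assum:tau} with the \emph{same} exponent $\tau = 1 - \frac{d}{2m}$, i.e., the reversed inequality $\sup_{\|v\|_\mathcal H \le R\|v\|_{L_2}} \frac{\langle g,v\rangle_\mathcal H}{\|v\|_{L_2}} > C_0 R^{1-\tau}$ for all large $R$. By Proposition \ref{Prop:equivalence} this is the statement that the interpolation inequality is, up to constants, \emph{saturated} by functions concentrating near $x_0$. The construction I would use is a rescaled bump: take a fixed smooth compactly supported $\phi$ with $\phi(0) \neq 0$, and set $v_h(x) = \phi((x - x_0)/h)$ for small $h > 0$; since $x_0$ is an interior point, $v_h \in H^m$ and is supported in $\Omega$ for $h$ small. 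A scaling computation gives $\|v_h\|_{L_2}^2 \asymp h^d$, $\|v_h\|_{H^m}^2 \asymp h^{d - 2m}$ (the top-order seminorm dominates), hence $\|v_h\|_{H^m}/\|v_h\|_{L_2} \asymp h^{-m}$, so choosing $h \asymp R^{-1/m}$ realizes the constraint level $R$. Meanwhile $\langle g, v_h\rangle_\mathcal H = v_h(x_0) = \phi(0) \asymp 1$, while $\|v_h\|_{L_2} \asymp h^{d/2} \asymp R^{-d/(2m)} = R^{-(1-\tau)}$, giving $\frac{\langle g,v_h\rangle_\mathcal H}{\|v_h\|_{L_2}} \asymp R^{1-\tau}$, which is exactly the claimed lower bound (with an appropriate $C_0$ after accounting for the norm equivalence constants and the possibility of replacing $\phi$ by $-\phi$ to fix the sign). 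Once $\delta = \tau = 1 - \frac{d}{2m}$ is in hand, parts 1 and 2 are immediate from Theorem \ref{Coro:varrate} and the $\delta = \tau < 1$ case of Theorem \ref{Coro:biasrate} (noting $\frac12 - \frac{d}{4m} = \frac{\delta}{2}$ and $\lambda^{\delta-1} = \lambda^{-d/(2m)}$, and that $m > d/2$ forces $\delta \in (0,1)$), part 3 follows by substituting $\lambda \asymp n^{-1}$ into parts 1--2 and combining the variance and worst-case-bias rates (both of order $n^{-(1/2 - d/(4m))}$), and part 4 follows from Theorem \ref{Th:AN} with $\delta = \tau$ — whose side condition is exactly $\lambda^{-1} = o(n^{2m/d})$ — together with Theorem \ref{Th:oBIAS}, since $\lambda = o(n^{-1})$ makes $\operatorname{BIAS} = o_\mathbb P(\lambda^{\delta/2}) = o_\mathbb P((n\lambda)^{-1/2}\cdot(n^{-1}\lambda^{\delta-1})^{1/2}) = o_\mathbb P(\sqrt{\operatorname{VAR}})$, so the bias is negligible against the asymptotically normal variance term and Slutsky's theorem finishes it.

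The main obstacle I anticipate is the lower-bound construction for Assumption \ref{assum:tau}: one must be careful that the rescaled bump $v_h$ genuinely lies in $H^m$ with the claimed seminorm scaling when $m$ is fractional (so the $H^m$ norm is a Gagliardo double-integral rather than an $L_2$ norm of derivatives), and that the interiority of $x_0$ is used to keep $\operatorname{supp} v_h \subset \Omega$ uniformly for a range of $h$. The fractional case can be handled either by the Gagliardo seminorm scaling identity $[v_h]_{H^s}^2 \asymp h^{d-2s}[v]_{H^s}^2$ (a change of variables in the double integral) or by a Fourier-side argument on an extension of $v$ to $\mathbb R^d$; both give the same exponent. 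A secondary point worth stating carefully is that all the $\asymp$ and $>$ statements involving $\mathcal H$-norms are invoked through the two-sided norm equivalence $c\|\cdot\|_{H^m} \le \|\cdot\|_\mathcal H \le C\|\cdot\|_{H^m}$ guaranteed by Assumption \ref{assum:matern}, so the constants $C_g, C_0, R_0$ produced depend on this equivalence but — crucially — not on $n$, which is what Theorems \ref{Coro:varrate}, \ref{Coro:biasrate}, and \ref{Th:AN} require.
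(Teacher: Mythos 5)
Your proposal is correct and follows essentially the same route as the paper: the upper exponent $\delta = 1-\frac{d}{2m}$ comes from the interpolation inequality (\ref{interpolation}), the matching $\tau$ is obtained by the same rescaled-bump construction at the interior point $x_0$ (this is exactly the paper's Proposition \ref{Prop:point}, including the Sobolev--Slobodeckij scaling for fractional $m$), and parts 1--4 then follow by specializing Theorems \ref{Coro:varrate}, \ref{Coro:biasrate}, and \ref{Th:AN} together with the bias bound. The only blemish is a sign typo in your part-4 display: the factor should be $(n\lambda)^{+1/2}$, since $\lambda^{\delta/2}=(n\lambda)^{1/2}\left(n^{-1}\lambda^{\delta-1}\right)^{1/2}$, which indeed tends to $0$ under $\lambda=o(n^{-1})$ and yields $\operatorname{BIAS}=o_\mathbb{P}(\sqrt{\operatorname{VAR}})$ as you conclude.
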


\begin{remark}
    For point evaluations of KRR, \cite{shang2013local,zhao2021statistical} obtained the rate of convergence and the asymptotic normality of the variance term, using a device called the functional Bahadur representation \citep{shang2010convergence}.
   
    The results presented in this work are under broader situations and weaker conditions: both random and deterministic designs are allowed, with wider ranges for $\lambda$ and $m$, and there is no uniform boundedness requirement for the eigenfunctions of the kernel. Besides, we give the order of magnitude of the worst-case bias together with the best order of magnitude of $\lambda$.
\end{remark}

\subsection{Derivatives}

Let $\alpha=(\alpha_1,\ldots,\alpha_d)^T\in\mathbb{N}^d$ be a multi-index and $|\alpha|=\alpha_1+\cdots+\alpha_d$. Denote $D^\alpha f=\frac{\partial^{|\alpha|}}{\partial \chi_1^{\alpha_1}\cdots\partial \chi_d^{\alpha_d}}f$ with $x=:(\chi_1,\ldots,\chi_d)^T$. Note that the zeroth order derivative stands for the identity mapping. (Thus, the point evaluation is a special case here.) The goal is to study the asymptotic properties of $D^\alpha \hat{f}(x_0)$ for $x_0\in\Omega$, as an estimator of $D^\alpha f(x_0)$. First, we have
\begin{eqnarray}\label{VARderivative}
    \operatorname{VAR}=\sigma^2 D^\alpha K(x_0,X)(K(X,X)+\lambda n I)^{-2}D^\alpha K(X,x_0),
\end{eqnarray}
where $D^\alpha K$ stands for the $\alpha$-th derivative of $K$ with respect to the first argument (or the second argument, as $K$ is symmetric.)
The Sobolev embedding theorem asserts that the linear operator $l(f)=D^\alpha f(x_0)$ is bounded provided that $m>d/2+|\alpha|$.
A different version of the interpolation inequality says that
\begin{eqnarray}\label{interpolationD}
    \|D^\alpha v\|_{L_\infty}\leq A\|v\|^{1-\frac{d+2|\alpha|}{2m}}_{L_2}\|v\|^{\frac{d+2|\alpha|}{2m}}_{H^m},
\end{eqnarray}
some constant $A>0$, provided that $m>d/2+|\alpha|$. This shows $\delta=1-\frac{d+2|\alpha|}{2m}$. Similarly, we have $\tau=1-\frac{d+2|\alpha|}{2m}$ for each interior point $x_0\in\Omega$, giving the following result.

\begin{theorem}\label{Th:derivative}
    Suppose Assumptions \ref{assum:matern} and \ref{assum:norms} are true, and $m>d/2+|\alpha|$. Suppose $\lambda=o(1)$ and $\lambda^{-1}=o(n^{\frac{2m}{d}})$. Let $x_0$ be an interior point of $\Omega$ and $\operatorname{VAR}$ be as in (\ref{VARderivative}). Then, we have
    \begin{enumerate}
        \item $\operatorname{VAR}\asymp \sigma^2 n^{-1}\lambda^{-\frac{d+2|\alpha|}{2m}}$.
        \item $\sup_{\|f\|_\mathcal{H}\leq 1}\left|\mathbb{E}_E D^\alpha \hat{f}(x_0)-D^\alpha f(x_0)\right| \asymp \lambda^{\frac{1}{2}-\frac{d+2|\alpha|}{4m}}$.
        \item Regarding $\sigma^2$ as a positive constant, under the optimal order $\lambda\asymp n^{-1}$, 
        \[\sup_{\|f\|_\mathcal{H}\leq 1}|D^\alpha \hat{f}(x_0)-D^\alpha f(x_0)|\asymp n^{-\frac{1}{2}+\frac{d+2|\alpha|}{4m}}.\]
        \item In addition, if $\sigma^2>0$ and $\lambda=o(n^{-1})$, $(\operatorname{VAR})^{-\frac{1}{2}}(D^\alpha \hat{f}(x_0)-D^\alpha f(x_0))\xrightarrow{\mathscr{L}}N(0,1)$.
    \end{enumerate}
\end{theorem}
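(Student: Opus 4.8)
The plan is to reduce the statement entirely to the general theory of Sections \ref{sec:upper}--\ref{sec:AN} by identifying the representer $g$ of the functional $l(f)=D^\alpha f(x_0)$ and computing the two exponents $\delta$ and $\tau$. First I would write $l(f)=\langle f,g\rangle_\mathcal{H}$ with $g=D^\alpha K(\cdot,x_0)$, the $\alpha$-th derivative of the reproducing kernel in its first argument: since $m>d/2+|\alpha|$ the Sobolev embedding makes $v\mapsto D^\alpha v(x_0)$ a bounded functional on $H^m\cong\mathcal{H}$, so $g\in\mathcal{H}$ and $\langle g,v\rangle_\mathcal{H}=D^\alpha v(x_0)$ for all $v\in\mathcal{H}$; in particular $\operatorname{VAR}$ in (\ref{key:VAR}) is exactly (\ref{VARderivative}), and $\mathbb{E}_E D^\alpha\hat f(x_0)-D^\alpha f(x_0)=\operatorname{BIAS}_f$. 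Then Assumption \ref{assum:f} holds with $\delta=1-\frac{d+2|\alpha|}{2m}$ (note $\delta<1$): for $v\in\mathcal{H}$, $|\langle g,v\rangle_\mathcal{H}|=|D^\alpha v(x_0)|\le\|D^\alpha v\|_{L_\infty}$, and the interpolation inequality (\ref{interpolationD}) together with the norm equivalence $\|\cdot\|_{H^m}\asymp\|\cdot\|_\mathcal{H}$ from Assumption \ref{assum:matern} yields (\ref{smoothness}) with this $\delta$ and a suitable $C_g$.

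The crux is verifying Assumption \ref{assum:tau} with the matching value $\tau=\delta=1-\frac{d+2|\alpha|}{2m}$, and this is where the interior-point hypothesis enters. I would fix a $C^\infty$ bump $\phi$ supported in a small ball around the origin with $D^\alpha\phi(0)\ne0$ (e.g. $\phi(x)=(\chi_1^{\alpha_1}\cdots\chi_d^{\alpha_d})\eta(x)$ for a smooth cutoff $\eta\equiv1$ near $0$, so $D^\alpha\phi(0)=\alpha!$), and for small $h>0$ set $v_h(x)=\phi((x-x_0)/h)$; since $x_0$ is interior, $v_h\in\mathcal{H}$ for all small $h$. Dilation scaling gives $\|v_h\|_{L_2}\asymp h^{d/2}$ and $\|v_h\|_{H^m}\asymp h^{d/2-m}$ (the top-order homogeneous seminorm, which scales as $h^{d/2-m}$ also for fractional $m$, dominating for small $h$), while $D^\alpha v_h(x_0)=h^{-|\alpha|}D^\alpha\phi(0)\asymp h^{-|\alpha|}$. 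Hence $\|v_h\|_\mathcal{H}/\|v_h\|_{L_2}\asymp h^{-m}$; choosing $h\asymp R^{-1/m}$ makes $\|v_h\|_\mathcal{H}\le R\|v_h\|_{L_2}$, and then $\langle g,v_h\rangle_\mathcal{H}/\|v_h\|_{L_2}\asymp h^{-|\alpha|-d/2}\asymp R^{(d+2|\alpha|)/(2m)}=R^{1-\tau}$, which gives Assumption \ref{assum:tau} (replacing $v_h$ by $-v_h$ if needed to fix the sign). Establishing these scaling estimates cleanly --- in particular the lower bound $\|v_h\|_{H^m}\gtrsim h^{d/2-m}$ for fractional $m$ and the nondegeneracy $D^\alpha\phi(0)\ne0$ --- is the main technical obstacle; the rest is bookkeeping.

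With $\delta=\tau=1-\frac{d+2|\alpha|}{2m}<1$ established, the conclusions follow from the general results. The assumptions $\lambda=o(1)$ and $\lambda^{-1}=o(n^{2m/d})$ guarantee that eventually $A_1n^{-2m/d}\le\lambda\le A_2$ on each event $\Xi_\epsilon$, so Theorem \ref{Coro:varrate} gives $\operatorname{VAR}\asymp\sigma^2n^{-1}\lambda^{\delta-1}=\sigma^2n^{-1}\lambda^{-(d+2|\alpha|)/(2m)}$, which is item 1; Theorem \ref{Coro:biasrate} in the case $\delta=\tau<1$ gives $\sup_{\|f\|_\mathcal{H}\le1}|\operatorname{BIAS}_f|\asymp\lambda^{\delta/2}=\lambda^{1/2-(d+2|\alpha|)/(4m)}$, which is item 2. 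Since $\epsilon$ in $\Xi_\epsilon$ is arbitrary, these upgrade to the stated $\asymp$-in-probability statements.

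For item 3, substitute $\lambda\asymp n^{-1}$: both $\sqrt{\operatorname{VAR}}$ and the worst-case bias are then of order $n^{-1/2+(d+2|\alpha|)/(4m)}$. The upper bound on $\sup_{\|f\|_\mathcal{H}\le1}|D^\alpha\hat f(x_0)-D^\alpha f(x_0)|$ follows from $|D^\alpha\hat f(x_0)-D^\alpha f(x_0)|\le|\operatorname{BIAS}_f|+|\text{variance term}|$ with the ($f$-independent) variance term being $O_\mathbb{P}(\sqrt{\operatorname{VAR}})$ by item 1; the matching lower bound follows by applying the elementary inequality $\max(|b+w|,|{-b}+w|)\ge|b|$ with $b=\operatorname{BIAS}_f$, $w=\text{variance term}$, over $f$ and $-f$, so that the worst-case total error dominates $\sup_{\|f\|_\mathcal{H}\le1}|\operatorname{BIAS}_f|\asymp n^{-1/2+(d+2|\alpha|)/(4m)}$. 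For item 4, Theorem \ref{Th:AN} with $\delta=\tau$ applies under $\lambda=o(1)$, $\lambda^{-1}=o(n^{2m/d})$ and yields asymptotic normality of the normalized variance term; since $\lambda=o(n^{-1})$, Theorem \ref{Coro:rate} combined with item 1 gives $\operatorname{BIAS}^2/\operatorname{VAR}=O_\mathbb{P}(n\lambda\,\|f\|_\mathcal{H}^2)=o_\mathbb{P}(1)$, so $\operatorname{BIAS}/\sqrt{\operatorname{VAR}}\xrightarrow{p}0$ and Slutsky's theorem gives $(\operatorname{VAR})^{-1/2}(D^\alpha\hat f(x_0)-D^\alpha f(x_0))\xrightarrow{\mathscr{L}}N(0,1)$.
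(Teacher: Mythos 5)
Your proposal is correct and follows essentially the same route as the paper: identify $g=D^\alpha K(\cdot,x_0)$, get $\delta=1-\frac{d+2|\alpha|}{2m}$ from the interpolation inequality (\ref{interpolationD}), verify Assumption \ref{assum:tau} with $\tau=\delta$ via a dilated compactly supported bump at the interior point $x_0$ (this is exactly the paper's Proposition \ref{Prop:point}), and then invoke Theorems \ref{Coro:varrate}, \ref{Coro:biasrate}, and \ref{Th:AN}. The only superfluous worry is the lower bound $\|v_h\|_{H^m}\gtrsim h^{d/2-m}$: only the upper bound on $\|v_h\|_{H^m}/\|v_h\|_{L_2}$ is needed to place $v_h$ in the constraint set of Assumption \ref{assum:tau}.
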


Frequently, it is imperative to establish a multivariate central limit theorem for the variance term concerning various locations or partial derivatives. For example, the joint asymptotic normality of the gradient is needed in the example introduced in Section \ref{sec:nonlinear}.

Specifically, given locations $z_1,\ldots,z_{d_0}\in\Omega$ and multi-indices $\alpha_1,\ldots,\alpha_{d_0}\in \mathbb{N}^d$ for some $d_0\in\mathbb{N}_+$. Then the variance term of $D^\alpha_i \hat{f}(z_i)$ is
$D^{\alpha_i} K(z_i,X)(K+\lambda n I)^{-1}E$.
Thus the $d_0\times d_0$ covariance matrix of the vector of the variance terms is
\begin{eqnarray}\label{cov}
    \operatorname{COV}:=\left(\sigma^2D^{\alpha_i} K(z_i,X)(K+\lambda n I)^{-2}D^{\alpha_j}(X,z_j)\right)_{i,j}.
\end{eqnarray}
Theorem \ref{Th:multivariateAN} shows a multivariate central limit theorem for the variance term when $\alpha_i$'s are homogeneous, in the sense that $|\alpha_1|=\cdots=|\alpha_{d_0}|$.

\begin{theorem}\label{Th:multivariateAN}
Suppose Assumption \ref{assum:matern} is true. The covariance matrix $\operatorname{COV}$ defined in (\ref{cov}) is invertible with probability tending to one, provided that the pairs $(\alpha_1,z_1),\ldots,(\alpha_{d_0},z_{d_0})$ are distinct and $\sigma^2>0$.
    In addition, if Assumption \ref{assum:norms} is true, $|\alpha_1|=\cdots=|\alpha_{d_0}|=k$, $m>k+d/2$, and $z_i$'s are interior points of $\Omega$,  let $\lambda=o(1)$ and $\lambda^{-1}=o(n^{\frac{2m}{d}})$, then we have 
    \[\operatorname{COV}^{-\frac{1}{2}}
    \begin{pmatrix}
        D^{\alpha_1} K(z_1,X)\\
        \vdots\\
        D^{\alpha_{d_0}}K(z_{d_0},X)
    \end{pmatrix}(K+\lambda n I)^{-1}E\xrightarrow{\mathscr{L}}N(0,I),\]
\end{theorem}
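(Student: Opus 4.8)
The plan is to normalize by $\operatorname{COV}^{-1/2}$ and reduce the statement to a conditional multivariate Lindeberg central limit theorem, feeding in the scalar results of Sections~\ref{sec:upper}--\ref{sec:AN} to control the two quantities that govern the Lindeberg condition. Write $g_i\in\mathcal{H}$ for the Riesz representer of $v\mapsto D^{\alpha_i}v(z_i)$, which is bounded on $\mathcal{H}=H^m$ since $m>k+d/2$, so that $\langle g_i,v\rangle_{\mathcal{H}}=D^{\alpha_i}v(z_i)$ and $g_i^T(X)=D^{\alpha_i}K(z_i,X)$. Let $W$ be the $d_0\times n$ matrix with rows $g_i^T(X)$, put $V_n:=W(K+\lambda nI)^{-1}E$ (the vector in the statement), so that $\operatorname{COV}=\operatorname{Var}_E(V_n)=\sigma^2W(K+\lambda nI)^{-2}W^T$. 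For Part~1, $\operatorname{COV}$ is singular precisely when $g_u:=\sum_iu_ig_i$ vanishes at every design point for some $u\neq0$; distinctness of the $(\alpha_i,z_i)$ makes $g_1,\dots,g_{d_0}$ linearly independent in $\mathcal{H}$ (a nontrivial relation would annihilate $D^{\alpha_i}v(z_i)$ for all $v\in H^m$, which is impossible since bump functions realize linearly independent values of these finitely many functionals), hence linearly independent in $C(\Omega)$, so with probability tending to one the design resolves $\operatorname{span}\{g_1,\dots,g_{d_0}\}$ and $\operatorname{COV}\succ0$. On that event set $M:=\operatorname{COV}^{-1/2}W(K+\lambda nI)^{-1}$; then $\sigma^2MM^T=I_{d_0}$ exactly, and $\operatorname{COV}^{-1/2}V_n=ME=\sum_{j=1}^nm_je_j$ is, given $X$, a sum of independent mean-zero $\mathbb{R}^{d_0}$-vectors with total covariance $I_{d_0}$, where $m_j$ is the $j$-th column of $M$. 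Thus the multivariate Lindeberg condition reduces to $\rho_n:=\max_j\|m_j\|_2^2\xrightarrow{p}0$. (The Cram\'er--Wold device is not convenient here, since $t^T\operatorname{COV}^{-1/2}V_n$ is a variance term $\langle\,\cdot\,,g\rangle_{\mathcal{H}}$ with a design-dependent $g$, to which Theorem~\ref{Th:AN} does not directly apply.)

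First input: eigenvalues of $\operatorname{COV}$. Set $r_n:=n^{-1}\lambda^{-(d+2k)/(2m)}$ and $\delta:=1-(d+2k)/(2m)$. For a unit vector $u$, $u^T\operatorname{COV}u$ equals the $\operatorname{VAR}$ attached to the functional $\langle\,\cdot\,,g_u\rangle_{\mathcal{H}}$. Here the homogeneity $|\alpha_1|=\dots=|\alpha_{d_0}|=k$ is essential: by the interpolation inequality~\eqref{interpolationD}, which has the common exponent $(d+2k)/(2m)$ for every $i$, together with the triangle inequality, $g_u$ satisfies Assumption~\ref{assum:f} with this $\delta$ and a constant $C_g$ uniform over $\|u\|=1$; and a localized test-function construction---using that the $z_i$ are interior and the $(\alpha_i,z_i)$ are distinct, so that for $i^\ast$ with $|u_{i^\ast}|=\max_i|u_i|$ a function $v$ concentrated near $z_{i^\ast}$ with prescribed Taylor data isolates the term $u_{i^\ast}D^{\alpha_{i^\ast}}v(z_{i^\ast})$ in $\langle g_u,v\rangle_{\mathcal{H}}$---shows $g_u$ satisfies Assumption~\ref{assum:tau} with $\tau=\delta$ and constants $C_0,R_0$ uniform over $\|u\|=1$ (only finitely many $i^\ast$ occur). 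Since $\lambda=o(1)$ and $\lambda^{-1}=o(n^{2m/d})$ force $A_1n^{-2m/d}\le\lambda\le A_2$ eventually, Theorems~\ref{Coro:rate} and~\ref{Coro:varrate}, whose constants depend only on $C_0,C_1,C_g,C_\epsilon,R_0,\delta,\tau$, yield $0<a\le A<\infty$ with $a\sigma^2 r_nI\preceq\operatorname{COV}\preceq A\sigma^2 r_nI$ on $\Xi_\epsilon$, hence with probability tending to one.

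Second input: columnwise smallness, and conclusion. Writing $w_i:=(K+\lambda nI)^{-1}g_i(X)$, the $i$-th coordinate of $V_n$ is $w_i^TE$ with $\|w_i\|_2^2=\operatorname{VAR}_i/\sigma^2\asymp r_n$ (Theorem~\ref{Th:derivative}); the Lindeberg-type bound established in the proof of Theorem~\ref{Th:AN}, applied to the fixed functional $g_i$ (which has $\delta=\tau=1-(d+2k)/(2m)$), gives $\|w_i\|_\infty^2=o_\mathbb{P}(\|w_i\|_2^2)=o_\mathbb{P}(r_n)$. Since the $j$-th column of $W(K+\lambda nI)^{-1}$ is $(w_{1,j},\dots,w_{d_0,j})^T$ and $d_0$ is fixed, $\max_j\sum_i w_{i,j}^2=o_\mathbb{P}(r_n)$, and therefore $\rho_n\le\lambda_{\min}(\operatorname{COV})^{-1}\max_j\sum_iw_{i,j}^2\le(a\sigma^2 r_n)^{-1}o_\mathbb{P}(r_n)=o_\mathbb{P}(1)$. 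Now on the high-probability event where $\operatorname{COV}$ is invertible, $\operatorname{COV}\succeq a\sigma^2 r_nI$, and $\rho_n\le\eta_n$ for a deterministic $\eta_n\to0$, the conditional (on $X$) Lindeberg sum for $\sum_j m_je_j$ is at most $(d_0/\sigma^2)\,\mathbb{E}[e_1^2\mathbf{1}\{|e_1|>\epsilon\eta_n^{-1/2}\}]\to0$, using $\sum_j\|m_j\|_2^2=\operatorname{tr}(MM^T)=d_0/\sigma^2$ and $\|m_j\|_2^2\le\rho_n$; hence the conditional characteristic function of $ME$ converges to $e^{-\|t\|_2^2/2}$ uniformly over such $X$, and since it is bounded by $1$ while the event has probability tending to one, the unconditional characteristic function converges to $e^{-\|t\|_2^2/2}$ for every $t\in\mathbb{R}^{d_0}$, which is the assertion $\operatorname{COV}^{-1/2}V_n\xrightarrow{\mathscr{L}}N(0,I)$.

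The main obstacle is the uniform-in-$u$ verification of Assumption~\ref{assum:tau} for the family $\{g_u:\|u\|_2=1\}$: for each direction one must produce a single test function that attains the single-functional lower rate and simultaneously annihilates all but one term of $\sum_i u_iD^{\alpha_i}v(z_i)$, with constants $C_0,R_0$ that do not degenerate as $u$ varies---this is precisely where the homogeneity $|\alpha_i|\equiv k$ and the finiteness of $d_0$ are used. The remaining ingredients are routine: the upper eigenvalue bound on $\operatorname{COV}$ is immediate from~\eqref{interpolationD} and the triangle inequality, the columnwise bound merely repackages the scalar Lindeberg condition already needed for Theorem~\ref{Th:AN}, and the passage from the conditional to the unconditional CLT via characteristic functions is standard.
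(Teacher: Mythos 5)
Your proposal is correct and follows essentially the same route as the paper's proof: a conditional multivariate Lindeberg CLT, with the minimum eigenvalue of $\operatorname{COV}$ bounded below by verifying Assumptions \ref{assum:f} and \ref{assum:tau} for $g_u$ uniformly over the unit sphere with $\delta=\tau=1-\frac{2k+d}{2m}$ via localized bump functions, and the Lindeberg numerator controlled by the $L_\infty$ and $\|\cdot\|_n$ bounds on $\hat g_i-g_i$ from Lemma \ref{Th:improved}. The only cosmetic differences are that you pre-normalize by $\operatorname{COV}^{-1/2}$ and isolate the single largest coefficient $u_{i^*}$ with one bump, whereas the paper works with the unnormalized sum and builds Hermite-interpolated bumps at all distinct locations matching every coefficient; both yield the same uniform constants.
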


\subsection{\texorpdfstring{$L_2$}{Lg}  Inner Products}

As shown in Proposition \ref{prop:L2}, if $\delta=1$, the linear functional $\langle g,\cdot\rangle_\mathcal{H}$ must be an $L_2$ inner product.

\begin{proposition}\label{prop:L2}
Suppose Assumption \ref{assum:matern} holds.
    If $g\in\mathcal{H}$ satisfies Assumption \ref{assum:f} with $\delta=1$, under Assumption \ref{assum:matern}, there exists a unique $h\in L_2$, such that $\langle g,v\rangle_{\mathcal{H}}=\langle h,v\rangle_{L_2}$ for each $v\in\mathcal{H}$.
\end{proposition}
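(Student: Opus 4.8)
The plan is to exploit the Riesz representation structure twice: once in $\mathcal{H}$ and once in $L_2$. Since $g \in \mathcal{H}$, the functional $v \mapsto \langle g, v\rangle_{\mathcal{H}}$ is already bounded on $\mathcal{H}$; the content of the claim is that with $\delta = 1$ it actually extends to a bounded functional with respect to the weaker $\|\cdot\|_{L_2}$ topology, and hence can be realized as an $L_2$ inner product with a function $h$ that need not lie in $\mathcal{H}$. First I would observe that Assumption \ref{assum:f} with $\delta = 1$ reads $|\langle g, v\rangle_{\mathcal{H}}| \le C_g \|v\|_{L_2}$ for all $v \in \mathcal{H}$, i.e. the linear functional $L(v) := \langle g, v\rangle_{\mathcal{H}}$ is bounded on $(\mathcal{H}, \|\cdot\|_{L_2})$.

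Next I would use that, under Assumption \ref{assum:matern}, $\mathcal{H} = H^m$ is dense in $L_2(\Omega)$ (this follows because $H^m$ contains, e.g., the smooth compactly supported functions, which are dense in $L_2$ on the convex compact $\Omega$, or simply because the Sobolev embedding $H^m \hookrightarrow L_2$ is dense). Therefore $L$ extends uniquely to a bounded linear functional $\tilde L$ on all of $L_2(\Omega)$ with the same norm bound $C_g$. By the Riesz representation theorem on the Hilbert space $L_2(\Omega)$, there is a unique $h \in L_2$ with $\tilde L(w) = \langle h, w\rangle_{L_2}$ for every $w \in L_2$; restricting to $w = v \in \mathcal{H}$ gives $\langle g, v\rangle_{\mathcal{H}} = \langle h, v\rangle_{L_2}$ for all $v \in \mathcal{H}$, which is the asserted identity. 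Uniqueness of $h$ as an element of $L_2$ is immediate: if $\langle h_1 - h_2, v\rangle_{L_2} = 0$ for all $v$ in the dense subspace $\mathcal{H}$, then $h_1 = h_2$ in $L_2$.

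The only genuinely non-routine point is the density of $\mathcal{H} = H^m$ in $L_2(\Omega)$ and the equivalence of norms built into Assumption \ref{assum:matern} (so that a $\|\cdot\|_{L_2}$-bound stated for the equivalent Sobolev norm transfers to the actual RKHS norm, though this is harmless since $L$ is the same functional regardless of which equivalent $\mathcal{H}$-norm one uses). I would state density as a standard fact about Sobolev spaces on a convex bounded domain and cite it rather than prove it. Everything else is the two-line Hahn–Banach/Riesz argument above, so I do not expect any real obstacle; the subtlety worth a remark is only that $h$ is generally \emph{not} in $\mathcal{H}$ — indeed this is precisely the regime where $\langle g, \cdot\rangle_{\mathcal{H}}$ corresponds to integrating against a function rougher than the baseline smoothness of $\mathcal{H}$, consistent with the discussion around Assumption \ref{assum:f}.
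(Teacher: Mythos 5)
Your argument is correct and is essentially identical to the paper's proof: both note that Assumption \ref{assum:f} with $\delta=1$ makes $v\mapsto\langle g,v\rangle_{\mathcal{H}}$ bounded in the $L_2$ norm, use the density of $\mathcal{H}$ in $L_2$ under Assumption \ref{assum:matern} to extend the functional continuously and uniquely to $L_2$, and then invoke the Riesz representation theorem on $L_2$ to produce the unique $h$.
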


Let $l(f)=\int_\Omega f(x)h(x)dx$. We have
\begin{eqnarray}\label{VARL2}
    \operatorname{VAR}=\int_\Omega\int_\Omega h(s) K(s,X)(K(X,X)+\lambda n I)^{-2} K(X,t) h(t)ds dt,
\end{eqnarray}
Set $\delta=\tau=1$. Corollary \ref{Coro:L2} follows immediately.

\begin{corollary}\label{Coro:L2}
    Suppose Assumptions \ref{assum:matern} and \ref{assum:norms} are true. Suppose $\lambda=o(1)$ and $\lambda^{-1}=o(n^{\frac{2m}{d}})$. Let $\operatorname{VAR}$ be as in (\ref{VARL2}). Then, we have
    \begin{enumerate}
        \item $\operatorname{VAR}\asymp \sigma^2 n^{-1}$.
        \item $|\int_\Omega (\hat{f}-f)(x)h(x)dx| =O_\mathbb{P}( \lambda^{\frac{1}{2}}\|f\|_\mathcal{H}+\sigma n^{-\frac{1}{2}})$.
        \item In addition, if $\sigma^2>0$ and $\lambda=o(n^{-1})$, $(\operatorname{VAR})^{-\frac{1}{2}}\int_\Omega (\hat{f}-f)(x)h(x)dx\xrightarrow{\mathscr{L}}N(0,1)$.
    \end{enumerate}
\end{corollary}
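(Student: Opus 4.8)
The plan is to obtain Corollary \ref{Coro:L2} as the $\delta=\tau=1$ specialization of the general theory, so the first task is to check that Assumptions \ref{assum:f} and \ref{assum:tau} hold with $\delta=\tau=1$. By Proposition \ref{prop:L2} there is a unique $h\in L_2$, which we take to be nonzero (the case $h\equiv 0$ forces $g=0$, making $\operatorname{VAR}=0$ and everything degenerate), with $\langle g,v\rangle_\mathcal{H}=\langle h,v\rangle_{L_2}$ for all $v\in\mathcal{H}$. Cauchy--Schwarz gives $|\langle g,v\rangle_\mathcal{H}|\le\|h\|_{L_2}\|v\|_{L_2}$, which is exactly Assumption \ref{assum:f} with $\delta=1$ and $C_g=\|h\|_{L_2}$; Assumption \ref{assum:tau} holds with $\tau=1$ by the trivial case noted after its statement (taking $R_0=\|g\|_\mathcal{H}/\|g\|_{L_2}$, $C_0=\|g\|^2_\mathcal{H}/\|g\|_{L_2}$), using $g\neq 0$. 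I would also record the two identities linking the abstract and concrete forms: $g(x_i)=\langle g,K(\cdot,x_i)\rangle_\mathcal{H}=\int_\Omega h(t)K(t,x_i)\,dt$, so that $\sigma^2 g^T(X)(K(X,X)+\lambda nI)^{-2}g(X)$ coincides with (\ref{VARL2}); and, applying Proposition \ref{prop:L2} to $v=\hat f-f\in\mathcal{H}$, $\langle\hat f-f,g\rangle_\mathcal{H}=\int_\Omega(\hat f-f)(x)h(x)\,dx$.

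For item 1, apply Theorem \ref{Coro:varrate} with $\delta=\tau=1$: since $\lambda=o(1)$ and $\lambda^{-1}=o(n^{2m/d})$, the requirement $A_1 n^{-2m/d}\le\lambda\le A_2$ holds for all large $n$, so on $\Xi_\epsilon$ one has $A_3\sigma^2 n^{-1}\le\operatorname{VAR}\le A_4\sigma^2 n^{-1}$ because $\lambda^{\delta-1}=1$; letting $\epsilon\downarrow 0$ and using $\mathbb{P}(\Xi_\epsilon)\ge 1-\epsilon$ converts this into $\operatorname{VAR}\asymp\sigma^2 n^{-1}$ in the in-probability sense. For item 2, decompose $\int_\Omega(\hat f-f)h=\langle\hat f-f,g\rangle_\mathcal{H}=\operatorname{BIAS}+\langle\hat f-\mathbb{E}_E\hat f,g\rangle_\mathcal{H}$. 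Theorem \ref{Coro:rate} (whose hypothesis $\lambda\gtrsim n^{-2m/d}$ again follows from $\lambda^{-1}=o(n^{2m/d})$) gives $|\operatorname{BIAS}|=O_\mathbb{P}(\lambda^{1/2}\|f\|_\mathcal{H})$ by (\ref{BIASupper}); the centered term has conditional mean zero and conditional variance $\operatorname{VAR}=O_\mathbb{P}(\sigma^2 n^{-1})$ by (\ref{VARupper}) or item 1, so conditional Chebyshev yields $\langle\hat f-\mathbb{E}_E\hat f,g\rangle_\mathcal{H}=O_\mathbb{P}(\sigma n^{-1/2})$; summing gives item 2.

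For item 3, Theorem \ref{Th:AN} applies verbatim with $\delta=\tau=1$: $\sigma^2\in(0,\infty)$ is fixed, $g\neq 0$, $\lambda=o(1)$ (implied by $\lambda=o(n^{-1})$), and condition (\ref{conditionlambda}) reads $\lambda^{-1}=o(n^{2m/d})$, all in force, so $(\operatorname{VAR})^{-1/2}g^T(X)(K(X,X)+\lambda nI)^{-1}E\xrightarrow{\mathscr{L}}N(0,1)$, i.e.\ $(\operatorname{VAR})^{-1/2}\big(\int_\Omega(\hat f-f)h-\operatorname{BIAS}\big)\xrightarrow{\mathscr{L}}N(0,1)$. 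It then suffices to show $\operatorname{BIAS}/\sqrt{\operatorname{VAR}}\xrightarrow{p}0$ and invoke Slutsky. By item 1, $\sqrt{\operatorname{VAR}}\asymp\sigma n^{-1/2}$; and by Theorem \ref{Th:oBIAS} (for fixed $f$) we have $|\operatorname{BIAS}|=o_\mathbb{P}(\lambda^{1/2})$, or more crudely $|\operatorname{BIAS}|=O_\mathbb{P}(\lambda^{1/2}\|f\|_\mathcal{H})$ from Theorem \ref{Coro:rate} when $\|f\|_\mathcal{H}=O(1)$; either way $\operatorname{BIAS}/\sqrt{\operatorname{VAR}}=o_\mathbb{P}\big((\lambda n)^{1/2}\big)=o_\mathbb{P}(1)$ precisely because $\lambda=o(n^{-1})$, and Slutsky delivers item 3.

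There is no substantive new estimate here; the content is entirely in verifying the two structural assumptions at $\delta=\tau=1$ and bookkeeping the hypotheses of the general theorems. The two points I would be most careful about are (i) converting the deterministic bounds ``on $\Xi_\epsilon$'' from Theorem \ref{Coro:varrate} into genuine $\asymp$/$O_\mathbb{P}$ statements, remembering that the constants $A_i$ depend on $C_\epsilon$ and may blow up as $\epsilon\downarrow 0$ (harmless for boundedness in probability, but it must be phrased correctly); and (ii) the bias-negligibility in item 3, where the right earlier result to invoke---Theorem \ref{Th:oBIAS}, which requires $f$ fixed, versus Theorem \ref{Coro:rate} together with a boundedness assumption on $\|f\|_\mathcal{H}$---must be matched to exactly what the corollary intends, since only an \emph{upper} bound on the actual bias is needed (the worst-case bias being merely ``no more than $\lambda^{1/2}$'').
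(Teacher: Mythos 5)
Your proposal is correct and follows exactly the route the paper intends: the paper's entire proof is the remark "Set $\delta=\tau=1$. Corollary \ref{Coro:L2} follows immediately," and you have filled in precisely the right details — Assumption \ref{assum:f} with $\delta=1$ via Cauchy--Schwarz on the $L_2$ representation from Proposition \ref{prop:L2}, Assumption \ref{assum:tau} with $\tau=1$ via the trivial case noted after its statement, and then Theorems \ref{Coro:varrate}, \ref{Coro:rate}, and \ref{Th:AN} with the $\lambda=o(n^{-1})$ bias-negligibility and Slutsky step for item 3. Your two cautionary remarks (the $\epsilon$-dependence of the constants and using only the bias upper bound to avoid the $\delta=1$ semiparametric subtlety) are both well placed and consistent with the paper's discussion.
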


\begin{remark}
    \cite{tuo2015efficient} considered the $L_2$ inner product and demonstrated its impact on the calibration of computer models. The techniques adopted in \cite{tuo2015efficient} were available in much earlier literature to study the parametric part of smoothing splines and partial linear models. All these results show a root-$n$ rate of convergence and the asymptotic normality. The existing approach cannot deal with general $h\in L_2$, but under extra smoothness conditions of $h$, the theory gives the rate of convergence $O_\mathbb{P}(\lambda\|f\|_\mathcal{H}+\sigma n^{-1/2})$; see Section \ref{sec:semiparametric} of the Supplementary Materials for further discussion.
\end{remark}

\subsubsection{Expressions in terms of the Eigensystem}\label{sec:series}

A more abstract, but potentially general statement starts with an equivalent representation of $\mathcal{H}$ \citep{wendland2004scattered}. The discussion is deferred to Section \ref{sec: eigen_series} of the Supplementary Materials.

\section{Other applications of the linear functional theory}\label{sec:other}

Our theory of the linear functionals of KRR can be leveraged to handle other problems. Two prominent cases would be: 1) supremum over a set of linear functionals, e.g., the uniform error, and 2) nonlinear functionals that can be linearized asymptotically, e.g., the maximum point of a function. In this section, we outline our findings. The full technical details are deferred to Sections \ref{sec:uniformSup} and \ref{sec:nonlinearSup} of the Supplementary Materials.

\subsection{Uniform Bounds}\label{sec:uniform}

The methodology introduced in Section \ref{sec:improved} can be extended to study the uniform errors in terms of $\sup_{g\in\mathscr{G}}|\langle \hat{f}-f,g\rangle_\mathcal{H}|$. We are particularly interested in the uniform error of the partial derivatives, i.e.,
\begin{eqnarray}\label{uniform}
    \sup_{x\in\Omega}\left|D^\alpha \hat{f}(x)-D^\alpha f(x)\right|,
\end{eqnarray}
for some $\alpha\in\mathbb{N}^d$. Note that (\ref{uniform}) includes the $L_\infty$ error by setting $\alpha=0$. Following the idea in Section \ref{sec:BnV}, we break (\ref{uniform}) into two terms.
\begin{eqnarray}
  (\ref{uniform}) \leq \sup_{x\in\Omega}\left|\mathbb{E}_E D^\alpha \hat{f}(x)-D^\alpha f(x)\right|+\sup_{x\in\Omega}\left|D^\alpha \hat{f}(x)-\mathbb{E}_E D^\alpha \hat{f}(x)\right|.\label{uniformBnV}
\end{eqnarray}
With some abuse of terminology, we call the first term in (\ref{uniformBnV}) the \textit{uniform bias} and the second term the \text{uniform variance term}.

Our analysis shows the upper bound for the uniform bias
\begin{eqnarray}\label{uniformbiasbound}
    \text{uniform bias}=O_\mathbb{P}(\lambda^{\frac{1}{2}-\frac{d+2|\alpha|}{4m}}\|f\|_\mathcal{H}),
\end{eqnarray}
which is attainable in the worst-case scenario. The magnitude of the variance term would depend on the random noise's tail property. When the noise has a sub-Gaussian tail, i.e., $\mathbb{E}\exp\{\vartheta e_1\}\leq \exp\{\vartheta^2\varsigma^2/2\}$ for all $\vartheta\in \mathbb{R}$ and some $\varsigma^2>0$, we have the bound
\begin{eqnarray}\label{uniformvariancebound}
    \text{uniform variance term}=O_\mathbb{P}\left(\varsigma n^{-\frac{1}{2}}\lambda^{-\frac{d+2|\alpha|}{4m}}\sqrt{\log\left(\frac{C}{\lambda}\right)}\right).
\end{eqnarray}
Compared with the pointwise bound given by Theorem \ref{Th:derivative}, (\ref{uniformvariancebound}) is inflated only by a logarithmic factor $\sqrt{\log(C/\lambda)}$. This factor cannot be improved in general, as the bound is shown to be sharp when the noise follows a normal distribution.

The bias and variance terms in (\ref{uniformbiasbound}) and (\ref{uniformvariancebound}) can be balanced by choosing $\lambda\sim n^{-1}\log n$ which is independent of $m,d$, and $\alpha$,
and the resulting rate of convergence is
\begin{eqnarray}\label{minimax}
    \sup_{x\in\Omega}\left|D^\alpha \hat{f}(x)-D^\alpha f(x)\right|=O_\mathbb{P}\left((n^{-1}\log n)^{\frac{1}{2}-\frac{d+2|\alpha|}{4m}}\right).
\end{eqnarray}

\begin{remark}\label{remark:uniform}
  
The rate of convergence shown in (\ref{minimax}) matches the classic  $L_\infty$ minimax rate.  \cite{nemirovski2000topics} demonstrates that, under grid-based designs, the lower bounds for the minimax risk under the   $L_\infty$  norm of $D^\alpha \hat{f}(x)-D^\alpha f(x)$ in  a unit ball of a Sobolev space with smoothness m, as stated in Theorem 2.1.1, is $(n/\log n)^{\frac{1}{2}-\frac{2|\alpha|+d}{4m}}$.  
\end{remark}

\subsection{A Nonlinear Problem}\label{sec:nonlinear}
Although this work primarily focuses on linear functionals of $f$, the results can help study certain nonlinear functionals if they can be linearized. In this section, we consider the nonlinear functionals
$\min_{x\in\Omega} f(x) \text{ and } \operatorname*{argmin}_{x\in\Omega}f(x)$.
Consider the plug-in estimators of $\min_{x\in\Omega} f(x)$ and $\operatorname*{argmin}_{x\in\Omega}f(x)$, defined as 
$\hat{f}_{\min}:=\min_{x\in\Omega} \hat{f}(x) \text{ and } \hat{x}_{\min}:=\operatorname*{argmin}_{x\in\Omega}\hat{f}(x),$
respectively. 
To linearize $\hat{x}_{\min}-x_{\min}$, intuitively, we use a Taylor expansion argument
$        0=\frac{\partial\hat{f}}{\partial x}(\hat{x}_{\min})\approx\frac{\partial\hat{f}}{\partial x}(x_{\min})+\frac{\partial^2\hat{f}}{\partial x\partial x^T}(x_{\min})(\hat{x}_{\min}-x_{\min}),
$
    which implies
    $\hat{x}_{\min}-x_{\min}\approx -H^{-1}\frac{\partial\hat{f}}{\partial x}(x_{\min})$.
This inspires us to consider the linear functional $l(\hat{f}-f)=\frac{\partial (\hat{f}-f)}{\partial x}(x_{\min})$. The covariance matrix of the variance term is
\begin{eqnarray}\label{Covnonlinear}
    \operatorname{COV}=\sigma^2\frac{\partial K}{\partial x}(x_{\min},X)(K(X,X)+\lambda n I)^{-2}\frac{\partial K}{\partial x^T}(X,x_{\min}).
\end{eqnarray}
Because both $H$ and $\operatorname{COV}$ contain unknown parameters, we consider estimators
\begin{eqnarray}
    \hat{H}&:=&\frac{\partial^2 \hat{f}}{\partial x\partial x^T}(\hat{x}_{\min}),\\
    \widehat{\operatorname{COV}}&:=&\hat{\sigma}^2\frac{\partial K}{\partial x}(\hat{x}_{\min},X)(K(X,X)+\lambda n I)^{-2}\frac{\partial K}{\partial x^T}(X,\hat{x}_{\min}),\label{Covestimate}
\end{eqnarray}
where $\hat{\sigma}^2$ is a consistent estimator of ${\sigma}^2$. 

Under the optimal tuning parameter $\lambda\asymp n^{-1}$, we show that
    \begin{enumerate}
        \item $\|\hat{x}_{\min}-x_{\min}\|=O_\mathbb{P}(n^{-\frac{1}{2}+\frac{d+2}{4m}}),f(\hat{x}_{\min})-f(x_{\min})=O_\mathbb{P}(n^{-1+\frac{d+2}{2m}})$;
        \item $\widehat{\operatorname{COV}}^{-\frac{1}{2}}\hat{H}(\hat{x}_{\min}-x_{\min})\xrightarrow{\mathscr{L}} N(0,I)$.
    \end{enumerate}

\section{Numerical Studies}\label{sec:NS}

In this section, we conduct numerical studies to examine both the pointwise asymptotic confidence interval (CI) for the estimated optimal point $\hat{x}_{\min}$ and the finite-sample coverage probability of the proposed derivative estimator. We begin by evaluating the performance of the proposed estimator for estimating the optimal point using both a toy example and real data, focusing on the accuracy of the pointwise CIs for $\hat{x}_{\min}$.
Next, we compare the finite-sample coverage probability of the proposed derivative estimator with several alternative methods in a toy example. The results provide numerical evidence supporting the theoretical asymptotic properties of the proposed estimator.

\subsection{Asymptotic Confidence Interval for Optimal Point}\label{subsection:optimal_point}
We conduct numerical studies to examine the pointwise asymptotic CI for the estimated optimal point $\hat{x}_{min}$ in the objective function. Three test regression functions are considered:
\begin{enumerate}
 \setlength\itemsep{3mm}
 \item $f_{1}(x)=1.8[\beta_{10,5}(x)+\beta_{7,7}(x)+\beta_{5,10}(x)]$,
 \item $f_{2}(x)=2.4\beta_{30,17}(x)+2.8\beta_{4,11}(x)$,
\item $f_{3}(x)=\frac{7}{5}\beta_{15,30}(x)+8\sin(32\pi x -\frac{4\pi}{3})-6\cos(16\pi x)-\frac{1}{5}\cos(64\pi x)$,
\end{enumerate}
where $\beta_{a,b}(x)$ stands for the density function of a Beta($a,b$) distribution. In all cases, we generate independent and identically distributed input data $X$ from the uniform distribution over $[0,1]$. The response $y$ is given by model (\ref{model}) after adding an independent and identically distributed noise. Two types of noise distributions are used: the normal distribution with a variance of 3 and the student's $t$-distribution with degrees of freedom $\nu=3$. Each distribution type is used under the mean zero and two different variance ($\sigma^2$) levels.  

In all simulation experiments, we choose the Mat\'ern kernel with $\nu=3$ and choose both its hyperparameters and the regularization parameter $\lambda$, where $\lambda$ is set near the order of $O(n^{-1})$, through cross-validation. We then construct CIs for each $\hat{x}_{\min}$ at a 95\% nominal level following the result in Section \ref{sec:nonlinear}. The coverage probability (CP) is estimated as the proportion of the CIs that cover the true value in a total of 800 replications. In addition, we present the Q-Q plots of the test statistics $\hat{x}_{min}$ to visualize their empirical distributions versus the normal distributions. The test functions are plotted as solid curves in Figure \ref{fig:testfun_plot} in the supplementary material. As shown in the plots, all three test functions are smooth, but have an increasing number of local optimal points. 

\begin{figure}
\centering
\begin{subfigure}{.37\hsize}
\centering
\includegraphics[width=1.1\linewidth,height=1.8\linewidth]{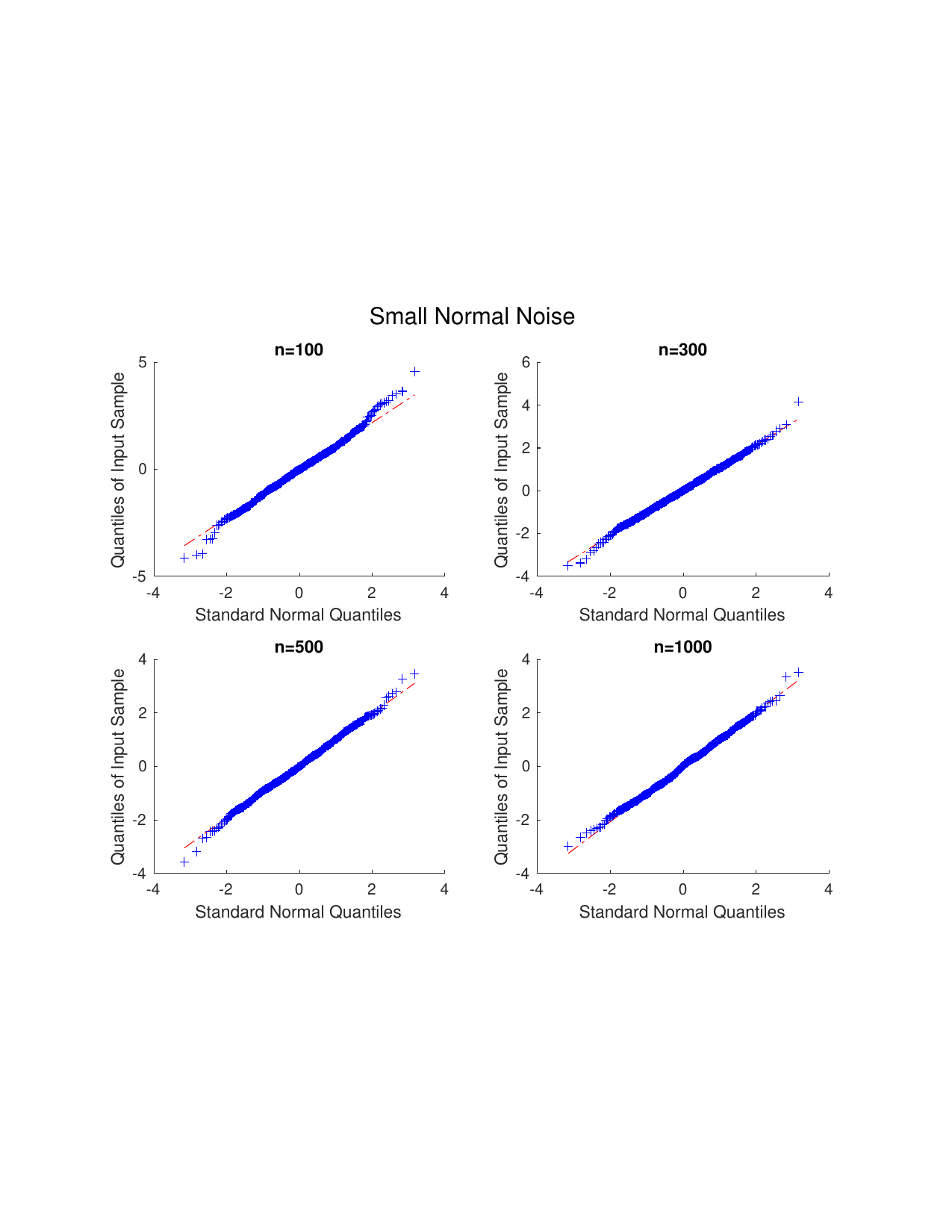}
\end{subfigure}%
\begin{subfigure}{.37\hsize}
\includegraphics[width=1.1\linewidth,height=1.8\linewidth]{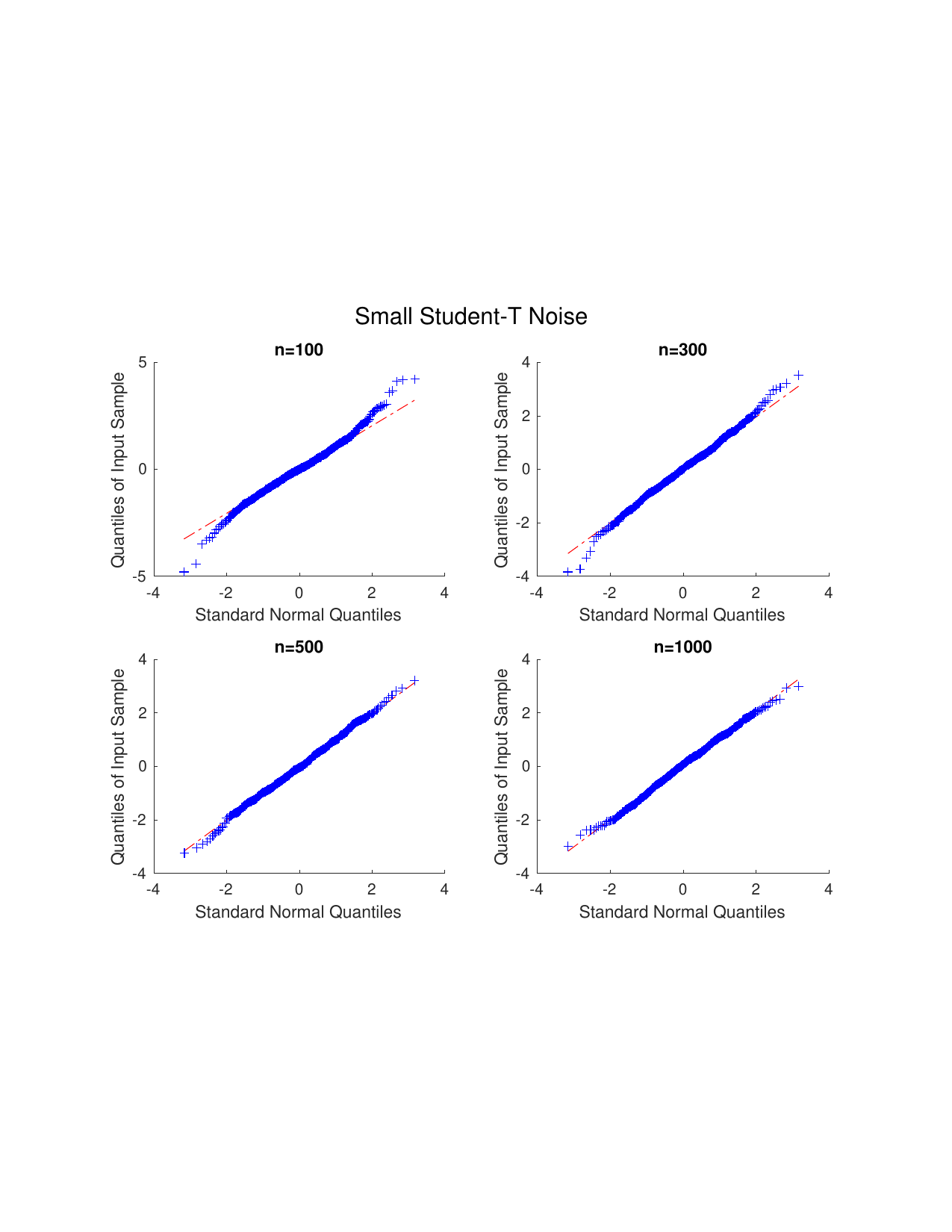}
\end{subfigure}
 \vspace*{-26mm}
\caption{Results for Test Function $f_1$ with low-level noise $\sigma=0.5$ }
\label{fig:test1_small}
\end{figure}

\begin{figure}
\centering
\begin{subfigure}{.37\hsize}
  \centering
  \includegraphics[width=1.1\linewidth,height=1.8\linewidth]{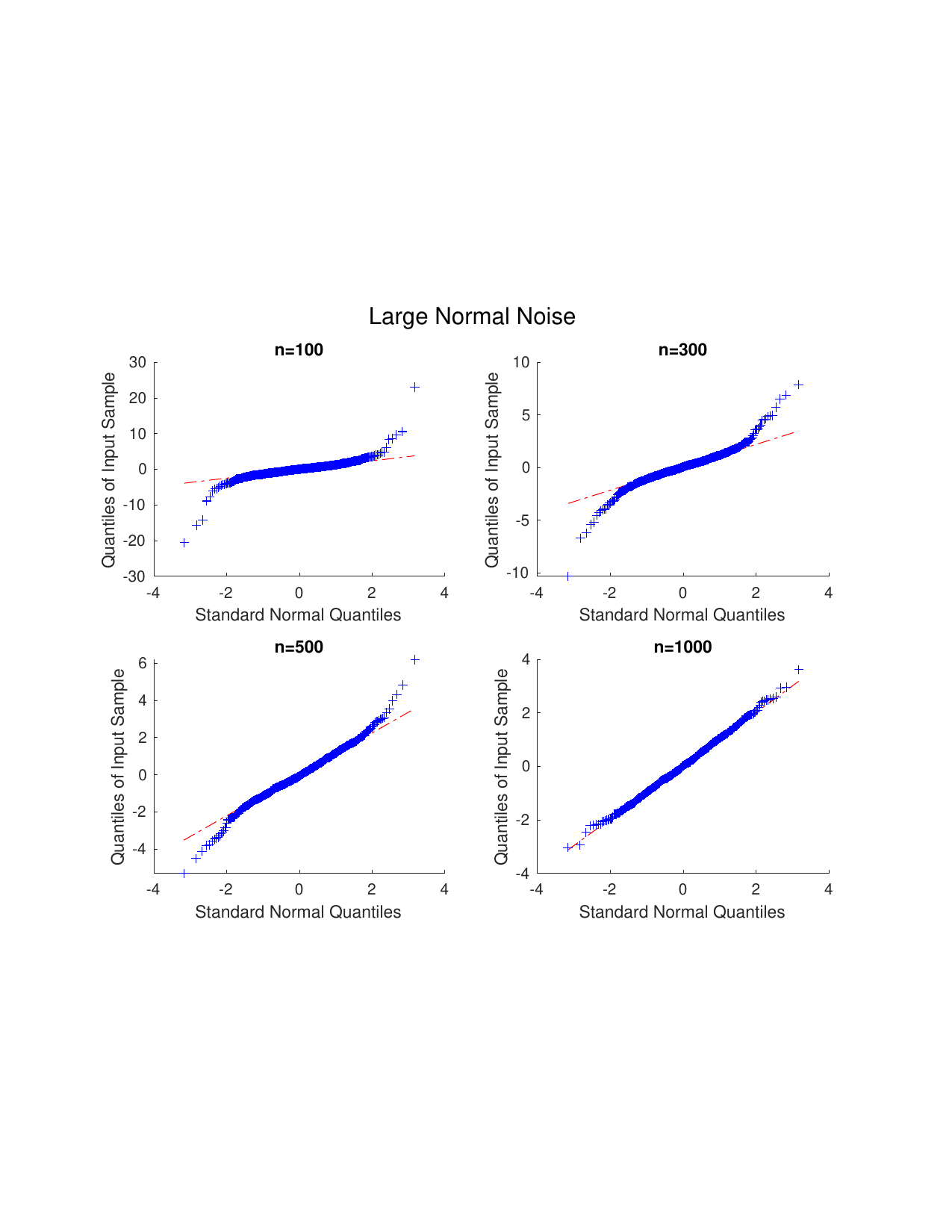}
\end{subfigure}%
\begin{subfigure}{.37\hsize}
  \centering
\includegraphics[width=1.1\linewidth,height=1.8\linewidth]{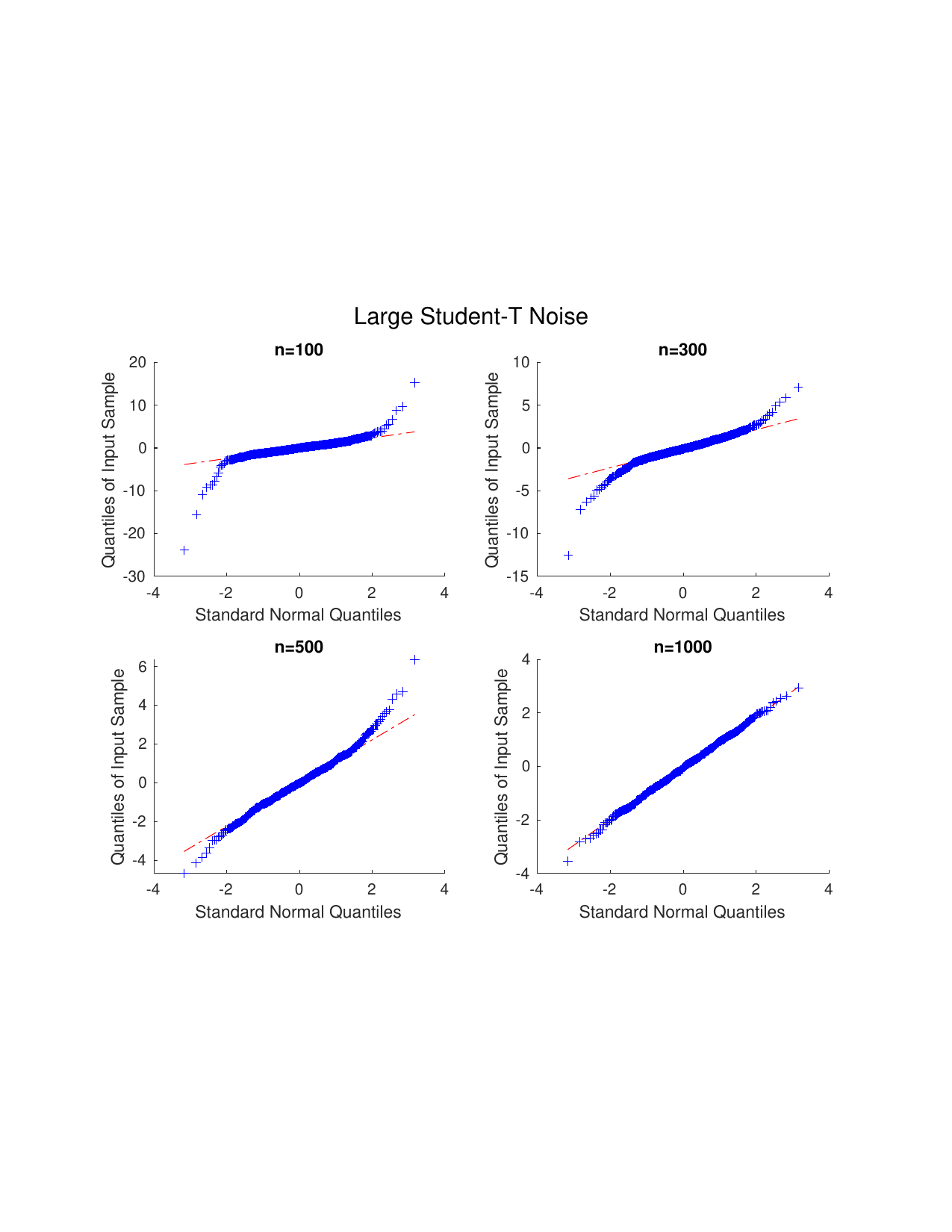}
\end{subfigure}
\vspace*{-26mm}
\caption{Results for Test Function $f_1$ with high-level noise $\sigma=5$ }
\label{fig:test1_large}
\end{figure}

\begin{figure}
\centering
\begin{subfigure}{.37\hsize}
  \centering
\includegraphics[width=1.1\linewidth,height=1.8\linewidth]{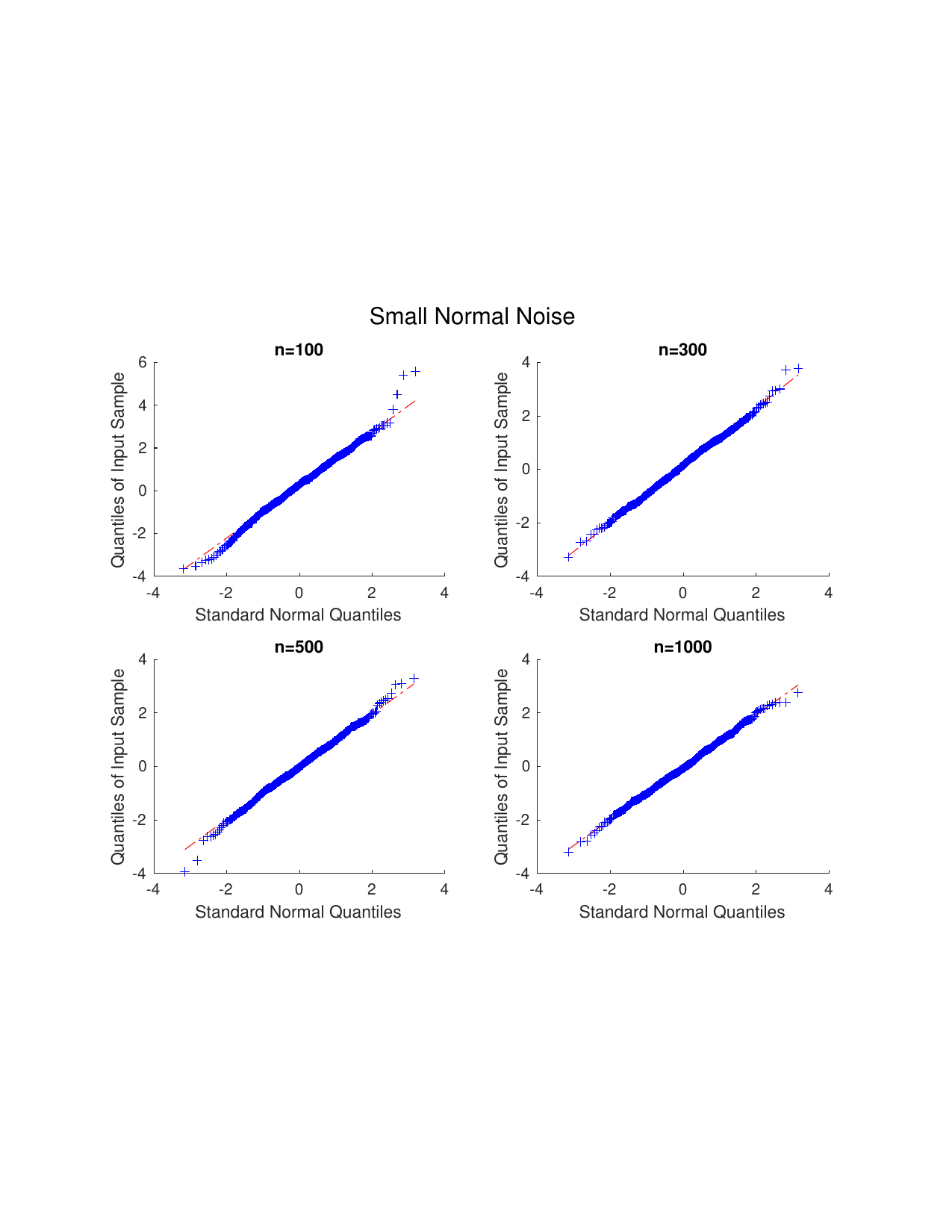}
\end{subfigure}%
\begin{subfigure}{.37\hsize}
  \centering
\includegraphics[width=1.1\linewidth,height=1.8\linewidth]{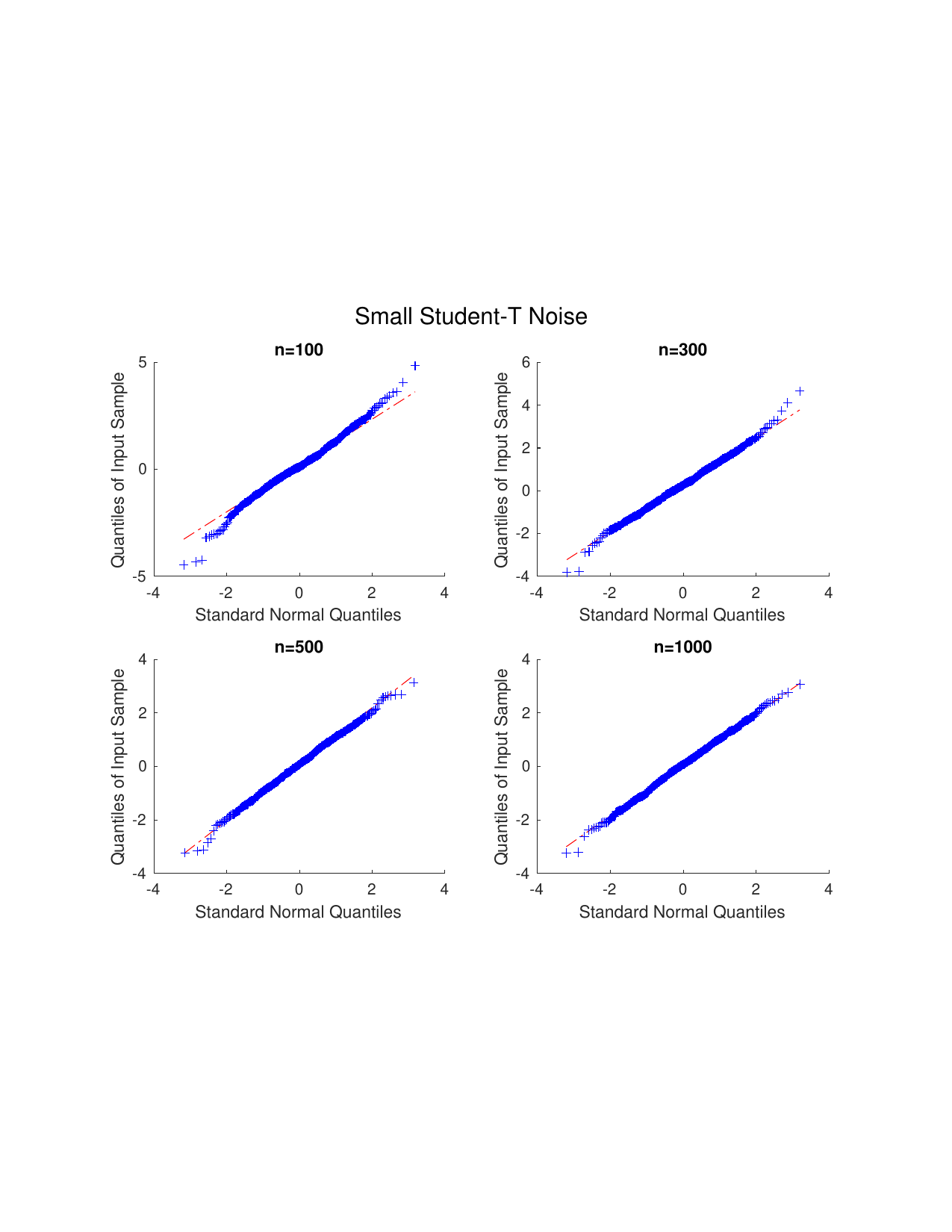}
\end{subfigure}
\vspace*{-26mm}
\caption{Results for Test Function $f_2$ with low-level noise $\sigma=0.5$ }
\label{fig:test2_small}
\end{figure}

\begin{figure}
\centering
\begin{subfigure}{.37\hsize}
  \centering
\includegraphics[width=1.1\linewidth,height=1.8\linewidth]{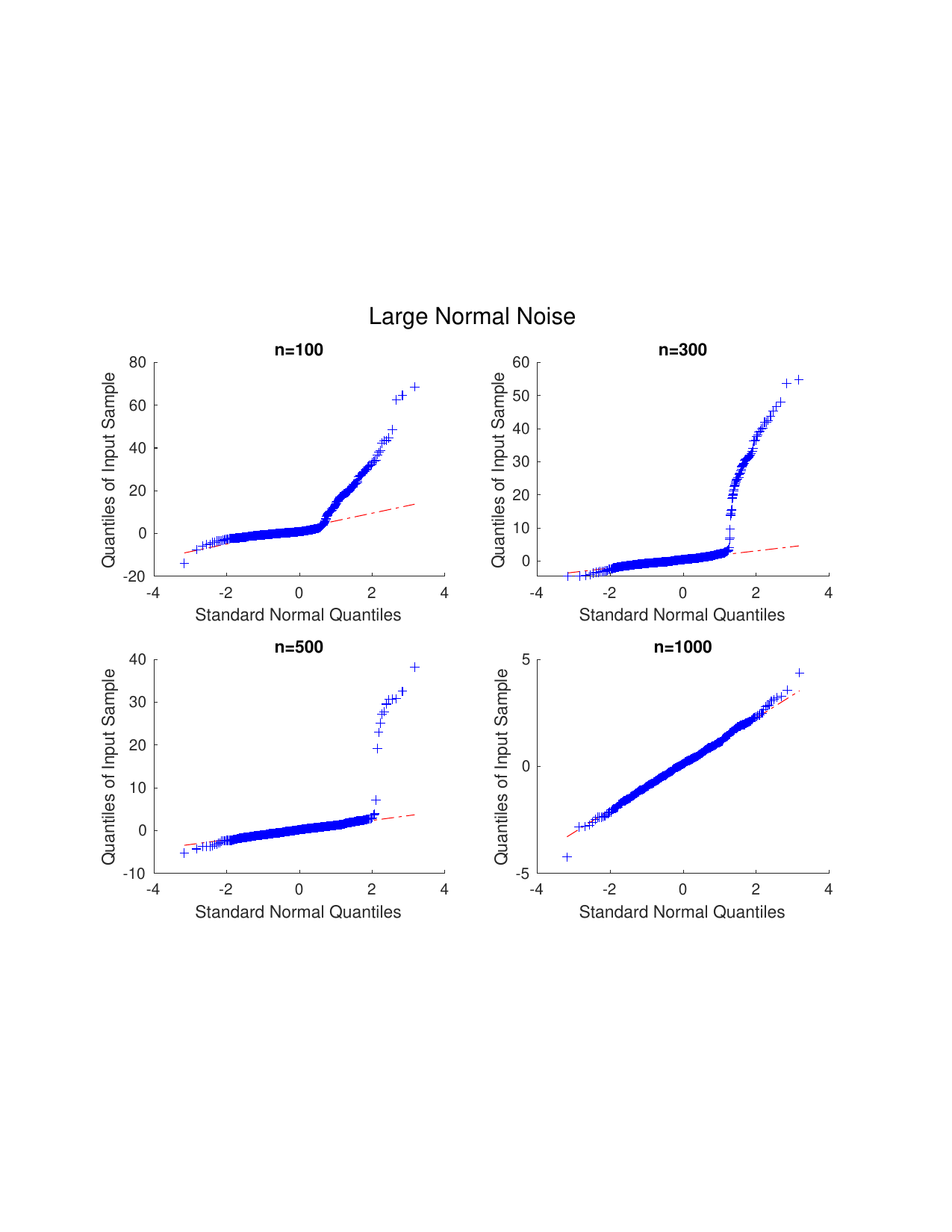}
\end{subfigure}%
\begin{subfigure}{.37\hsize}
  \centering
  \includegraphics[width=1.1\linewidth,height=1.8\linewidth]{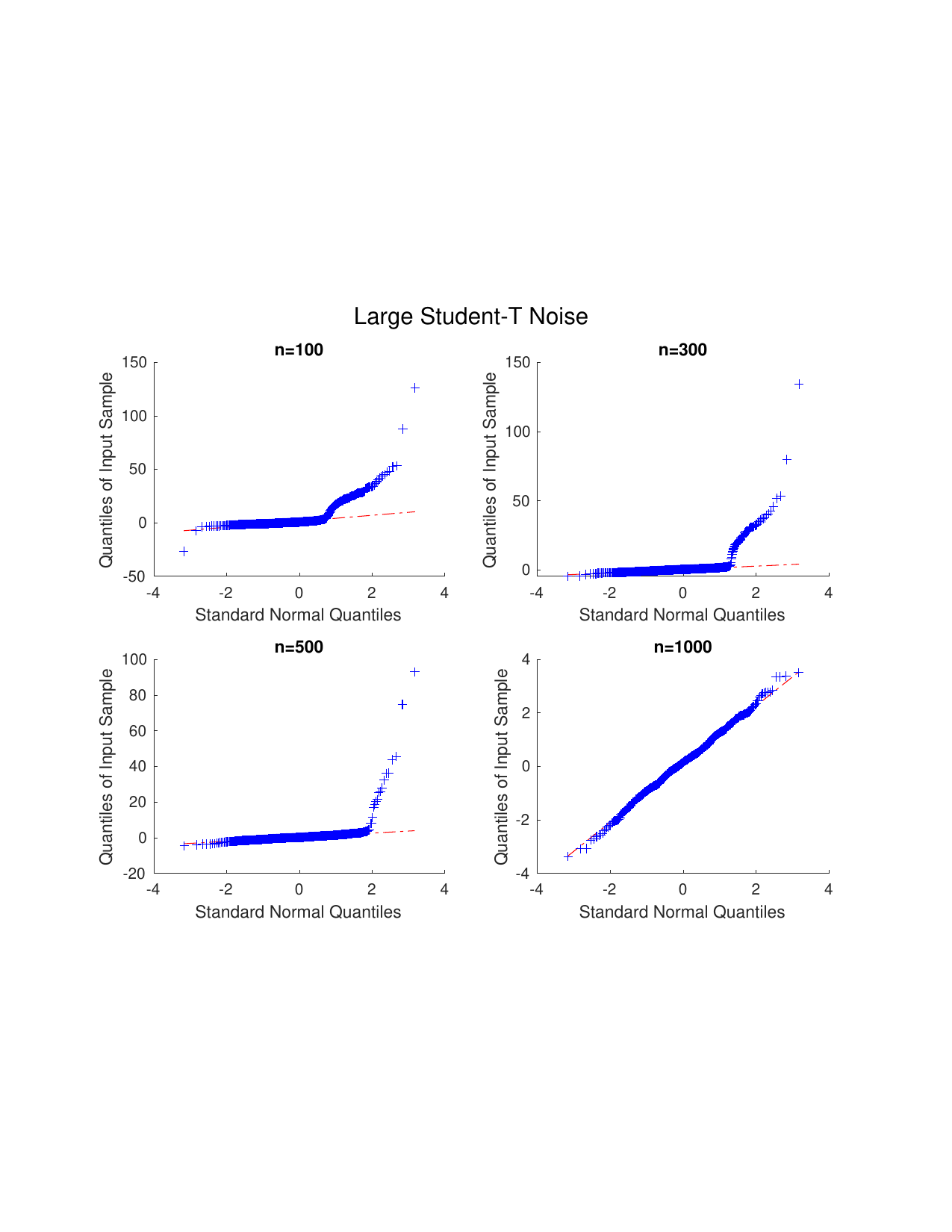}
\end{subfigure}
\vspace*{-26mm}
\caption{Results for Test Function $f_2$ with high-level noise $\sigma=5$ }
\label{fig:test2_large}
\end{figure}

\begin{figure}
\centering
\begin{subfigure}{.37\hsize}
  \centering
  \includegraphics[width=1.1\linewidth,height=1.8\linewidth]{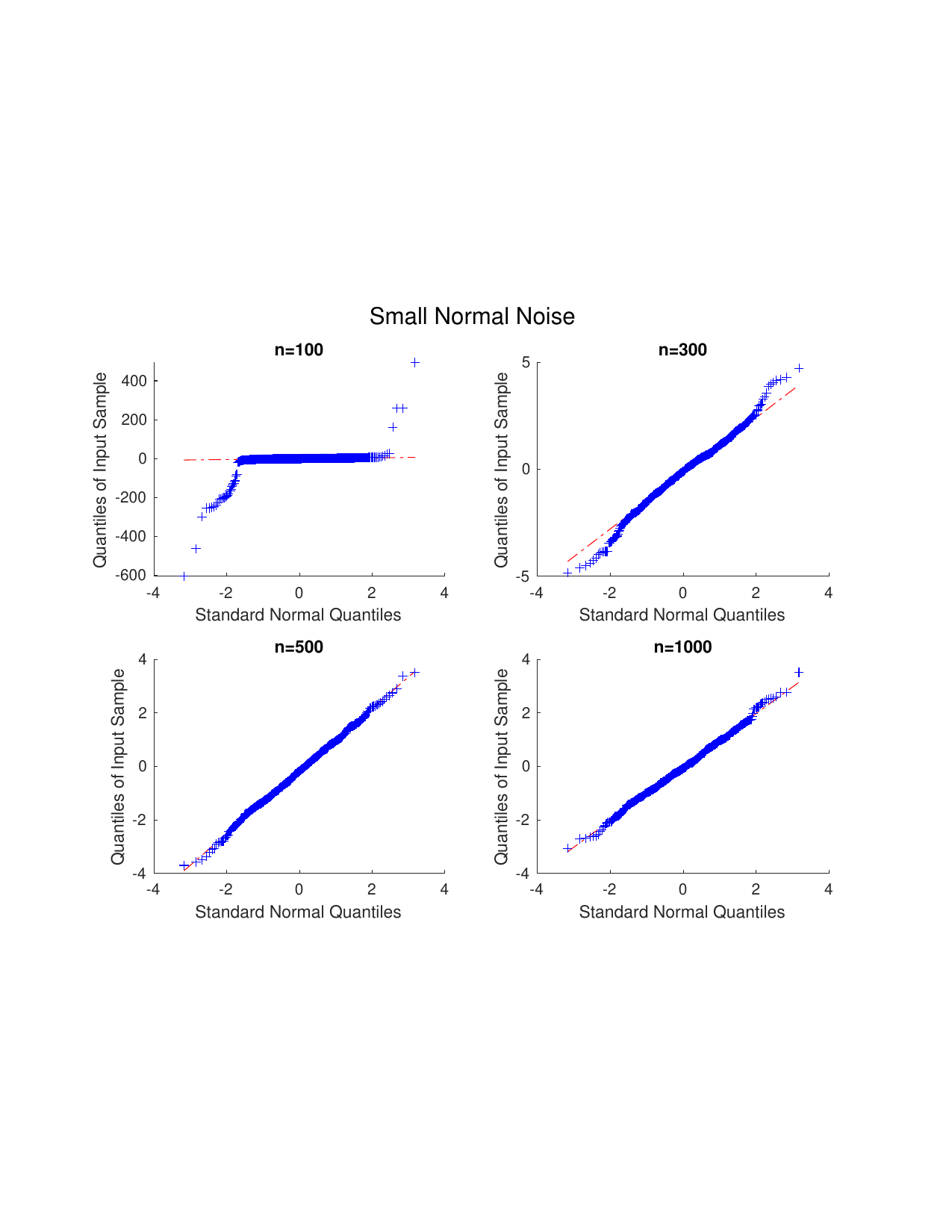}
\end{subfigure}%
\begin{subfigure}{.37\hsize}
  \centering
  \includegraphics[width=1.1\linewidth,height=1.8\linewidth]{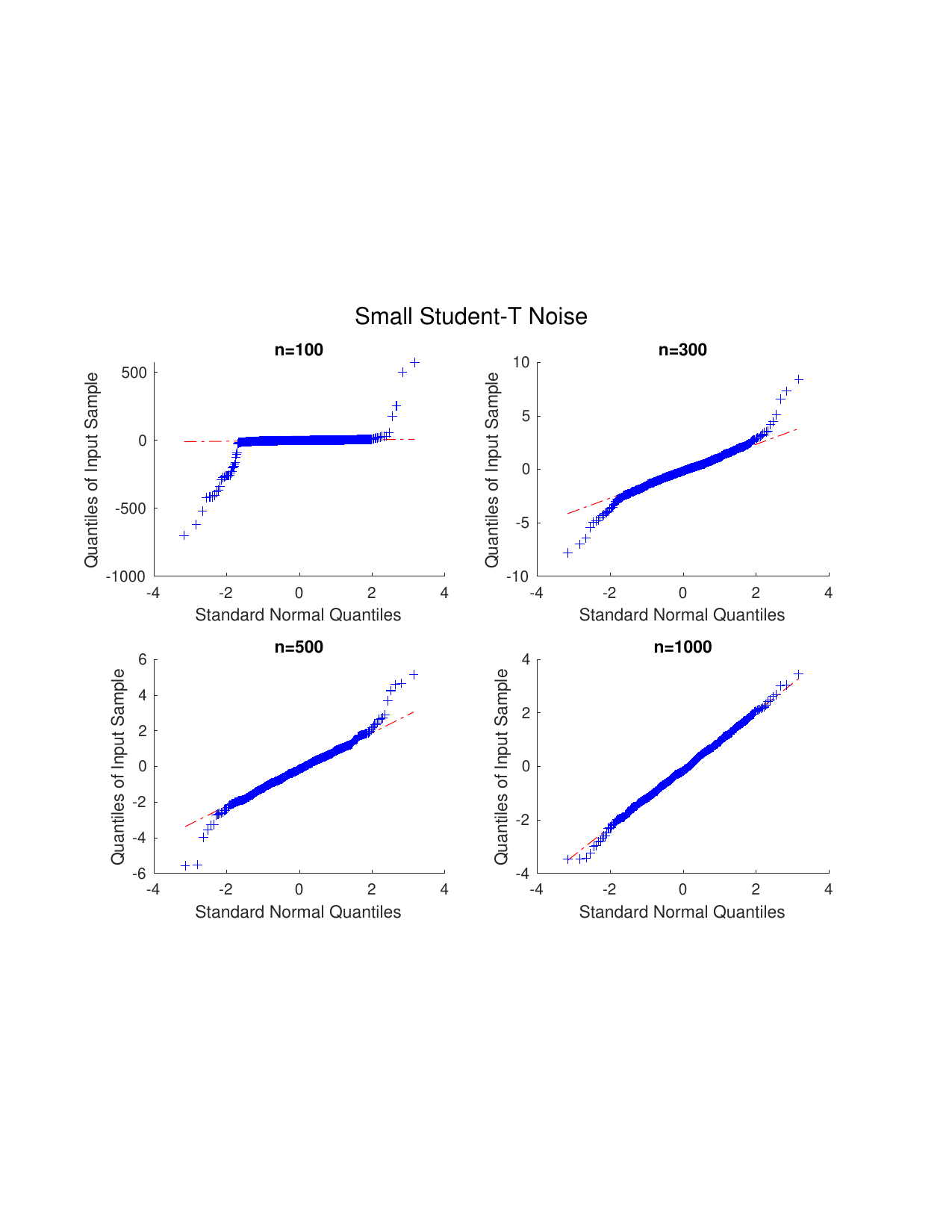}
\end{subfigure}
\vspace*{-26mm}
\caption{Results for Test Function $f_3$ with low-level noise $\sigma=0.5$ }
\label{fig:test3_small}
\end{figure}

\begin{figure}
\centering
\begin{subfigure}{.37\hsize}
  \centering
  \includegraphics[width=1.1\linewidth,height=1.8\linewidth]{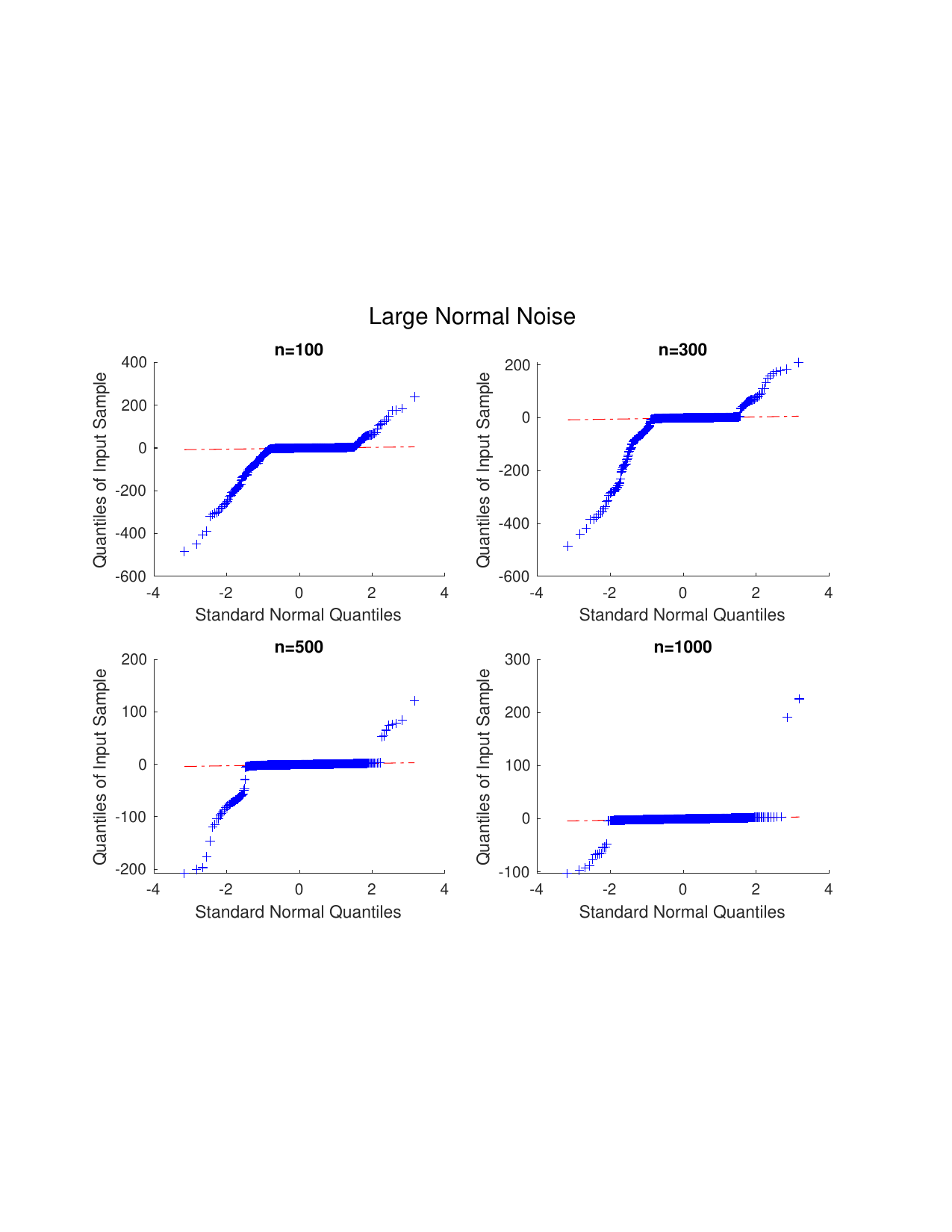}
\end{subfigure}%
\begin{subfigure}{.37\hsize}
  \centering
  \includegraphics[width=1.1\linewidth,height=1.8\linewidth]{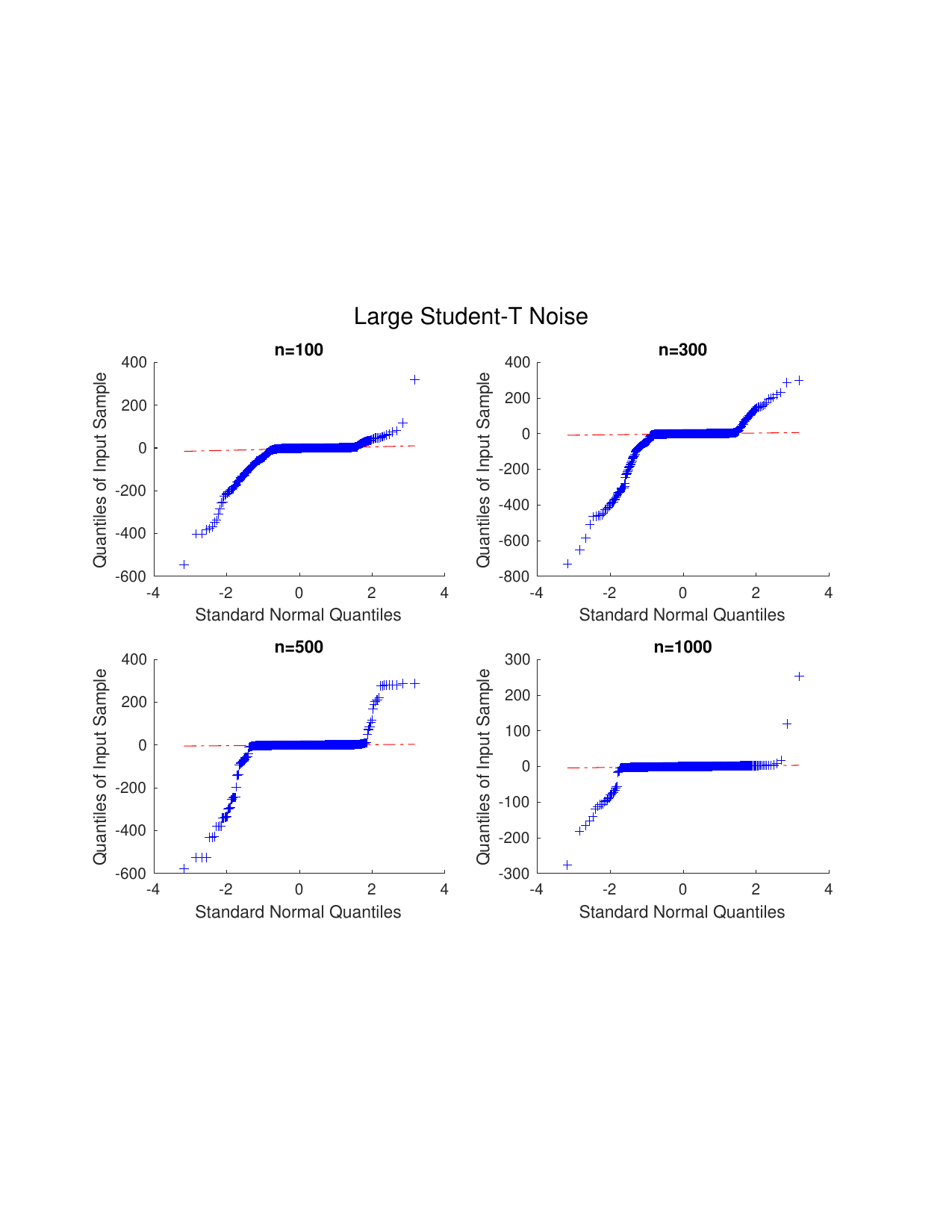}
\end{subfigure}
\vspace*{-26mm}
\caption{Results for Test Function $f_3$ with high-level noise $\sigma=5$ }
\label{fig:test3_large}
\end{figure}


\begin{table}
\centering
\begin{tabular}{ccccccc}
    \toprule
        & \multicolumn{6}{c}{Coverage Probability under Normal Noise with $\alpha=0.05$}                                              \\
        \cmidrule{2-7}
        & \mcc{f_1} & \mcc{f_2} & \mcc{f_3}     \\
    \cmidrule(r){2-3}\cmidrule(lr){4-5}\cmidrule(l){6-7}
\multicolumn{1}{c}{$n$} & $\sigma=0.5$ & $\sigma=5$ & $\sigma=0.5$ & $\sigma=5$ &$\sigma=0.5$ & $\sigma=5$ \\
    \midrule
100      & 0.9031 & 0.8010 & 0.8452     & 0.5872 &  0.5968     &  0.5978               \\
300   & 0.9317 &  0.8304   & 0.9178    & 0.7665   & 0.8386    & 0.6223               \\
500   & 0.9533 & 0.8821     & 0.9398   & 0.8415   & 0.9118   & 0.8344                 \\
1000   & 0.9543 & 0.9412    &  0.9577   & 0.9205    & 0.9441     & 0.8898               \\
1500   & 0.9573  & 0.9532   &  0.9470   & 0.9389   & 0.9407     & 0.9382                \\
    \bottomrule
\end{tabular}
\caption{\label{table:normal_cp} Estimated Coverage Probability for Normal Distributed Noise.}
    \end{table}

\begin{table}
\centering
\begin{tabular}{ccccccc}
    \toprule
        & \multicolumn{6}{c}{Coverage Probability under $t_3$ Noise with $\alpha=0.05$}                                              \\
        \cmidrule{2-7}
        & \mcc{f_1} & \mcc{f_2} & \mcc{f_3}     \\
    \cmidrule(r){2-3}\cmidrule(lr){4-5}\cmidrule(l){6-7}
\multicolumn{1}{c}{$n$} & $\sigma=0.5$ & $\sigma=5$ & $\sigma=0.5$ & $\sigma=5$ &$\sigma=0.5$ & $\sigma=5$ \\
    \midrule
100   &  0.9005 &  0.8101  & 0.8801   & 0.6006 &   0.5114    &  0.5578               \\
300   & 0.9329  &  0.8412  & 0.9217   & 0.7912   & 0.8359   &  0.5976            \\
500   & 0.9532  &  0.8897  & 0.9470   & 0.8584   & 0.9295   &  0.7716                 \\
1000  & 0.9402  &  0.9509  & 0.9501   &  0.9142    &  0.9310   & 0.8475            \\
1500  & 0.9472  & 0.9417   & 0.9629   &  0.9401   & 0.9389     & 0.9293                \\
    \bottomrule
\end{tabular}
\caption{ Estimated Coverage Probability for Student's-t Distributed Noise.}\label{table:student_cp}
\end{table}

Tables \ref{table:normal_cp} and \ref{table:student_cp} summarize the CP of our asymptotic CI over 800 replications. Tables \ref{table:normal_cp} and \ref{table:student_cp} imply that in the first two cases, the proposed asymptotic confidence intervals provide decent coverage rates (i.e., close to the nominal level $95\%$) for both functions, regardless of the type of the error distribution. For Case 3, we suffer from the under-coverage problem in high noise scenarios, KRR cannot accurately reconstruct the function and thus pinpoint the global minimum point. But such a problem is mitigated when the sample size is sufficiently large: when $n = 1500$, the proposed asymptotic CI has a CP close to 0.95. 

Figures \ref{fig:test1_small}-\ref{fig:test3_large} 
present the Q-Q plots of the aforementioned statistics over the replications. As shown in Figures \ref{fig:test1_small} and \ref{fig:test2_small}, when the error variance is small, the distribution of statistical quantities corresponding to two different error distributions is close to the normal distribution even under small sample sizes. However, in Case 3 with small noise, the statistical values associated with the normal distribution error closely align with the normal distribution under small sample sizes, in contrast to those associated with the $t$-distribution error. Nevertheless, as sample size increases, the statistics corresponding to both error distributions progressively approach the normal distribution. When the error variance is relatively large, as observed in Figures \ref{fig:test1_large}, \ref{fig:test2_large}, and \ref{fig:test3_large}, the Q-Q plots for both types of error distribution exhibit an S-shape, indicating that the statistics' distribution has heavier tails than the normal distribution, especially with a sample of less than 500. In particular, as demonstrated in Figure \ref{fig:test3_large},  the statistics with both the $t$-distributed errors and normally distributed errors severely  deviate from a normal distribution even under a sample size of 1000. As said before, this deviation is mainly due to the large \textit{uniform estimation errors}, so we cannot correctly pinpoint which local optimal is the global optimal. Nevertheless, as exhibited in Table \ref{table:normal_cp} and Table \ref{table:student_cp}, the coverage rates of the test statistics associated with a normal distribution are slightly better than those with $t$-distributed errors across all sample sizes.
In view of the different simulation results led by the noise distribution, these results support our hypothesis in Remark \ref{remark:uniform} that the uniform rate of convergence of KRR depends on the tail property of the random noise.

In summary, the simulation results show that the asymptotic confidence interval for the optimal point generally aligns with our asymptotic analysis. The CP uniformly approaches the desired confidence level as the sample size grows, showing the validity of the intervals. In addition, the resulting confidence intervals are not sensitive to the error distribution.

\subsection{Real Data Analysis}\label{subsec:data}

Event-related potentials (ERPs) are electroencephalogram (EEG) signals recorded in response to external stimuli, and the amplitude and latency of their characteristic waveform components are well known to reflect sensory and cognitive processes. For our real-data analysis, we use a publicly available ERP dataset (\url{http://dsenturk.bol.ucla.edu/supplements.html}) consisting of recordings from a single participant diagnosed with autism spectrum disorder (ASD) under one electrode and one experimental condition. The dataset contains 72 trials, each with 250 time points. Our study targets two well-established ERP components—N1, typically occurring between 100 and 250, and P3, between 190 and 370—both of which have been extensively investigated for their links to sensory and cognitive function. To capture both components, we restrict the analysis to the [100,370]. We then apply our method to construct confidence intervals for the optimal point of these component latencies, providing a calibrated assessment of their estimation uncertainty.

The aim is to estimate the optimal maximum values of the ERP signal, specifically the peak latencies of the N1 and P3 components, within the time window [100, 370]. Since EEG signals are inherently noisy, neuroscientists traditionally average the signals across trials to obtain a grand average ERP waveform. This averaged waveform is then used to estimate the amplitude and latency of the ERP components. The optimal points are estimated based on these averaged waveforms. In the supplementary material, Figure~\ref{fig:erp_data} plots the 72 individual ERP trial waveforms together with their grand average, with two vertical lines indicating the time window used as the search region for estimating the optimal point.

Figure~\ref{fig:real_data_uq} displays the Q–Q plot of the optimal point estimates for the real ERP data, showing close agreement between the empirical and theoretical quantiles. The empirical coverage rate of the 95\% confidence intervals is 0.948, consistent with the nominal level and indicating that the intervals effectively capture the true optimal points.

\begin{figure}
    \centering
    \resizebox{0.45\linewidth}{!}{%
        \includegraphics{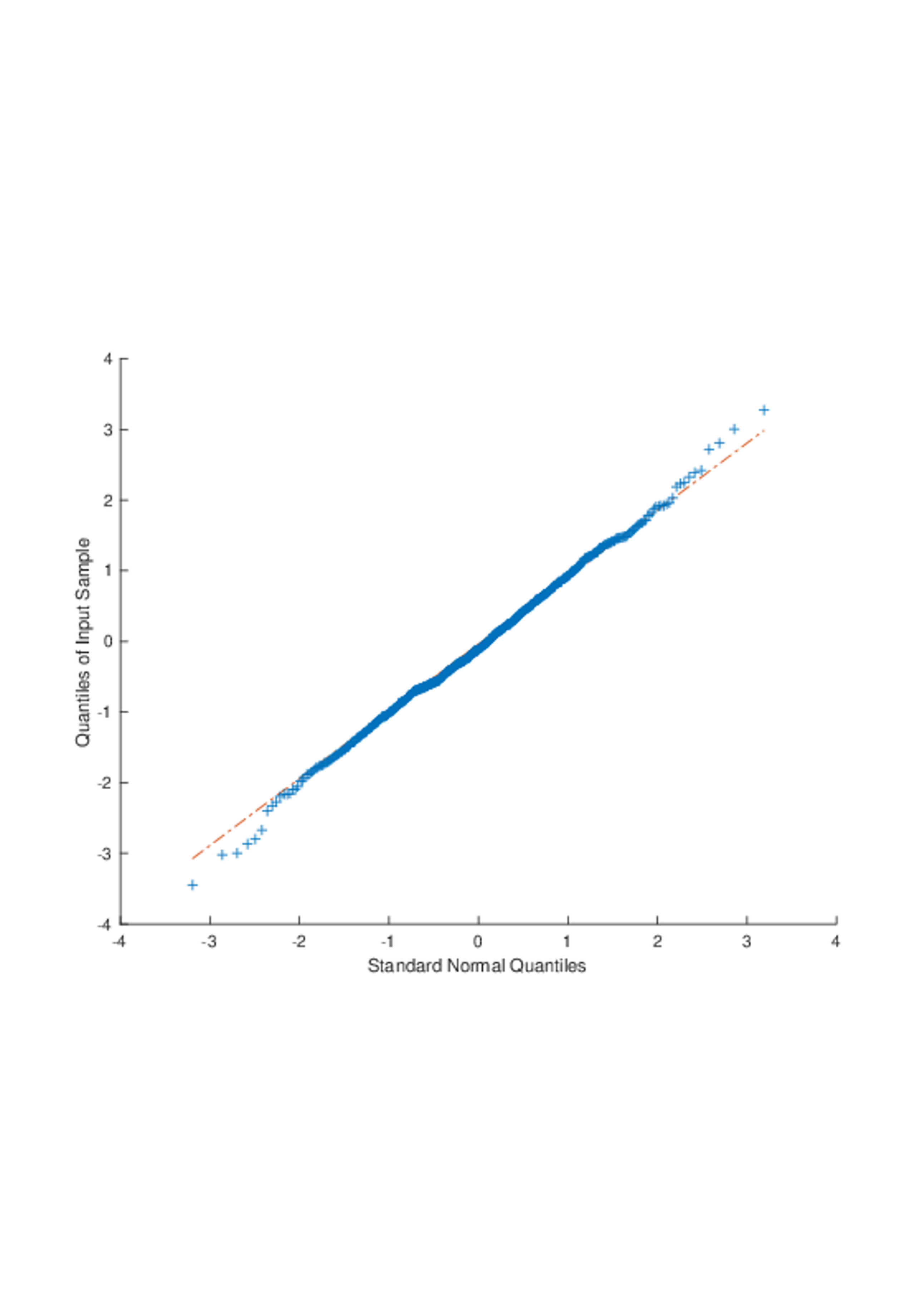}
    }
    \caption{Q-Q Plot of Optimal Point Estimations for Real ERP Data}
    \label{fig:real_data_uq}
\end{figure}

\subsection{ Comparison with existing Methods for Derivative Estimator}

We consider two regression functions: 

\begin{enumerate}
 \setlength\itemsep{3mm}
 
 \item $f_{4}(x)=5\exp{(-2(1 - 2x)^2)} (1 - 2x)$, with $x\in[0,1]$.
 
\item $f_{5}(x)=\sin(8.5x)+\cos(8.5x)+\log(2+x)$, with $x\in[-1,1]$.
\end{enumerate}

Random design points from the uniform distributions over the designated intervals are used with sample size $n = 500$.  The response $y$ is given by model (\ref{model}) after adding an independent and identically distributed Gaussian noise $\epsilon_i\sim N(0,2^2)$.


We consider  the first order derivative to accommodate competing methods, but note that the proposed method is readily available for any order.  We  construct a CI for each $\hat{f}^{\prime}(x)$ with a $95\%$ nominal level by applying Theorem \ref{Th:derivative}. The CP is estimated as the proportion of the CIs that cover the true value in a total of 800 replications.  For the plug-in KRR estimator, we adopt the same simulation setting as described in Section~\ref{subsection:optimal_point}. We compare the plug-in KRR estimator with three other methods: local polynomial regression with degree $p = 4$ (R package \texttt{nprobust} in \citep{calonico2019nprobust}, denoted as \texttt{locpol4} in the figures), smoothing spline (R package \texttt{lspartition} in \citep{cattaneo2020lspartition}) with higher-order-basis bias correction (denoted as \texttt{bspline1}) and with least squares bias correction (denoted as \texttt{bspline2}). For more details of the bias correction estimator, please refer to \citep{calonico2022coverage}.  

\begin{figure}[htbp]
    \centering
    \begin{subfigure}[b]{0.48\textwidth}
        \centering
        \includegraphics[width=\textwidth]{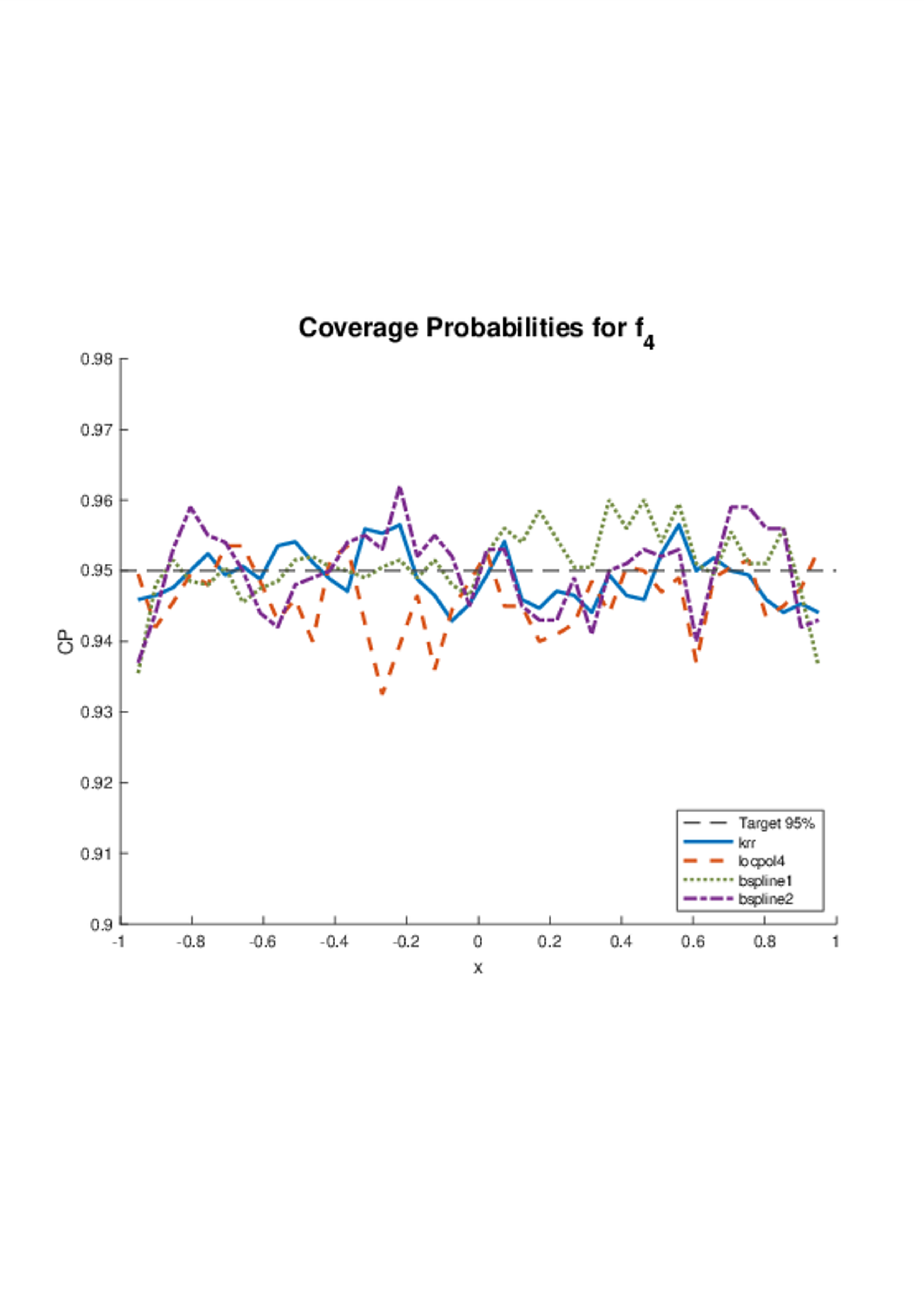}
    
        \label{fig:f4_der}
    \end{subfigure}
    \hfill
    \begin{subfigure}[b]{0.48\textwidth}
        \centering
        \includegraphics[width=\textwidth]{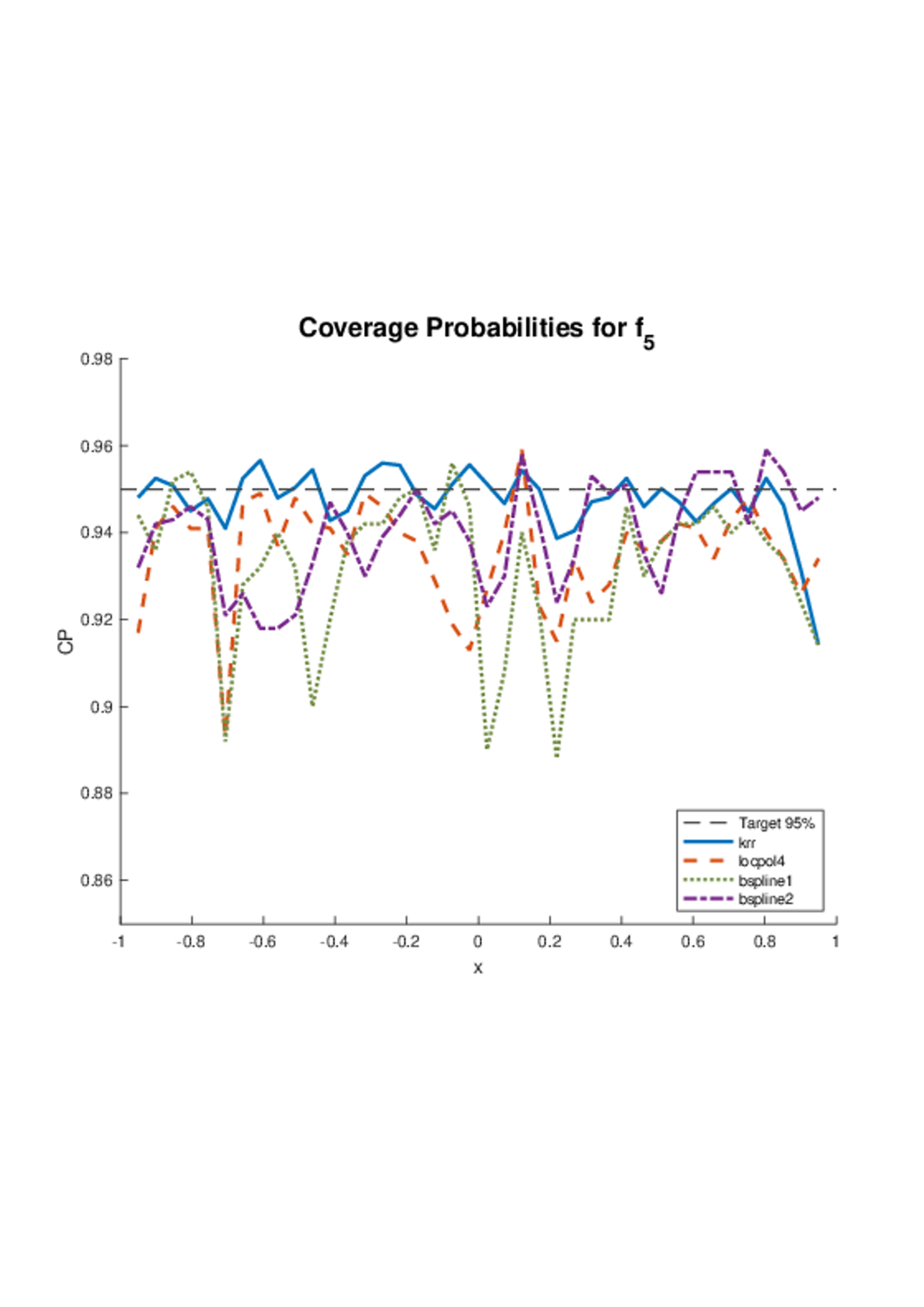}
       
        \label{fig:f5_der}
    \end{subfigure}
    
    \caption{Estimated Coverage Probability for Derivative}
    \label{fig:derivative}
\end{figure}

Figure~\ref{fig:derivative} presents the estimated coverage probabilities for $f_4$ (left) and $f_5$ (right) using the plug-in KRR estimator (\texttt{krr}), local polynomial regression with degree $p=4$ (\texttt{locpol4}), smoothing spline with higher-order-basis bias correction (\texttt{bspline1}), and smoothing spline with least squares bias correction (\texttt{bspline2}). For $f_4$, all methods produce similar results across the domain, with coverage probabilities close to the nominal 95\% level. For $f_5$, the proposed KRR method outperforms the alternative approaches over most of the domain, except near the left boundary where its coverage probability is slightly lower. For both functions, the KRR estimator exhibits relatively small fluctuations in coverage compared to other methods. Table~\ref{tab:avg_CI_length} summarizes the average confidence interval widths for the derivative estimates across all target functions. The proposed KRR method yields the narrowest intervals in both cases, demonstrating superior estimation efficiency while maintaining nominal coverage. Overall, these results indicate that the proposed method maintains stable and accurate coverage across different target functions.

\begin{table}[htbp]
\centering
\begin{tabular}{l @{\hskip 35pt} S[table-format=1.3]@{\hskip 35pt} S[table-format=1.3]}
\toprule
\textbf{Method} & {\textbf{$f_4$}} & {\textbf{$f_5$}} \\
\midrule
\texttt{krr}       &  12.5803  &  11.3918 \\
\texttt{locpol4}   &  17.2081 & 13.2785 \\
\texttt{bspline1}  &  15.6488 & 12.0351 \\
\texttt{bspline2}  &  16.8536 &  12.4222\\
\bottomrule
\end{tabular}
\caption{Average Lengths of the 95\% Confidence Intervals for Each Method.}\label{tab:avg_CI_length}
\end{table}

\section{Discussion}\label{Sec:discussion}

In this paper, we develop an asymptotic theory for a variety of linear functionals of kernel ridge regression. Our theory encompasses both upper and lower bounds for the estimator's performance and its asymptotic normality under both deterministic and random designs. 
We also demonstrate that our asymptotic theory on linear functionals can be utilized to obtain results for uniform errors and certain non-linear problems.

This article is based on the assumption that the true function $f$ resides within the RKHS ($\mathcal{H}$) associated with the kernel $K$. Our analysis can be extended to scenarios where the smoothness levels of $\mathcal{H}$ surpass those of the functional space in which the true function lies in \citep{fischer2020sobolev}. Additionally, deriving sharp and uniform confidence bands for the estimator, presenting another interesting direction for future research. The challenge in constructing sharp and uniform confidence bands arises from the reliance of existing methods for constructing  uniform confidence bands on expressing the KRR estimator through an orthonormal basis; see \cite{shang2013local, singhkernel}. Since linear functional estimators, such as derivatives, are typically non-orthogonal within this basis \citep{liu2023estimationanova}, existing testing procedures cannot be directly adapted to these estimators.



\appendix
\section*{Supplementary Material}

In this supplementary material, we provide the technical details of our theoretical results. An additional literature review is available in Section \ref{sec:literature}. Section \ref{sec:results} provides additional convergence results and discussion to complement the findings presented in the main article. Section \ref{sec:spaces} offers preliminary information on function spaces. In Section \ref{sec:sufficient}, we present the equivalent conditions for a key assumption. Section \ref{sec:proofs_main} contains the supporting lemmas and the proofs of the theorems in our main article. Section \ref{sec:numerical_plot} contains additional figures of the numerical results. 

\section{Additional Related Literature}\label{sec:literature}
KRR is a prevailing technique in machine learning and statistical modeling, demonstrating extensive utility across diverse areas, including predictive modeling \citep{ciliberto2020general,pourkamali2020kernel}, classification \citep{cortes2012algorithms,zien2007multiclass}, generative modeling \citep{dziugaite2015training,li2019implicit}, and statistical inference. In statistical inference areas,  KRR finds specific applications in tasks such as two-sample testing, independence testing \citep{bach2002kernel,gretton2005measuring,gretton2012kernel}, and causal inference \citep{singh2019kernel,singh2020kernel}.

\textit{Error bounds for KRR.} The minimax convergence rates for KRR in $L_2$  are thoroughly documented in the current literature. More recently, \cite{fischer2020sobolev} extended these rates to Sobolev norms without requiring the regression function to be contained in the hypothesis space.  For more recent work on the convergence rate for KRR, please refer to \cite{yang2017frequentist, wang2021inference, cui2021generalization, marteau2019beyond, talwai2022sobolev, zhang2023optimality}. In recent years, there has been significant interest in characterizing the learning curve for KRR, which captures the magnitude of the generalization error as it fluctuates in response to regularization parameters. Several works (e.g., \cite{bordelon2020spectrum, cui2021generalization}) depicted the learning curve of KRR under the Gaussian design. Subsequently, these results were extended to a more general random design; see \cite{ loureiro2021learning}. It has been discovered in practice and reported in the literature \citep{bauer2007regularization,gerfo2008spectral} that incorporating extra smoothness and refining the qualifications of the algorithm could yield a higher convergence rate for KRR. Recent research, including works by \citet{dicker2017kernel, li2022saturation, lian2021distributed, lin2020optimal, tuo2020improved}, further explores strategies for achieving this improved convergence rate.

Another line of research relevant to this paper explores linear functional regression, as detailed in \citep{li2020inference, lv2023kernel, sun2018optimal}. These studies focus on the linear functional defined as the 
$L_2$ inner product of the input data with a slope function. Recently, \cite{koltchinskii2023functional} demonstrated the asymptotic normality of smooth functionals with plug-in estimators, which relies on the assumption that the plug-in estimator can be well approximated by a normal random variable. For further literature on functional linear regression with special structures, please refer to \citep{cui2020partially, balasubramanian2022unified}.

\textit{Statistical inference for KRR.}  Another approach uses KRR for statistical inference, often investigating Gaussian approximation for KRR and its variants. More recently,  \cite{singhkernel} proposed a uniform confidence band for KRR, which also provided the pointwise asymptotic normality for KRR as a byproduct.  In econometric literature, exploring the linear functional form includes investigating other nonparametric regression estimators like B-spline and wavelet models.  \cite{cattaneo2013optimal} provided the uniform Bahadur representation for linear functionals of local polynomial partitioning estimators. These results are contingent upon H\"older conditions for both the underlying function and its derivatives. In a related context, \citep{belloni2015some,chen2015optimal} offered similar theoretical results under more general conditions.

\section{Additional Convergence Results and Discussion}\label{sec:results}

This section provides additional convergence results and discussion that supplement the findings presented in the main article.

\subsection{Supporting Lemmas for Bias and Variance in Section \ref{sec:BnV}}
In this part we introduce a major auxiliary problem that plays a central role in our theory. 

The first goal of this work is to quantify the bias and variance. It turns out that these quantities are intimately related to an auxiliary problem, called the \textit{noiseless kernel ridge regression}.

\begin{definition}
    Given KRR problem (\ref{KRR:1}) and function $g\in\mathcal{H}$, the associated noiseless KRR problem is defined as
    \begin{eqnarray}
        \hat{g}=\operatorname*{argmin}_{v\in\mathcal{H}}\frac{1}{n}\sum_{i=1}^n(g(x_i)-v(x_i))^2+\lambda \|v\|^2_{\mathcal{H}},
    \end{eqnarray}
    where $\lambda$ takes the same value as in (\ref{KRR:1}).
\end{definition}

Note that the target function of the noiseless KRR is $g$, not $f$.
The following lemma establishes the relationship between the bias and variance, and the noiseless KRR. For notational simplicity, we will denote $\|v\|^2_n=\frac{1}{n}\sum_{i=1}^n v^2(x_i)$ for any $v$.

\begin{lemma}\label{lemma:relationship}
The following formulas are true:
\begin{eqnarray}
    |\operatorname{BIAS}|&=&|\langle\hat{g}-g,f\rangle_\mathcal{H}| \leq\|\hat{g}-g\|_{\mathcal{H}}\|f\|_{\mathcal{H}},\label{cauchy}\\
    \operatorname{VAR}&=&\sigma^2n^{-1}\lambda^{-2}\|\hat{g}-g\|^2_n.\nonumber
\end{eqnarray}
\end{lemma}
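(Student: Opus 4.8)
The strategy is to write out $\hat{g}$ explicitly via the representer theorem and then identify the two expressions on the right-hand side with the matrix formulas for $\operatorname{BIAS}$ and $\operatorname{VAR}$ already derived in Section~\ref{sec:BnV}. Applying the representer theorem to the noiseless KRR problem gives
\[
\hat{g}(x)=K(x,X)(K(X,X)+\lambda n I)^{-1}g(X),
\]
exactly as for $\hat f$ but with $Y$ replaced by $g(X)$. Hence for any $v\in\mathcal{H}$ we have $\langle\hat g,v\rangle_\mathcal{H}=g^T(X)(K(X,X)+\lambda n I)^{-1}v(X)$, using the reproducing property $\langle K(\cdot,x_i),v\rangle_\mathcal{H}=v(x_i)$.

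\textbf{The bias identity.} Take $v=f$ in the above. Then $\langle\hat g,f\rangle_\mathcal{H}=g^T(X)(K(X,X)+\lambda n I)^{-1}F$, while $\langle g,f\rangle_\mathcal{H}=\langle f,g\rangle_\mathcal{H}$. Subtracting and comparing with the definition (\ref{key:bias}) of $\operatorname{BIAS}$ yields $\operatorname{BIAS}=\langle\hat g-g,f\rangle_\mathcal{H}$, and the inequality $|\langle\hat g-g,f\rangle_\mathcal{H}|\le\|\hat g-g\|_\mathcal{H}\|f\|_\mathcal{H}$ is just Cauchy--Schwarz in $\mathcal{H}$.

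\textbf{The variance identity.} From (\ref{key:VAR}), $\operatorname{VAR}=\sigma^2 g^T(X)(K(X,X)+\lambda n I)^{-2}g(X)$. Here I would use the first-order (stationarity) condition for the noiseless KRR problem, which reads $\frac1n K(X,X)(g(X)-\hat g(X))=\lambda\,\hat g$ evaluated suitably, or more directly compute the residual vector: with $M:=K(X,X)+\lambda n I$, one has $\hat g(X)=K(X,X)M^{-1}g(X)$, so the residual is $g(X)-\hat g(X)=(I-K(X,X)M^{-1})g(X)=\lambda n\,M^{-1}g(X)$. Therefore
\[
\|\hat g-g\|_n^2=\tfrac1n\|g(X)-\hat g(X)\|_2^2=\tfrac1n(\lambda n)^2 g^T(X)M^{-2}g(X)=\lambda^2 n\, g^T(X)M^{-2}g(X),
\]
and multiplying by $\sigma^2 n^{-1}\lambda^{-2}$ recovers exactly $\operatorname{VAR}$.

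\textbf{Main obstacle.} There is no real obstacle here; the only point requiring a little care is the algebraic identity $I-K(X,X)(K(X,X)+\lambda n I)^{-1}=\lambda n(K(X,X)+\lambda n I)^{-1}$, which holds because $K(X,X)$ and $(K(X,X)+\lambda n I)^{-1}$ commute (both are functions of the same symmetric positive semidefinite matrix), and the invertibility of $K(X,X)+\lambda n I$ for $\lambda>0$. One should also note $\|\hat g-g\|_n^2$ depends only on the values $\hat g(x_i)-g(x_i)$, which is why only the vector $g(X)$ and not the full function $g$ enters. Both identities are then immediate consequences of matching matrix expressions.
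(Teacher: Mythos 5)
Your proof is correct and follows essentially the same route as the paper: both derive the bias identity from the representer formula for $\hat g$ together with the reproducing property, and both obtain the variance identity from the residual relation $g(X)-\hat g(X)=\lambda n\,(K(X,X)+\lambda nI)^{-1}g(X)$. (A minor remark: the identity $I-K(X,X)M^{-1}=\lambda n M^{-1}$ follows directly from $(M-K(X,X))M^{-1}=\lambda n M^{-1}$, so no commutativity argument is needed.)
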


It is worth noting that Lemma \ref{lemma:relationship} does not postulate any assumptions on the input points $X$. These points can be arbitrary: either deterministic or random.

To make Lemma \ref{lemma:relationship} useful, it is critical to establish the rates of convergence of $\|\hat{g}-g\|_n$ and $\|\hat{g}-g\|_{\mathcal{H}}$. Under a standard theory (in the sense of a minimax rate of convergence), we can only have $\|\hat{g}-g\|_{\mathcal{H}}=O(1)$, which is insufficient. The key here is: if $g$ is ``smoother'' than the baseline smoothness of $\mathcal{H}$, $\|\hat{g}-g\|_n$ and $\|\hat{g}-g\|_{\mathcal{H}}$ may decay faster than their minimax rates. Such a result is called an \textit{improved rate of convergence}. Improved rates are widely available for methodologies with a variational or optimization-based formulation, such as finite element methods \citep{brenner2008mathematical} and radial basis function approximation \citep{wendland2004scattered}. In statistics, it was also discovered long ago that extra smoothness and boundary conditions could yield a higher convergence rate for smoothing splines \citep{wahba1975periodic}. Such extra conditions are referred to as the source conditions in the machine learning literature \cite{bauer2007regularization,li2022saturation,rastogi2017optimal}. Recent advances have demonstrated the general ideas to pursue an improved convergence rate for KRR \citep{dicker2017kernel,fischer2020sobolev,guo2017learning,lin2017distributed,tuo2020improved}. 
In this work, we will adopt the approach of \cite{tuo2020improved} to derive the improved rates, which leads to results in terms of both the $\|\cdot\|_n$ and $\|\cdot\|_{\mathcal{H}}$ norms. 


We also highlight that the Cauchy-Schwarz inequality used in (\ref{cauchy}) is sharp: the equality holds if $f$ is a multiple of $\hat{g}-g$. This implies that $\|\hat{g}-g\|_\mathcal{H}$ is the \textit{worst-case bias} over the unit ball of $\mathcal{H}$. To be more precise, when referring to the worst-case bias, we imagine the application of KRR to a family of
models having the form of equation (\ref{model}), but with different $f$. Nevertheless, the same $g$ and parameter $\lambda$ are used for each model. For each $f$, denote the corresponding bias by $\operatorname{BIAS}_f$, and then we immediately have Corollary \ref{coro:worstcase}. 

\begin{corollary}\label{coro:worstcase}
    $\sup_{\|f\|_\mathcal{H}\leq 1}|\operatorname{BIAS}_f|=\|\hat{g}-g\|_\mathcal{H}.$
\end{corollary}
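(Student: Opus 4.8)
The claim to establish is Corollary \ref{coro:worstcase}, namely $\sup_{\|f\|_\mathcal{H}\leq 1}|\operatorname{BIAS}_f|=\|\hat{g}-g\|_\mathcal{H}$. Since $g$ and $\lambda$ (and hence the noiseless KRR solution $\hat{g}$) are held fixed as $f$ varies, Lemma \ref{lemma:relationship} gives the identity $\operatorname{BIAS}_f=\langle \hat{g}-g,f\rangle_\mathcal{H}$ for every $f$, so the whole problem reduces to computing $\sup_{\|f\|_\mathcal{H}\leq 1}|\langle \hat{g}-g,f\rangle_\mathcal{H}|$, which is precisely the operator-norm characterization of a bounded linear functional on a Hilbert space.

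The plan is therefore a two-sided argument. First, for the upper bound, I would invoke the Cauchy--Schwarz inequality exactly as in \eqref{cauchy}: for any $f$ with $\|f\|_\mathcal{H}\leq 1$ we have $|\operatorname{BIAS}_f|=|\langle \hat{g}-g,f\rangle_\mathcal{H}|\leq \|\hat{g}-g\|_\mathcal{H}\|f\|_\mathcal{H}\leq \|\hat{g}-g\|_\mathcal{H}$, and taking the supremum over the unit ball yields $\sup_{\|f\|_\mathcal{H}\leq 1}|\operatorname{BIAS}_f|\leq \|\hat{g}-g\|_\mathcal{H}$. Second, for the matching lower bound, I would exhibit a specific $f$ achieving the bound: if $\hat{g}-g\neq 0$, take $f^\star=(\hat{g}-g)/\|\hat{g}-g\|_\mathcal{H}$, which lies in $\mathcal{H}$ and has unit norm; then $\operatorname{BIAS}_{f^\star}=\langle \hat{g}-g,f^\star\rangle_\mathcal{H}=\|\hat{g}-g\|_\mathcal{H}$, so the supremum is at least $\|\hat{g}-g\|_\mathcal{H}$. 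Combining the two inequalities gives equality. The degenerate case $\hat{g}-g=0$ is trivial: both sides are zero (the bias vanishes identically).

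Honestly, there is no real obstacle here — the statement is essentially the Riesz representation / dual-norm identity applied to the functional $v\mapsto\langle \hat{g}-g,v\rangle_\mathcal{H}$, with Lemma \ref{lemma:relationship} doing all the substantive work of identifying $\operatorname{BIAS}_f$ with that functional. The only point requiring a word of care is making explicit that the maximizing direction $f^\star$ is a legitimate competitor, i.e.\ that $\hat{g}-g\in\mathcal{H}$ (immediate, since both $\hat{g}$ and $g$ belong to $\mathcal{H}$) and that it attains, rather than merely approaches, the supremum — so the result is a genuine maximum. If one prefers to avoid the case split, one can instead note that for any $\varepsilon>0$ the normalized vector argument shows the supremum exceeds $\|\hat{g}-g\|_\mathcal{H}-\varepsilon$ whenever $\|\hat{g}-g\|_\mathcal{H}>0$, but the explicit maximizer is cleaner. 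This is why the remark preceding the corollary already flags that the Cauchy--Schwarz step in \eqref{cauchy} is sharp with equality exactly when $f$ is a scalar multiple of $\hat{g}-g$; the corollary simply records the consequence.
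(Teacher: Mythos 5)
Your proof is correct and follows exactly the paper's own argument: identify $\operatorname{BIAS}_f=\langle\hat{g}-g,f\rangle_\mathcal{H}$ via Lemma \ref{lemma:relationship}, bound above by Cauchy--Schwarz, and attain equality with $f$ a unit multiple of $\hat{g}-g$. The only difference is that you spell out the maximizer and the degenerate case explicitly, which the paper leaves implicit.
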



\subsection{Comments on Assumption \ref{assum:f}}\label{sec:A2}
Assumption \ref{assum:f} is a critical condition to ensure an improved rate of convergence for $\hat{g}-g$, by imposing an extra smoothness condition on $g$. Technically, Assumption \ref{assum:f} holds if $g$ lies in a function space $\mathcal{G}$ such that the dual space of $\mathcal{G}$ (with respect to the inner product of $\mathcal{H}$), denoted as $\mathcal{G}^*$, is an intermediate space between $L_2$ and $\mathcal{H}$, i.e., $L_2\supset \mathcal{G}^*\supset \mathcal{H}\supset \mathcal{G}$. In this case,
\[\langle g,v\rangle_\mathcal{H}\leq \|g\|_{\mathcal{G}}\|v\|_{\mathcal{G}^*}.\]
If an ``interpolation inequality'' with the form
\begin{eqnarray}\label{Ineq}
    \|v\|_{\mathcal{G}^*}\leq C\|v\|_{L_2}^\delta\|v\|_\mathcal{H}^{1-\delta}
\end{eqnarray}
holds for some $\delta\in (0,1]$, Assumption \ref{assum:f} is valid. In general, an interpolation inequality is an inequality of the form $\|v\|_{1}\leq C\|v\|_2^{1-\theta}\|v\|_3^{\theta}$ for $0<\theta<1$, which describes the relative strength of the norms $\|\cdot\|_1,\|\cdot\|_2$ and $\|\cdot\|_3$. For example, the following inequality, which follows simply from H\"older's inequality, links three $L_p$ norms: 
\begin{eqnarray}\label{LpInter}
    \|v\|_{L_{p_\theta}}\leq \|v\|^{1-\theta}_{L_{p_0}}\|v\|^\theta_{L_{p_1}},
\end{eqnarray}
where the indices $1\leq p_0<p_1\leq \infty$ and $0<\theta<1$ satisfy
\begin{eqnarray}\label{LpInter2}
   \frac{1}{p_\theta}=\frac{1-\theta}{p_0}+\frac{\theta}{p_1}. 
\end{eqnarray}
In view of (\ref{LpInter})-(\ref{LpInter2}), we can regard space $L_{p_\theta}$ as an ``interpolation'' of spaces $L_{p_0}$ and $L_{p_1}$, and this is where its name derives from. 
In Section \ref{sec:point}, we use the interpolation inequality (\ref{interpolation}) that links the $L_2,L_\infty$ and $H^m$ norms. A related field from functional analysis is referred to as ``interpolation theory'' (e.g., the Riesz-Thorin theorem). An interpolation inequality is usually a consequence of the corresponding interpolation theory.

Besides using interpolation inequalities, Assumption \ref{assum:f} can be verified directly when a series expansion is applied for $g$. See Proposition \ref{Prop:series} in Section \ref{sec: eigen_series}.

\subsection{Further Improvements in Bias}\label{sec:further}

In case $f$ also possesses an extra smoothness, the bias upper bound in Theorem \ref{Coro:rate} can be further improved. Assumption \ref{assum:further} is analogous to Assumption \ref{assum:f}.

\begin{assumption}\label{assum:further}
    There exist constants $C_f>0$ and $\gamma\in(0,1]$, such that for each $v\in\mathcal{H}$,
    \begin{eqnarray}\label{smoothnessf}
        |\langle f,v\rangle_{\mathcal{H}}|\leq C_f\|v\|^{\gamma}_{L_2}\|v\|^{1-\gamma}_{\mathcal{H}}.
    \end{eqnarray}
\end{assumption}

\begin{theorem}\label{Coro:further}
    Under the conditions and notation of Theorem \ref{Coro:rate} in addition to Assumption \ref{assum:further}, we have
    $|\operatorname{BIAS}|=O_\mathbb{P}(C_f\lambda^\frac{\gamma+\delta}{2}).$  
\end{theorem}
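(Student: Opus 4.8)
The plan is to combine the representation from Lemma~\ref{lemma:relationship} with the improved rates of convergence for the noiseless KRR error $\hat g-g$, but now measuring the overlap $\langle \hat g-g, f\rangle_{\mathcal H}$ more carefully by exploiting the extra smoothness of $f$ encoded in Assumption~\ref{assum:further}. The starting point is the exact identity
\begin{eqnarray}
\operatorname{BIAS}=\langle \hat g-g, f\rangle_{\mathcal H}.\nonumber
\end{eqnarray}
Rather than bounding this by Cauchy--Schwarz as in (\ref{cauchy}), I would apply Assumption~\ref{assum:further} directly with $v=\hat g-g\in\mathcal H$, giving
\begin{eqnarray}
|\operatorname{BIAS}|\leq C_f\,\|\hat g-g\|_{L_2}^{\gamma}\,\|\hat g-g\|_{\mathcal H}^{1-\gamma}.\nonumber
\end{eqnarray}
So the task reduces to producing sharp-enough $O_{\mathbb P}$ rates for $\|\hat g-g\|_{L_2}$ and $\|\hat g-g\|_{\mathcal H}$ under the hypotheses of Theorem~\ref{Coro:rate} (in particular $\lambda\gtrsim n^{-2m/d}$ and Assumptions~\ref{assum:matern}--\ref{assum:norms}).

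Second, I would invoke the improved-rate estimates for the noiseless KRR that underlie the proof of Theorem~\ref{Coro:rate}: under Assumption~\ref{assum:f} with parameter $\delta$, the noiseless KRR estimator satisfies $\|\hat g-g\|_{\mathcal H}=O_{\mathbb P}(\lambda^{\delta/2})$ (this is exactly what yields (\ref{BIASupper}) via (\ref{cauchy}), since $\|f\|_{\mathcal H}$ is of constant order there) and, correspondingly, an $L_2$-type improved rate of the form $\|\hat g-g\|_{L_2}=O_{\mathbb P}(\lambda^{(\delta+1)/2})$ — the $L_2$ norm gains an extra half power of $\lambda$ relative to the $\mathcal H$ norm, which is the standard gain between energy-norm and $L_2$-norm error for a regularization method of this type (and is precisely what makes the Cauchy--Schwarz step in (\ref{cauchy}) wasteful). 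I would either cite these from the supporting lemmas in Section~\ref{sec:proofs_main} or re-derive them from the equivalent-kernel/source-condition machinery of \cite{tuo2020improved} that the paper says it adopts. Plugging these into the displayed interpolation bound gives
\begin{eqnarray}
|\operatorname{BIAS}|=O_{\mathbb P}\!\left(C_f\,\lambda^{\frac{\gamma(\delta+1)}{2}}\cdot\lambda^{\frac{(1-\gamma)\delta}{2}}\right)=O_{\mathbb P}\!\left(C_f\,\lambda^{\frac{\gamma\delta+\gamma+\delta-\gamma\delta}{2}}\right)=O_{\mathbb P}\!\left(C_f\,\lambda^{\frac{\gamma+\delta}{2}}\right),\nonumber
\end{eqnarray}
which is the claimed bound.

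The main obstacle is making the $L_2$ improved rate $\|\hat g-g\|_{L_2}=O_{\mathbb P}(\lambda^{(\delta+1)/2})$ precise and legitimate under exactly the stated assumptions, since the paper's Assumption~\ref{assum:norms} only gives control of $\|\cdot\|_n$ versus $\|\cdot\|_{L_2}$ on a high-probability event $\Xi_\epsilon$, and the natural object that the variational characterization of noiseless KRR controls is $\|\hat g-g\|_n$, not $\|\hat g-g\|_{L_2}$. So one genuinely needs to pass from empirical norm to $L_2$ norm via (\ref{l2_to_n}), being careful that the ``remainder'' term $C_\epsilon n^{-m/d}\|\hat g-g\|_{\mathcal H}$ in that inequality is dominated by the main term under $\lambda\gtrsim n^{-2m/d}$; this is where the lower bound on $\lambda$ is used, and it is the same mechanism already exploited in the proof of Theorem~\ref{Coro:rate}. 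A secondary technical point is ensuring the two $O_{\mathbb P}$ bounds hold \emph{jointly} (on a common high-probability event), so that the product bound is valid — this is routine since both come from the same event $\Xi_\epsilon$ together with a bound on the noise, but it should be stated. Once these ingredients are in place the result follows by the one-line interpolation computation above; I would present it in that order: recall the identity, apply Assumption~\ref{assum:further}, quote/establish the joint improved rates for $\|\hat g-g\|_{L_2}$ and $\|\hat g-g\|_{\mathcal H}$, and combine.
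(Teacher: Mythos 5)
Your proposal is correct and follows essentially the same route as the paper: the paper's proof of Theorem \ref{Coro:further} is precisely "use (\ref{smoothnessf}) in place of the Cauchy--Schwarz step in (\ref{cauchy}), then apply Lemma \ref{Th:improved}," which gives $\|\hat g-g\|_{\mathcal H}=O_{\mathbb P}(\lambda^{\delta/2})$ and, after passing from $\|\cdot\|_n$ to $\|\cdot\|_{L_2}$ on $\Xi_\epsilon$ under $\lambda\gtrsim n^{-2m/d}$, $\|\hat g-g\|_{L_2}=O_{\mathbb P}(\lambda^{(1+\delta)/2})$, exactly as you describe. Your exponent computation and your attention to the empirical-to-$L_2$ norm transfer and to the joint validity of the bounds on the common event $\Xi_\epsilon$ are all consistent with the paper's argument.
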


In view of Corollary \ref{Coro:further}, in the presence of Assumption \ref{assum:further}, the best order of magnitude of $\lambda$ to balance the bias and the variance is $\lambda\asymp n^{-\frac{1}{\gamma+1}}$. In particular, if $\gamma=1$, one can choose $\lambda\asymp n^{-\frac{1}{2}}$. However, as $\gamma$ is unknown in practice, it is difficult to take advantage of this improved rate in statistical inference.

\subsection{Discussion on the Semiparametric Effect}\label{sec:semiparametric}

As shown in Proposition \ref{prop:L2}, when $\delta=1$, there exists $h$ such that $\langle g,v\rangle_{\mathcal{H}}=\langle h,v\rangle_{L_2}$ for each $v\in\mathcal{H}$. In this section, we will discuss the known results from the standard semiparametric statistical theory through the lens of the proposed approach. In the literature, it is often assumed that the input points $x_i$'s are independent and identical random samples. Denote the probability density function of $x_1$ by $p_X$. With the techniques articulated in \cite{mammen1997penalized,tuo2015efficient}, one can prove 
\begin{eqnarray}\label{semi}
    \sqrt{n}\int_\Omega (\hat{f}-f)(x)h(x)dx\xrightarrow{\mathscr{L}} N\left(0,\sigma^2\int_\Omega h^2(x)/p_X(x)dx\right),
\end{eqnarray}
under $\lambda=o_p(n^{-1/2})$ in addition to some other conditions. Among these conditions, the most important one to our attention is 
\begin{eqnarray}\label{Hcondtion}
    h/p_X\in\mathcal{H}.
\end{eqnarray}
The objective of this part is to further understand (\ref{semi}) together with the condition (\ref{Hcondtion}).
Clearly, (\ref{semi}) implies that $\operatorname{BIAS}=o_\mathbb{P}(n^{-1/2})$, which cannot be obtained by simply applying Theorem \ref{Coro:rate} under the condition $\lambda=o(n^{-1/2})$. This implies that further improvement in the rate of convergence emerges. 

To explain the actual reason, we should take the perspective of numerical integration. Define
\begin{eqnarray}\label{NI}
    \mathscr{E}:=\int_\Omega (\mathbb{E}_E\hat{f}-f)(x)h(x)dx-\frac{1}{n}\sum_{i=1}^n (\mathbb{E}_E\hat{f}-f)(x_i)\frac{h(x_i)}{p_X(x_i)},
\end{eqnarray}
the error of approximating the integral $\int_\Omega (\hat{f}-f)(x)h(x)dx$ with the summation $n^{-1}\sum_{i=1}^n (\hat{f}-f)(x_i)h(x_i)/p_X(x_i)$. Under our setting, $x_i$'s are not necessarily random, and $p_X$ can be any function of our choice with the goal of making $|\mathscr{E}|$ small. 

We will first show that the second term in (\ref{NI}) is small if $h/p_X\in\mathcal{H}$.

\begin{theorem}\label{Prop:semi}
    If $h/p_X\in\mathcal{H}$,
    \[\left|\frac{1}{n}\sum_{i=1}^n (\mathbb{E}_E\hat{f}-f)(x_i)\frac{h(x_i)}{p_X(x_i)}\right|\leq \lambda \|f\|_\mathcal{H}\|h/p_X\|_{\mathcal{H}}.\]
\end{theorem}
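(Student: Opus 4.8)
The plan is to recognize the sum $\frac{1}{n}\sum_{i=1}^n (\mathbb{E}_E\hat{f}-f)(x_i)\frac{h(x_i)}{p_X(x_i)}$ as an empirical inner product and then rewrite it using the noiseless KRR machinery from Lemma \ref{lemma:relationship}. Write $\phi := h/p_X$, which lies in $\mathcal{H}$ by hypothesis, so that the sum equals $\langle \mathbb{E}_E\hat{f}-f, \phi\rangle_n$ in the empirical inner product notation. Next I would recall that $\mathbb{E}_E\hat{f}$ is precisely the noiseless KRR fit to the target $f$: indeed, from the representer expressions in Section \ref{sec:BnV}, $\mathbb{E}_E\hat{f}(x) = K(x,X)(K(X,X)+\lambda n I)^{-1}F$, which is exactly the minimizer of $\frac{1}{n}\sum_{i=1}^n (f(x_i)-v(x_i))^2 + \lambda\|v\|_\mathcal{H}^2$ over $v\in\mathcal{H}$. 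Call this fit $\hat{f}_{\mathrm{nl}}$, so the quantity to bound is $\langle \hat{f}_{\mathrm{nl}} - f, \phi\rangle_n$.

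The key step is to exploit the first-order optimality (normal) equations of the noiseless KRR problem. Stationarity of $v\mapsto \frac{1}{n}\sum_i (f(x_i)-v(x_i))^2 + \lambda\|v\|_\mathcal{H}^2$ at $v=\hat{f}_{\mathrm{nl}}$ gives, for every $w\in\mathcal{H}$,
\begin{eqnarray}
\frac{1}{n}\sum_{i=1}^n \bigl(\hat{f}_{\mathrm{nl}}(x_i)-f(x_i)\bigr)w(x_i) + \lambda\langle \hat{f}_{\mathrm{nl}}, w\rangle_\mathcal{H} = 0,\nonumber
\end{eqnarray}
i.e. $\langle \hat{f}_{\mathrm{nl}}-f, w\rangle_n = -\lambda\langle \hat{f}_{\mathrm{nl}}, w\rangle_\mathcal{H}$. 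Taking $w=\phi = h/p_X\in\mathcal{H}$ yields
\begin{eqnarray}
\frac{1}{n}\sum_{i=1}^n (\mathbb{E}_E\hat{f}-f)(x_i)\frac{h(x_i)}{p_X(x_i)} = -\lambda\bigl\langle \hat{f}_{\mathrm{nl}}, h/p_X\bigr\rangle_\mathcal{H}.\nonumber
\end{eqnarray}
Then I would bound the right-hand side by Cauchy--Schwarz in $\mathcal{H}$: $\lambda|\langle \hat{f}_{\mathrm{nl}}, h/p_X\rangle_\mathcal{H}| \le \lambda\|\hat{f}_{\mathrm{nl}}\|_\mathcal{H}\|h/p_X\|_\mathcal{H}$, and finally invoke the elementary fact that the noiseless KRR fit is a contraction in $\mathcal{H}$-norm toward its target, $\|\hat{f}_{\mathrm{nl}}\|_\mathcal{H}\le \|f\|_\mathcal{H}$ (which follows because $\hat{f}_{\mathrm{nl}}$ is the minimizer and plugging in $v=f$ shows $\lambda\|\hat{f}_{\mathrm{nl}}\|_\mathcal{H}^2 \le \frac{1}{n}\sum_i(f(x_i)-\hat{f}_{\mathrm{nl}}(x_i))^2 + \lambda\|\hat{f}_{\mathrm{nl}}\|_\mathcal{H}^2 \le \lambda\|f\|_\mathcal{H}^2$). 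Combining these gives the claimed bound $\lambda\|f\|_\mathcal{H}\|h/p_X\|_\mathcal{H}$.

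I expect no serious obstacle here; this is a short argument once the right objects are named. The only point requiring mild care is the identification of $\mathbb{E}_E\hat{f}$ with the noiseless KRR fit of $f$ — this is immediate from linearity of the representer formula in $Y$ and the fact that $\mathbb{E}_E Y = F$ — and the verification that the stationarity condition is valid with the test direction $w$ ranging over all of $\mathcal{H}$, which is standard for the regularized least-squares functional since it is strictly convex and coercive on $\mathcal{H}$. One could alternatively carry out the whole computation in matrix form using $\mathbb{E}_E\hat{f}(X) = K(X,X)(K(X,X)+\lambda n I)^{-1}F$ and the identity $K(X,X)(K(X,X)+\lambda n I)^{-1} - I = -\lambda n(K(X,X)+\lambda n I)^{-1}$, but the variational route above is cleaner and makes the role of condition (\ref{Hcondtion}) transparent.
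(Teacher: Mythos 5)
Your proof is correct and rests on the same ingredients as the paper's: identifying $\mathbb{E}_E\hat f$ with the noiseless KRR fit of $f$ and exploiting the basic inequality $\frac{1}{n}\sum_i(f-\hat f_{\mathrm{nl}})^2(x_i)+\lambda\|\hat f_{\mathrm{nl}}\|_\mathcal{H}^2\le\lambda\|f\|_\mathcal{H}^2$. The only (cosmetic) difference is that you pass through the normal equations and apply Cauchy--Schwarz in $\mathcal{H}$ to $\lambda\langle\hat f_{\mathrm{nl}},h/p_X\rangle_\mathcal{H}$, whereas the paper writes the sum as $\lambda f^T(X)(K(X,X)+\lambda nI)^{-1}(h/p_X)(X)$ and applies Cauchy--Schwarz to the quadratic form in $(K(X,X)+\lambda nI)^{-1}$; both reductions yield the identical final bound.
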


Thus, regarding $\|f\|_\mathcal{H}$ and $\|h/p_X\|_{\mathcal{H}}$ as constants,
\begin{eqnarray*}
    |\operatorname{BIAS}|&=&\left|\int_\Omega (\mathbb{E}_E\hat{f}-f)(x)h(x)dx\right|\leq|\mathscr{E}|+\left|\frac{1}{n}\sum_{i=1}^n (\mathbb{E}_E\hat{f}-f)(x_i)\frac{h(x_i)}{p_X(x_i)}\right|\\
    &=&|\mathscr{E}|+O(\lambda).
\end{eqnarray*}
In case $x_i$'s are indeed independent copies with density $p_X$, the standard empirical process theory \citep{geer2000empirical} can show that $|\mathscr{E}|=o_\mathbb{P}(n^{-1/2})$, provided that $\int_\Omega(\mathbb{E}_E\hat{f}-f)^2(x)p_X(x)dx=o_\mathbb{P}(1)$. Hence we have recovered the results from the semiparametric statistical literature. 
If the input points $X$ are carefully chosen, the integration error can be much smaller than that from a Monte Carlo sampling. For example, when $\Omega=[0,1]$ and $X$ are evenly distributed, choosing $p_X=1$, then $|\mathscr{E}|$ can be as small as $O(n^{-2})$. In this situation, $|\mathscr{E}|$ can be smaller than $O(\lambda)$ when $\lambda$ is not too small, which implies that the lower bound in Theorem \ref{Coro:biasrate} can be reached.

\subsection{Expressions in terms of the Eigensystem}\label{sec: eigen_series}
Suppose Assumption \ref{assum:matern} is true. Let $\rho_1\geq \rho_2\geq \cdots$ and $\eta_1,\eta_2,\ldots$ be the eigenvalues and $L_2$-normalized eigenfunctions of the integral operator $L(v)=\int_\Omega K(\cdot,x)v(x)dx$. In this case, we have the representation
\begin{eqnarray}\label{RKHSnorm}
    \left\|\sum_{i=1}^\infty c_i \eta_i\right\|^2_\mathcal{H}=\sum_{i=1}^\infty\frac{c_i^2}{\rho_i},
\end{eqnarray}
for any $c_i\in\mathbb{R}$ such that the right side of (\ref{RKHSnorm}) is convergent. On the other hand, $\mathcal{H}$ is equal to all functions in the form of (\ref{RKHSnorm}) with a finite norm. Proposition \ref{Prop:series} links the series presentation of functions with Assumption \ref{assum:f}.

\begin{proposition}\label{Prop:series}
    Under Assumption \ref{assum:matern},
    suppose $w=\sum_{i=1}^n c_i \eta_i\in\mathcal{H}$ satisfies
    \begin{eqnarray}\label{Hkappanorm}
        \|w\|_{\mathcal{H},\kappa}^2:=\sum_{i=1}^\infty \frac{c_i^2}{\rho^{1+\kappa}}<\infty,
    \end{eqnarray}
    for some $\kappa\in(0,1]$. Then for any $v\in\mathcal{H}$,
    \[|\langle w,v\rangle_\mathcal{H}|\leq \|w\|_{\mathcal{H},\kappa}\|v\|^{\kappa}_{L_2}\|v\|^{1-\kappa}_{\mathcal{H}}.\]
\end{proposition}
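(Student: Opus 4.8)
The statement to prove is Proposition~\ref{Prop:series}: given the eigen-expansion $w=\sum_i c_i\eta_i$ with the weighted norm condition $\|w\|_{\mathcal{H},\kappa}^2=\sum_i c_i^2/\rho_i^{1+\kappa}<\infty$, we must show $|\langle w,v\rangle_\mathcal{H}|\le \|w\|_{\mathcal{H},\kappa}\|v\|_{L_2}^\kappa\|v\|_{\mathcal{H}}^{1-\kappa}$ for every $v\in\mathcal{H}$.

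\medskip

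\noindent\textbf{Plan.} The plan is to expand everything in the orthonormal eigenbasis $\{\eta_i\}$ and reduce the inequality to a one-parameter H\"older estimate on the sequence of Fourier coefficients. First I would write $v=\sum_i b_i\eta_i$, so that, using the representation (\ref{RKHSnorm}) for the RKHS inner product, $\langle w,v\rangle_\mathcal{H}=\sum_i c_i b_i/\rho_i$. Next I would split the summand as
\begin{eqnarray*}
\frac{c_i b_i}{\rho_i}=\left(\frac{c_i}{\rho_i^{(1+\kappa)/2}}\right)\left(b_i^{\kappa}\right)\left(\frac{b_i^{1-\kappa}}{\rho_i^{(1-\kappa)/2}}\right),
\end{eqnarray*}
where I have used $|b_i|=|b_i|^\kappa|b_i|^{1-\kappa}$ and $\rho_i^{-1}=\rho_i^{-(1+\kappa)/2}\rho_i^{-(1-\kappa)/2}$. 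Then I would apply the three-term H\"older inequality with exponents $2$, $1/\kappa$, and $2/(1-\kappa)$ (these are conjugate since $\tfrac12+\kappa+\tfrac{1-\kappa}{2}=1$). This yields
\begin{eqnarray*}
|\langle w,v\rangle_\mathcal{H}|\le\left(\sum_i\frac{c_i^2}{\rho_i^{1+\kappa}}\right)^{1/2}\left(\sum_i b_i^2\right)^{\kappa/2}\left(\sum_i\frac{b_i^2}{\rho_i}\right)^{(1-\kappa)/2},
\end{eqnarray*}
and the three factors are exactly $\|w\|_{\mathcal{H},\kappa}$, $\|v\|_{L_2}^{\kappa}$ (by Parseval, since the $\eta_i$ are $L_2$-normalized), and $\|v\|_{\mathcal{H}}^{1-\kappa}$ (again by (\ref{RKHSnorm})), which is the claim.

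\medskip

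\noindent\textbf{Obstacles.} The argument is essentially a bookkeeping exercise, so the only real points requiring care are: (i) checking the H\"older exponents are admissible, i.e.\ all three are $\ge 1$ — this needs $\kappa\le 1$ and $1-\kappa\le 1$, both guaranteed by $\kappa\in(0,1]$, with the degenerate case $\kappa=1$ reducing to an ordinary Cauchy--Schwarz (the third factor becoming the sup/empty and the second becoming $\|v\|_{L_2}$); and (ii) justifying the manipulations when some $b_i=0$ (so that $b_i^{1-\kappa}$ and $b_i^\kappa$ are fine, but one should phrase things with absolute values and the convention $0^s=0$ for $s>0$) and ensuring the interchange of summation is legitimate, which follows because $v\in\mathcal{H}$ forces $\sum_i b_i^2/\rho_i<\infty$ and the hypothesis gives $\sum_i c_i^2/\rho_i^{1+\kappa}<\infty$, so all three sums on the right are finite and the middle sum $\sum b_i^2<\infty$ too. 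I would also remark that the finiteness of the right-hand side a posteriori justifies that the series $\sum_i c_i b_i/\rho_i$ converges absolutely, so $\langle w,v\rangle_\mathcal{H}$ is well-defined by the stated formula. No step here is genuinely hard; the main thing is to present the three-exponent H\"older split cleanly.
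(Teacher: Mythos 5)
Your argument is essentially the paper's: both expand $w$ and $v$ in the eigenbasis, use the representation $\langle w,v\rangle_{\mathcal H}=\sum_i c_ib_i/\rho_i$, and control the sum by H\"older-type estimates; the paper merely packages the bound as Cauchy--Schwarz (pairing $c_i/\rho_i^{(1+\kappa)/2}$ with $b_i/\rho_i^{(1-\kappa)/2}$) followed by a two-term H\"older with exponents $(1/\kappa,1/(1-\kappa))$ applied to $\sum_i a_i^2/\rho_i^{1-\kappa}$, which is exactly your three-factor split performed in two stages. One correction to your write-up: the admissible exponents for the generalized H\"older inequality are $2$, $2/\kappa$, and $2/(1-\kappa)$ (not $1/\kappa$ for the middle one), and the conjugacy check should read $\tfrac12+\tfrac{\kappa}{2}+\tfrac{1-\kappa}{2}=1$; your stated identity $\tfrac12+\kappa+\tfrac{1-\kappa}{2}=1$ is false for $\kappa>0$. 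With that fix the three factors come out as $\|w\|_{\mathcal H,\kappa}$, $\|v\|_{L_2}^{\kappa}$, and $\|v\|_{\mathcal H}^{1-\kappa}$ exactly as you claim, so the proof is correct.
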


\begin{remark}
    A condition equivalent to (\ref{Hkappanorm}) was considered in \cite{dicker2017kernel} to pursue an improved rate. When $\kappa=1$, $\langle w,\cdot\rangle_\mathcal{H}$ is equal to an $L_2$ inner product; see Proposition \ref{prop:L2} and \cite{tuo2020improved,wendland2004scattered}.
\end{remark}

Corollary \ref{Coro:series} is an immediate consequence of Proposition \ref{Prop:series} and Theorem \ref{Coro:rate}.

\begin{corollary}\label{Coro:series}
    Under the conditions of Theorem \ref{Coro:rate} and Proposition \ref{Prop:series}, we have
    \begin{eqnarray*}
        |\operatorname{BIAS}|&=&O_\mathbb{P}(\lambda^{\frac{\kappa}{2}}\|f\|_\mathcal{H}),\\
        \operatorname{VAR}&=&O_\mathbb{P}(\sigma^2n^{-1}\lambda^{\kappa-1}).
    \end{eqnarray*}
    In addition, if $\kappa>\frac{d}{2m}$, $\sigma^2>0$, and $\lambda=o(n^{-1})$, then 
    \begin{eqnarray}\label{seriesAN}
        (\operatorname{VAR})^{-\frac{1}{2}}\langle \hat{f}-f,g\rangle_\mathcal{H}\xrightarrow{\mathscr{L}}N(0,1).
    \end{eqnarray}
\end{corollary}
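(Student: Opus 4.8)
The plan is to treat Corollary \ref{Coro:series} as a genuine corollary: Proposition \ref{Prop:series} is precisely the bridge that converts the eigen-decay condition (\ref{Hkappanorm}) on $g$ into the abstract smoothness condition Assumption \ref{assum:f}, after which the bias and variance bounds are a direct substitution into Theorem \ref{Coro:rate} and the central limit theorem is a direct substitution into Theorem \ref{Th:AN}. So the work is essentially bookkeeping, with the one point requiring care being the verification of the $\lambda$-range hypothesis of Theorem \ref{Th:AN}.

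First I would apply Proposition \ref{Prop:series} with $w=g$. Its conclusion, $|\langle g,v\rangle_\mathcal{H}|\le \|g\|_{\mathcal{H},\kappa}\,\|v\|_{L_2}^{\kappa}\|v\|_\mathcal{H}^{1-\kappa}$ for every $v\in\mathcal{H}$, is exactly (\ref{smoothness}) with $\delta=\kappa$ and $C_g=\|g\|_{\mathcal{H},\kappa}$; hence Assumption \ref{assum:f} holds with $\delta=\kappa$. Since the hypotheses of Theorem \ref{Coro:rate} (Assumptions \ref{assum:matern}--\ref{assum:norms} and $\lambda\gtrsim n^{-2m/d}$) are in force, Theorem \ref{Coro:rate} immediately gives $|\operatorname{BIAS}|=O_\mathbb{P}(\lambda^{\kappa/2}\|f\|_\mathcal{H})$ and $\operatorname{VAR}=O_\mathbb{P}(\sigma^2 n^{-1}\lambda^{\kappa-1})$, which is the first half of the corollary.

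For the central limit theorem I would invoke Theorem \ref{Th:AN}, checking its hypotheses as follows: Assumptions \ref{assum:matern} and \ref{assum:norms} are standing; Assumption \ref{assum:f} with $\delta=\kappa$ was just established; Assumption \ref{assum:tau} is used in its trivial form $\tau=1$, valid because $g\ne 0$ (implicit, since otherwise $\operatorname{VAR}=0$ and the statement is vacuous, and in any case $\|g\|_{\mathcal{H},\kappa}>0$ forces $g\ne0$); $\sigma^2\in(0,\infty)$ is assumed; and $\lambda=o(1)$ follows from $\lambda=o(n^{-1})$. It remains to check (\ref{conditionlambda}), which with $\delta=\kappa$ and $\tau=1$ becomes $\lambda^{-1}=o\bigl(n^{2m/(d+2m(1-\kappa))}\bigr)$; the hypothesis $\kappa>d/(2m)$ is equivalent to $d+2m(1-\kappa)<2m$, so the exponent $2m/(d+2m(1-\kappa))$ strictly exceeds $1$, which is exactly what makes the regime $\lambda=o(n^{-1})$ (and in particular the balanced choice $\lambda\asymp n^{-1}$) admissible. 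Theorem \ref{Th:AN} then yields the $N(0,1)$ limit for the normalized variance term $g^T(X)(K(X,X)+\lambda nI)^{-1}E$. Finally, writing $\langle\hat f-f,g\rangle_\mathcal{H}=g^T(X)(K(X,X)+\lambda nI)^{-1}E+\operatorname{BIAS}$ and noting that $\operatorname{BIAS}$ is of smaller stochastic order than $\sqrt{\operatorname{VAR}}$ when $\lambda=o(n^{-1})$ — comparing the bias rate $\lambda^{\kappa/2}$ with the order $\sigma n^{-1/2}\lambda^{(\kappa-1)/2}$ of $\sqrt{\operatorname{VAR}}$, the same balancing used in Theorems \ref{Th:point} and \ref{Th:derivative} — Slutsky's theorem transfers the limit to $(\operatorname{VAR})^{-1/2}\langle\hat f-f,g\rangle_\mathcal{H}$; Lemma \ref{lemma:var} ensures $\operatorname{VAR}>0$ with probability tending to one so the normalization is well defined.

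The only delicate step is this last reconciliation inside the CLT: identifying $\delta=\kappa$ together with the admissible choice $\tau=1$, recognizing that $\kappa>d/(2m)$ is exactly the condition keeping (\ref{conditionlambda}) compatible with $\lambda=o(n^{-1})$, and confirming that the bias is asymptotically negligible relative to $\sqrt{\operatorname{VAR}}$ so that the self-normalized variance-term CLT upgrades to a CLT for $\langle\hat f-f,g\rangle_\mathcal{H}$ itself. Beyond this bookkeeping there is no new analytic obstacle, since all the hard estimates — the improved convergence rate of $\hat g-g$ behind Theorem \ref{Coro:rate} and the Lindeberg-type argument behind Theorem \ref{Th:AN} — have already been carried out in the cited results.
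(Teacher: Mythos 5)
Your route is exactly the paper's: the paper proves this corollary in one line ("an immediate consequence of Proposition \ref{Prop:series} and Theorem \ref{Coro:rate}"), with the remark that the CLT uses the trivial $\tau=1$ in Assumption \ref{assum:tau}, and your identification of $\delta=\kappa$, $C_g=\|g\|_{\mathcal{H},\kappa}$, and the role of $\kappa>\frac{d}{2m}$ in making (\ref{conditionlambda}) compatible with $\lambda=o(n^{-1})$ is precisely the intended argument. The bias and variance bounds and the variance-term CLT are therefore handled correctly.

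The one step that is not airtight as written is the final bias-negligibility comparison. You divide the bias rate $\lambda^{\kappa/2}$ by $\sigma n^{-1/2}\lambda^{(\kappa-1)/2}$, calling the latter "the order of $\sqrt{\operatorname{VAR}}$"; but Theorem \ref{Coro:rate} only gives $O_\mathbb{P}(\sigma^2 n^{-1}\lambda^{\kappa-1})$ as an \emph{upper} bound on $\operatorname{VAR}$, and to show $\operatorname{BIAS}/\sqrt{\operatorname{VAR}}\xrightarrow{p}0$ you need a \emph{lower} bound. In Theorems \ref{Th:point} and \ref{Th:derivative} the two-sided rate is available because $\delta=\tau$ there (Theorem \ref{Coro:varrate}); here $\tau=1\neq\delta=\kappa$ when $\kappa<1$, so the only guaranteed lower bound from Theorem \ref{Th:lower} is the parametric one, $\operatorname{VAR}\gtrsim\sigma^2 n^{-1}$. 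With that bound, negligibility of the bias (even the improved $o_\mathbb{P}(\lambda^{\kappa/2})$ of Theorem \ref{Th:oBIAS}) requires $n\lambda^{\kappa}\rightarrow 0$, i.e.\ $\lambda=o(n^{-1/\kappa})$, which is strictly stronger than the stated $\lambda=o(n^{-1})$ when $\kappa<1$. So either you must invoke a matching lower bound for $\operatorname{VAR}$ at the rate $n^{-1}\lambda^{\kappa-1}$ (which the cited results do not supply for this $g$), or you must strengthen the condition on $\lambda$. This looseness is arguably inherited from the corollary's own statement, but your proof should flag it rather than treat the upper bound on $\operatorname{VAR}$ as its exact order.
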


Because $\kappa$ can be arbitrarily small, (\ref{Hkappanorm}) is a relatively weak condition given that $w\in\mathcal{H}$. Thus we can say that improved rates are generally available for ``most'' functions in $\mathcal{H}$. A relevant conclusion is that the further improved rate for the bias term in Section \ref{sec:further} are also commonly available. Specifically, by Corollary \ref{Coro:further}, if $\|f\|_{\mathcal{H},\kappa}<\infty$, we have $\operatorname{BIAS}=O_\mathbb{P}(\lambda^{\frac{\delta+\kappa}{2}})$.

The asymptotic normality (\ref{seriesAN}) requires $\kappa>\frac{d}{2m}$, because we use $\tau=1$ in Assumption \ref{assum:tau}. We conjecture that this cannot be improved in general, when the magnitude of the coefficients $c_i$'s of $g$ in (\ref{RKHSnorm}) fluctuates wildly. 

\subsection{Uniform Bounds}\label{sec:uniformSup}

Recall that the goal is to determine the rate of convergence of the \textit{uniform bias} and the \text{uniform variance term}.

As we remarked in Section \ref{sec:upper}, the event $\Xi_\epsilon$ is independent of $g$. Therefore, the uniform bias is simply the largest bias on $\Xi_\epsilon$, which, together with the interpolation inequality (\ref{interpolationD}), leads to Corollary \ref{Coro:uniformbias}.

\begin{corollary}\label{Coro:uniformbias}
    Suppose Assumptions \ref{assum:matern} and \ref{assum:norms} are true. In addition, $m>d/2+|\alpha|$ and $\lambda\gtrsim n^{-2m/d}$. Then we have
    \begin{eqnarray}\label{uniformbias}
        \sup_{x\in\Omega}\left|\mathbb{E}_E D^\alpha \hat{f}(x)-D^\alpha f(x)\right|=O_\mathbb{P}(\lambda^{\frac{1}{2}-\frac{d+2|\alpha|}{4m}}\|f\|_\mathcal{H}).
    \end{eqnarray}
\end{corollary}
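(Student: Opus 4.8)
## Proof Proposal

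The plan is to deduce Corollary~\ref{Coro:uniformbias} by combining the pointwise bias bound already available (via Lemma~\ref{lemma:relationship} and Theorem~\ref{Coro:rate}) with the key observation, emphasized in the text, that the high-probability event $\Xi_\epsilon$ from Assumption~\ref{assum:norms} does \emph{not} depend on the functional $g$. Concretely, for each fixed $x\in\Omega$ the point-evaluation-of-derivative functional corresponds to $g_x := D^\alpha K(\cdot,x)\in\mathcal{H}$, since $D^\alpha \hat f(x) = \langle \hat f, g_x\rangle_{\mathcal{H}}$ and $D^\alpha f(x) = \langle f, g_x\rangle_{\mathcal{H}}$ whenever $m > d/2 + |\alpha|$ (Sobolev embedding makes $D^\alpha$ bounded). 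The bias associated to $g_x$ is $\operatorname{BIAS}_{g_x} = \langle \hat g_x - g_x, f\rangle_{\mathcal{H}}$, where $\hat g_x$ is the noiseless KRR with target $g_x$; by the Cauchy--Schwarz step in Lemma~\ref{lemma:relationship} this is at most $\|\hat g_x - g_x\|_{\mathcal{H}}\|f\|_{\mathcal{H}}$.

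First I would observe that, on the event $\Xi_\epsilon$, the improved-rate machinery behind Theorem~\ref{Coro:rate} (i.e., the bound $\|\hat g - g\|_{\mathcal{H}} \lesssim \lambda^{\delta/2}$ deduced from Assumption~\ref{assum:f}) holds \emph{simultaneously} for every choice of target function $g$, because the norm-equivalence inequalities \eqref{l2_to_n}--\eqref{n_to_l2} that drive that argument are statements about all $v\in\mathcal{H}$ with constants independent of the target. Hence on $\Xi_\epsilon$ one gets $\|\hat g_x - g_x\|_{\mathcal{H}} \le A\,\lambda^{\delta/2}\,C_{g_x}$ with $A$ a universal constant, where $C_{g_x}$ is the constant in Assumption~\ref{assum:f} for the functional $\langle g_x,\cdot\rangle_{\mathcal{H}} = D^\alpha(\cdot)(x)$. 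Second, I would pin down $\delta$ and the uniformity of $C_{g_x}$: the interpolation inequality \eqref{interpolationD} gives, for every $v\in H^m$,
\[
  |\langle g_x, v\rangle_{\mathcal{H}}| = |D^\alpha v(x)| \le \|D^\alpha v\|_{L_\infty} \le A \|v\|_{L_2}^{1-\frac{d+2|\alpha|}{2m}}\|v\|_{H^m}^{\frac{d+2|\alpha|}{2m}},
\]
so Assumption~\ref{assum:f} holds with the \emph{same} exponent $\delta = 1 - \frac{d+2|\alpha|}{2m}$ and the \emph{same} constant $C_{g_x} = A$ for all $x\in\Omega$ (after absorbing the norm-equivalence constant between $\|\cdot\|_{\mathcal{H}}$ and $\|\cdot\|_{H^m}$). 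Consequently $\sup_{x\in\Omega}\|\hat g_x - g_x\|_{\mathcal{H}} \le A' \lambda^{\delta/2}$ on $\Xi_\epsilon$, with $A'$ not depending on $x$ or $n$.

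Putting these together: on $\Xi_\epsilon$,
\[
  \sup_{x\in\Omega}\big|\mathbb{E}_E D^\alpha \hat f(x) - D^\alpha f(x)\big|
  = \sup_{x\in\Omega}|\operatorname{BIAS}_{g_x}|
  \le \Big(\sup_{x\in\Omega}\|\hat g_x - g_x\|_{\mathcal{H}}\Big)\|f\|_{\mathcal{H}}
  \le A'\,\lambda^{\frac12 - \frac{d+2|\alpha|}{4m}}\|f\|_{\mathcal{H}}.
\]
Since $\mathbb{P}(\Xi_\epsilon)\ge 1-\epsilon$ for arbitrary $\epsilon>0$ and the bound is deterministic on $\Xi_\epsilon$, the $O_{\mathbb{P}}$ statement \eqref{uniformbias} follows. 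I would also need the condition $\lambda \gtrsim n^{-2m/d}$, which is exactly the hypothesis of Theorem~\ref{Coro:rate} used to validate the improved rate for $\|\hat g - g\|_{\mathcal{H}}$.

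The main obstacle is the uniformity claim in the two middle steps: one must be sure that the improved-rate bound $\|\hat g - g\|_{\mathcal{H}} \lesssim \lambda^{\delta/2}$ holds with a constant that is genuinely uniform over the family $\{g_x : x\in\Omega\}$, rather than just for each fixed $g$. This is plausible precisely because (a) the event $\Xi_\epsilon$ is $g$-free, and (b) the constant $C_{g_x}$ in Assumption~\ref{assum:f} is bounded uniformly in $x$ by the single interpolation constant $A$ from \eqref{interpolationD} — the derivative kernel $D^\alpha K(\cdot,x)$ has $H^m$-dual norm uniformly controlled over the compact $\Omega$. Verifying that the noiseless-KRR analysis (the "$\hat g$" machinery deferred to the Supplementary Material) only ever invokes $g$ through this constant, and through no other $x$-dependent quantity, is the point that requires care; everything else is a direct substitution into the already-proven pointwise estimates.
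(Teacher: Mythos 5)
Your proposal is correct and follows essentially the same route as the paper: the paper's proof is precisely the observation that the event $\Xi_\epsilon$ is independent of the target $g$, so the bound of Lemma \ref{Th:improved} (namely $\|\hat g - g\|_{\mathcal{H}}\le 2C_gC_1^{\delta}\lambda^{\delta/2}$ in the smoothing regime, which depends on $g$ only through the constant $C_g$ of Assumption \ref{assum:f}) holds simultaneously for the whole family $g_x=D^\alpha K(\cdot,x)$, whose constant $C_{g_x}$ is uniform in $x$ by the interpolation inequality (\ref{interpolationD}); combining with Lemma \ref{lemma:relationship} gives the claim. The uniformity point you flag as the "main obstacle" is exactly what Lemma \ref{Th:improved} and the remark following it already guarantee, so there is no gap.
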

The uniform variance term is a supremum of a stochastic process, which seemingly depends on the random noise's tail property. Theorem \ref{Th:uniformvar} shows that, if the random noise has a sub-Gaussian tail, the uniform variance term has almost the same order of magnitude as the pointwise variance term, except for a logarithmic factor. 

\begin{theorem}\label{Th:uniformvar}
    Suppose Assumptions \ref{assum:matern} and \ref{assum:norms} are met, $m>d/2+|\alpha|$ and $n^{-2m/d}\lesssim\lambda\leq 1$.
    In addition, if the random error satisfies $\mathbb{E}\exp\{\vartheta e_1\}\leq \exp\{\vartheta^2\varsigma^2/2\}$ for all $\vartheta\in \mathbb{R}$ and some $\varsigma^2>0$, we have
    \begin{eqnarray}\label{uniformvarsubgaussian}
        \sup_{x\in\Omega}\left|D^\alpha \hat{f}(x)-\mathbb{E}_E D^\alpha \hat{f}(x)\right|= O_\mathbb{P}\left(\varsigma n^{-\frac{1}{2}}\lambda^{-\frac{d+2|\alpha|}{4m}}\sqrt{\log\left(\frac{C}{\lambda}\right)}\right),
    \end{eqnarray}
    for some $C>1$ independent of $\varsigma^2,\lambda$, and $n$.
\end{theorem}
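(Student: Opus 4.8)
\textbf{Setup and reduction to a Gaussian process bound.} The plan is to write the uniform variance term as the supremum of a stochastic process indexed by $x\in\Omega$ and control it via a chaining/metric-entropy argument. Recall from (\ref{key:variance}) that, for each fixed $x$, the pointwise variance term equals $D^\alpha K(x,X)(K(X,X)+\lambda n I)^{-1}E$, which is a linear functional of the noise vector $E$. Define, for $x\in\Omega$, the vector $a_\lambda(x):=(K(X,X)+\lambda nI)^{-1}D^\alpha K(X,x)\in\mathbb{R}^n$, so that the process is $Z(x):=a_\lambda(x)^TE$. Conditionally on $X$, this is a sub-Gaussian process (since $E$ has sub-Gaussian coordinates) with respect to the pseudometric $\rho(x,x')=\varsigma\|a_\lambda(x)-a_\lambda(x')\|_2$. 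The canonical bound for suprema of sub-Gaussian processes (Dudley's entropy integral) gives
\begin{eqnarray*}
\mathbb{E}_E\sup_{x\in\Omega}|Z(x)| \lesssim \sup_{x}\rho(x,x_0) + \int_0^{\operatorname{diam}(\Omega,\rho)}\sqrt{\log N(\Omega,\rho,u)}\,du,
\end{eqnarray*}
where $N$ denotes covering numbers. By Lemma \ref{lemma:var} and Theorem \ref{Th:derivative} (part 1), the first term is $\asymp\varsigma\sqrt{n^{-1}\lambda^{-(d+2|\alpha|)/(2m)}}$ times a bounded factor, which already matches the target up to the logarithmic factor; so the work is in the entropy integral.

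\textbf{Bounding the metric entropy.} The key observation is that $x\mapsto Z(x)$ is, conditionally on $X$, a (random) function lying in a finite-dimensional space spanned by $\{D^\alpha K(\cdot,x_i)\}_{i=1}^n$, and more usefully, that $Z(\cdot)=\langle \widehat{E},D^\alpha K(\cdot,x)\rangle$ where $\widehat{E}:=\sum_i (a_\lambda(x))$... more precisely $Z(x)=D^\alpha\big(g_E\big)(x)$ where $g_E(\cdot):=K(\cdot,X)(K(X,X)+\lambda nI)^{-1}E\in\mathcal{H}$ is the KRR fit to pure noise. Thus $\sup_x|Z(x)|=\|D^\alpha g_E\|_{L_\infty(\Omega)}$. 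To estimate the $\rho$-entropy I would use the Sobolev/RKHS smoothness of $g_E$: by the interpolation inequality (\ref{interpolationD}) applied to increments, the increments $|Z(x)-Z(x')|$ are Hölder-controlled by $\|x-x'\|^{\beta}\|g_E\|_\mathcal{H}^{1-\cdot}\cdots$, so that $N(\Omega,\rho,u)\lesssim (A/u)^{d/\beta}$ for $u$ up to a cutoff of order $\varsigma\sqrt{n^{-1}\lambda^{-(d+2|\alpha|)/(2m)}}$, and $N\equiv 1$ beyond it. Here one must control the random scale factors $\|g_E\|_\mathcal{H}$ and $\|g_E\|_{L_2}$ (hence the $\|g_E\|_{L_\infty}$-type quantities entering the Hölder constant): by the standard noiseless-vs-noisy bounds one has $\lambda\|g_E\|_\mathcal{H}^2 \lesssim \|g_E\|_n^2 + \lambda\|g_E\|_\mathcal{H}^2 = n^{-1}E^TE\cdot(\text{bdd})$ — actually $\|g_E\|_n^2+\lambda\|g_E\|_\mathcal{H}^2\le n^{-1}\|E\|_2^2$ by optimality, so $\|g_E\|_\mathcal{H}=O_\mathbb{P}(\varsigma\lambda^{-1/2})$ and, via (\ref{n_to_l2}) of Assumption \ref{assum:norms} together with the variance computation, $\|g_E\|_{L_2}=O_\mathbb{P}(\varsigma)$. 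Plugging these scales into the entropy integral, a logarithmic term of the form $\sqrt{\log(C/\lambda)}$ emerges precisely because the ratio of the diameter to the minimal relevant scale is polynomial in $\lambda^{-1}$ (the Hölder exponent contributes only a constant multiple inside the logarithm). This reproduces (\ref{uniformvarsubgaussian}) in expectation.

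\textbf{From expectation to $O_\mathbb{P}$, and conditioning on $X$.} Finally I would upgrade the in-expectation bound (conditional on $X$) to the $O_\mathbb{P}$ statement: Borell–TIS / Bousquet-type concentration for suprema of sub-Gaussian processes gives that $\sup_x|Z(x)|$ concentrates around its conditional mean with fluctuations of smaller order than $\sqrt{\log(1/\lambda)}$ times the pointwise scale, so the $O_\mathbb{P}$ claim follows by integrating over the event $\Xi_\epsilon$ of Assumption \ref{assum:norms} (which has probability $\ge 1-\epsilon$ and on which all the norm-equivalence constants are uniform). Since everything is done conditionally on $X$ and the constants from Assumption \ref{assum:norms} are deterministic on $\Xi_\epsilon$, no further care about random vs.\ fixed design is needed.

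\textbf{Main obstacle.} The delicate step is the metric-entropy estimate for the process $Z(\cdot)$ at the \emph{correct} scale: one needs the Hölder modulus of $D^\alpha g_E$ with a constant that scales like $\lambda^{-(d+2|\alpha|+2\beta)/(4m)}$ or so (from (\ref{interpolationD}) with an extra $\beta$ derivatives) rather than a cruder bound, since a suboptimal constant would inflate the logarithm to a power of $\lambda^{-1}$ instead of a logarithm. Getting the sharp $\sqrt{\log(C/\lambda)}$ — and matching it with the normal-noise lower bound claimed in the surrounding text — requires the entropy integral to be genuinely dominated by its upper limit, which is exactly where the interplay between the polynomial covering numbers and the polynomially small diameter-to-scale ratio must be tracked carefully.
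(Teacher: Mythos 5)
Your overall strategy---conditioning on $X$, viewing $Z(x)=a_\lambda(x)^TE$ as a sub-Gaussian process in the pseudometric $\rho(x,x')=\varsigma\|a_\lambda(x)-a_\lambda(x')\|_2$, and invoking Dudley's entropy integral---is exactly the paper's route, and your bound on the diameter (via the pointwise variance rate of Theorem \ref{Th:derivative}) is correct. The gap is in the metric-entropy step. To bound $N(\Omega,\rho,u)$ you must show that the \emph{pseudometric} $\rho$ itself is H\"older in $\|x-x'\|$, with a constant that carries the $n^{-1/2}$ factor. Instead you derive a pathwise H\"older modulus of the realization $D^\alpha g_E$ from the interpolation inequality together with $\|g_E\|_{\mathcal H}=O_\mathbb{P}(\varsigma\lambda^{-1/2})$, $\|g_E\|_{L_2}=O_\mathbb{P}(\varsigma)$. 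That quantity controls increments of a single sample path, not the sub-Gaussian metric, and its scale is $\varsigma\lambda^{-(\cdot)/(4m)}$ with no $n^{-1/2}$; it cannot be substituted into $N(\Omega,\rho,u)$. Even if you repaired this into a discretization-plus-union-bound argument (net of mesh $\eta$, pathwise H\"older remainder $L\eta^\beta$), the ratio of the H\"older constant $L$ to the pointwise scale $\varsigma n^{-1/2}\lambda^{-(d+2|\alpha|)/(4m)}$ is polynomial in $n$, so you would end up with a $\sqrt{\log n}$ factor rather than the claimed $\sqrt{\log(C/\lambda)}$---strictly worse when $\lambda$ is close to constant.

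The missing idea, which is the heart of the paper's proof, is to exploit linearity of the noiseless KRR map: writing $g_{x_1,x_2}:=D^\alpha K(\cdot,x_1)-D^\alpha K(\cdot,x_2)$, one has $\rho(x_1,x_2)=\varsigma\lambda^{-1}n^{-1/2}\|\hat{g}_{x_1,x_2}-g_{x_1,x_2}\|_n$, and one verifies Assumption \ref{assum:f} for $g_{x_1,x_2}$ with constant $C_{g_{x_1,x_2}}=A\|x_1-x_2\|^\tau$ (via the Sobolev embedding $H^{m'-|\alpha|}\hookrightarrow C^{0,\tau}$ for some $d/2+|\alpha|<m'<m$) and exponent $\delta=1-m'/m$. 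Lemma \ref{Th:improved} then yields $\rho(x_1,x_2)\leq C\varsigma n^{-1/2}\lambda^{-m'/(2m)}\|x_1-x_2\|^\tau$: the $n^{-1/2}$ appears in the H\"older constant of the metric itself, the diameter-to-scale ratio becomes a pure power of $\lambda$, and the entropy integral produces exactly $\varsigma n^{-1/2}\lambda^{-\frac{d+2|\alpha|}{4m}}\sqrt{\log(C/\lambda)}$. Without this step your argument does not close.
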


Comparing Theorem \ref{Th:uniformvar} with the pointwise bound given by Theorem \ref{Th:derivative}, it can be seen that the uniform bound is inflated only by a logarithmic factor $\sqrt{\log(C/\lambda)}$. This factor cannot be improved in general, as shown in the lower bound in Theorem \ref{Th:uniformlower} under the assumption that the noise follows a normal distribution.

\begin{theorem}\label{Th:uniformlower}
    Suppose Assumptions \ref{assum:matern} and \ref{assum:norms} are met, $m>d/2+|\alpha|$ and $n^{-2m/d}\lesssim\lambda\leq 1$.
    In addition, if the random error follows a normal distribution, i.e., $e_1\sim N(0,\sigma^2)$ with $\sigma^2>0$, we have
   
    \begin{eqnarray*}
        \mathbb{P}\left(\sup_{x\in\Omega}\left|D^\alpha \hat{f}(x)-\mathbb{E}_E D^\alpha \hat{f}(x)\right|\geq C_1\sigma n^{-\frac{1}{2}}\lambda^{-\frac{d+2|\alpha|}{4m}}\sqrt{\log\left(\frac{C_2}{\lambda}\right)}\right)\geq \frac{1}{2}
    \end{eqnarray*}
    for some constants $C_1>0,C_2>1$ independent of $\sigma^2$, $\lambda$ and $n$.
\end{theorem}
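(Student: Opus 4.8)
The plan is to establish the lower bound by first reducing the supremum over the continuum $\Omega$ to a supremum over a finite, well-separated net of points, and then applying Gaussian anti-concentration (Sudakov-type minoration) to that finite maximum. Concretely, I would proceed as follows. First, recall from (\ref{key:variance}) and the discussion of Section \ref{sec:BnV} that the uniform variance term equals $\sup_{x\in\Omega}|D^\alpha K(x,X)(K(X,X)+\lambda nI)^{-1}E|$, which, conditional on $X$, is the supremum of a centered Gaussian process indexed by $x\in\Omega$ with covariance kernel $\Gamma(x,x')=\sigma^2 D^\alpha K(x,X)(K(X,X)+\lambda nI)^{-2}D^\alpha K(X,x')$. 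By Theorem \ref{Th:derivative}(1) (applied pointwise, using $\delta=\tau=1-\tfrac{d+2|\alpha|}{2m}$), for any fixed interior point $x$ the variance $\Gamma(x,x)$ is of exact order $\sigma^2 n^{-1}\lambda^{-\frac{d+2|\alpha|}{2m}}$ on the event $\Xi_\epsilon$.

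Second, the key geometric step: I would construct a collection of $N\asymp \lambda^{-d/(2m)}$ points $x_1,\dots,x_N$ in the interior of $\Omega$, pairwise separated by distance $\asymp \lambda^{1/(2m)}$ (this is the natural "resolution length" of the equivalent kernel at regularization level $\lambda$), and show that the corresponding Gaussian variables $Z_j := D^\alpha K(x_j,X)(K(X,X)+\lambda nI)^{-1}E$ are, after normalization, uniformly "almost independent" — more precisely that $\mathrm{Corr}(Z_j,Z_k)$ is bounded away from $1$ uniformly over $j\neq k$, or at least that a large sub-collection is. This is where the fine structure of the Matérn/Sobolev reproducing kernel enters: one needs that $D^\alpha K(x_j,\cdot)$ and $D^\alpha K(x_k,\cdot)$, after filtering through $(K(X,X)+\lambda nI)^{-1}$, become nearly orthogonal once $\|x_j-x_k\|\gtrsim\lambda^{1/(2m)}$. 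I expect to borrow the localization estimates for the equivalent kernel / "noiseless KRR" machinery developed in Section \ref{sec:proofs_main} of the Supplement, together with the norm-equivalence Assumption \ref{assum:norms}, to control these off-diagonal interactions.

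Third, with a set of $N\asymp\lambda^{-d/(2m)}$ jointly Gaussian variables, each with variance $\asymp\sigma^2 n^{-1}\lambda^{-\frac{d+2|\alpha|}{2m}}$ and pairwise correlations bounded away from $1$, a standard Sudakov minoration / Gaussian maximal inequality gives $\mathbb{E}\max_j |Z_j|\gtrsim \sqrt{\sigma^2 n^{-1}\lambda^{-\frac{d+2|\alpha|}{2m}}}\cdot\sqrt{\log N}\asymp \sigma n^{-1/2}\lambda^{-\frac{d+2|\alpha|}{4m}}\sqrt{\log(1/\lambda)}$, since $\log N\asymp \log(1/\lambda)$. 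Converting the expectation lower bound into a probability statement (the $\geq \tfrac12$ in the claim) is routine: the maximum of Gaussians concentrates around its mean (Borell--TIS), so $\sup_{x\in\Omega}|\cdots|\geq \max_j|Z_j|$ exceeds a constant multiple of its mean with probability at least $\tfrac12$; I would choose $C_1,C_2$ to absorb the absolute constants and the $\Xi_\epsilon$ conditioning, which holds with probability $\geq 1-\epsilon$. Finally, since $\lambda^{-1}=O(n^{2m/d})$ ensures $N\to\infty$ and $\log N\asymp\log(1/\lambda)$ stays comparable to a genuine logarithm rather than collapsing, the rate is non-degenerate.

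The main obstacle will be the second step — establishing quantitative near-orthogonality of the $N$ normalized Gaussian variables uniformly in $n$ and $\lambda$. Pointwise variance bounds are already in hand via Theorem \ref{Th:derivative}, but the joint behavior requires controlling a full $N\times N$ correlation matrix (or a large well-conditioned principal submatrix of it) rather than individual entries, and doing so demands the sharp spatial-decay properties of the (derivative of the) equivalent kernel at scale $\lambda^{1/(2m)}$. I would handle this by passing through the noiseless KRR representation of Lemma \ref{lemma:relationship} and the improved-rate estimates cited there, reducing the correlation bound to an estimate on how much a localized bump at $x_j$ is "seen" at $x_k$ after smoothing — a decay-of-influence statement that is the derivative analogue of the Matérn equivalent-kernel bounds in \cite{wendland2004scattered}.
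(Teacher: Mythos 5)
Your overall architecture coincides with the paper's: condition on $X$, view the uniform variance term as the supremum of a centered Gaussian process $Z(x)$, extract $N\asymp\lambda^{-d/(2m)}$ interior points at Euclidean spacing $\asymp\lambda^{1/(2m)}$, apply Sudakov minoration to get $\mathbb{E}_E\sup|Z|\gtrsim\sigma n^{-1/2}\lambda^{-\frac{d+2|\alpha|}{4m}}\sqrt{\log(1/\lambda)}$, and pass from the expectation to the probability-$\tfrac12$ statement by Gaussian concentration (Borell--TIS). The scales, the exponent bookkeeping, and the treatment of $\Xi_\epsilon$ are all right.

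The gap is precisely the step you flag as the ``main obstacle'': you never actually prove that the canonical distances between the net points are bounded below, and the route you sketch for it --- upper-bounding the off-diagonal correlations via spatial decay of the filtered kernel $D^\alpha K(x_j,\cdot)(K(X,X)+\lambda nI)^{-1}$ --- is not what the paper does and is substantially harder, since no localization or decay estimate for the equivalent kernel is available anywhere in this framework. The paper's trick is to avoid any two-sided control of cross-covariances: the squared canonical distance $d_\Omega^2(\xi_1,\xi_2)=\mathbb{E}_E[Z(\xi_1)-Z(\xi_2)]^2$ is itself exactly the $\operatorname{VAR}$ of the single linear functional $v\mapsto D^\alpha v(\xi_1)-D^\alpha v(\xi_2)$, i.e.\ of $g=g_{\xi_1}-g_{\xi_2}$, so the already-established variance lower bound (Theorem \ref{Th:lower}, via Lemma \ref{Th:KRRlower} and the noiseless-KRR identity of Lemma \ref{lemma:relationship}) applies directly. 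The only new work is to verify Assumption \ref{assum:tau} for this \emph{difference} functional with $\tau=1-\frac{d+2|\alpha|}{2m}$ and with constants uniform in $n$ and $\lambda$; this is done with a $C^\infty$ bump supported in a ball of radius $\rho<\|\xi_1-\xi_2\|$ centered at $\xi_1$ (so it vanishes at $\xi_2$ and contributes $D^\alpha\phi_\rho(\xi_1)=1$), exactly as in Proposition \ref{Prop:point}, with the caveat that $R_0\asymp\rho^{-m}\asymp\lambda^{-1/2}$ now grows with $n$, which forces the constant $C_2$ in $M=\lceil(C_2/\lambda)^{d/(2m)}\rceil$ to be chosen small so that the admissible range of $R$ in the proof of Theorem \ref{Th:lower} still contains $R_0$. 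Without this reduction (or a worked-out substitute for your near-orthogonality claim), your argument does not close; I would recommend replacing your step two by the observation that pairwise separation in the canonical metric is a variance lower bound for a difference functional, and then reusing the lower-bound machinery you already have.
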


\subsection{A Nonlinear Problem}\label{sec:nonlinearSup}
Consider the nonlinear functionals
$\min_{x\in\Omega} f(x) \text{ and } \operatorname*{argmin}_{x\in\Omega}f(x)$.
By plugging in the KRR estimator $\hat{f}$, we obtain intuitive estimators of $\min_{x\in\Omega} f(x)$ and $\operatorname*{argmin}_{x\in\Omega}f(x)$ as 
\[\hat{f}_{\min}:=\min_{x\in\Omega} \hat{f}(x) \text{ and } \hat{x}_{\min}:=\operatorname*{argmin}_{x\in\Omega}\hat{f}(x),\]
respectively. The goal is to study the asymptotic properties of these estimators. In order to linearize the problem, we make some regularity assumptions.

\begin{assumption}\label{assum:nonlinear}
    The function $f$ has a unique minimizer $x_{\min}$. 
    Besides, $x_{\min}$ is an interior point of $\Omega$, and $f$ is twice differentiable at $x_{\min}$ with a positive definite Hessian matrix $H:=\frac{\partial^2 f}{\partial x\partial x^T}(x_{\min})$.
\end{assumption}

Here we provide a rigorous result following the intuition provided in the main article. For simplicity, we only show the result under the optimal choice of the tuning parameter $\lambda\asymp n^{-1}$, which yields the best rate of convergence. The results are given in Theorem \ref{Th:nonlinear}.

\begin{theorem}\label{Th:nonlinear}
    Suppose Assumptions \ref{assum:matern}, \ref{assum:norms}, and \ref{assum:nonlinear} are true, $\sigma^2>0$, and $m>2+d/2$. The covariance matrix $\operatorname{COV}$ and its estimate $\widehat{\operatorname{COV}}$ are defined by (\ref{Covnonlinear}) and (\ref{Covestimate}), respectively. Then under the optimal choice of the tuning parameter $\lambda\asymp n^{-1}$, we have
    \begin{enumerate}
        \item $\|\hat{x}_{\min}-x_{\min}\|=O_\mathbb{P}(n^{-\frac{1}{2}+\frac{d+2}{4m}}),f(\hat{x}_{\min})-f(x_{\min})=O_\mathbb{P}(n^{-1+\frac{d+2}{2m}})$;
        \item $\widehat{\operatorname{COV}}^{-\frac{1}{2}}\hat{H}(\hat{x}_{\min}-x_{\min})\xrightarrow{\mathscr{L}} N(0,I)$.
    \end{enumerate}
\end{theorem}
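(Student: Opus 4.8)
The plan is to reduce both claims to the pointwise/multivariate central limit theorems already established (Theorems \ref{Th:derivative}, \ref{Th:multivariateAN}) together with the bias and variance rates in Theorems \ref{Coro:rate} and \ref{Coro:varrate}, applied to the first-order derivative functional $l(v) = \frac{\partial v}{\partial x}(x_{\min})$. The guiding identity is the Taylor-expansion heuristic sketched in Section \ref{sec:nonlinear}: since $x_{\min}$ is an interior minimizer with positive definite Hessian $H$ (Assumption \ref{assum:nonlinear}) and, by a uniform-convergence argument, $\hat{x}_{\min} \to x_{\min}$ in probability, the first-order condition $\frac{\partial \hat{f}}{\partial x}(\hat{x}_{\min}) = 0$ expanded around $x_{\min}$ gives
\begin{equation*}
    0 = \frac{\partial \hat{f}}{\partial x}(x_{\min}) + \widetilde{H}_n (\hat{x}_{\min} - x_{\min}),
\end{equation*}
where $\widetilde{H}_n = \int_0^1 \frac{\partial^2 \hat{f}}{\partial x \partial x^T}(x_{\min} + t(\hat{x}_{\min}-x_{\min}))\,dt$. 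So $\hat{x}_{\min} - x_{\min} = -\widetilde{H}_n^{-1} \frac{\partial \hat{f}}{\partial x}(x_{\min})$, and everything hinges on controlling the gradient term $\frac{\partial \hat{f}}{\partial x}(x_{\min})$ and showing $\widetilde{H}_n \xrightarrow{p} H$.

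For step one (consistency and rates), I would first establish $\hat{x}_{\min} \xrightarrow{p} x_{\min}$: the uniform bound \eqref{minimax} with $\alpha = 0$ and $\lambda \asymp n^{-1}$ gives $\sup_x |\hat{f}(x) - f(x)| \xrightarrow{p} 0$, and uniqueness of $x_{\min}$ (Assumption \ref{assum:nonlinear}) upgrades this to argmin consistency by a standard argmax-continuous-mapping argument. Then, applying Theorem \ref{Th:derivative} (parts 1 and 3) with $|\alpha| = 1$ and $\lambda \asymp n^{-1}$, the bias of $\frac{\partial \hat{f}}{\partial x}(x_{\min})$ is $O_\mathbb{P}(n^{-\frac12 + \frac{d+2}{4m}})$ in the worst case and the variance (in $L_2$) is $O_\mathbb{P}(n^{-1+\frac{d+2}{2m}})$, so $\|\frac{\partial \hat{f}}{\partial x}(x_{\min}) - \frac{\partial f}{\partial x}(x_{\min})\| = \|\frac{\partial \hat{f}}{\partial x}(x_{\min})\| = O_\mathbb{P}(n^{-\frac12 + \frac{d+2}{4m}})$ (using $\frac{\partial f}{\partial x}(x_{\min}) = 0$). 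Combined with $\widetilde{H}_n^{-1} = O_\mathbb{P}(1)$ this yields the first rate; $f(\hat{x}_{\min}) - f(x_{\min}) = \frac12 (\hat{x}_{\min}-x_{\min})^T H (\hat{x}_{\min}-x_{\min}) + o_\mathbb{P}(\|\hat{x}_{\min}-x_{\min}\|^2)$ gives the squared rate in part 1. The uniform bound on second derivatives (Corollary \ref{Coro:uniformbias} / Theorem \ref{Th:uniformvar} with $|\alpha|=2$, requiring $m > 2 + d/2$) gives $\sup_x \|\frac{\partial^2 \hat{f}}{\partial x\partial x^T}(x) - \frac{\partial^2 f}{\partial x\partial x^T}(x)\| \xrightarrow{p} 0$, which together with continuity of $\frac{\partial^2 f}{\partial x \partial x^T}$ at $x_{\min}$ and $\hat{x}_{\min} \xrightarrow{p} x_{\min}$ shows both $\widetilde{H}_n \xrightarrow{p} H$ and $\hat{H} = \frac{\partial^2 \hat{f}}{\partial x\partial x^T}(\hat{x}_{\min}) \xrightarrow{p} H$.

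For step two (asymptotic normality), write $\hat{H}(\hat{x}_{\min}-x_{\min}) = -\hat{H}\widetilde{H}_n^{-1}\frac{\partial \hat{f}}{\partial x}(x_{\min}) = -\frac{\partial \hat{f}}{\partial x}(x_{\min}) + (I - \hat{H}\widetilde{H}_n^{-1})\frac{\partial \hat{f}}{\partial x}(x_{\min})$. The second term is $o_\mathbb{P}(1) \cdot O_\mathbb{P}(n^{-\frac12+\frac{d+2}{4m}}) = o_\mathbb{P}$ of the leading term. The leading term $-\frac{\partial \hat{f}}{\partial x}(x_{\min})$ splits into its (nonrandom given $X$) bias, which is $o_\mathbb{P}((\operatorname{VAR})^{1/2})$ under $\lambda \asymp n^{-1}$ by Theorem \ref{Th:oBIAS} with $\delta = \tau = 1 - \frac{d+2}{2m}$ (since $\delta = \tau$ here), and its variance term, to which Theorem \ref{Th:multivariateAN} applies with $d_0 = d$, $z_i = x_{\min}$ for all $i$, and $\alpha_i = $ the $i$-th unit multi-index (all $|\alpha_i| = 1 = k$, distinct pairs, interior point, $\lambda = o(1)$ and $\lambda^{-1} = o(n^{2m/d})$ all satisfied): $\operatorname{COV}^{-1/2}\big(\text{variance term of }\frac{\partial \hat{f}}{\partial x}(x_{\min})\big) \xrightarrow{\mathscr{L}} N(0, I)$. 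Finally, Slutsky together with $\widehat{\operatorname{COV}}^{-1/2}\operatorname{COV}^{1/2} \xrightarrow{p} I$ — which follows from $\hat{\sigma}^2 \xrightarrow{p} \sigma^2$, the continuity of the kernel derivatives, $\hat{x}_{\min} \xrightarrow{p} x_{\min}$, and the invertibility of $\operatorname{COV}$ with probability tending to one from Theorem \ref{Th:multivariateAN} — delivers $\widehat{\operatorname{COV}}^{-1/2}\hat{H}(\hat{x}_{\min}-x_{\min}) \xrightarrow{\mathscr{L}} N(0,I)$.

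The main obstacle I anticipate is making the Taylor linearization rigorous, i.e., justifying that the remainder is genuinely negligible \emph{relative to} the leading gradient term rather than merely $o_\mathbb{P}(1)$. This requires the uniform control of $\frac{\partial^2 \hat{f}}{\partial x\partial x^T}$ near $x_{\min}$ (hence the hypothesis $m > 2 + d/2$) and a quantitative modulus-of-continuity argument tying the rate of $\hat{x}_{\min} - x_{\min}$ to the rate of the gradient; one must be careful that $\widehat{\operatorname{COV}}$ is normalized by the \emph{same} matrix that appears in the CLT, so the perturbation $z_i \mapsto \hat{x}_{\min}$ in \eqref{Covestimate} must be shown to affect $\operatorname{COV}$ only by a $(1+o_\mathbb{P}(1))$ factor after the $\operatorname{COV}^{-1/2}$ renormalization — this is delicate because $\operatorname{COV}$ itself is shrinking at rate $n^{-1+\frac{d+2}{2m}}$, so absolute perturbations must be compared against that scale.
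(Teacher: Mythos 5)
Your overall route is the same as the paper's: consistency of $\hat{x}_{\min}$ from uniform convergence of $\hat{f}$ and its first two derivatives, the Taylor linearization $\hat{x}_{\min}-x_{\min}=-\widetilde{H}_n^{-1}\frac{\partial(\hat{f}-f)}{\partial x}(x_{\min})$, the rate from Theorem \ref{Th:derivative} with $|\alpha|=1$, the squared rate for $f(\hat{x}_{\min})-f(x_{\min})$ from a second-order expansion of $f$, and the CLT from Theorem \ref{Th:multivariateAN} applied to the gradient at $x_{\min}$ combined with Theorem \ref{Th:oBIAS} and Slutsky. All of that matches the paper's argument, and your handling of the remainder $(I-\hat{H}\widetilde{H}_n^{-1})\frac{\partial\hat{f}}{\partial x}(x_{\min})$ after $\operatorname{COV}^{-1/2}$-normalization is sound because the lower eigenvalue bound on $\operatorname{COV}$ matches the upper rate of the gradient.

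The genuine gap is the step you yourself flag as ``delicate'' and then leave unresolved: the claim $\widehat{\operatorname{COV}}^{-1/2}\operatorname{COV}^{1/2}\xrightarrow{p}I$. The justification you offer for it --- $\hat{\sigma}^2\xrightarrow{p}\sigma^2$, continuity of the kernel derivatives, and $\hat{x}_{\min}\xrightarrow{p}x_{\min}$ --- does not suffice, for exactly the reason you state in your last paragraph: both $\operatorname{COV}$ and $\widehat{\operatorname{COV}}$ shrink at rate $n^{-1+\frac{d+2}{2m}}$, so an absolute $o_\mathbb{P}(1)$ perturbation of the matrix entries caused by replacing $x_{\min}$ with $\hat{x}_{\min}$ tells you nothing after multiplying by $\operatorname{COV}^{-1/2}$, whose operator norm blows up at rate $n^{\frac12-\frac{d+2}{4m}}$. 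The paper closes this gap quantitatively: writing $\mathscr{C}(x)=\frac{\partial K}{\partial x}(x,X)(K+\lambda nI)^{-2}\frac{\partial K}{\partial x^T}(X,x)$, it bounds $\bigl\|[\mathscr{C}(\hat{x}_{\min})]^{1/2}-[\mathscr{C}(x_{\min})]^{1/2}\bigr\|_{op}$ through an eigenvector identity and the mean value theorem applied to $h(x)=\mathbf{a}^T\mathscr{C}(x)\mathbf{a}$, then controls $\|\partial h/\partial x\|$ by Cauchy--Schwarz as the geometric mean of a second-order-derivative covariance (order $n^{-1+\frac{d+4}{2m}}$, which is where $m>2+d/2$ is actually consumed) and a first-order one, multiplies by $\|\hat{x}_{\min}-x_{\min}\|=O_\mathbb{P}(n^{-\frac12+\frac{d+2}{4m}})$, and divides by the eigenvalue lower bound $n^{-1+\frac{d+2}{2m}}$ from the proof of Theorem \ref{Th:multivariateAN}; the net relative error is $O_\mathbb{P}(n^{-\frac{2m-4-d}{4m}})=o_\mathbb{P}(1)$. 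Without some argument of this kind --- a derivative bound on $x\mapsto\mathscr{C}(x)$ at the correct scale compared against the decaying eigenvalues of $\operatorname{COV}$ --- part 2 of the theorem is not proved.
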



\section{Review of Sobolev Spaces}\label{sec:spaces}

Let $\Omega\subset\mathbb{R}^d$ be a domain.
For a non-negative integer $k$, the Sobolev space $H^k(\Omega)$ is defined as the closure of sufficiently smooth functions over the norm
\[\|f\|_{H^k(\Omega)}^2=\sum_{|\alpha|\leq k}\|D^\alpha f\|_{L_2(\Omega)}^2.\]
To define $H^m(\Omega)$ for non-integer $m=k+s$ for $k\in\mathbb{N}$ and $s\in (0,1)$, there is a direct approach using the Sobolev–Slobodeckij semi-norm
\begin{eqnarray}\label{Sobolev}
    |f|_{W^{k+s}(\Omega)}^2:=\sum_{|\alpha|= k}\int_\Omega\int_\Omega \frac{|D^\alpha f(x)-D^\alpha f(y)|^2}{\|x-y\|^{d+2s}}dxdy,
\end{eqnarray}
and an equivalent norm of $H^{k+s}(\Omega)$ is given by
\begin{eqnarray*}
\|f\|_{H^{k+s}(\Omega)}^2:=\|f\|_{H^k(\Omega)}^2+|f|_{W^{k+s}(\Omega)}^2.
\end{eqnarray*}
For notational simplicity, we omit the domain $\Omega$ in notation like $H^m(\Omega)$ and $\|\cdot\|_{H^m(\Omega)}$ if $\Omega$ is the experimental region of the KRR problem of our main interest.

A reproducing kernel Hilbert space $\mathcal{H}$ is a Hilbert space of continuous functions over a domain $\Omega$, satisfying the reproducing property
\begin{eqnarray}\label{reproducing}
    f(x)=\langle f,K(\cdot,x)\rangle_\mathcal{H},
\end{eqnarray}
for each $f\in\mathcal{H}$ and $x\in\Omega$. Here $K(\cdot,\cdot)$ is a positive semi-definite function called the reproducing kernel.
Stationary kernels, i.e., $K(x,y)=\Phi(x-y)$ for some $\Phi:\mathbb{R}^d\mapsto\mathbb{R}$, are commonly used. When the Fourier transform of $\Phi$, denoted as $\tilde{\Phi}$, satisfies
\begin{eqnarray}\label{fourier}
    c_1(1+\|\omega\|^2)^{-m}\leq \tilde{\Phi}(\omega)\leq c_2(1+\|\omega\|^2)^{-m},
\end{eqnarray}
for $m>d/2$, some constants $0<c_1<c_2$, and all $\omega\in\mathbb{R}^d$, and $\Omega$ has a Lipschitz boundary, then $\mathcal{H}=H^m$ with equivalent norms; see \cite{wendland2004scattered}. A prominent example of kernels satisfying (\ref{fourier}) is the Mat\'ern correlation family \cite{stein2012interpolation} with smoothness $\nu=m-d/2$, defined as
\[\Phi(x;\nu,\phi):=\frac{1}{\Gamma(\nu)2^{\nu-1}}(2\sqrt{\nu}\phi\|x\|)^\nu K_\nu(2\sqrt{\nu}\phi\|x\|),\]
where $\phi,\nu>0$, and $K_\nu$ is the modified Bessel function of the second kind.

\section{Sufficient Conditions of Assumption \ref{assum:norms} in Section \ref{sec:improved}}\label{sec:sufficient}
In this section, we provide the equivalent conditions for Assumption \ref{assum:norms} under both random designs and fixed designs.

By Assumption \ref{assum:matern}, $\mathcal{H}$ and $H^m$ are equivalent. Therefore, we can replace the $\mathcal{H}$-norm by the $H^m$-norm in Assumption \ref{assum:norms}, with possibly different $C_1$ and $C_\epsilon$, to obtain an equivalent assumption. In this section, we shall show sufficient conditions for this equivalent assumption.

\subsection{Random Designs}


The goal of this section is to prove Theorems \ref{th:randomdesign1} and \ref{th:randomdesign2} under the random design Assumptions \ref{assum:random1} and \ref{assum:random2}, respectively. These results refine Lemma 5.16 of \cite{geer2000empirical}.

\begin{assumption}\label{assum:random1}
    The input sites $x_1,\ldots,x_n$ are independent and identically distributed random variables over $\Omega$ with density function $\mu(\cdot)$. In addition, $\inf_{x\in\Omega}\mu(x)=\mu_0>0$.
\end{assumption}

\begin{theorem}\label{th:randomdesign1}
Suppose $\Omega$ satisfies the conditions in Assumption \ref{assum:matern}.
    Under Assumption \ref{assum:random1}, there exist constants $C_1,C_2,C_3,C_4>0$ independent of $n$ and $X$, such that for each $t\geq 1$,
        \begin{multline}
         \mathbb{P}\left(\|v\|_{L_2}\leq \max\left\{ C_1 \|v\|_{n},C_2 t n^{-m/d}\|v\|_{H^m}\right\} \text{ for all } v\in H^m\right)\\ \geq 1-C_3\exp\{-C_4 t^{d/m}\},\label{l2_to_nTH}
    \end{multline}
\end{theorem}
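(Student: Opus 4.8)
\textbf{Proof proposal for Theorem \ref{th:randomdesign1}.}

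The plan is to establish the inequality \eqref{l2_to_nTH} via a uniform concentration argument for the empirical process indexed by functions in the unit ball of $H^m$, combined with a rescaling argument to extend from a normalized ball to all of $H^m$. First I would reduce the statement to a single scale: by homogeneity of both sides in $v$, it suffices to prove that, with the stated probability, every $v \in H^m$ with $\|v\|_{H^m} \le 1$ satisfies $\|v\|_{L_2}^2 \le C_1^2 \|v\|_n^2 + C_2^2 t^2 n^{-2m/d}$, or equivalently $\|v\|_{L_2}^2 - C_1^2 \|v\|_n^2 \le C_2^2 t^2 n^{-2m/d}$. Since $\mathbb{E}\|v\|_n^2 = \int_\Omega v^2(x)\mu(x)\,dx \ge \mu_0 \|v\|_{L_2}^2$ by Assumption \ref{assum:random1}, the gap $\|v\|_{L_2}^2 - \mu_0^{-1}\mathbb{E}\|v\|_n^2 \le 0$, so the task becomes controlling the fluctuation $\sup_{\|v\|_{H^m}\le 1}\bigl|\,\|v\|_n^2 - \mathbb{E}\|v\|_n^2\,\bigr|$ uniformly; choosing $C_1$ slightly larger than $\mu_0^{-1/2}$ then absorbs a constant fraction of $\mathbb{E}\|v\|_n^2$ and leaves a remainder of the claimed order on the event that the supremum of the fluctuation is small.

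The core step is therefore a uniform deviation bound for $Z_n := \sup_{\|v\|_{H^m}\le 1}\bigl|\frac{1}{n}\sum_{i=1}^n v^2(x_i) - \mathbb{E}v^2(x_1)\bigr|$. Here I would invoke the standard machinery from empirical process theory (e.g., a chaining/Dudley-type entropy bound together with a Talagrand-type concentration inequality, as in \cite{geer2000empirical}), using the classical metric entropy estimate for the unit ball of $H^m(\Omega)$ under the sup-norm, namely $\log N(\varepsilon, B_{H^m}, \|\cdot\|_\infty) \lesssim \varepsilon^{-d/m}$, which holds because $m > d/2$ guarantees the relevant compact embedding. The functions $v^2$ with $\|v\|_{H^m}\le 1$ are uniformly bounded (by the Sobolev embedding $H^m \hookrightarrow C(\Omega)$) and form a class whose entropy is governed by that of $B_{H^m}$. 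Chaining yields $\mathbb{E}Z_n \lesssim n^{-m/d}$ (the entropy integral $\int_0^1 \varepsilon^{-d/(2m)}\,d\varepsilon$ converges precisely when $m > d/2$, and the dominant contribution scales like $n^{-m/d}$), and Talagrand's inequality upgrades this to $\mathbb{P}(Z_n \ge C' t\, n^{-m/d}) \le C_3 \exp\{-C_4 t^{d/m}\}$ for $t \ge 1$. The exponent $t^{d/m}$ arises from balancing the sub-exponential tail against the scale $n^{-m/d}$; I would track constants carefully to confirm this is the right power.

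On the event $\{Z_n \le C' t\, n^{-m/d}\}$, combining with $\mathbb{E}\|v\|_n^2 \ge \mu_0\|v\|_{L_2}^2$ gives $\|v\|_{L_2}^2 \le \mu_0^{-1}\|v\|_n^2 + \mu_0^{-1} C' t\, n^{-m/d}$ for all $v$ with $\|v\|_{H^m}\le 1$; taking square roots, using $\sqrt{a+b}\le\sqrt a+\sqrt b$, and then rescaling a general $v$ to $v/\|v\|_{H^m}$ produces \eqref{l2_to_nTH} with $C_1 = \mu_0^{-1/2}$ (up to an arbitrarily small inflation), $C_2 = (\mu_0^{-1}C')^{1/2}$, and the stated failure probability. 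I expect the main obstacle to be the bookkeeping in the chaining argument: getting the exponent $t^{d/m}$ (rather than $t^2$ or $t$) in the tail bound requires the refined Talagrand concentration for suprema of empirical processes combined with the specific polynomial entropy growth, and one must be careful that the ``variance'' and ``envelope'' terms in that inequality both contribute at the scale $n^{-m/d}$ rather than one dominating in a way that changes the exponent; this is exactly the point where the statement ``refines Lemma 5.16 of \cite{geer2000empirical}'' becomes non-trivial.
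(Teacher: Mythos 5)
There is a genuine gap, and it sits at the heart of your argument. You reduce everything to the claim that $Z_n := \sup_{\|v\|_{H^m}\le 1}\bigl|\|v\|_n^2-\mathbb{E}v^2(x_1)\bigr|$ satisfies $\mathbb{E}Z_n\lesssim n^{-m/d}$ with a tail $\mathbb{P}(Z_n\ge C't\,n^{-m/d})\le C_3\exp\{-C_4t^{d/m}\}$. This is false. The class $\{v^2:\|v\|_{H^m}\le 1\}$ is uniformly bounded and Donsker (its entropy integral converges because $d/m<2$), so chaining gives $\mathbb{E}Z_n\lesssim n^{-1/2}$ — and this rate is sharp: already a single fixed $v_0$ with $\operatorname{Var}(v_0^2(x_1))>0$ forces $Z_n\ge|\|v_0\|_n^2-\mathbb{E}v_0^2(x_1)|\asymp n^{-1/2}$ by the CLT. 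Since $m>d/2$ means $n^{-m/d}\ll n^{-1/2}$, the event $\{Z_n\le C't\,n^{-m/d}\}$ has probability bounded away from one for fixed $t$, and no choice of constants rescues it. There is also a separate algebra slip: even granting your tail bound, after taking square roots the additive remainder is $(\mu_0^{-1}C't)^{1/2}n^{-m/(2d)}$, not $C_2t\,n^{-m/d}$; to land on the stated rate you would need $Z_n\lesssim t^2n^{-2m/d}$, which is even further out of reach.

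The reason the theorem is nevertheless true is that its conclusion is a maximum, not a sum, so no absolute bound on the global supremum is ever needed. Functions with $\|v\|_{L_2}\le C_2t\,n^{-m/d}\|v\|_{H^m}$ are handled trivially by the second branch; for the remaining functions one only needs the \emph{relative} bound $\|v\|_{L_2}\le C_1\|v\|_n$, i.e., that the deviation of $\|v\|_n^2$ from its mean is at most a constant fraction of $\mathbb{E}v^2(x_1)\ge\mu_0\|v\|_{L_2}^2$ — a much weaker requirement when $\|v\|_{L_2}$ is not tiny. The paper implements exactly this: a Bernstein inequality for each fixed $v$, whose exponent $3n\mu_0\|v\|_{L_2}^2/(28\|v\|_{L_\infty}^2)$ is converted via the interpolation inequality $\|v\|_{L_\infty}\le A\|v\|_{L_2}^{1-d/(2m)}\|v\|_{H^m}^{d/(2m)}$ into $cn(\|v\|_{L_2}/\|v\|_{H^m})^{d/m}$, followed by a peeling device over shells $\|v\|_{L_2}\asymp s\,|\Omega|^{1/2}r$ with $r=t\,n^{-m/d}$ and a union bound against the covering number $\log N(r,H^m(R))\lesssim (R/r)^{d/m}$; summing the shells produces the $\exp\{-C_4t^{d/m}\}$ tail. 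Your instinct to ``absorb a constant fraction of $\mathbb{E}\|v\|_n^2$'' is the right one, but it must be carried out locally on each $\|v\|_{L_2}$-shell (a ratio-type or localized deviation inequality), not through a single global supremum bound.
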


\begin{assumption}\label{assum:random2}
    The input sites $x_1,\ldots,x_n$ are independent and identically distributed random variables over $\Omega$ with density function $\mu(\cdot)$. In addition, $\sup_{x\in\Omega}\mu(x)<\infty$.
\end{assumption}

\begin{theorem}\label{th:randomdesign2}
Suppose $\Omega$ satisfies the conditions in Assumption \ref{assum:matern}.
    Under Assumption \ref{assum:random2}, there exist constants $C_1,C_2,C_3,C_4>0$ independent of $n$ and $X$, such that for each $t\geq 1$,
        \begin{multline}
         \mathbb{P}\left(\|v\|_{n}\leq \max\left\{C_1 \|v\|_{L_2},C_2 t n^{-m/d}\|v\|_{H^m}\right\} \text{ for all } v\in H^m\right)\\ \geq 1-C_3\exp\{-C_4 t^{d/m}\}.\label{n_to_l2TH}
    \end{multline}
\end{theorem}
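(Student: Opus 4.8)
\textbf{Proof plan for Theorem \ref{th:randomdesign2}.} The plan is to run a peeling (slicing) argument over the size of $\|v\|_{L_2}$, controlling on each slice a uniform empirical-process deviation, in the spirit of---and as a sharpening of---Lemma 5.16 of \cite{geer2000empirical}. By homogeneity of both sides of the asserted inequality it suffices to prove it for $v$ ranging over the unit ball $B:=\{v\in H^m:\|v\|_{H^m}\le 1\}$ (the case $v=0$ is trivial). Writing $\mu_1:=\sup_{x\in\Omega}\mu(x)<\infty$, we have $\mathbb{E}\|v\|_n^2=\int_\Omega v^2\mu\,dx\le\mu_1\|v\|_{L_2}^2$, so it is enough to show that, with the stated probability, $\|v\|_n^2-\mathbb{E}\|v\|_n^2$ is dominated by $\max\{\|v\|_{L_2}^2,\,t^2n^{-2m/d}\}$ uniformly over $v\in B$.

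The two deterministic inputs are: (i) the interpolation inequality $\|v\|_{L_\infty}\le A\|v\|_{L_2}^{1-d/(2m)}\|v\|_{H^m}^{d/(2m)}$, valid since $m>d/2$ (cf. (\ref{interpolation})); and (ii) the classical metric-entropy bound $\log N(u,B,\|\cdot\|_{L_\infty})\le A'u^{-d/m}$, with exponent $d/m<2$. Input (i) forces functions with small $\|v\|_{L_2}$ to have a small sup-norm, hence a small $\ell_\infty$-envelope for the squared class; this is exactly what permits the ``floor'' to be pushed down to $n^{-m/d}$ rather than to the coarser scale a blunt Dudley bound would deliver.

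Next I would partition $B$ into slices $S_0=\{v\in B:\|v\|_{L_2}\le r\}$ and $S_j=\{v\in B:2^{j-1}r<\|v\|_{L_2}\le 2^jr\}$ for $j=1,\dots,J$, where $r:=t\,n^{-m/d}$ and only $J\asymp\log n$ slices are non-empty (since $\|v\|_{L_2}\lesssim\|v\|_{H^m}\le 1$ on $B$). On $S_j$ consider $\mathcal G_j:=\{v^2:v\in S_j\}$; by (i) it has envelope $\|v^2\|_{L_\infty}\lesssim(2^jr)^{2-d/m}$ and variance proxy $\sup_{v\in S_j}\operatorname{Var}_\mu(v^2)\lesssim\mu_1(2^jr)^{4-d/m}$, and by (ii), together with $\|v^2-w^2\|_{L_\infty}\le\|v+w\|_{L_\infty}\|v-w\|_{L_\infty}$, its entropy is controlled through $u^{-d/m}$. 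I would then bound $Z_j:=\sup_{v\in S_j}\bigl|\|v\|_n^2-\mathbb{E}\|v\|_n^2\bigr|$ by combining (a) a chaining estimate for $\mathbb{E}Z_j$ via the entropy integral $\int_0^{\sigma_j}\sqrt{\log N(u,\mathcal G_j,\cdot)}\,du$, which converges precisely because $d/m<2$, with $\sigma_j\asymp(2^jr)^{2-d/(2m)}$; and (b) Talagrand's (Bousquet's) concentration inequality with a deviation parameter $x_j$, using the smallness of the envelope to keep the lower-order $U x_j/n$ terms within budget. Choosing $x_j\asymp t^{d/m}+j$ makes $\sum_{j\ge 0}e^{-x_j}\lesssim e^{-C_4 t^{d/m}}$ (the summable $e^{-j}$ absorbs the $O(\log n)$ slices), and one verifies that on the smallest slice the resulting bound is $Z_0=O(r^2)=O(t^2n^{-2m/d})$ and on slice $S_j$ it is $O((2^jr)^2)$.

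On the intersection of these $O(\log n)$ good events, for $v\in S_j$ with $j\ge 1$ one gets $\|v\|_n^2\le\mu_1\|v\|_{L_2}^2+O((2^{j-1}r)^2)\le C_1^2\|v\|_{L_2}^2$, while for $v\in S_0$ one gets $\|v\|_n^2\le\mu_1 r^2+O(r^2)\le C_2^2t^2n^{-2m/d}$; undoing the homogeneity reduction gives the theorem, and tracking constants through (a)--(b) shows they are independent of $n$ and $X$. The main obstacle---the place where the argument genuinely refines \cite[Lemma 5.16]{geer2000empirical}---is step (a)--(b) on the small slices: one must show that the localized empirical-process fluctuation is no larger than the square of the (tiny) radius of the slice, which requires feeding the interpolation inequality (i) into the chaining/Bernstein-type estimates rather than invoking a generic uniform envelope; otherwise the floor would only come out at the larger scale $n^{-m/(2m+d)}$.
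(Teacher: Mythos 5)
Your overall strategy is the one the paper itself follows: the paper omits the proof of Theorem \ref{th:randomdesign2}, stating only that it is obtained ``in a similar fashion'' to Theorem \ref{th:randomdesign1}, namely by a pointwise Bernstein inequality (here for the upper tail of $\|v\|_n^2$), the Sobolev entropy bound $\log N(u)\lesssim u^{-d/m}$, a peeling over the size of $\|v\|_{L_2}$, the interpolation inequality (\ref{interpolation}) to control envelopes on each shell, and a normalization to the unit ball of $H^m$. Your reduction by homogeneity, the shell decomposition with $r=tn^{-m/d}$, the envelope and variance-proxy computations, and the choice $x_j\asymp t^{d/m}+j$ so that the failure probabilities sum to $e^{-C_4t^{d/m}}$ all match the paper's Lemmas \ref{lemma:bernstein} and \ref{lemma:step2}; the only structural difference is that you replace the one-step discretization and union bound over covering centers by a Dudley chaining bound for $\mathbb{E}Z_j$ plus Talagrand/Bousquet concentration. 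That substitution is legitimate in principle.

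The gap is precisely at the step you identify as the crux but only assert: ``one verifies that on the smallest slice the resulting bound is $Z_0=O(r^2)=O(t^2n^{-2m/d})$.'' This verification fails. Write $\beta=d/(2m)\in(0,1)$ and $\rho=2^jr$. On the shell the squared class has $L_\infty$-envelope $\lesssim\rho^{2-2\beta}$ and entropy $\log N(u)\lesssim(\rho^{1-\beta}/u)^{2\beta}$, so the honest Dudley integral in $L_\infty$ gives $\mathbb{E}Z_j\lesssim n^{-1/2}\rho^{(1-\beta)(2-\beta)}$, which is $\le\rho^2$ only when $\rho\gtrsim n^{-1/(2\beta(3-\beta))}$; even under the most favorable accounting (truncating the integral at the standard-deviation level $\sigma_j\asymp\rho^{2-\beta}$, which itself requires a localization step you do not supply) one gets $\mathbb{E}Z_j\lesssim n^{-1/2}\rho^{2-2\beta}$, which is $\le\rho^2$ only when $\rho\gtrsim n^{-m/(2d)}$. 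Both thresholds are strictly larger than the claimed floor $n^{-m/d}=n^{-1/(2\beta)}$, so the bottom shells are not controlled: the Talagrand deviation terms are fine with your $x_j$, but the mean of the supremum is not $O(\rho^2)$ there, because chaining pays for the entropy at every scale while the Bernstein exponent $n\rho^{d/m}$ is available only at scale $\rho$ itself. For fairness, the paper's own template has the same soft spot: condition (\ref{epsilon}) of Lemma \ref{lemma:step2} demands $\sqrt{n}(r/R)^{d/m}\ge1$, i.e.\ $r\gtrsim Rn^{-m/(2d)}$, which the substitution $r=tn^{-m/d}$, $R=1$, $t=O(1)$ does not satisfy. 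To reach the floor $n^{-m/d}$ for this direction of the inequality one needs a different mechanism on the smallest shells — e.g., a local Sobolev embedding on a partition of $\Omega$ into cells of volume $\asymp n^{-1}$, bounding each $v^2(x_i)$ by the local $L_2$ and $H^m$ masses and controlling the maximal cell occupancy — rather than a global empirical-process bound of the van de Geer type.
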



There are three major steps to prove Theorem \ref{th:randomdesign1}. Here we call $\|v\|_{L_2}\leq C_1\|v\|_{n}+C_2tn^{-m/d}\|v\|_{H^m}$ the ``norm inequality'' for simplicity.
\begin{enumerate}[noitemsep]
    \item Use a Bernstein inequality to show that the norm inequality is true with a high probability for each fixed $v$. This is given by Lemma \ref{lemma:bernstein}.
    \item Apply a ``peeling device'' \citep{geer2000empirical} with regard to the $L_2$ norm, and show that the norm inequality is true with a high probability for all $v$ satisfying $\|v\|_{L_2}\geq r$ and $\|v\|_{H^m}\leq R$ with fixed $(r,R)$. This is given by Lemma \ref{lemma:step2}.
    \item Use a normalization argument to show (\ref{l2_to_nTH}). The proof is given at the end of this section.
\end{enumerate}

Theorem \ref{th:randomdesign2} can be proved in a similar fashion, starting from the opposite side of the Bernstein inequality. Hence, we omit the proof of Theorem \ref{th:randomdesign2}.

\begin{lemma}\label{lemma:bernstein}
    Suppose $v$ is continuous over $\Omega$. Under Assumption \ref{assum:random1}, we have
    \[\mathbb{P}(\|v\|_{L_2}\leq \sqrt{2/\mu_0}\|v\|_n)\geq 1- \exp\left\{-\frac{3n\mu_0\|v\|^2_{L_2}}{28\|v\|^2_{L_\infty}}\right\}.\]
\end{lemma}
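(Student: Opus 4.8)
The plan is to view $\|v\|_n^2=\frac1n\sum_{i=1}^n Z_i$ with $Z_i:=v^2(x_i)$ as an empirical mean of i.i.d.\ nonnegative variables and to control its lower tail by a one-sided Bernstein inequality. Since $v$ is continuous on the compact set $\Omega$ (Assumption~\ref{assum:matern}), $0\le Z_i\le\|v\|_{L_\infty}^2<\infty$, and by Assumption~\ref{assum:random1},
\[
a:=\mathbb{E} Z_1=\int_\Omega v^2(x)\mu(x)\,dx\ \ge\ \mu_0\int_\Omega v^2(x)\,dx=\mu_0\|v\|_{L_2}^2 .
\]
If $v\equiv 0$ the claim holds trivially under the convention $0/0=0$, so I would assume $\|v\|_{L_2}>0$, hence $\|v\|_{L_\infty}>0$ and $a>0$. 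The reduction I would use is that it suffices to bound $\mathbb{P}(\|v\|_n^2<a/2)$: because $a/2\ge \tfrac{\mu_0}{2}\|v\|_{L_2}^2$, the event $\{\|v\|_n^2\ge a/2\}$ is contained in $\{\tfrac{\mu_0}{2}\|v\|_{L_2}^2\le\|v\|_n^2\}=\{\|v\|_{L_2}\le\sqrt{2/\mu_0}\,\|v\|_n\}$.

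Next I would apply Bernstein's inequality to $W_i:=a-Z_i$, which are independent, mean zero, satisfy $|W_i|\le\|v\|_{L_\infty}^2=:M$, and have $\sum_{i=1}^n\mathrm{Var}(W_i)=n\,\mathrm{Var}(Z_1)\le n\,\mathbb{E}[v^4(x_1)]\le n\|v\|_{L_\infty}^2 a=:\nu$. With the standard form $\mathbb{P}\big(\sum_i W_i\ge s\big)\le\exp\{-s^2/(2\nu+2Ms/3)\}$ and $s=na/2$, and using $a\le\|v\|_{L_\infty}^2$ to bound $2Ms/3=n\|v\|_{L_\infty}^2 a/3$, the denominator is at most $2n\|v\|_{L_\infty}^2 a+n\|v\|_{L_\infty}^2 a/3=\tfrac{7}{3} n\|v\|_{L_\infty}^2 a$, so
\[
\mathbb{P}\big(\|v\|_n^2<a/2\big)=\mathbb{P}\Big(\sum_{i=1}^n W_i> \tfrac{na}{2}\Big)\le \exp\!\Big\{-\frac{n^2a^2/4}{\tfrac{7}{3} n\|v\|_{L_\infty}^2 a}\Big\}=\exp\!\Big\{-\frac{3na}{28\|v\|_{L_\infty}^2}\Big\}\le \exp\!\Big\{-\frac{3n\mu_0\|v\|_{L_2}^2}{28\|v\|_{L_\infty}^2}\Big\}.
\]
Combining this with the reduction above yields $\mathbb{P}(\|v\|_{L_2}\le\sqrt{2/\mu_0}\,\|v\|_n)\ge 1-\exp\{-3n\mu_0\|v\|_{L_2}^2/(28\|v\|_{L_\infty}^2)\}$, which is the assertion.

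I expect the argument to be essentially routine, so there is no serious obstacle; the only point requiring care is reproducing the exact constant $28$. This forces the specific choices above: taking the deviation to be exactly $a/2$ (legitimate precisely because $\tfrac{\mu_0}{2}\|v\|_{L_2}^2\le a/2$, so no finer deviation is needed), using the variance bound $\mathrm{Var}(Z_1)\le\|v\|_{L_\infty}^2 a$ rather than the cruder $\|v\|_{L_\infty}^4$, and invoking $a\le\|v\|_{L_\infty}^2$ in the linear Bernstein term; a looser treatment of any of these yields a valid but weaker constant. One should also keep the $0/0=0$ convention in view so that the degenerate case $v\equiv 0$ is covered, and note that continuity of $v$ on compact $\Omega$ is what guarantees $\|v\|_{L_\infty}<\infty$ and pointwise well-definedness of $Z_i$.
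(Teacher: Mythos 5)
Your proposal is correct and follows essentially the same route as the paper: a one-sided Bernstein bound for the empirical mean of $v^2(x_i)$ with the variance bound $\operatorname{Var}(v^2(x_1))\le\|v\|_{L_\infty}^2\,\mathbb{E}v^2(x_1)$, deviation level $\tfrac12\mathbb{E}v^2(x_1)$, and the final lower bound $\mathbb{E}v^2(x_1)\ge\mu_0\|v\|_{L_2}^2$, yielding the identical constant $28$. (The only cosmetic difference is that you phrase Bernstein for the sum $\sum_i(a-Z_i)$ while the paper phrases it for the average; also, your appeal to $a\le\|v\|_{L_\infty}^2$ in the linear term is not actually needed, since $2Ms/3$ equals $n\|v\|_{L_\infty}^2a/3$ exactly.)
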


\begin{proof}
    We first note that $|v^2(x_1)-\mathbb{E}v^2(x_1)|\leq \max\{v^2(x_1),\mathbb{E}v^2(x_1)\}\leq \|v\|^2_{L_\infty}$, and
    \[\mathbb{E}[v^2(x_1)-\mathbb{E}v^2(x_1)]^2\leq \mathbb{E}v^4(x_1)\leq \|v\|^2_{L_\infty}\mathbb{E}v^2(x_1).\]
    Let $t=0.5$; we use the Bernstein inequality to obtain
    \begin{eqnarray*}
            \mathbb{P}(\|v\|^2_n-\mathbb{E}v^2(x_1)\leq -t\mathbb{E}v^2(x_1))&\leq& \exp\left\{-\frac{nt^2(\mathbb{E}v^2(x_1))^2/2}{\|v\|^2_{L_\infty}\mathbb{E}v^2(x_1)+\|v\|^2_{L_\infty}t\mathbb{E}v^2(x_1)/3}\right\}\nonumber\\
            &=&\exp\left\{-\frac{3nt^2\mathbb{E}v^2(x_1)}{2\|v\|^2_{L_\infty}(t+3)}\right\}\nonumber\\
            &=&\exp\left\{-\frac{3n\mathbb{E}v^2(x_1)}{28\|v\|^2_{L_\infty}}\right\},
    \end{eqnarray*}
    which, together with the property
    \[\mathbb{E}v^2(x_1)=\int_\Omega v^2(x)\mu(x)dx\geq \mu_0\|v\|^2_{L_2},\]
    yields the desired result.
\end{proof}

The above lemma works only for a specific $v$. To get a bound uniform for a range of $v$, we need to consider the covering number.

\begin{definition}[$d_\mathcal{V}$-covering number]
    Let $\mathcal{V}$ be a set of functions over $\Omega$, and $d_\mathcal{V}(\cdot,\cdot)$ be a semi-metric over $\mathcal{V}$. Define $N(\epsilon,\mathcal{V},d_\mathcal{V})$ the smallest integer $N$, such that there exist functions (also referred to as \textit{centers}) $v_1,\ldots,v_N$ satisfying $\sup_{v\in\mathcal{V}}\min_{1\leq i\leq N}d_\mathcal{V}(v,v_i)\leq \epsilon$. In particular, for the case $\mathcal{V}\subset L_\infty$, we denote $N(\epsilon,\mathcal{V},\|\cdot\|_{L_\infty})$ as $N(\epsilon,\mathcal{V})$ for simplicity.
\end{definition}

The following result can be found in \cite{edmunds2008function}. Define $H^m(R)=\{v\in H^m:\|v\|_{H^m}\leq R\}$ for $R\geq 0$. 

\begin{proposition}
Suppose $\Omega$ satisfies the conditions in Assumption \ref{assum:matern}.
    There exists a constant $A>0$ depending only on $\Omega,m,d$, such that all $r>0$,
    \begin{eqnarray}\label{entropy}
        \log N(r,H^m(R))\leq A(R/r)^{d/m}.
    \end{eqnarray}
\end{proposition}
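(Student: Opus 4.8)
This is the classical entropy estimate for the compact embedding $H^m(\Omega)\hookrightarrow L_\infty(\Omega)$ — compactness being exactly where $m>d/2$ enters, via the Sobolev embedding theorem — and the cleanest route is to invoke it directly, e.g.\ from \cite{edmunds2008function}, or equivalently the Birman--Solomyak theorem on entropy numbers of Sobolev embeddings, which gives $e_k(\mathrm{id}\colon H^m(\Omega)\to L_\infty(\Omega))\lesssim k^{-m/d}$, the covering-number form of which is equivalent to (\ref{entropy}). If one instead wants a self-contained argument, the plan is as follows. First I would reduce to a fixed configuration: by homogeneity of covering numbers under dilation of the set, $N(r,H^m(R))=N(r/R,H^m(1))$, so it suffices to show $\log N(\epsilon,H^m(1))\le A\,\epsilon^{-d/m}$ for all small $\epsilon>0$, the bound being trivial once $\epsilon$ exceeds the embedding norm. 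Since $\Omega$ is convex and compact with non-empty interior (Assumption~\ref{assum:matern}), it has a Lipschitz boundary and therefore admits a bounded Sobolev extension operator into $H^m$ of a fixed cube $Q\supset\Omega$; as restriction to $\Omega$ increases neither norm, it then suffices to bound the $L_\infty(Q)$-covering numbers of the image of a ball of $H^m(Q)$, where one may work with a concrete orthonormal (trigonometric or wavelet) system.

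On the cube I would decompose $v=\sum_{j\ge0}v_j$ into dyadic frequency blocks. The Littlewood--Paley description of $H^m$ gives $\sum_j 2^{2jm}\|v_j\|_{L_2}^2\asymp\|v\|_{H^m}^2$, and Bernstein's inequality gives $\|v_j\|_{L_\infty}\lesssim 2^{jd/2}\|v_j\|_{L_2}$, hence $\|v_j\|_{L_\infty}\lesssim 2^{-j(m-d/2)}\|v\|_{H^m}$ — a \emph{summable} bound precisely because $m>d/2$. Thus $v_j$ ranges over an $L_\infty$-ball of radius $\rho_j\lesssim 2^{-j(m-d/2)}$ inside a linear space of dimension $N_j\asymp 2^{jd}$, and a ball of radius $\rho$ in an $N$-dimensional normed space is covered at scale $\eta\le\rho$ by at most $(3\rho/\eta)^N$ balls. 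One then covers each block to a scale $\eta_j$ with $\sum_j\eta_j\lesssim\epsilon$, and forms the product of these per-block nets to obtain a net of $H^m(1)$ at scale $\lesssim\epsilon$; its log-cardinality is $\sum_j N_j\log(3\rho_j/\eta_j)$.

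The hard part is choosing the accuracy budget $\{\eta_j\}$ so that this sum is $\lesssim\epsilon^{-d/m}$, i.e.\ so that every scale contributes at the same order. A naive single-scale choice — approximate by piecewise polynomials on a mesh of width $h$, with $L_\infty$-error $\lesssim h^{m-d/2}$ on a space of dimension $\asymp h^{-d}$, then balance — only yields the weaker bound $\log N(\epsilon,H^m(1))\lesssim\epsilon^{-d/(m-d/2)}$, which is not good enough: it would force the stronger hypothesis $m>d$ in the chaining/peeling arguments that use (\ref{entropy}), e.g.\ in the proof of Theorem~\ref{th:randomdesign1}, whereas only $m>d/2$ is assumed. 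Recovering the sharp exponent $d/m$ requires the frequency-graded allocation of the covering budget across the dyadic blocks (the heart of the Birman--Solomyak argument), and the attendant geometric sums together with the joint optimisation over the truncation level and the allocation rate are the delicate step; the remaining ingredients — the scaling reduction, the extension operator, Bernstein's inequality, and the volumetric covering of finite-dimensional balls — are routine. Since (\ref{entropy}) is a standard fact, simply citing \cite{edmunds2008function} (the route taken here) is the pragmatic choice.
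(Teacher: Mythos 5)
The paper offers no proof of this proposition --- it is stated as a known result with a citation to \cite{edmunds2008function} --- and your pragmatic conclusion to simply cite that reference is exactly the route the paper takes. Your supplementary sketch of the underlying Birman--Solomyak-type argument (scaling reduction $N(r,H^m(R))=N(r/R,H^m(1))$, extension to a cube, dyadic frequency blocks with Bernstein's inequality, and the frequency-graded allocation of the covering budget needed to reach the sharp exponent $d/m$ rather than the naive $d/(m-d/2)$) is a correct account of what lies behind that citation.
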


\begin{lemma}\label{lemma:step2}
Suppose $\Omega$ satisfies the conditions in Assumption \ref{assum:matern}.
    Fix $R>0$. For any $r>0$ satisfying 
    \begin{eqnarray}
        \sqrt{n}(r/R)^{d/m}\geq 1,\label{epsilon}
    \end{eqnarray}
    under Assumption \ref{assum:random1},
   there exists constants $C_1,C_2,C_3,C_4$ depending only on $\Omega,m,d$ and $\mu_0$, such that
   \begin{eqnarray*}
       \mathbb{P}\left(\|v\|_{L_2}\leq C_1 \|v\|_n \text{ for all } v\in H^m(R) \text{ with } \|v\|_{L_2}\geq C_2r\right)\\\geq 1-C_3\exp\{-C_4n(r/R)^{d/m}\}
   \end{eqnarray*}
\end{lemma}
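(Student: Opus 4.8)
The plan is to upgrade the single‑function Bernstein bound of Lemma~\ref{lemma:bernstein} to a bound that is uniform over $H^m(R)$, via a peeling argument over $L_2$‑shells combined with the metric entropy estimate~(\ref{entropy}). First I would merge Lemma~\ref{lemma:bernstein} with the Sobolev interpolation inequality~(\ref{interpolation}). Any $v\in H^m(R)$ is continuous because $m>d/2$, and (\ref{interpolation}) gives $\|v\|_{L_\infty}^2\le A^2\|v\|_{L_2}^{2-d/m}R^{d/m}$, hence $\|v\|_{L_2}^2/\|v\|_{L_\infty}^2\ge A^{-2}(\|v\|_{L_2}/R)^{d/m}$. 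Substituting this into Lemma~\ref{lemma:bernstein} yields, for every fixed such $v$,
\[
\mathbb{P}\!\left(\|v\|_{L_2}\le \sqrt{2/\mu_0}\,\|v\|_n\right)\ge 1-\exp\!\left\{-c_0\, n\,(\|v\|_{L_2}/R)^{d/m}\right\},
\]
where $c_0=3\mu_0/(28A^2)$ depends only on $\Omega,m,d,\mu_0$.

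Next comes the peeling. Write $\{v\in H^m(R):\|v\|_{L_2}\ge C_2r\}=\bigcup_{j\ge0}S_j$ with $S_j=\{v\in H^m(R):2^jC_2r\le\|v\|_{L_2}<2^{j+1}C_2r\}$; only finitely many shells are nonempty since $\|v\|_{L_2}\le\|v\|_{H^m}\le R$ on $H^m(R)$. For the $j$‑th shell I take an $L_\infty$‑net of $H^m(R)$ of radius $\epsilon_j=\kappa\,2^jC_2r$ with centers in $H^m(R)$, where $\kappa$ is a small constant (depending only on $\mu_0,|\Omega|$) fixed below; by~(\ref{entropy}) this net has at most $\exp\{A'\kappa^{-d/m}(R/(2^jC_2r))^{d/m}\}$ centers, $A'$ being the entropy constant. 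Any center $w$ within $\epsilon_j$ of $S_j$ satisfies $\|w\|_{L_2}\ge(1-|\Omega|^{1/2}\kappa)2^jC_2r\ge\tfrac12 2^jC_2r$ once $|\Omega|^{1/2}\kappa\le\tfrac12$, so the display above applies to $w$ with exponent at least $c_02^{-d/m}n(2^jC_2r/R)^{d/m}$. A union bound over these centers over all shells bounds the failure probability by
\[
\sum_{j\ge0}\exp\!\left\{A'\kappa^{-d/m}\bigl(R/(2^jC_2r)\bigr)^{d/m}-c_0\,2^{-d/m}\,n\bigl(2^jC_2r/R\bigr)^{d/m}\right\}.
\]
Condition~(\ref{epsilon}) gives $n(2^jC_2r/R)^{2d/m}\ge C_2^{2d/m}$ for every $j\ge0$, so choosing $C_2$ large enough makes the entropy exponent at most half the Bernstein exponent in each shell, and the remaining super‑geometric sum is at most $C_3\exp\{-C_4 n(r/R)^{d/m}\}$ with $C_3,C_4$ depending only on $\Omega,m,d,\mu_0$.

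Finally, on the complementary event I transfer the bound from centers to all of $S_j$: given $v\in S_j$, pick the nearest center $w$, so $\|v-w\|_{L_\infty}\le\epsilon_j$ and in particular $\|v-w\|_n\le\epsilon_j$ and $\|v-w\|_{L_2}\le|\Omega|^{1/2}\epsilon_j$; then
\[
\|v\|_{L_2}\le\|w\|_{L_2}+|\Omega|^{1/2}\epsilon_j\le\sqrt{2/\mu_0}\,\|w\|_n+|\Omega|^{1/2}\epsilon_j\le\sqrt{2/\mu_0}\,\|v\|_n+\bigl(\sqrt{2/\mu_0}+|\Omega|^{1/2}\bigr)\epsilon_j.
\]
Since $\epsilon_j\le\kappa\|v\|_{L_2}$ on $S_j$, fixing $\kappa$ so that $(\sqrt{2/\mu_0}+|\Omega|^{1/2})\kappa\le\tfrac12$ absorbs the last term into the left side and gives $\|v\|_{L_2}\le 2\sqrt{2/\mu_0}\,\|v\|_n$, i.e.\ $C_1=2\sqrt{2/\mu_0}$; this choice of $\kappa$ automatically satisfies $|\Omega|^{1/2}\kappa\le\tfrac12$, so all the constants depend only on $\Omega,m,d,\mu_0$ as claimed.

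The main obstacle is the interplay of the two exponents in the union bound: the net radius must be taken proportional to the shell radius $2^jC_2r$, so that the entropy exponent is of order $(R/(2^jC_2r))^{d/m}$ while the Bernstein exponent is of order $n(2^jC_2r/R)^{d/m}$ — their product being of order $n$, and it is exactly condition~(\ref{epsilon}) plus a sufficiently large $C_2$ that forces the latter to dominate the former uniformly in $j$. Secondary care is needed to guarantee that the centers entering the union bound have $L_2$‑norm bounded below (so Bernstein is non‑vacuous) and may be taken inside $H^m(R)$ (so the interpolation inequality applies), both of which cost only harmless constant factors.
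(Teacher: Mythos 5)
Your proof is correct and follows essentially the same route as the paper's: combine the single-function Bernstein bound of Lemma~\ref{lemma:bernstein} with the interpolation inequality~(\ref{interpolation}), peel over $L_2$-shells, control each shell with the entropy bound~(\ref{entropy}), and use condition~(\ref{epsilon}) to make the Bernstein exponent dominate the entropy cost before transferring the inequality from net centers back to arbitrary $v$. The only difference is organizational — you use geometric shells with shell-adapted net radii, whereas the paper uses arithmetically spaced shells with a single $r$-net — and this does not change the substance of the argument.
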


\begin{proof}
    The proof proceeds by applying a peeling device.
    Let $|\Omega|$ be the volume of $\Omega$, and $\mathcal{V}_s:=\{v\in\mathcal{V}(R):(s-1)|\Omega|^{1/2}r \leq \|v\|_{L_2}\leq s |\Omega|^{1/2}r\}$ for $s=1,2,\ldots$. By the definition of the covering number, we have $N(r,\mathcal{V})$ centers. For $v\in\mathcal{V}_s$, denote its associated center as $\operatorname{ctr}v$. Note that
    \[(s-2)|\Omega|^{1/2}r\leq\|v\|_{L_2}-|\Omega|^{1/2}r\leq \|\operatorname{ctr}v\|_{L_2}\leq\|v\|_{L_2}+|\Omega|^{1/2}r\leq (s+1)|\Omega|^{1/2}r.\] Define event
    \[E_v:=\{\|\operatorname{ctr}v\|_{L_2}\leq \sqrt{2/\mu_0}\|\operatorname{ctr}v\|_n\}.\]
    Then on $E_v$, we have 
    \begin{eqnarray*}
        \|v\|_{L_2}&\leq& s|\Omega|^{1/2}r\leq \frac{s}{s-2}\|\operatorname{ctr}v\|_{L_2}\leq \frac{s\sqrt{2/\mu}}{s-2}\|\operatorname{ctr}v\|_n\\
        &\leq&\frac{s\sqrt{2/\mu_0}}{s-2}(\|v\|_n+r)\leq \frac{s\sqrt{2/\mu_0}}{s-2}\left(\|v\|_n+\frac{|\Omega|^{-1/2}}{s-1}\|v\|_{L_2}\right).
    \end{eqnarray*}
    Then for $s> 2|\Omega|^{-1/2}+1$, we have $\|v\|_{L_2}\leq 4\sqrt{2/\mu_0}\|v\|_n$. This proves $E_v\subset \{\|v\|_{L_2}\leq 4\sqrt{2/\mu_0}\|v\|_n\}$. Therefore, by Lemma \ref{lemma:bernstein}, we have
    \begin{eqnarray*}
    &&\mathbb{P}(\|v\|_{L_2}> 4\sqrt{2/\mu_0}\|v\|_n)\leq \mathbb{P}(E_v^c)\leq \exp\left\{-\frac{3n\mu_0\|v\|^2_{L_2}}{28\|v\|^2_{L_\infty}}\right\}\\
    &\leq&\exp\left\{-\frac{3n\mu_0\|v\|^2_{L_2}}{28C\|v\|^{2-d/m}_{L_2}\|v\|^{d/m}_{H^m}}\right\}=\exp\left\{-\frac{3n\mu_0}{28C}\frac{\|v\|^{d/m}_{L_2}}{\|v\|^{d/m}_{H^m}}\right\}\\
    &\leq&\exp\left\{-\frac{3n\mu_0}{28C}\left(\frac{(s-1)|\Omega|^{1/2}r}{R}\right)^{d/m}\right\},
    \end{eqnarray*}
    where the third inequality follows from the interpolation inequality (\ref{interpolation}). Choose $S_0$ large enough such that we also have $3\mu_0(S_0-1)^{d/m}|\Omega|^{d/(2m)}/(28C)>(A+1)S_0^{d/(2m)}$, for $A$ defined in (\ref{entropy}). Now we arrive at
    \begin{eqnarray*}
        &&\mathbb{P}\left(\bigcup_{v\in \cup_{s\geq S_0}\mathcal{V}_s}\left\{\|v\|_{L_2}> 4\sqrt{2/\mu_0}\|v\|_n\right\}\right)\\
        &\leq&\sum_{s\geq S_0}\mathbb{P}\left(\bigcup_{v\in \mathcal{V}_s}\left\{\|v\|_{L_2}> 4\sqrt{2/\mu_0}\|v\|_n\right\}\right)\\
        &\leq&\sum_{s\geq S_0}\mathbb{P}\left(\bigcup_{v\in \mathcal{V}_s}E_v^c\right)\\
        &\leq&\sum_{s\geq S_0}\exp\left\{\log N(r,H^m(R))-\frac{3n\mu_0}{28C}\left(\frac{(s-1)|\Omega|^{1/2}r}{R}\right)^{d/m}\right\}\\
        &\leq&\sum_{s\geq S_0}\exp\left\{A(R/r)^{d/m}-(A+1)s^{d/(2m)}n(r/R)^{d/m}\right\}\\
        &\leq&\sum_{s\geq S_0}\exp\left\{-s^{d/(2m)}n(r/R)^{d/m}\right\},
    \end{eqnarray*}
    where the last inequality follows from (\ref{epsilon}). This completes the proof.
\end{proof}

Now we are ready to prove (\ref{l2_to_nTH}).

\begin{proof}[Proof of Theorem \ref{th:randomdesign1}]
Because $\|0\|_{L_2}\leq \max\left\{A_1 \|0\|_n, A_2 n^{-m/d}\|0\|_{H^m}\right\}$ is certainly true, we only need to consider the $v\neq 0$ case.
In this case,
\begin{eqnarray}\label{normineq}
    \|v\|_{L_2}\leq \max\left\{A_1 \|v\|_{n},A_2n^{-m/d}\|v\|_{H^m}\right\}
\end{eqnarray}
is equivalent to
\[\left\|\frac{v}{\|v\|_{H^m}}\right\|_{L_2}\leq \max\left\{A_1 \left\|\frac{v}{\|v\|_{H^m}}\right\|_{n},A_2n^{-m/d}\left\|\frac{v}{\|v\|_{H^m}}\right\|_{H^m}\right\}.\]
This implies that we only need to show (\ref{normineq}) for $v$ with $\|v\|_{H^m}=1$. Now we invoke Lemma \ref{lemma:step2} with $R=1$ and $r=t n^{-m/d}$ for $t\geq 1$, which fulfills the condition (\ref{epsilon}). Let $C_1,C_2,C_3,C_4$ be constants suggested by Lemma \ref{lemma:step2}. We consider two cases.

\textit{Case 1)}. If $\|v\|_{L_2}<C_2 r=C_2tn^{-m/d}$, then $\|v\|_{L_2}<C_2tn^{-m/d}\|v\|_{H^m}$ is automatically true.

\textit{Case 2)}. If $\|v\|_{L_2}\geq C_2 r$. Lemma 4 implies that on an event $\Xi$ independent of $v$, we have 
    $\|v\|_{L_2}\leq C_1\|v\|_n$
and $\mathbb{P}(\Xi)\geq 1-C_3\exp\{-t^{d/m}\}$.

Combining the above two cases, we get
\begin{eqnarray*}
      \mathbb{P}\left(\|v\|_{L_2}\leq \max\left\{C_1 \|v\|_{n},C_2 t n^{-m/d}\|v\|_{H^m}\right\} \text{ for all } v\in H^m\right)\\ \geq 1-C_3\exp\{-C_4 t^{d/m}\}
\end{eqnarray*}
which completes the proof.
\end{proof}

\begin{remark}
   Theorem \ref{th:randomdesign1} improves Lemma 5.16 of \cite{geer2000empirical}. As we examine the proof, it can be seen that this improvement is mainly due to the use of the interpolation inequality (\ref{interpolation}).
\end{remark}

\subsection{Fixed Designs}

For fixed designs, we assume they are \textit{quasi-uniform}, defined as below. For a set of design points $X = \{x_1, x_2,...,x_n\}\subset \Omega$, define the fill distance as
$$h_{X, \Omega}=\sup_{x\in \Omega} \inf_{x_j \in X}||x-x_j||,$$
and the separation radius as
$$q_X = \min_{1\leq j\neq k\leq n}||x_j-x_k||/2.$$
    A set of input points $X$ is said to be \textit{quasi-uniform} in $\Omega$ if
    \[h_{X,\Omega}/q_X\leq A,\]
for some $A>0$ independent of $n$.

Suppose $\Omega$ satisfies the conditions in Assumption \ref{assum:matern}.
For quasi-uniform designs, Assumption \ref{assum:norms} is a consequence of Theorems 3.3 and 3.4 of \cite{utreras1988convergence}. Of course, here we do not need a probabilistic statement, and (\ref{l2_to_n})-(\ref{n_to_l2}) can be simplified to:
    \begin{eqnarray*}
        &&\|v\|_{L_2}\leq \max\left\{C_1 \|v\|_{n},C_2 n^{-m/d}\|v\|_{H^m}\right\},\\
        &&\|v\|_n\leq \max\left\{C_1\|v\|_{L_2},C_2 n^{-m/d}\|v\|_{H^m}\right\}.
    \end{eqnarray*}
    for all $v\in H^m$ and constants $C_1,C_2$ depending only on $\Omega,d,m$ and the quasi-uniform constant $A$.


\section{Supporting Lemmas and Technical Details}\label{sec:proofs_main}
In this section, we present supporting lemmas used in our main theorems and  proofs of the main theorems and lemmas presented in the main article.

\subsection{Proofs for Section \ref{sec:BnV}}\label{sub:gen_bias_var}

\begin{proof}[Proof of Lemma \ref{lemma:relationship}]
Denote $(u_1,\ldots,u_n)^T:=(K(X,X)+\lambda nI)^{-1}g(X)$. Use the representation 
\begin{eqnarray}\label{ghat}
    \hat{g}(\cdot)=K(\cdot,X)(K(X,X)+\lambda nI)^{-1}g(X),
\end{eqnarray}
we have
\begin{eqnarray*}
    \operatorname{BIAS}&=&g^T(X)(K(X,X)+\lambda n I)^{-1}F-\langle f,g\rangle_\mathcal{H}\\
    &=&\sum_{i=1}^n u_i f(x_i)-\langle f,g\rangle_\mathcal{H}\\
    &=&\left\langle f, \sum_{i=1}^n u_i K(\cdot,x_i)\right\rangle_\mathcal{H}-\langle f,g\rangle_\mathcal{H}\\
    &=&\left\langle f, g^T(X)(K(X,X)+\lambda n I)^{-1}K(X,\cdot)-g\right\rangle_{\mathcal{H}}\\
    &=&\langle f, \hat{g}-g\rangle_{\mathcal{H}}\\
    &\leq&\|f\|_{\mathcal{H}}\|\hat{g}-g\|_{\mathcal{H}},
\end{eqnarray*}
where the third equality follows from the reproducing property (\ref{reproducing}).
This proves the bias part.

For the variance part, it suffices to note from (\ref{ghat}) that
\begin{eqnarray}\label{u}
  \hat{g}(x_i)-g(x_i)=-\lambda n u_i.  
\end{eqnarray}
Therefore
\[\frac{1}{n}\sum_{i=1}^n (\hat{g}(x_i)-g(x_i))^2=n\lambda^2 g^T(X)(K(X,X)+\lambda nI)^{-2}g(X),\]
which proves the variance part.
\end{proof}

\subsection{Supporting Lemmas for Upper Bound in Section \ref{sec:upper}}

The following lemma states the results on the improved rates for the noiseless KRR.

\begin{lemma}\label{Th:improved}
    Under Assumptions \ref{assum:matern}-\ref{assum:norms}, on the event $\Xi_\epsilon$ introduced in Assumption \ref{assum:norms}, we have
    \begin{align*}
          &  \begin{cases}
        \|\hat{g}-g\|_n\leq 2C_gC_1^\delta\lambda^{\frac{1+\delta}{2}}\\
        \|\hat{g}-g\|_\mathcal{H}\leq 2C_gC_1^\delta\lambda^{\frac{\delta}{2}}
    \end{cases}, & \text{if } C_1\lambda^{\frac{1}{2}}\geq C_\epsilon n^{-\frac{m}{d}} & \text{ (\textbf{smoothing regime})},\\
          &  \begin{cases}
        \|\hat{g}-g\|_n\leq 2C_gC_\epsilon^\delta\lambda^{\frac{1}{2}}n^{-\frac{\delta m}{d}}\\
        \|\hat{g}-g\|_\mathcal{H}\leq 2C_gC_\epsilon^\delta n^{-\frac{\delta m}{d}}
    \end{cases}, & \text{if } C_1\lambda^{\frac{1}{2}}< C_\epsilon n^{-\frac{m}{d}} & \text{ (\textbf{interpolation regime})}.
    \end{align*}
\end{lemma}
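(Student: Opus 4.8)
The plan is to analyze the noiseless KRR minimizer $\hat g$ via the variational characterization. Since $\hat g$ minimizes $\|v-g\|_n^2 + \lambda\|v\|_{\mathcal H}^2$ over $v\in\mathcal H$ (up to a scaling in the first term), comparing $\hat g$ with the competitor $v=g$ gives the basic inequality
\begin{eqnarray*}
\|\hat g - g\|_n^2 + \lambda\|\hat g\|_{\mathcal H}^2 \le \lambda\|g\|_{\mathcal H}^2,
\end{eqnarray*}
which immediately yields the crude bounds $\|\hat g\|_{\mathcal H}\le\|g\|_{\mathcal H}$ and hence $\|\hat g - g\|_{\mathcal H}\le 2\|g\|_{\mathcal H}$, plus $\|\hat g - g\|_n\le \lambda^{1/2}\|g\|_{\mathcal H}$. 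These are not yet the improved rates; the point is to bootstrap using the smoothness of $g$ encoded in Assumption \ref{assum:f}. First I would write the normal equations / first-order optimality condition for $\hat g$, which says that for every $v\in\mathcal H$,
\begin{eqnarray*}
\langle \hat g - g, v\rangle_{\mathcal H} = -\lambda^{-1}\cdot\tfrac1n\sum_{i=1}^n(\hat g(x_i)-g(x_i))v(x_i),
\end{eqnarray*}
or equivalently an identity relating $\langle \hat g - g, v\rangle_{\mathcal H}$, $\langle \hat g - g, v\rangle_n$-type quantities and $\lambda$. Taking $v=\hat g - g$ in the appropriate version gives the energy identity $\|\hat g - g\|_n^2 + \lambda\|\hat g - g\|_{\mathcal H}^2 = \lambda\langle g - \hat g, g\rangle_{\mathcal H}$ (sign conventions to be checked against the exact normalization in (\ref{KRR:1})).

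The key step is then to bound $\langle g-\hat g, g\rangle_{\mathcal H}$ — or more precisely $\langle \hat g - g, g\rangle_{\mathcal H}$ — using Assumption \ref{assum:f} with $v=\hat g - g$:
\begin{eqnarray*}
|\langle g, \hat g - g\rangle_{\mathcal H}| \le C_g\|\hat g - g\|_{L_2}^{\delta}\|\hat g - g\|_{\mathcal H}^{1-\delta}.
\end{eqnarray*}
Now I would invoke Assumption \ref{assum:norms}, specifically (\ref{l2_to_n}) on the event $\Xi_\epsilon$, to replace $\|\hat g - g\|_{L_2}$ by $\max\{C_1\|\hat g - g\|_n,\, C_\epsilon n^{-m/d}\|\hat g - g\|_{\mathcal H}\}$. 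This is exactly what produces the dichotomy between the \textbf{smoothing regime} and the \textbf{interpolation regime}: the maximum is attained by its first argument precisely in a range where $\lambda^{1/2}$ dominates $n^{-m/d}$ (after the bounds are worked out), and by its second argument otherwise. In the smoothing regime, substituting $\|\hat g-g\|_{L_2}\lesssim C_1\|\hat g - g\|_n$ into the energy identity gives $\|\hat g - g\|_n^2 + \lambda\|\hat g - g\|_{\mathcal H}^2 \lesssim \lambda C_g C_1^\delta \|\hat g - g\|_n^{\delta}\|\hat g - g\|_{\mathcal H}^{1-\delta}$, and a Young's-inequality / algebraic manipulation (splitting the right side against the two terms on the left with exponents $\delta$ and $1-\delta$) disentangles this into the stated bounds $\|\hat g - g\|_n \le 2C_gC_1^\delta\lambda^{(1+\delta)/2}$ and $\|\hat g - g\|_{\mathcal H}\le 2C_gC_1^\delta\lambda^{\delta/2}$. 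In the interpolation regime, substituting $\|\hat g - g\|_{L_2}\lesssim C_\epsilon n^{-m/d}\|\hat g - g\|_{\mathcal H}$ instead and cancelling powers of $\|\hat g - g\|_{\mathcal H}$ yields $\|\hat g - g\|_{\mathcal H}\le 2C_gC_\epsilon^\delta n^{-\delta m/d}$, and then feeding this back into the energy identity bounds $\|\hat g - g\|_n$.

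I expect the main obstacle to be the bookkeeping in the case split: making the threshold $C_1\lambda^{1/2}\gtrless C_\epsilon n^{-m/d}$ emerge cleanly requires carefully tracking where each factor of $C_1$, $C_\epsilon$, $\lambda$ and $n^{-m/d}$ lands when the $\max$ is resolved, and checking that the self-referential estimates (bounds on $\|\hat g - g\|_{\mathcal H}$ appearing on both sides) can actually be solved to give the clean closed-form bounds with the explicit constant $2$ rather than something larger. A secondary subtlety is the handling of the $\|\cdot\|_n$ versus $\|\cdot\|_{L_2}$ interplay when $\delta$ is close to $1$ (the $L_2$-inner-product case), where the exponent $1-\delta$ on $\|\hat g - g\|_{\mathcal H}$ degenerates; one should check the degenerate exponents do not cause division-by-zero issues, consistent with the $\frac00=0$ convention adopted in the paper. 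Otherwise the argument is a fairly standard improved-rate / source-condition computation in the spirit of \cite{tuo2020improved}, and no deep new idea beyond combining the energy identity with Assumptions \ref{assum:f} and \ref{assum:norms} should be needed.
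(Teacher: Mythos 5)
Your proposal is correct and follows essentially the same route as the paper's proof: the basic inequality rearranged into an energy bound $\|\hat g-g\|_n^2+\lambda\|\hat g-g\|_\mathcal{H}^2\leq 2\lambda\langle g,\hat g-g\rangle_\mathcal{H}$, then Assumption \ref{assum:f} applied to $v=\hat g-g$, then (\ref{l2_to_n}) on $\Xi_\epsilon$ to resolve the $L_2$ norm into the $\max$, and finally the two-case algebraic solve (the paper derives both case bounds and observes that whichever is larger, as determined by the sign of $C_1\lambda^{1/2}-C_\epsilon n^{-m/d}$, covers both cases). No gap.
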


Lemma \ref{Th:improved} shows that, depending on the choice of $\lambda$, there are two types of upper bounds. Given $\epsilon$, we say that $\lambda$ lies in the \textit{interpolation regime} if $\lambda<(C_\epsilon/C_1)^{2}n^{-2m/d}$; and otherwise, say that $\lambda$ lies in the \textit{smoothing regime}. In the interpolation regime, $\hat{g}$ behaves similarly as the kernel interpolant, i.e., the KRR estimator with $\lambda=0$. Specifically, we have seen that as $\lambda$ decreases, $\|\hat{g}-g\|_\mathcal{H}$ also decreases as $O_\mathbb{P}(\lambda^{\delta/2})$ until $\lambda$ enters the interpolation regime, and thereafter $\|\hat{g}-g\|_\mathcal{H}$ stays as $O_\mathbb{P}(n^{-\delta m/d})$. This is not surprising, as $O_\mathbb{P}(n^{-\delta m/d})$ is the limit of this estimation: it is the rate of convergence of the kernel interpolants under the same conditions; see \cite{tuo2023privacy,wendland2004scattered}.

It is important to note that the event $\Xi_\epsilon$, introduced in Assumption \ref{assum:norms}, is independent of the target function $g$. In other words, the inequalities in Lemma \ref{Th:improved} hold \textit{simultaneously} for all $g$ satisfying Assumption \ref{assum:f}. This property enables us to quantify the uniform errors in terms of $\sup_{g\in\mathcal{G}}|\langle \hat{f}-f,g\rangle_\mathcal{H}|$. Further details will be given in Section \ref{sec:uniform} in the main article.

It is also worth noting that, Lemma \ref{Th:improved} concerns noiseless KRR, which has only the bias but no variance. So there is no downside to using a small $\lambda$. In the presence of random noise, however, it is of no practical interest to choose $\lambda$ inside the interpolation regime (say, $\lambda=o(n^{-2m/d})$), because doing so will result in way too large variances! Therefore, hereafter we only consider results in the smoothing regime in an asymptotic sense, i.e., $\lambda^{-1}=O(n^{2m/d})$, for simplicity.

 Theorem \ref{Coro:rate} is an immediate consequence of Lemmas \ref{lemma:relationship} and \ref{Th:improved}.

\subsection{Proof for Section \ref{sec:assumptions}} \label{sub:general_assum}
\begin{proof}[Proof of Lemma \ref{lemma:var}]
By the definition
\[\operatorname{VAR}=\sigma^2g^T(X)(K(X,X)+\lambda n I)^{-2}g(X),\]
together with the condition $\sigma^2>0$ and that $(K(X,X)+\lambda n I)^{-2}$ is positive definite,
$\operatorname{VAR}=0$ if and only if $g(X)=0$, which implies $\|g\|_n=0$.
For any $\epsilon>0$, let $C_\epsilon$ and $\Xi_\epsilon$ be defined in Assumption \ref{assum:norms}. Because $\|g\|_{L_2}\neq 0$, for sufficiently large $n$, we have $\|g\|_{L_2}>C_\epsilon n^{-m/d}\|g\|_\mathcal{H}$. Then by (\ref{l2_to_n}), on the event $\Xi_\epsilon$, $\|g\|_{L_2}\leq C_1\|g\|_n$. This shows that $\operatorname{VAR}\neq 0$ on $\Xi_\epsilon$, and the desired result follows.
\end{proof}


\subsection{Proof for Upper Bound in Section \ref{sec:upper}} \label{sub:upp}
 \begin{proof}[Proof of Lemma \ref{Th:improved}]
    By the definition of noiseless KRR, we have the basic inequality
\[\|\hat{g}-g\|_n^2+\lambda \|\hat{g}\|^2_{\mathcal{H}}\leq \|g-g\|^2_n + \lambda \|g\|^2_\mathcal{H}=\lambda \|g\|^2_\mathcal{H},\]
which is equivalent to
\begin{eqnarray*}
    \|\hat{g}-g\|_n^2+\lambda \|\hat{g}-g\|^2_{\mathcal{H}}\leq 2 \lambda \langle g, \hat{g}-g\rangle_{\mathcal{H}}.
\end{eqnarray*}
Plugging in Assumptions \ref{assum:f}-\ref{assum:norms}, on $\Xi_\epsilon$, we have
\begin{eqnarray*}
    &&\|\hat{g}-g\|_n^2+\lambda \|\hat{g}-g\|^2_{\mathcal{H}}\leq 2 \lambda C_g \|\hat{g}-g\|_{L_2}^{\delta}\|\hat{g}-g\|_{\mathcal{H}}^{1-\delta}.\\
    &\leq& 2 \lambda C_g\max\left\{C_1^\delta\|\hat{g}-g\|^{\delta}_n\|\hat{g}-g\|_{\mathcal{H}}^{1-\delta},C_\epsilon^\delta n^{-\delta m/d}\|\hat{g}-g\|_{\mathcal{H}}\right\},
\end{eqnarray*}
which can be broken down into two cases.

\textit{Case 1)}: $\|\hat{g}-g\|_n^2+\lambda \|\hat{g}-g\|^2_{\mathcal{H}}\leq 2 \lambda C_gC_1^\delta\|\hat{g}-g\|^{\delta}_n\|\hat{g}-g\|_{\mathcal{H}}^{1-\delta}$, which implies
\begin{eqnarray}\label{needed_for_lower}
    \begin{cases}
        ~~\|\hat{g}-g\|_n^2\leq 2 \lambda C_gC_1^\delta\|\hat{g}-g\|^{\delta}_n\|\hat{g}-g\|_{\mathcal{H}}^{1-\delta},\\
        \lambda \|\hat{g}-g\|^2_{\mathcal{H}}\leq 2 \lambda C_gC_1^\delta\|\hat{g}-g\|^{\delta}_n\|\hat{g}-g\|_{\mathcal{H}}^{1-\delta}.
    \end{cases}
\end{eqnarray}
The above system can be solved with elementary algebra. The solution is
\begin{eqnarray}\label{case1}
    \begin{cases}
        \|\hat{g}-g\|_n\leq 2C_gC_1^\delta\lambda^{\frac{1+\delta}{2}},\\
        \|\hat{g}-g\|_\mathcal{H}\leq 2C_gC_1^\delta\lambda^{\frac{\delta}{2}}.
    \end{cases}
\end{eqnarray}

\textit{Case 2)}: $\|\hat{g}-g\|_n^2+\lambda \|\hat{g}-g\|^2_{\mathcal{H}}\leq 2 \lambda C_gC_\epsilon^\delta n^{-\delta m/d}\|\hat{g}-g\|_{\mathcal{H}}$, which implies
\begin{eqnarray*}
    \begin{cases}
        ~~\|\hat{g}-g\|_n^2\leq 2 \lambda C_gC_\epsilon^\delta n^{-\delta m/d}\|\hat{g}-g\|_{\mathcal{H}},\\
        \lambda \|\hat{g}-g\|^2_{\mathcal{H}}\leq 2 \lambda C_gC_\epsilon^\delta n^{-\delta m/d}\|\hat{g}-g\|_{\mathcal{H}}.
    \end{cases}
\end{eqnarray*}
The solution is
\begin{eqnarray}\label{case2}
    \begin{cases}
        \|\hat{g}-g\|_n\leq 2C_gC_\epsilon^\delta\lambda^{\frac{1}{2}}n^{-\frac{\delta m}{d}},\\
        \|\hat{g}-g\|_\mathcal{H}\leq 2C_gC_\epsilon^\delta n^{-\frac{\delta m}{d}}.
    \end{cases}
\end{eqnarray}
Clearly, if $C_1^\delta\lambda^{\delta/2}\geq C_\epsilon^\delta n^{-\delta m/d}$, (\ref{case2}) is implied by (\ref{case1}); otherwise, (\ref{case1}) is implied by (\ref{case2}). This completes the proof.
\end{proof}

\subsection{Supporting Lemmas for Lower Bound in Section \ref{sec:lower}}
In view of Lemma \ref{lemma:relationship}, we have analogous lower bounds for the noiseless KRR.

\begin{lemma}\label{Th:KRRlower}
       Suppose Assumptions \ref{assum:matern}-\ref{assum:tau} hold. Then for each $\epsilon>0$, there exist constants $A_1,A_2,A_3>0$ depending only on $C_0,C_1,C_g,C_\epsilon,R_0,\delta$, and $\tau$, such that, on the event $\Xi_\epsilon$ introduced in Assumption \ref{assum:norms}, for any $n$ and $\lambda$ satisfying $A_1n^{-2m/d}\leq \lambda \leq A_2$, we have
       \begin{eqnarray}
           \|\hat{g}-g\|_n&\geq& A_3 \lambda^{\frac{\delta\tau-\delta+2\tau}{2\tau}},\label{KRRlowern}\\
            \|\hat{g}-g\|_\mathcal{H}&\geq& 
            \begin{cases}
                A_3 \lambda^{\frac{2\tau-2\delta+\delta^2-\delta^2\tau}{2\tau(1-\delta)}} & \text{ if } \delta<1\\
                A_3 \lambda     & \text{ if } \delta=1
            \end{cases}.\label{KRRlowerH}
       \end{eqnarray}
       In particular, if $\delta=\tau$, we have       
       \begin{eqnarray}
           \|\hat{g}-g\|_n&\geq& A_3 \lambda^{\frac{1+\delta}{2}},\nonumber\\
           \|\hat{g}-g\|_\mathcal{H}&\geq& 
           \begin{cases}
               A_3 \lambda^{\frac{\delta}{2}} & \text{ if } \delta<1\\
               A_3 \lambda& \text{ if } \delta=1
           \end{cases}.\label{KRRlowerHequal}
       \end{eqnarray}
\end{lemma}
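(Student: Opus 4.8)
The plan is to transfer everything to the \emph{noiseless} KRR $\hat g$ and argue from its exact first‑order (Galerkin) identity. As in the proof of Lemma~\ref{lemma:relationship}, with $u=(K(X,X)+\lambda nI)^{-1}g(X)$ one has $\hat g=\sum_i u_iK(\cdot,x_i)$, $\hat g(x_i)-g(x_i)=-\lambda nu_i$, and hence, by the reproducing property, $\langle\hat g-g,v\rangle_n+\lambda\langle\hat g,v\rangle_{\mathcal H}=0$ for every $v\in\mathcal H$; equivalently
\[
\lambda\langle g,v\rangle_{\mathcal H}=\langle g-\hat g,v\rangle_n-\lambda\langle\hat g-g,v\rangle_{\mathcal H}.
\]
Applying Cauchy--Schwarz, restricting to test functions with $\|v\|_{\mathcal H}\le R\|v\|_{L_2}$ for $R_0\le R\le(C_1/C_\epsilon)n^{m/d}$ — so that \eqref{n_to_l2} gives $\|v\|_n\le C_1\|v\|_{L_2}$ — then dividing by $\|v\|_{L_2}$, taking the supremum and invoking Assumption~\ref{assum:tau}, I obtain the master inequality (call it $(\star)$):
\[
C_0\,\lambda R^{1-\tau}\ \le\ C_1\,\|\hat g-g\|_n+\lambda R\,\|\hat g-g\|_{\mathcal H}.
\]
This is the only place Assumption~\ref{assum:tau} enters.

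For \eqref{KRRlowern}: on $\Xi_\epsilon$ in the smoothing regime, Lemma~\ref{Th:improved} gives $\|\hat g-g\|_{\mathcal H}\le 2C_gC_1^{\delta}\lambda^{\delta/2}$, so the choice $R=\bigl(C_0/(4C_gC_1^{\delta})\bigr)^{1/\tau}\lambda^{-\delta/(2\tau)}$ makes the second term of $(\star)$ at most $\tfrac12C_0\lambda R^{1-\tau}$; hence $\|\hat g-g\|_n\ge(2C_1)^{-1}C_0\lambda R^{1-\tau}\gtrsim\lambda^{(\delta\tau-\delta+2\tau)/(2\tau)}$. The admissibility $R_0\le R\le(C_1/C_\epsilon)n^{m/d}$ is exactly what forces the range $A_1n^{-2m/d}\le\lambda\le A_2$: the upper constraint on $R$ holds because $\delta\le\tau$ and $\lambda\le1$ give $\lambda^{-\delta/(2\tau)}\le\lambda^{-1/2}\le A_1^{-1/2}n^{m/d}$, and $R\ge R_0$ is just $\lambda\le A_2$ (shrinking $A_2$ if needed so that $A_2\le1$).

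For \eqref{KRRlowerH} with $\delta<1$, I would feed this lower bound on $\|\hat g-g\|_n$ back into the basic inequality of the noiseless KRR. Splitting $\|\hat g-g\|_{L_2}$ via \eqref{l2_to_n} and subadditivity of $t\mapsto t^{\delta}$ exactly as in the proof of Lemma~\ref{Th:improved}, one gets on $\Xi_\epsilon$ (up to a constant, and up to passing to whichever of the two sub‑cases occurs — the other only gives a faster rate) $\|\hat g-g\|_n^{2}\lesssim\lambda\|\hat g-g\|_n^{\delta}\|\hat g-g\|_{\mathcal H}^{1-\delta}$, i.e. $\|\hat g-g\|_{\mathcal H}\gtrsim\bigl(\lambda^{-1}\|\hat g-g\|_n^{2-\delta}\bigr)^{1/(1-\delta)}$. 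Substituting $\|\hat g-g\|_n\gtrsim\lambda^{(\delta\tau-\delta+2\tau)/(2\tau)}$ and simplifying the exponent — a routine computation giving $\tfrac{2-\delta}{1-\delta}\cdot\tfrac{\delta\tau-\delta+2\tau}{2\tau}-\tfrac1{1-\delta}=\tfrac{2\tau-2\delta+\delta^2-\delta^2\tau}{2\tau(1-\delta)}$ — yields \eqref{KRRlowerH}. Setting $\delta=\tau$ collapses the exponents to $\tfrac{1+\delta}2$ and $\tfrac\delta2$, which is \eqref{KRRlowerHequal}; the matching upper bounds needed downstream (Theorems~\ref{Coro:varrate}--\ref{Coro:biasrate}) are immediate from Lemma~\ref{Th:improved} through Lemma~\ref{lemma:relationship}.

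The main obstacle is the case $\delta=1$, which forces $\tau=1$ (Proposition~\ref{Prop:equivalence} gives $\delta\le\tau$). Here the division by $1-\delta$ is unavailable, and plugging the crude $\|\hat g-g\|_n\le 2C_gC_1\lambda$ into $(\star)$ with $\tau=1$ only gives $R\,\|\hat g-g\|_{\mathcal H}\gtrsim C_0-2C_gC_1^{2}$, which is vacuous unless the constants happen to line up. Instead I would use Proposition~\ref{prop:L2} to write $\langle g,\cdot\rangle_{\mathcal H}=\langle h,\cdot\rangle_{L_2}$ with $h\in L_2$, exploit the sharper bound $\|\hat g-g\|_{L_2}=O_{\mathbb P}(\lambda)$ that holds when $\delta=1$, and track the numerical‑integration‑type cancellation between $\langle g-\hat g,v\rangle_n$ and $\lambda\langle h,v\rangle_{L_2}$ for a test function $v$ supplied at resolution $R_0$ by Assumption~\ref{assum:tau}, so that the $\|\hat g-g\|_n$ contribution does not swamp the $C_0\lambda$ term; this should deliver $\|\hat g-g\|_{\mathcal H}\gtrsim\lambda$. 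I expect this step to require the most care, as it is precisely the borderline at which the generic mechanism behind $(\star)$ degenerates — consistent with the remark after Theorem~\ref{Coro:biasrate} that here the bound cannot be improved in general.
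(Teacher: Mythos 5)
Your treatment of \eqref{KRRlowern} and of \eqref{KRRlowerH} for $\delta<1$ is correct and is essentially the paper's argument in lightly different clothing: your master inequality $(\star)$, obtained from the Galerkin identity $\langle\hat g-g,v\rangle_n+\lambda\langle\hat g,v\rangle_{\mathcal H}=0$, is the same estimate the paper reaches inside the proof of Theorem \ref{Th:lower} via $\langle v,\hat g\rangle_{\mathcal H}=-\lambda^{-1}\langle\hat g-g,v\rangle_n$ and the Cauchy--Schwarz duality for $\operatorname{VAR}$; your choice $R\asymp\lambda^{-\delta/(2\tau)}$ and the constraint window producing $A_1n^{-2m/d}\le\lambda\le A_2$ match the paper's (\ref{R0}). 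Feeding the resulting lower bound on $\|\hat g-g\|_n$ into the first inequality of (\ref{needed_for_lower}) and solving for $\|\hat g-g\|_{\mathcal H}$ is exactly what the paper does, and your exponent arithmetic checks out; your explicit verification that the second sub-case of the proof of Lemma \ref{Th:improved} only yields a stronger bound is a point the paper glosses over, so that part of your write-up is if anything more careful.

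The genuine gap is the case $\delta=1$. What you offer there is not a proof but a program (``track the numerical-integration-type cancellation\dots this should deliver $\|\hat g-g\|_{\mathcal H}\gtrsim\lambda$''), and the route through Proposition \ref{prop:L2} and an $L_2$ representation of the functional is both speculative and unnecessary. The paper's resolution is a two-line reduction you missed: since $\|\cdot\|_{\mathcal H}$ dominates $\|\cdot\|_{L_2}$ on $\mathcal H$, Assumption \ref{assum:f} with $\delta=1$ implies Assumption \ref{assum:f} with any $\delta'<1$ (write $\|v\|_{L_2}=\|v\|_{L_2}^{\delta'}\|v\|_{L_2}^{1-\delta'}\lesssim\|v\|_{L_2}^{\delta'}\|v\|_{\mathcal H}^{1-\delta'}$), while Assumption \ref{assum:tau} with $\tau=1$ holds trivially for any $g\neq 0$. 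Applying the already-proved $\delta'<1$ bound with the pair $(\delta',\tau)=(\delta',1)$ gives the exponent $\frac{2-2\delta'+\delta'^2-\delta'^2}{2(1-\delta')}=1$ for every $\delta'$, i.e.\ $\|\hat g-g\|_{\mathcal H}\ge A_3\lambda$ directly. Your diagnosis that the generic mechanism ``degenerates'' at $\delta=1$ is therefore not quite right: the mechanism survives, one just has to run it at a strictly smaller $\delta'$ and observe that the resulting exponent is independent of $\delta'$. As written, your proposal does not establish the $\delta=1$ branch of \eqref{KRRlowerH}--\eqref{KRRlowerHequal}.
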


When $\delta=\tau<1$, the noiseless KRR's convergence rate is completely known.

\subsection{Proofs for Lower Bound in Section \ref{sec:lower}}
\begin{proof}[Proof of Proposition \ref{Prop:equivalence}]
    Suppose $\sup_{v\in\mathcal{H}}\frac{|\langle g,v\rangle_\mathcal{H}|}{\|v\|_{L_2}^\delta\|v\|^{1-\delta}_\mathcal{H}}= A$. Then for each $R>0$,
\begin{eqnarray*}
    \sup_{\|v\|_\mathcal{H}\leq R\|v\|_{L_2}}\frac{|\langle g,v\rangle_\mathcal{H}|}{\|v\|_{L_2}}&\leq& \sup_{\|v\|_\mathcal{H}\leq R\|v\|_{L_2}}\frac{A\|v\|_{L_2}^\delta\|v\|^{1-\delta}_\mathcal{H}}{\|v\|_{L_2}}\\
    &=& A \sup_{\|v\|_\mathcal{H}\leq R\|v\|_{L_2}}\frac{\|v\|^{1-\delta}_\mathcal{H}}{\|v\|_{L_2}^{1-\delta}}\\
    &\leq& CR^{1-\delta}.
\end{eqnarray*}
Conversely, suppose $\sup_{\|v\|_\mathcal{H}\leq R\|v\|_{L_2}}\frac{|\langle g,v\rangle_\mathcal{H}|}{\|v\|_{L_2}}\leq C R^{1-\delta}$ for each $R>0$. First we note that, under Assumption \ref{assum:matern}, $\|\cdot\|_\mathcal{H}$ is stronger than $\|\cdot\|_{L_2}$, which means $\|v\|_{L_2}/\|v\|_\mathcal{H}\leq A_1$ for all $v\in\mathcal{H}$.
Then for each $v\in\mathcal{H}$ satisfying $\|v\|_\mathcal{H}\leq \|v\|_{L_2}$, we have
\begin{eqnarray}\label{v01}
    \frac{|\langle g,v\rangle_\mathcal{H}|}{\|v\|_{L_2}^\delta\|v\|^{1-\delta}_\mathcal{H}}=\frac{|\langle g,v\rangle_\mathcal{H}|}{\|v\|_{L_2}}\cdot\frac{\|v\|_{L_2}^{1-\delta}}{\|v\|^{1-\delta}_\mathcal{H}}\leq C A_1^{1-\delta}.
\end{eqnarray}
Next, for each $i=1,2,\ldots$ and $v\in\mathcal{H}$ satisfying $2^{i-1}\leq \|v\|_\mathcal{H}/\|v\|_{L_2}\leq 2^i$,
\begin{eqnarray}\label{vi}
    \frac{|\langle g,v\rangle_\mathcal{H}|}{\|v\|_{L_2}^\delta\|v\|^{1-\delta}_\mathcal{H}}&=&\frac{|\langle g,v\rangle_\mathcal{H}|}{\|v\|_{L_2}}\cdot\frac{\|v\|_{L_2}^{1-\delta}}{\|v\|^{1-\delta}_\mathcal{H}}\leq C (2^i)^{1-\delta}\cdot (2^{1-i})^{1-\delta}\nonumber
    \\&=&2^{1-\delta}C.
\end{eqnarray}
Combining (\ref{v01}) and (\ref{vi}) leads to
\[\sup_{v\in\mathcal{H}}\frac{|\langle g,v\rangle_\mathcal{H}|}{\|v\|_{L_2}^\delta\|v\|^{1-\delta}_\mathcal{H}}\leq \max\{2^{1-\delta},A_1^{1-\delta}\}C<+\infty.\]
This completes the proof.
\end{proof}

\begin{proof}[Proof of Theorem \ref{Th:lower}]
    Note that $\operatorname{VAR}=\sigma^2g^T(X)(K(X,X)+\lambda n I)^{-2}g(X)$.
    By Cauchy-Schwarz inequality, we have
    \begin{eqnarray}\label{lower}
        [\mathbf{v}(K(X,X)+\lambda n I)^{-1}g(X)]^2&\leq &\mathbf{v}^T\mathbf{v}\cdot g^T(X)\cdot\nonumber\\
        &&(K(X,X)+\lambda n I)^{-2}g(X),
    \end{eqnarray}
    for each $\mathbf{v}\in\mathbb{R}^n$. Now take $\mathbf{v}=v(X)$ for some $v\in\mathcal{H}$. By (\ref{ghat}), 
    \[\mathbf{v}(K(X,X)+\lambda n I)^{-1}g(X)=\langle v,\hat{g}\rangle_\mathcal{H};\]
    thus (\ref{lower}) implies
    \begin{eqnarray}\label{lowersup}
        g^T(X)(K(X,X)+\lambda n I)^{-2}g(X)\geq \sup_{v\in\mathcal{H}}\frac{\langle v,\hat{g}\rangle^2_\mathcal{H}}{n\|v\|^2_n}.
    \end{eqnarray}
    (Actually, the equality holds as $v(X)$ can go over the entire $\mathbb{R}^n$.) In view of (\ref{lowersup}), the strategy is to bound $\langle v,\hat{g}\rangle_\mathcal{H}/\|v\|_n$ from below with a carefully chosen $v$.


    
    To proceed, we need to get rid of the annoying $\|\cdot\|_n$ norm and the KRR estimator. This can be done by invoking the bounds in Assumption \ref{assum:norms} and Lemma \ref{Th:improved}, which state that on the event $\Xi_\epsilon$,
    \begin{eqnarray*}
        \|v\|_n/\|v\|_{L_2}\leq \max\left\{C_1,C_\epsilon n^{-m/d}\|v\|_{\mathcal{H}}/\|v\|_{L_2}\right\},
    \end{eqnarray*}
    and
    \begin{eqnarray*}
        |\langle v,\hat{g}-g\rangle_\mathcal{H}|\leq \|v\|_\mathcal{H}\|\hat{g}-g\|_\mathcal{H} \leq 2C_gC_1^\delta\lambda^{\frac{\delta}{2}}\|v\|_\mathcal{H}.
    \end{eqnarray*}
    Therefore, for any $v$ satisfying 
    \begin{eqnarray}\label{vcondition}
        C_\epsilon n^{-m/d}\|v\|_{\mathcal{H}}/\|v\|_{L_2}\leq C_1,
    \end{eqnarray}
    we have
    \begin{eqnarray}\label{lower2}
        \frac{\langle v,\hat{g}\rangle_\mathcal{H}}{\|v\|_n}&&\geq \frac{\langle v,\hat{g}\rangle_\mathcal{H}}{C_1\|v\|_{L_2}}=\frac{\langle v,g\rangle_\mathcal{H}}{C_1\|v\|_{L_2}}+\frac{\langle v,\hat{g}-g\rangle_\mathcal{H}}{C_1\|v\|_{L_2}}\nonumber\\
        &&\geq \frac{\langle v,g\rangle_\mathcal{H}}{C_1\|v\|_{L_2}}-2C_gC_1^{\delta-1}\lambda^{\frac{\delta}{2}}\frac{\|v\|_\mathcal{H}}{\|v\|_{L_2}}.
    \end{eqnarray}

    Assumption \ref{assum:tau} implies that for each $R\geq R_0$, there exists $v\in\mathcal{H}$ such that $\|v\|_{\mathcal{H}}/\|v\|_{L_2}\leq R$ and $R^{1-\tau}\langle v,g\rangle_\mathcal{H}/\|v\|_{L_2}>C_0$. Using this specific $v$ in (\ref{lower2}) leads to
    \begin{eqnarray}\label{lower3}
        \frac{\langle v,\hat{g}\rangle_\mathcal{H}}{\|v\|_n}\geq \frac{C_0 R^{1-\tau}}{C_1}-2C_gC_1^{\delta-1}\lambda^{\frac{\delta}{2}}R.
    \end{eqnarray}

    Our goal is to make the right-hand side of (\ref{lower3}) no less than $\frac{C_0R^{1-\tau}}{2C_1}$, which requires taking $R$ no more than $4^{-1/\tau}C_0^{1/\tau}C_1^{-\delta/\tau}C_g^{-1/\tau}\lambda^{-\delta/(2\tau)}$. Clearly, we can find suitable constants $A_1,A_2$ depends only on $C_0,C_1,C_g,C_\epsilon,R_0,\delta,\tau$, such that for each $\lambda$ satisfying
    \[A_1 n^{-\frac{2m}{d}}\leq \lambda\leq A_2,\]
    we have
    \begin{eqnarray}\label{R0}
        R_0\leq 4^{-1/\tau}C_0^{1/\tau}C_1^{-\delta/\tau}C_g^{-1/\tau}\lambda^{-\delta/(2\tau)}\leq \frac{C_1n^{\frac{m}{d}}}{C_\epsilon}.
    \end{eqnarray}
    This implies that the choice $R=4^{-1/\tau}C_0^{1/\tau}C_1^{-\delta/\tau}C_g^{-1/\tau}\lambda^{-\delta/(2\tau)}$ fulfills the conditions $R\geq R_0$ and (\ref{vcondition}), and leads to
    \[\frac{\langle v,\hat{g}\rangle_\mathcal{H}}{\|v\|_n}\geq \frac{C_0 R^{1-\tau}}{2C_1},\]
    which, together with (\ref{lowersup}), completes the proof.
\end{proof}

 \begin{proof}[Proof of Lemma \ref{Th:KRRlower}]
        The relationship between $\|\hat{g}-g\|_n$ and $\operatorname{VAR}$ in Lemma \ref{lemma:relationship} and Theorem \ref{Th:lower} lead to (\ref{KRRlowern}) immediately. To bound $\|\hat{g}-g\|_\mathcal{H}$ from below, we first make a possible adjustment of $A_1$ from that given by Theorem \ref{Th:lower}, so that $\lambda$ lies in the smoothing regime defined in Lemma \ref{Th:improved}. Then we resort to the first inequality in (\ref{needed_for_lower}) from the proof of Lemma \ref{Th:improved}, which states
        \begin{eqnarray}\label{lowerKRR1}
            \|\hat{g}-g\|_n^2\leq 2 \lambda C_gC_1^\delta\|\hat{g}-g\|^{\delta}_n\|\hat{g}-g\|_{\mathcal{H}}^{1-\delta}.
        \end{eqnarray}
        When $\delta<1$, we substitute the lower bound of $\|\hat{g}-g\|_n$ to (\ref{lowerKRR1}), and arrive at the first part of (\ref{KRRlowerH})
        by elementary algebraic calculations. For $\delta=1$, we note that Assumption \ref{assum:f} is also true for any $\delta'<1$. We then invoke the first part of (\ref{KRRlowerH}), which we just proved, by substituting $\delta\leftarrow\delta'$ and $\tau\leftarrow 1$. The resulting lower bound is $A_3 \lambda$, regardless of the choice of $\delta'$.
    \end{proof}

Combining Lemmas \ref{Th:improved}, \ref{Th:KRRlower} and Corollary \ref{coro:worstcase}, we obtain Theorem \ref{Coro:biasrate}.

\subsection{Proofs for Improved Results on $\operatorname{BIAS}$}
\begin{proof}[Proof of Theorem \ref{Th:oBIAS}]
    We shall use the eigensystem representation of the RKHS norm in this proof. We follow the notation introduced in Section \ref{sec: eigen_series} and denote $f=\sum_{i=1}^\infty c_i \eta_i$ and $\hat{g}-g=\sum_{i=1}^\infty a_i \eta_i$. By Lemma \ref{lemma:relationship},
    \begin{eqnarray}\label{bias_series}
        |\operatorname{BIAS}|=|\langle \hat{g}-g,f\rangle_\mathcal{H}|=\left|\sum_{i=1}^\infty \frac{a_ic_i}{\rho_i}\right|.
    \end{eqnarray}
    Basic calculus suggests that we can find an infinite series which converges slower than    $
        \sum_{i=1}^\infty c_i^2/\rho_i$. In other words, there exists a sequence $\xi_i\downarrow 0$, such that
        $\sum_{i=1}^\infty \frac{c_i^2}{\rho_i\xi_i}<\infty$. Now we apply the Cauchy-Schwarz inequality to (\ref{bias_series}) to find
        \[|\operatorname{BIAS}|\leq \left(\sum_{i=1}^\infty\frac{a_i^2\xi_i}{\rho_i}\right)^{1/2}\left(\frac{c_i^2}{\rho_i\xi_i}\right)^{1/2}.\]
        Now it suffices to prove that $\sum_{i=1}^\infty a_i^2\xi_i/\rho_i=o_\mathbb{P}(\lambda^\delta)$. Because $\xi_i\downarrow 0$, for any $\epsilon>0$, there exists $N$ such that $\xi_i<\epsilon$ for all $i>N$. We now write
        \begin{eqnarray*}
            &&\sum_{i=1}^\infty \frac{a_i^2 \xi_i}{\rho_i}=\left(\sum_{i=1}^N+\sum_{i=N+1}^\infty\right) \frac{a_i^2 \xi_i}{\rho_i}\\
            &\leq &\left(\max_{1\leq i\leq N}\frac{\xi_i}{\rho_i}\right)\sum_{i=1}^\infty a_i^2+\epsilon \sum_{i=1}^\infty \frac{a_i^2}{\rho_i}\\
            &=&\left(\max_{1\leq i\leq N}\frac{\xi_i}{\rho_i}\right)\|\hat{g}-g\|^2_{L_2}+\epsilon\|\hat{g}-g\|^2_\mathcal{H}.
        \end{eqnarray*}
        Employing Assumption \ref{assum:norms} and Theorem \ref{Coro:rate}, together with the condition $\lambda=o(1)$, on $\Xi_\epsilon$, the above equation is no more than
        \begin{eqnarray*}
            &&\left(\max_{1\leq i\leq N}\frac{\xi_i}{\rho_i}\right)\max\left\{C_1^2 \|\hat{g}-g\|_{n}^2,C_\epsilon^2 n^{-m/d}\|\hat{g}-g\|^2_{\mathcal{H}}\right\}+\epsilon\|\hat{g}-g\|^2_\mathcal{H}\\
            &=&\left(\max_{1\leq i\leq N}\frac{\xi_i}{\rho_i}\right)o(\lambda^\delta)+\epsilon O(\lambda^\delta).
        \end{eqnarray*}
        This proves $\sum_{i=1}^\infty a_i^2\xi_i/\rho_i=o_\mathbb{P}(\lambda^\delta)$ as $\epsilon$ is arbitrary.
\end{proof}

\begin{proof}[Proof of Theorem \ref{Coro:further}]
    The desired result follows from
using (\ref{smoothnessf}) instead of the Cauchy-Schwarz inequality in (\ref{cauchy}), together with Lemma \ref{Th:improved}.
\end{proof}

 \subsection{Supporting Lemmas for Asymptotic Normality in Section \ref{sec:AN}} 

The following Lemma \ref{lemma:CLT} is a consequence of the Lindeberg central limit theorem, and it is the key lemma for proving the asymptotic normality of our estimator. We use the notion ``$\xrightarrow{\mathscr{L}}$'' to denote the convergence in distribution.

\begin{lemma}\label{lemma:CLT}
Suppose $\sigma^2\in(0,\infty)$ is independent of $n$, and $g\neq 0$. The design points $X$ are either deterministic, or random but independent of the random error $E$.
    If
    \begin{eqnarray}\label{Lindeberg}
        n^{-1/2}\frac{\|\hat{g}-g\|_{L_\infty}}{\|\hat{g}-g\|_n}\xrightarrow{p} 0, \text{as } n\rightarrow\infty,
    \end{eqnarray}
   then we have the central limit theorem
   \begin{eqnarray}
       \frac{1}{\sqrt{\operatorname{VAR}}}g^T(X)(K(X,X)+\lambda n I)^{-1}E\xrightarrow{\mathscr{L}} N(0,1),  \text{ as } n\rightarrow\infty.
   \end{eqnarray}
\end{lemma}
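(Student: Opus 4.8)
\textit{Proof proposal.} The plan is to recognize the normalized statistic as a sum of a conditionally independent triangular array and apply the Lindeberg--Feller central limit theorem, then remove the conditioning on $X$ by a subsequence argument. First I would put the statistic in transparent form. With $u:=(K(X,X)+\lambda nI)^{-1}g(X)$ we have $g^T(X)(K(X,X)+\lambda nI)^{-1}E=\sum_{i=1}^n u_ie_i$; combining the identity $\hat g(x_i)-g(x_i)=-\lambda nu_i$ from the proof of Lemma~\ref{lemma:relationship} with $\operatorname{VAR}=\sigma^2n^{-1}\lambda^{-2}\|\hat g-g\|_n^2$ yields
\[
\frac{1}{\sqrt{\operatorname{VAR}}}\sum_{i=1}^n u_ie_i=\sum_{i=1}^n a_{n,i}e_i,\qquad a_{n,i}:=-\frac{\hat g(x_i)-g(x_i)}{\sigma\sqrt{n}\,\|\hat g-g\|_n},
\]
where the weights $a_{n,i}$ are $X$-measurable and satisfy $\sum_i a_{n,i}^2=\sigma^{-2}$ (throughout we restrict to $\{\operatorname{VAR}>0\}$, on which these quantities are well defined). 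Since $\hat g$ depends on $X$ only and $E$ is independent of $X$, conditionally on $X$ the summands $W_{n,i}:=a_{n,i}e_i$ are independent with mean zero and $\sum_i\operatorname{Var}(W_{n,i}\mid X)=\sigma^2\sum_i a_{n,i}^2=1$.

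Next I would check the conditional Lindeberg condition. For fixed $\epsilon>0$, using that the $e_i$ are i.i.d.\ and bounding each $|a_{n,i}|$ by $m_n:=\max_{1\le i\le n}|a_{n,i}|$,
\[
\sum_{i=1}^n\mathbb{E}\!\left[W_{n,i}^2\mathbf{1}\{|W_{n,i}|>\epsilon\}\mid X\right]\le\Big(\sum_{i=1}^na_{n,i}^2\Big)\,\mathbb{E}\!\left[e_1^2\mathbf{1}\{|e_1|>\epsilon/m_n\}\right]=\sigma^{-2}\,\mathbb{E}\!\left[e_1^2\mathbf{1}\{|e_1|>\epsilon/m_n\}\right].
\]
Since $\mathbb{E}e_1^2=\sigma^2<\infty$, dominated convergence makes the right-hand side vanish whenever $m_n\to0$. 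Because $m_n\le\|\hat g-g\|_{L_\infty}\big/(\sigma\sqrt{n}\,\|\hat g-g\|_n)$, hypothesis (\ref{Lindeberg}) gives $m_n\xrightarrow{p}0$; consequently, for any realization of $X$ along which $m_n\to0$, the Lindeberg--Feller theorem yields the conditional convergence $\mathbb{E}[e^{it\sum_iW_{n,i}}\mid X]\to e^{-t^2/2}$ for every $t\in\mathbb{R}$.

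To pass to the unconditional limit I would use the subsequence principle. Along any subsequence, $m_n\xrightarrow{p}0$ provides a further subsequence on which $m_n\to0$ almost surely; along that sub-subsequence $\mathbb{E}[e^{it\sum_iW_{n,i}}\mid X]\to e^{-t^2/2}$ a.s., and bounded convergence (the conditional characteristic function is bounded by $1$) promotes this to $\mathbb{E}[e^{it\sum_iW_{n,i}}]\to e^{-t^2/2}$, hence $\sum_iW_{n,i}\xrightarrow{\mathscr{L}}N(0,1)$ along the sub-subsequence. Since every subsequence contains such a further subsequence, the whole sequence converges in distribution to $N(0,1)$, which is the assertion.

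I expect the main obstacle to be precisely this transfer step: the Lindeberg--Feller theorem requires the triangular-array weights to be uniformly negligible, but (\ref{Lindeberg}) supplies this only in probability and only after dividing by the random quantity $\|\hat g-g\|_n$, so one must run the conditional CLT on an event of probability one within each sub-subsequence and exploit that the weights are $X$-measurable while $E\perp X$. Everything else---the algebraic rewriting via Lemma~\ref{lemma:relationship}, the variance normalization, and the dominated-convergence bound on the Lindeberg sum---is routine.
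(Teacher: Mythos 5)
Your proposal is correct and follows essentially the same route as the paper: rewrite the statistic via the identity $\hat g(x_i)-g(x_i)=-\lambda n u_i$, verify the conditional Lindeberg condition using the uniform-negligibility bound $\max_i|a_{n,i}|\le \|\hat g-g\|_{L_\infty}/(\sigma\sqrt{n}\|\hat g-g\|_n)$, and then pass from conditional to unconditional convergence using independence of $X$ and $E$. The only difference is that where the paper delegates the ``Lindeberg condition holding only in probability'' issue to a citation of Pollard, you supply the standard subsequence/bounded-convergence argument explicitly, which is a valid (and more self-contained) way to close that step.
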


We can verify (\ref{Lindeberg}) provided that we have an upper bound of $\|\hat{g}-g\|_{L_\infty}$ and a lower bound of $\|\hat{g}-g\|_n$.
The final result is given in Theorem \ref{Th:AN}.

    \subsection{Proofs for Asymptotic Normality in Section \ref{sec:AN}} \label{sub:normlity}

\begin{proof}[Proof of Lemma \ref{lemma:CLT}]
    For clarity, we shall reinstate the subscript $n$ for each term depending on $n$ in this proof. For instance, we will denote $X$ by $X_n$ to emphasize its dependence on $n$.
    Again, we define $(u_{1n},\ldots,u_{nn})^T:=(K(X_n,X_n)+\lambda_n nI_n)^{-1}g(X_n)$. Then
    \[g^T(X_n)(K(X_n,X_n)+\lambda_n n I_n)^{-1}E_n=\sum_{i=1}^n u_{in} e_{i}.\]
    First, we regard $X_n$ as a fixed sequence, i.e., we set conditioning on $X_n$ if the design is random. Then $u_{in}$'s are fixed. Then we shall have the central limit theorem
    \[\frac{1}{\sigma\sqrt{\sum_{i=1}^n u_{in}^2}}\sum_{i=1}^n u_{in}e_i\xrightarrow{d} N(0,1),\]
    provided that the Lindeberg condition
    \begin{eqnarray}\label{lindeberg}
        \lim_{n\rightarrow \infty}\frac{1}{\sigma^2\sum_{i=1}^n u_{in}^2}\sum_{i=1}^n \mathbb{E}\left[u_{in}^2 e_i^2 \mathbf{1}_{\{u_{in}^2e_i^2>\epsilon^2 \sigma^2 \sum_{j=1}^n u_{jn}^2\}}\right]=0
    \end{eqnarray}
    is fulfilled. It is easily seen that a sufficient condition of (\ref{lindeberg}) is (see also Lemma 3.1 of \cite{huang2003local}, and the proof of Theorem \ref{Th:multivariateAN})
    \begin{eqnarray}\label{lindebergsufficient}
        \max_{1\leq i\leq n}u_{in}^2/\sum_{i=1}^n u_{in}^2\rightarrow 0, \text{ as } n\rightarrow \infty.
    \end{eqnarray}
    This is equivalent to, by (\ref{u}),
    \begin{eqnarray}\label{pointwise}
        \frac{\max_{1\leq i\leq n}(\hat{g}_n-g)^2(x_{in})}{\sum_{i=1}^n(\hat{g}_n-g)^2(x_{in})}\rightarrow 0, \text{ as } n\rightarrow\infty,
    \end{eqnarray}
    which is ensured by (\ref{Lindeberg}), except that (\ref{Lindeberg}) converges only in probability. 
    In fact, the convergence in probability in the Lindeberg condition still leads to the central limit theorem, because $X$ is independent of $E$; see, e.g., Theorem 1 on page 171 of \cite{pollard1984convergence}.
\end{proof}

\begin{proof}[Proof of Theorem \ref{Th:AN}]
    The interpolation inequality (\ref{interpolation}), together with Assumption \ref{assum:matern}, implies that
    \[\|\hat{g}-g\|_{L_\infty}\leq C\|\hat{g}-g\|_{L_2}^{1-\frac{d}{2m}}\|\hat{g}-g\|_{\mathcal{H}}^{\frac{d}{2m}}.\]
    Invoke Assumption \ref{assum:norms}, we know on $\Xi_\epsilon$,
    \begin{eqnarray}
        \|\hat{g}-g\|_{L_\infty}&\leq& C \max\left\{C_1 \|\hat{g}-g\|_n^{1-\frac{d}{2m}}\|\hat{g}-g\|_{\mathcal{H}}^{\frac{d}{2m}}, C_\epsilon n^{-\frac{m}{d}\left(1-\frac{d}{2m}\right)}\|\hat{g}-g\|_{\mathcal{H}}\right\}\nonumber\\
        &\leq& 2 C C_g C_1 \max\left\{C_1(\lambda^{\frac{1+\delta}{2}})^{1-\frac{d}{2m}}\lambda^{\frac{\delta d}{4m}},C_\epsilon n^{-\frac{m}{d}\left(1-\frac{d}{2m}\right)} \lambda^{\frac{\delta}{2}}\right\}\nonumber\\
        &=& 2 C C_g C_1\max\{C_1\lambda^{\frac{1+\delta}{2}-\frac{d}{4m}},n^{-\frac{2m-d}{2d}}\lambda^{\frac{\delta}{2}}\}\nonumber\\
        &=& O(\lambda^{\frac{1+\delta}{2}-\frac{d}{4m}}),\label{deltachoice}
    \end{eqnarray}
    where the second inequality follows from Lemma \ref{Th:improved}, and the last equality follows from the condition $\lambda^{-1}=O(n^{2m/d})$. 
    Combining the above upper bound of $\|\hat{g}-g\|_{L_\infty}$ with the lower bound given in Lemma \ref{Th:KRRlower} and condition (\ref{conditionlambda}) leads to (\ref{Lindeberg}). Then we invoke Lemma \ref{lemma:CLT} to arrive at the desired result.
\end{proof}

\subsection{Proofs for Examples in Section \ref{sec:examples}} \label{sub:example}
To prove Theorems \ref{Th:point} and \ref{Th:derivative}, it suffices to show that $\delta=\tau$ in both cases, which is implied by 
Proposition \ref{Prop:point}.
\begin{proposition}\label{Prop:point}
    Let $\alpha$ be a multi-index. Suppose $m>d/2+|\alpha|$.
    For each interior point of $\Omega$, denoted as $x_0$, there exist $A_1,A_2>0$, such that
    \[\sup_{\|v\|_{H^m}\leq R\|v\|_{L_2}}\frac{D^\alpha v(x_0)}{\|v\|_{L_2}}\geq A_1 R^{\frac{d+2|\alpha|}{2m}},\]
    for all $R\geq A_2$.
\end{proposition}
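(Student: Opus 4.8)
The plan is to prove the lower bound by exhibiting, for each large $R$, an explicit ``bump'' test function concentrated near $x_0$ at a carefully chosen length scale $h=h(R)$, verifying that it lies in the feasible set $\{v:\|v\|_{H^m}\leq R\|v\|_{L_2}\}$, and checking that it makes the ratio $D^\alpha v(x_0)/\|v\|_{L_2}$ as large as the claimed rate.

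First I would fix a function $\psi\in C_c^\infty(\mathbb{R}^d)$ with $\operatorname{supp}\psi\subset B(0,1)$ and $D^\alpha\psi(0)>0$; such $\psi$ exists because $\psi\mapsto D^\alpha\psi(0)$ is a non-trivial linear functional on $C_c^\infty(B(0,1))$, and one may multiply by $-1$ if the value is negative. Since $x_0$ is an interior point of $\Omega$, pick $h_0\in(0,1]$ with $\overline{B}(x_0,h_0)\subset\Omega$. For $h\in(0,h_0]$ set $v_h(x):=\psi((x-x_0)/h)$, supported in $B(x_0,h)\subset\Omega$, so (extending by zero) $v_h\in H^m(\Omega)$. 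The change of variables $x=x_0+h\xi$ gives the exact scalings $D^\alpha v_h(x_0)=c_1h^{-|\alpha|}$ with $c_1:=D^\alpha\psi(0)>0$ and $\|v_h\|_{L_2(\Omega)}=c_2h^{d/2}$ with $c_2:=\|\psi\|_{L_2(\mathbb{R}^d)}$. For the $H^m$ norm: when $m$ is an integer, $\|D^\beta v_h\|_{L_2}^2=h^{d-2|\beta|}\|D^\beta\psi\|_{L_2}^2$, and since $h\leq 1$ we get $\|v_h\|_{H^m(\Omega)}\leq\|v_h\|_{H^m(\mathbb{R}^d)}\leq c_3 h^{d/2-m}$ with $c_3:=\|\psi\|_{H^m(\mathbb{R}^d)}$; when $m=k+s$ is non-integer, the same substitution $x=x_0+h\xi$, $y=x_0+h\eta$ in the Gagliardo--Slobodeckij double integral yields $|v_h|_{W^{k+s}}^2=h^{d-2m}|\psi|_{W^{k+s}}^2$, and again the lower-order terms are dominated for $h\leq 1$, so the bound $\|v_h\|_{H^m(\Omega)}\leq c_3 h^{d/2-m}$ persists.

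Next I would calibrate the scale by setting $h=h(R):=(c_3/(c_2 R))^{1/m}$. For $R\geq A_2:=c_3/(c_2 h_0^m)$ one has $h(R)\leq h_0$, so the construction is legitimate, and by the choice of $h$ we have $c_3 h^{d/2-m}=Rc_2 h^{d/2}$, hence $\|v_h\|_{H^m(\Omega)}\leq R\|v_h\|_{L_2(\Omega)}$, i.e., $v_h$ is admissible in the supremum. Evaluating the ratio then gives
\[
\sup_{\|v\|_{H^m}\leq R\|v\|_{L_2}}\frac{D^\alpha v(x_0)}{\|v\|_{L_2}}\ \geq\ \frac{D^\alpha v_h(x_0)}{\|v_h\|_{L_2}}\ =\ \frac{c_1}{c_2}\,h^{-|\alpha|-d/2}\ =\ \frac{c_1}{c_2}\Big(\frac{c_2 R}{c_3}\Big)^{\frac{2|\alpha|+d}{2m}}\ =\ A_1 R^{\frac{d+2|\alpha|}{2m}},
\]
with $A_1:=(c_1/c_2)(c_2/c_3)^{(2|\alpha|+d)/(2m)}>0$, which is exactly the claim. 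Note that the hypothesis $m>d/2+|\alpha|$ is used only to ensure, via the Sobolev embedding, that $D^\alpha v(x_0)$ is a well-defined bounded functional on $H^m$; it plays no role in the estimates themselves.

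The routine but mildly delicate points are the fractional-order scaling identity $|v_h|_{W^{k+s}}^2=h^{d-2m}|\psi|_{W^{k+s}}^2$, which requires the two-variable substitution together with the bookkeeping that the pieces of order $<m$ are negligible for $h\leq 1$, and the fact that the feasibility constraint $h(R)\leq h_0$ is precisely what forces the threshold $A_2$ and restricts the conclusion to $R\geq A_2$ --- the interiority of $x_0$ being exactly what guarantees such an $h_0$ exists. I do not expect a genuine obstacle here, only careful accounting of the constants.
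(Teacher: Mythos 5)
Your proof is correct and follows essentially the same route as the paper's: both construct a rescaled smooth bump $\psi((\cdot-x_0)/h)$ supported near the interior point $x_0$, use the exact scalings of $D^\alpha v$, $\|\cdot\|_{L_2}$, the integer-order derivatives, and the Sobolev--Slobodeckij semi-norm, and then identify $R$ with $h^{-m}$ (up to constants). Your version is slightly more explicit about calibrating $h(R)$ and about how the interiority of $x_0$ produces the threshold $A_2$, but there is no substantive difference.
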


\begin{proof}
    Choose a function $B(x)$ such that $B(x)\in C^\infty(\mathbb{R}^d)$ and $B(x)$ is supported in the unit ball of $\mathbb{R}^d$. The function $D^\alpha B$ must be nonzero at some point. Without loss of generality, we assume that $D^\alpha B(0)=1$, because if otherwise, we can translate, dilate, and rescale $B$ to make this happen.
    Define $w(x)=B((x-x_0)/\rho)$ for  $\rho\in(0,1)$. For each multi-index $\beta$, the chain rule implies
    \begin{eqnarray}\label{dbetav}
        D^\beta w(x)=\rho^{-|\beta|}D^\beta B((x-x_0)/\rho).
    \end{eqnarray}
    Thus for each sufficiently small $\rho$ such that $w$ is supported in $\Omega$, we have
    \[\int_\Omega [D^\beta w (x)]^2dx\leq \rho^{-2|\beta|+d}\int_{\mathbb{R}^d} [D^\beta B(x)]^2dx,\]
    which implies that $\|w\|_{H^k}\leq \rho^{-(k-d/2)}\|B\|_{H^k(\mathbb{R}^d)}$ for integer $k$. In particular, we have 
    \[\|w\|_{L_2}=\rho^{d/2}\|B\|_{L_2(\mathbb{R}^d)}.\]
    If $m$ is not an integer, direct calculations show the Sobolev-Slobodeckij semi-norm in (\ref{Sobolev}) satisfies
    \begin{eqnarray*}
        |w|_{W^m}=\rho^{-(m-d/2)}|B|_{W^m(\mathbb{R}^d)},
    \end{eqnarray*}
    which, again, implies $\|w\|_{H^m}\leq \rho^{-(m-d/2)}\|B\|_{H^m(\mathbb{R}^d)}$. Besides, (\ref{dbetav}) also shows $D^\alpha w(x_0)=\rho^{-|\alpha|}$. In summary, for sufficiently small $\rho$, we have
    \begin{eqnarray*}
        \sup_{\frac{\|v\|_{H^m}}{\|v\|_{L_2}}\leq\rho^{-m}\frac{\|B\|_{H^m(\mathbb{R}^d)}}{\|B\|_{L_2(\mathbb{R}^d)}}}\frac{D^\alpha v(x_0)}{\|v\|_{L_2}}\geq \frac{D^\alpha w(x_0)}{\|w\|_{L_2}}=\frac{\rho^{-|\alpha|-d/2}}{\|B\|_{L_2(\mathbb{R}^d)}},
    \end{eqnarray*}
    which leads to the desired result by replacing $R=\rho^{-m}$.    
\end{proof}

\begin{proof}[Proof of Theorem \ref{Th:multivariateAN}]
    Without loss of generality, we assume that $\sigma^2=1$. First, we prove that $\operatorname{COV}$ is invertible with probability tending to one. It suffices to prove that the smallest eigenvalue of $\operatorname{COV}$, denoted by $\underline{\lambda}$, is positive. Note that
    \begin{eqnarray}\label{mineigen}
        \underline{\lambda}=\min_{\|\mathbf{a}\|=1}\mathbf{a}^T\operatorname{COV}\mathbf{a},
    \end{eqnarray}
    where $\mathbf{a}=(a_1,\ldots,a_{d_0})^T$. By the definition of $\operatorname{COV}$ in (\ref{cov}),
    \begin{eqnarray}\label{covquadratic}
        \mathbf{a}^T\operatorname{COV}\mathbf{a}&=&\left(\sum_{i=1}^{d_0}a_iD^{\alpha_i} K(z_i,X)\right)(K+\lambda n I)^{-2}\left(\sum_{i=1}^{d_0}a_iD^{\alpha_j}K(X,z_j)\right)\nonumber\\
        &=&g_a^T(X)(K+\lambda n I)^{-2}g_a(X),
    \end{eqnarray}
    for $g_a=\sum_{i=1}^{d_0}a_iD^{\alpha_j}K(\cdot,z_j)$. Because $(\alpha_i,z_i)$'s are distinct, $D^{\alpha_j}K(\cdot,z_j)$'s are linearly independent, and therefore $g_a\neq 0$ for any $\|a\|=1$. Because $\|g\|_{L_2}/\|g_a\|_\mathcal{H}$, as a function of $a$, is continuous over the unit sphere $\{a:\|a\|=1\}$, $\|g\|_{L_2}/\|g_a\|_\mathcal{H}$ has an attainable infimum, denoted as $\underline{r}>0$. Now for any $\epsilon>0$, let $C_\epsilon$ and $\Xi_\epsilon$ be defined as in Assumption \ref{assum:norms}. Then for $n>(C_\epsilon/\underline{r})^{d/m}$, $\|g_a\|_{L_2}>C_\epsilon n^{-m/d}\|g_a\|_\mathcal{H}$ for all $\|a\|=1$. Then by (\ref{l2_to_n}), on the event $\Xi_\epsilon$, $\|g_a\|_{L_2}\leq C_1\|g_a\|_n$ for all $\|a\|=1$. This shows that $\mathbf{a}^T\operatorname{COV}\mathbf{a}\neq 0$ for all $\|a\|=1$, and implies that $\operatorname{COV}$ is invertible.

    Now assume that $\alpha_i$'s are homogenous and denote $k:=|\alpha_i|$.
    Let us establish a lower bound of $\underline{\lambda}_n$ for the future use. By (\ref{mineigen}) and (\ref{covquadratic}), it suffices to find a lower bound of the variance term of $\langle \hat{f}-f,g_a\rangle_\mathcal{H}$. In order to invoke Theorem \ref{Th:lower}, we need to verify Assumption \ref{assum:tau} for $g_a$. The idea is similar to the proof of Proposition \ref{Prop:point} but with more involved details. Without loss of generality, we assume that $a_i\neq 0$ for each $i$.

        First, we group the triads $(a_i,\alpha_i,z_i)$'s based on the value of $z_i$: each group has a common $z_i$, and different groups have different $z_i$. Denote the groups by $\mathcal{G}_1,\ldots,\mathcal{G}_J$. Again, each group consists of triads $(a_i,\alpha,z_i)$ with the same $z_i$ value. Then the linear functional $\langle g_a,\cdot\rangle_\mathcal{H}$ can be rewritten as
    \begin{eqnarray}\label{vinner}
        \langle g_a,v\rangle_\mathcal{H}=\sum_{j=1}^J\sum_{(a_i,\alpha_i,z_i)\in\mathcal{G}_j}a_iD^{\alpha_i}v(z_i).
    \end{eqnarray}
    The goal is to construct $v$ under the condition $\|v\|_\mathcal{H}/\|v\|_{L_2}\leq R$, such that $\langle g_a,v\rangle_\mathcal{H}/\|v\|_{L_2}$ reaches the optimal order of magnitude. For a moment, suppose that, for each $j=1,\ldots,J$,  we can find a function $B_j\in C^\infty(\mathbb{R}^d)$, such that
    \begin{eqnarray}\label{Bbound}
        \sum_{(a_i,\alpha_i,z_i)\in\mathcal{G}_j}a_iD^{\alpha_i}B_j(0)\geq \frac{1}{2}\sum_{(a_i,\alpha_i,z_i)\in\mathcal{G}_j}a_i^2,
    \end{eqnarray}
    and $B_j$ is supported in the unit ball of $\mathbb{R}^d$. Now define $v_j(x):=B_j((x-z_j)/\rho)$ for $\rho\in (0,1)$ and $v:=\sum_{j=1}^J v_j$. Clearly, if $\rho$ is sufficiently small, $v_j$'s have disjoint supports, and thus
    \[\|v\|_{L_2}^2=\sum_{i=1}^{J}\|v_j\|_{L_2}^2, ~~~ \|v\|_{H^m}^2=\sum_{i=1}^{J}\|v_j\|_{H^m}^2.\]
    By the calculations in the proof of Proposition \ref{Prop:point}, we have
    \begin{eqnarray*}
        D^{\alpha_i}v_j(z_i)&=&\rho^{-k}D^{\alpha_i}B_j(0),\\
        \|v\|_{L_2}&=&\rho^{d/2}\left(\sum_{i=1}^{J}\|B_j\|_{L_2}^2\right)^{1/2}=:\rho^{d/2} A_1,\\
        \|v\|_{H^m}&\leq&\rho^{-(m-d/2)}\left(\sum_{i=1}^{J}\|B_j\|_{H^m}^2\right)^{1/2}:=\rho^{-(m-d/2)} A_2.
    \end{eqnarray*}
    On the other hand, we have
    \begin{eqnarray*}
        \frac{\langle g_a,v\rangle_\mathcal{H}}{\|v\|_{L_2}}&=&\frac{\sum_{j=1}^J\sum_{(a_i,\alpha_i,z_i)\in\mathcal{G}_j}a_iD^{\alpha_i}v_j(z_i)}{\rho^{d/2} A_1}\\
        &=&\frac{\rho^{-k}\sum_{j=1}^J\sum_{(a_i,\alpha_i,z_i)\in\mathcal{G}_j}a_iD^{\alpha_i}B_j(0)}{\rho^{d/2} A_1}\\
        &\geq&\frac{1}{2}\rho^{-k-d/2}\sum_{i=1}^n a_i^2 /A_1=\frac{1}{2A_1}\rho^{-k-d/2}.
    \end{eqnarray*}
    Setting $R=\rho^m$ implies Assumption \ref{assum:tau} with $\tau=1-\frac{2k+d}{2m}$.

    Now we prove the existence of $B_j$'s subject to (\ref{Bbound}) and the compact supportedness condition. A simple configuration that fulfills (\ref{Bbound}) is to ensure 
    \begin{eqnarray}\label{hermite}
        D^{\alpha_i}B_j(0)=a_i, \text{ whenever } (a_i,\alpha_i,z_i)\in\mathcal{G}_j.
    \end{eqnarray}
    Building a function $B_j$ subject to (\ref{hermite}) can be done by a multivariate Hermite interpolation. For example, we can use kring \cite{MorrisMitchellYlvisaker93,zongmin1992hermite} with a Gaussian kernel to produce a function in $C^\infty(\mathbb{R}^d)$ that satisfies (\ref{hermite}). Denote such a function by $B_{j1}$. To introduce the compact supportedness, define
    \[
    B_{j2}(x):=
    \begin{cases}
        B_{j1}(x) & \text{if } \|x\|\leq 1/2,\\
        0       & \text{otherwise.}
    \end{cases}
    \]
    Then we smooth $B_{j2}$ via a convolution. Choose $\varphi\in C^\infty(\mathbb{R}^d)$ supported in the unit ball of $\mathbb{R}^d$ with $\int_{\mathbb{R}^d}\varphi(x)dx=1$. Let $\varphi_\rho:=\rho^{-d}\varphi(\cdot/\rho)$, and $B_{j3}(x;\rho)=\int_{\mathbb{R}^d}B_{j2}(x-t)\varphi_\rho(t)dt$ for small $\rho$. Then $B_{j3}(\cdot;\rho)\in C^\infty(\mathbb{R}^d)$, $B_{j3}(\cdot;\rho)$ is supported in the unit ball of $\mathbb{R}^d$, and $\lim_{\rho\downarrow 0}D^{\alpha_i}B_{j3}(0;\rho)=D^{\alpha_i}B_{j1}(0)$. Therefore, we can set $B_j=B_{j3}(\cdot;\rho)$ for sufficiently small $\rho$ such that (\ref{hermite}) is satisfied.

    To verify Assumption \ref{assum:f}, we use the interpolation inequality (\ref{interpolationD}) to show that
    \begin{eqnarray*}
        \left|\sum_{i=1}^{d_0}a_i D^{\alpha_i}g(z_i)\right|&\leq& \left(\sum_{i=1}^{d_0}a_i^2\right)^{1/2}\left(\sum_{i=1}^{d_0}[D^{\alpha_i}g(z_i)]^2\right)^{1/2}\\
        &\leq&d_0^{\frac{1}{2}}A\|v\|^{1-\frac{2k+d}{2m}}_{L_2}\|v\|^{\frac{2k+d}{2m}}_{H^m},
    \end{eqnarray*}
    where $A$ is given in (\ref{interpolationD}).
    Thus we have verified Assumption \ref{assum:f} with $\delta=1-\frac{2k+d}{2m}$.
    Since $\delta=\tau$, we are ready to invoke Corollary \ref{Coro:varrate} to obtain that $\mathbf{a}^T\operatorname{COV}\mathbf{a}\geq A_3 n^{-1}\lambda^{-\frac{2k+d}{2m}}$, for some $A_3>0$. Note that the constants we established for Assumptions \ref{assum:f} and \ref{assum:tau} are independent of $\mathbf{a}$. Thus $A_3$ is also independent of $\mathbf{a}$, which implies that on the event $\Xi_\epsilon$,
    \begin{eqnarray}\label{eigenlower}
        \underline{\lambda}\geq A_3n^{-1}\lambda^{-\frac{2k+d}{2m}}.
    \end{eqnarray}
    
    Next, we move to the central limit theorem. We shall use the notation similar to the proof of Lemma \ref{lemma:CLT}, by reinstating the subscript $n$. Again, we assume that $X_n$ is fixed, which is equivalent to conditioning on $X_n$.
    Denote $\left(u_{1n}^{(i)},\ldots,u_{nn}^{(i)}\right)^T:=(K(X_n,X_n)+\lambda_n n I_n)^{-1}D^{\alpha_i}K(X_n,z_i)$. Define
    \[\mathbf{u}_{n,i}:=\left(u_{in}^{(1)},\ldots,u_{in}^{(d_0)}\right)^T.\]
    Then
    \[\begin{pmatrix}
        D^{\alpha_1} K(z_1,X)\\
        \vdots\\
        D^{\alpha_{d_0}}K(z_{d_0},X)
    \end{pmatrix}(K(X_n,X_n)+\lambda_n n I)^{-1}E=\sum_{i=1}^n \mathbf{u}_{n,i}e_i.\]
    We now use a version of the multivariate Lindeberg central limit theorem \cite{hansen2022econometrics}, which ensures the desired result provided that
    \begin{eqnarray}\label{MVLindeberg}
        \lim_{n\rightarrow \infty}\frac{1}{\underline{\lambda}_n}\sum_{i=1}^n\mathbb{E}\left[\|\mathbf{u}_{n,i}\|^2e_i^2\mathbf{1}_{\{\|\mathbf{u}_{n,i}\|^2e_i^2\geq \varepsilon^2 \underline{\lambda}_n\}}\right]=0,
    \end{eqnarray}
    for each $\varepsilon>0$, where $\underline{\lambda}_n$ denotes the minimum eigenvalue of $\operatorname{COV}_n$. In view of (\ref{eigenlower}), on the event $\Xi_{\epsilon,n}$
    \begin{eqnarray}
        &&\frac{1}{\underline{\lambda}_n}\sum_{i=1}^n\mathbb{E}\left[\|\mathbf{u}_{n,i}\|^2e_i^2\mathbf{1}_{\{\|\mathbf{u}_{n,i}\|^2e_i^2\geq \varepsilon^2 \underline{\lambda}_n\}}\right]\nonumber\\
        &\leq& A_3^{-1}n\lambda_n^{\frac{2k+d}{2m}}\sum_{i=1}^n\mathbb{E}\left[\|\mathbf{u}_{n,i}\|^2e_i^2\mathbf{1}_{\left\{\|\mathbf{u}_{n,i}\|^2e_i^2\geq \varepsilon^2 A_3n^{-1}\lambda_n^{-\frac{2k+d}{2m}}\right\}}\right].\label{MVe1}
    \end{eqnarray}
    On the other hand, let $g_j=D^{\alpha_j} K(z_j,X)$ for $j=1,\ldots,d_0$. Then, on $\Xi_{\epsilon,n}$, we have
    \begin{eqnarray}
        \|\mathbf{u}_{n,i}\|^2&=&\sum_{j=1}^{d_0}\left[u^{(j)}_{in}\right]^2=\lambda_n^{-2}n^{-2}\sum_{j=1}^{d_0}\left[\hat{g}_{jn}(x_{in})-g_j(x_{in})\right]^2\nonumber\\
        &\leq&\lambda_n^{-2}n^{-2}\sum_{j=1}^{d_0}\|\hat{g}_{jn}-g_j\|^2_{L_\infty}\nonumber\\
        &\leq & A_4 n^{-2}\lambda_n^{-\frac{k+d}{m}},\label{MVe2}
    \end{eqnarray}
    where the second equality follows from (\ref{u}); and the last inequality follows from (\ref{deltachoice}) and $A_4>0$ is a constant independent of $n$ and $\lambda_n$. Combining (\ref{MVe1}) and (\ref{MVe2}), we obtain that on $\Xi_{\epsilon,n}$,
    \begin{eqnarray}
        &&\frac{1}{\underline{\lambda}_n}\sum_{i=1}^n\mathbb{E}\left[\|\mathbf{u}_{n,i}\|^2e_i^2\mathbf{1}_{\{\|\mathbf{u}_{n,i}\|^2e_i^2\geq \varepsilon^2 \underline{\lambda}_n\}}\right]\nonumber\\
        &\leq&A_3^{-1}n\lambda_n^{\frac{2k+d}{2m}}\sum_{i=1}^n\mathbb{E}\left[\|\mathbf{u}_{n,i}\|^2 e_i^2\mathbf{1}_{\left\{e_i^2\geq \varepsilon^2 A_3A_4^{-1}n\lambda_n^{\frac{d}{2m}}\right\}}\right]\nonumber\\
        &=&A_3^{-1}n\lambda_n^{\frac{2k+d}{2m}}\mathbb{E}\left[ e_1^2\mathbf{1}_{\left\{e_1^2\geq \varepsilon^2 A_3A_4^{-1}n\lambda_n^{\frac{d}{2m}}\right\}}\right]\sum_{i=1}^n\|\mathbf{u}_{n,i}\|^2.\label{MVe3}
    \end{eqnarray}
    Note that on $\Xi_{\epsilon,n}$,
    \begin{eqnarray}
        \sum_{i=1}^n\|\mathbf{u}_{n,i}\|^2&=&\sum_{j=1}^{d_0}\sum_{i=1}^n \left[u_{in}^{(j)}\right]^2=\sum_{j=1}^{d_0}\lambda^{-2}_nn^{-2}\sum_{i=1}^{n}\left[\hat{g}_{jn}(x_{in})-g(x_{in})\right]^2\nonumber\\
        &=&\lambda^{-2}_n n^{-1}\sum_{j=1}^{d_0}\|\hat{g}_{jn}-g_j\|^2_n\nonumber\\
        &\leq& A_5 n^{-1}\lambda_n^{-\frac{2k+d}{2m}},\label{MVe4}
    \end{eqnarray}
    where the second equality follows from (\ref{u}); and the inequality follows from Lemma \ref{Th:improved} and $A_5>0$ is a constant independent of $n$ and $\lambda_n$. Combining (\ref{MVe3}) and (\ref{MVe4}) yields that, on $\Xi_{\epsilon,n}$,
    \begin{eqnarray*}
        \frac{1}{\underline{\lambda}_n}\sum_{i=1}^n\mathbb{E}\left[\|\mathbf{u}_{n,i}\|^2e_i^2\mathbf{1}_{\{\|\mathbf{u}_{n,i}\|^2e_i^2\geq \varepsilon^2 \underline{\lambda}_n\}}\right]\leq A_3^{-1}A_5\mathbb{E}\left[ e_1^2\mathbf{1}_{\left\{e_1^2\geq \varepsilon^2 A_3A_4^{-1}n\lambda_n^{\frac{d}{2m}}\right\}}\right],
    \end{eqnarray*}
    which tends to zero due to the condition $\lim_{n\rightarrow\infty}n\lambda_n^{\frac{d}{2m}}=\infty$ and the dominated convergence theorem.

    Hence we have proven the Lindeberg condition, where the convergence is in probability. It can be argued that, similar to that in the proof of Lemma \ref{lemma:CLT}, such a condition still ensures the central limit theorem.
\end{proof}

\begin{proof}[Proof of Proposition \ref{prop:L2}]
    
 Consider the linear functional $l(v):\mathcal{H}\mapsto \mathbb{R}$ with $l(v)=\langle v,g\rangle_\mathcal{H}$. Assumption \ref{assum:matern} ensures that $\mathcal{H}$ is dense in $L_2$, which, together with the condition
 \[\sup_{v\in\mathcal{H}}\frac{l(v)}{\|v\|_{L_2}}<\infty,\]
 implies that $l$ can be continuously and uniquely extended to $L_2$. Then the Riesz representation theorem asserts there exists a unique $h\in L_2$, such that $l(v)=\langle v,h\rangle_{L_2}$.
\end{proof}

\begin{proof}[Proof of Proposition \ref{Prop:series}]
    Under Assumption \ref{assum:matern}, $L_2$ is dense in $\mathcal{H}$. Consequently, we know that 1) $\rho_i>0$ for each $i$, and 2) $\{\eta_i\}_{i=1}^\infty$ forms an orthonormal basis of $L_2$. Let $v=\sum_{i=1}^s a_i\eta_i$. Theorem 10.29 of \cite{wendland2004scattered} shows the representation of the RKHS inner product as
    \[\left\langle \sum_{i=1}^\infty a_i\eta_i,\sum_{i=1}^\infty c_i\eta_i\right\rangle_\mathcal{H}=\sum_{i=1}^\infty\frac{a_ic_i}{\rho_i}.\]
    This implies
    \begin{eqnarray*}
        |\langle g,v\rangle_\mathcal{H}|=\left|\sum_{i=1}^\infty\frac{c_ia_i}{\rho_i}\right| &\leq&\left(\sum_{i=1}^\infty \frac{c_i^2}{\rho^{1+\kappa}_i}\right)^{1/2}\left(\sum_{i=1}^\infty \frac{a_i^2}{\rho^{1-\kappa}_i}\right)^{1/2}\\
        &=&\|g\|_{\mathcal{H},\kappa}\left(\sum_{i=1}^\infty (a_i^2)^\kappa\left(\frac{a_i^2}{\rho_i}\right)^{1-\kappa}\right)^{1/2}\\
        &\leq&\|g\|_{\mathcal{H},\kappa}\left(\sum_{i=1}^\infty a_i^2\right)^{\frac{\kappa}{2}}\left(\sum_{i=1}^\infty \frac{a_i^2}{\rho_i}\right)^{\frac{1-\kappa}{2}}\\
        &=&\|g\|_{\mathcal{H},\kappa}\|v\|^{\kappa}_{L_2}\|v\|^{1-\kappa}_{\mathcal{H}},
    \end{eqnarray*}
    where the first inequality follows from the Cauchy-Schwarz inequality, the second inequality follows from the H\"older's inequality with $(p,q)=(\frac{1}{\kappa},\frac{1}{1-\kappa})$.
\end{proof}

\subsection{Proofs for Uniform Bound in Section \ref{sec:uniform}} \label{sub:uniform}

\begin{proof}[Proof of Theorem \ref{Th:uniformvar}]
The main idea is to invoke Dudley's theorem \citep{VaartWCofEP00,vershynin2018high}, which states that a zero-mean sub-Gaussian process with respect to a semi-metric $d_Z$, i.e., a stochastic process $Z(x)$ satisfying $\mathbb{E}\exp\{\theta(Z(x_1)-Z(x_2))\}\leq \exp\{\vartheta^2 d^2_Z(x_1,x_2)/2\}$ for all possible $\vartheta,x_1,x_2$, is subject to the following uniform bound
\begin{eqnarray}\label{Dudley}  \mathbb{E}\left[\sup_{t\in\mathcal{T}}|Z(t)|\right]\leq \mathbb{E}|Z(t_0)|+ A \int_0^D \sqrt{\log N(\epsilon, \mathcal{T},d_Z)}d\epsilon,
\end{eqnarray}
for any $t_0\in \mathcal{T}$, where $D$ is the $d_Z$-diameter of $\mathcal{T}$, and $A$ is a universal constant.

Denote $g_x(\cdot)=D^\alpha K(\cdot,x)$. Then by (\ref{u}), we have
\begin{eqnarray}\label{Zx}
    D^\alpha \hat{f}(x)-D^\alpha f(x)=-\lambda^{-1}n^{-1}\sum_{i=1}^n (\hat{g}_x-g_x)(x_i)e_i:=Z(x).
\end{eqnarray}

Because $e_i$ is $\varsigma^2$-sub-Gaussian, conditional on $X$, $Z(x)$ is a zero-mean sub-Gaussian process with respect to the semi-metric 
\begin{eqnarray}\label{natural_distance1}
    d_\Omega(x_1,x_2)=\varsigma\lambda^{-1}n^{-\frac{1}{2}}\|\hat{g}_{x_1}-g_{x_1}-\hat{g}_{x_2}+g_{x_2}\|_n.
\end{eqnarray}
Then we can find an upper bound of $d_\Omega(x_1,x_2)$ by using the triangle inequality
\begin{eqnarray*}
    d_\Omega(x_1,x_2)\leq \varsigma\lambda^{-1}n^{-\frac{1}{2}}\left(\|\hat{g}_{x_1}-g_{x_1}\|_n+\|\hat{g}_{x_2}+g_{x_2}\|_n\right).
\end{eqnarray*}
Thus, by Lemma \ref{Th:improved}, on the event $\Xi_\epsilon$ defined in Assumption \ref{assum:norms}, we have
\[d_\Omega(x_1,x_2)\leq C_\Omega \varsigma\lambda^{-1}n^{-\frac{1}{2}} \lambda^{\frac{1+1-\frac{d+2|\alpha|}{2m}}{2}}=C_\Omega \varsigma n^{-\frac{1}{2}}\lambda^{-\frac{d+2|\alpha|}{4m}}, \]
for some constant $C_\Omega>0$.
On the other hand, let $g_{x_1,x_2}:=g_{x_1}-g_{x_2}$. Because KRR is linear, we have $\hat{g}_{x_1,x_2}=\hat{g}_{x_1}-\hat{g}_{x_2}$, and thus
\begin{eqnarray}\label{natural_distance2}
    d_\Omega(x_1,x_2)
    =\varsigma\lambda^{-1}n^{-\frac{1}{2}}\|\hat{g}_{x_1,x_2}-g_{x_1,x_2}\|_n.
\end{eqnarray}
Now we verify Assumption \ref{assum:f} for $g_{x_1,x_2}$. Note that, for $v\in H^m$
\[\langle g_{x_1,x_2},v\rangle_\mathcal{H}=D^\alpha v(x_1)-D^\alpha v(x_2).\]
Clearly, $D^\alpha v\in H^{m-|\alpha|}$. Noting $m>d/2+|\alpha|$, we can find $m>m'>d/2+|\alpha|$. Because $m'-|\alpha|>d/2$, the Sobolev embedding theorem (see, e.g., Theorem 4.47 of \cite{demengel2012functional}) claims the embedding relationship $H^{m'-|\alpha|}\hookrightarrow  C^{0,\tau}$ for $\tau:=\min(m'-|\alpha|-d/2,1)$, where $C^{0,\tau}$ denotes the H\"older space with the norm
\[\|f\|_{C^{0,\tau}}:=\sup_{x\neq x'}\frac{|f(x)-f(x')|}{\|x-x'\|^\tau}.\]
Thus, 
\[\langle g_{x_1,x_2},v\rangle_\mathcal{H}\leq \|D^\alpha v\|_{C^{0,\tau}}\|x_1-x_2\|^\tau\leq \|D^\alpha v\|_{H^{m'-|\alpha|}}\|x_1-x_2\|^\tau\leq \|v\|_{H^{m'}}\|x_1-x_2\|^\tau.\]
Next we use the interpolation inequality
\[\|v\|_{H^{m'}}\leq A \|v\|_{L_2}^{1-\frac{m'}{m}}\|v\|_{H^m}^{\frac{m'}{m}}.\]
This implies Assumption \ref{assum:f} for $C_{g_{x_1,x_2}}=A \|x_1-x_2\|^\tau$ and $\delta=1-\frac{m'}{m}$. Thus, by Lemma \ref{Th:improved} and (\ref{natural_distance2}), on the event $\Xi_\epsilon$ defined in Assumption \ref{assum:norms}, we find
\[d_\Omega(x_1,x_2)\leq C \varsigma\lambda^{-1}n^{-\frac{1}{2}}\lambda^{\frac{1+1-\frac{m'}{m}}{2}}\|x_1-x_2\|^\tau=C_1\varsigma n^{-\frac{1}{2}}\lambda^{-\frac{m'}{2m}}\|x_1-x_2\|^\tau,\]
for some $C_1>0$. Using the fact that $\Omega$ is a $d$-dimensional bounded region, we obtain that
\[N(\varepsilon,\Omega,d_\Omega)\leq N\left(\left(\epsilon/ (C_1\varsigma n^{-\frac{1}{2}}\lambda^{-\frac{m'}{2m}})\right)^{1/\tau},\Omega,\|\cdot\|\right).\]
Thus, by Lemma 4.1 of Pollard,
\[\log N(\varepsilon,\Omega,d_\Omega)\leq d \log \left(16D_\Omega\left(\frac{C_1\varsigma n^{-\frac{1}{2}}\lambda^{-\frac{m'}{2m}}}{\epsilon}\right)^{1/\tau}+1\right),\]
where $D_\Omega$ is the Euclidean diameter of $\Omega$.

Therefore, the integral in Dudley's theorem (\ref{Dudley}) has the upper bound
\begin{eqnarray*}
    &&
    \int_0^{C_\Omega n^{-\frac{1}{2}}\lambda^{-\frac{d+2|\alpha|}{4m}}}\sqrt{\log N(\varepsilon,\Omega,d_{\Omega})}d\varepsilon\\
    &\lesssim&\int_0^{C_\Omega \varsigma n^{-\frac{1}{2}}\lambda^{-\frac{d+2|\alpha|}{4m}}}\sqrt{\log \left(16D_\Omega\left(\frac{C_1 \varsigma n^{-\frac{1}{2}}\lambda^{-\frac{m'}{2m}}}{\epsilon}\right)^{1/\tau}+1\right)}d\varepsilon\\
    &=&\varsigma n^{-\frac{1}{2}}\lambda^{-\frac{d+2|\alpha|}{4m}}\int_0^{C_\Omega }\sqrt{\log \left(16D_\Omega\left(\frac{C_1\lambda^{\frac{d+2|\alpha|}{4m}-\frac{m'}{2m}}}{\epsilon}\right)^{1/\tau}+1\right)}d\varepsilon\\
    &\lesssim&\varsigma n^{-\frac{1}{2}}\lambda^{-\frac{d+2|\alpha|}{4m}}\sqrt{\log\left(\frac{C}{\lambda}\right)},
\end{eqnarray*}
for some $C>0$; where the last inequality is based on algebraic calculations similar to (33)-(36) in \cite{tuo2020kriging}. This term would dominate the first term of (\ref{Dudley}), which is given by Theorem \ref{Th:derivative}. Hence, we prove the desired result as $\mathbb{P}(\Xi_\epsilon)$ tends to one as $n\rightarrow \infty$.
\end{proof}

\begin{proof}[Proof of Theorem \ref{Th:uniformlower}]
    Let $Z(x)$ be the same as (\ref{Zx}). We can see that conditional on $X$, $Z(x)$ is a zero-mean Gaussian process with the natural distance
    \begin{eqnarray*}
        d_{\Omega}(x_1,x_2):=\left(\mathbb{E}_E [Z(x_1)-Z(x_2)]^2\right)^{1/2}=\sigma\lambda^{-1}n^{-\frac{1}{2}}\|\hat{g}_{x_1}-g_{x_1}-\hat{g}_{x_2}+g_{x_2}\|_n.
    \end{eqnarray*}
    Now the idea is to invoke the Sudakov's lower bound \citep{vershynin2018high}, which states that
    \begin{eqnarray}\label{sudakov}
        \mathbb{E}_E\left[\sup_{x\in\Omega}|Z(x)|\right]\gtrsim \sup_{\varepsilon>0}\varepsilon\sqrt{\log N(\varepsilon,\Omega,d_\Omega)}.
    \end{eqnarray}

    The boundary effect of $\Omega$ may cause some problems to our proof. So we define $\Omega'$ as a subset of $\Omega$ such that each $x\in\Omega'$ is distant from the boundary of $\Omega$ by at least $\eta$ in the Euclidean distance, where $\eta$ is sufficiently small such that $\Omega'$ contains an open set. Because $\sup_{x\in\Omega}|Z(x)|\geq \sup_{x\in\Omega'}|Z(x)|$, we will work only on a lower bound of $\sup_{x\in\Omega'}|Z(x)|$.
    
    Let $C_2>0$ be a constant to be determined, and let $M:=\lceil (C_2/\lambda)^{\frac{d}{2m}} \rceil$. In view of the lower bound for the covering number of a Euclidean compact set \citep{pollard1990empirical}, when $M\geq 2$, for any $M$ points $\{\xi_1,\ldots,\xi_M\}\subset\Omega'$, there exists two points, say $\{\xi_1,\xi_2\}$ without loss of generality, such that $\|\xi_1-\xi_2\|\leq C M^{-1/d}$ for some constant $C$ depending only on $\Omega'$.    
    Because $\lambda\rightarrow 0$, we shall assume that $C M^{-1/d}<2C (\lambda/C_2)^{1/(2m)}<\eta$ without loss of generality.

    Now let us consider $d_\Omega(\xi_1,\xi_2)$. The goal is to show that 
    \begin{eqnarray}\label{lowergoal}
    d_\Omega(\xi_1,\xi_2)\gtrsim \sigma n^{-\frac{1}{2}}\lambda^{-\frac{d+2|\alpha|}{4m}}.
    \end{eqnarray}
    If (\ref{lowergoal}) is true, we essentially prove that for $\varepsilon\leq C_1 \sigma n^{-\frac{1}{2}}\lambda^{-\frac{d+2|\alpha|}{4m}}$ for some constant $C_1>0$, we have $N(\varepsilon,\Omega',d_\Omega)\geq M\geq (C_2/\lambda)^{d/2m}$, which, together with (\ref{sudakov}), imples
    \begin{eqnarray}\label{GPlower}
        \mathbb{E}_E\left[\sup_{x\in\Omega'}|Z(x)|\right]\gtrsim C_1 \sigma n^{-\frac{1}{2}}\lambda^{-\frac{d+2|\alpha|}{4m}}\sqrt{\frac{d}{2m}\log\left(\frac{C_2}{\lambda}\right)}.
    \end{eqnarray}

    To prove (\ref{lowergoal}), we use Lemma \ref{Th:KRRlower}. We have to be mindful that the constants in Lemma \ref{Th:KRRlower} may depend on $n$ as $\xi_1,\xi_2$ are dependent on $n$. This necessities a closer look at the proof of Theorem \ref{Th:lower}, which Lemma \ref{Th:KRRlower} primarily relies on. First, we note that the interpolation inequality (\ref{interpolationD}) gives $\delta=1-\frac{d+2|\alpha|}{2m}$, independent of $n$. The constants from Assumption \ref{assum:norms} are also constants. However, constants in Assumption \ref{assum:tau} should be examined carefully. Our goal is to ensure Assumption \ref{assum:tau} with $\tau=\delta=1-\frac{d+2|\alpha|}{2m}$. To this end, we consider the function constructed in Proposition \ref{Prop:point}. In the proof of Proposition \ref{Prop:point}, we have constructed a function $\phi_\rho$ for each $\rho<C M^{-1/d}<\eta$ that satisfies the following properties:
    \begin{enumerate} [noitemsep]
        \item $\phi_\rho(\xi)=0$ if $\|\xi-\xi_1\|\geq \rho$;
        \item $D^\alpha \phi_\rho(\xi_1)=1$;
        \item $\|\phi_\rho\|_{L_2}=C_3\rho^{d/2}$ for some constant $C_3>0$ depending only on $m$.
        \item $\|\phi_\rho\|_{H^m}/\|\phi_\rho\|_{L_2}\leq C_4\rho^{-m}$ for some $C_4>0$ depending only on $m$.
    \end{enumerate}
    Hence, we have
    \[\langle \phi_\rho,g_{x_1}-g_{x_2}\rangle_\mathcal{H}=\phi_\rho(x_1)-\phi_\rho(x_2)=1,\]
    whenever $\rho<C M^{-1/d}$.
    This ensures Assumption \ref{assum:tau} for $g=g_{x_1}-g_{x_2}$ with $\tau=\delta$ independent of $n$, $C_0=C_3$ independent of $n$, $R_0=C'(C M^{-1/d})^{-m}\geq 2^{-m} C^{-m} C' (C_2/\lambda)^{1/2}$. Because only $R_0$ depends on $n$ (or $\lambda$), by examining the proof of Theorem \ref{Th:lower}, we can see that we only need to ensure (\ref{R0}), that is,
    \begin{eqnarray}\label{C2}
      2^{-m} C^{-m} C' (C_2/\lambda)^{1/2}\leq 4^{-1/\delta}C_0^{-1/\delta}C_1^{-1}C_g^{-1/\delta}\lambda^{-1/2},  
    \end{eqnarray}
    for the validity of Theorem \ref{Th:lower} and consequently, Lemma \ref{Th:KRRlower}. and (\ref{C2}) can be ensured provided that $C_2$ is sufficiently small. 
    Now we are ready to use Lemma \ref{Th:KRRlower}, which states that under the event $\Xi_\epsilon$, (\ref{lowergoal}) is true.
    Therefore we have proven (\ref{GPlower}), under $\Xi_\epsilon$.

    Because $\epsilon$ can be chosen arbitrarily small, the desired result is a direct consequence of the above statement together with the concentration inequality of Gaussian processes \citep{vershynin2018high}.
\end{proof}

\subsection{Proof for Nonlinear Problem in Section \ref{sec:nonlinear}}\label{sub:nonlinear}
\begin{proof}[Proof of Theorem \ref{Th:nonlinear}]
    It is well known that, 
    \begin{eqnarray}\label{nonlineruniform}
        \sup_{x\in\Omega}|D^\alpha\hat{f}(x)-D^\alpha f(x)|=o_\mathbb{P}(1),
    \end{eqnarray}
    for all $\alpha\in\mathbb{N}^d$ with $|\alpha|=0,1,2$ under the condition $\lambda\sim n^{-1}$; see \cite{geer2000empirical}.
    The uniform convergence of $\hat{f}-f$ implies the consistency of $\hat{f}_{\min}$ and $\hat{x}_{\min}$.

    Next, we study the rates of convergence of the estimators. Because $x_{\min}$ and $\hat{x}_{\min}$ minimize $f$ and $\hat{f}$, respectively, we have
    \begin{eqnarray}
        0&=&\frac{\partial\hat{f}}{\partial x}(\hat{x}_{\min})=\frac{\partial\hat{f}}{\partial x}(x_{\min})+\frac{\partial^2\hat{f}}{\partial x\partial x^T}(x^*)(\hat{x}_{\min}-x_{\min})\nonumber\\
        &=&\frac{\partial(\hat{f}-f)}{\partial x}(x_{\min})+\frac{\partial^2\hat{f}}{\partial x\partial x^T}(x^*)(\hat{x}_{\min}-x_{\min})\label{Taylor}
    \end{eqnarray}
    where $x^*$ lies between $x_{\min}$ and $\hat{x}_{\min}$. The consistency of $\hat{x}_{\min}$ and (\ref{nonlineruniform}) implies that $\frac{\partial^2\hat{f}}{\partial x\partial x^T}(x^*)$ converges weakly to $H$, which, together with the condition that $H$ is positive definite, implies that $\frac{\partial^2\hat{f}}{\partial x\partial x^T}(x^*)$ is invertible with probability tending to one. Therefore, (\ref{Taylor}) implies
    \begin{eqnarray}\label{nonlinearerror}
        \hat{x}_{\min}-x_{\min}=-\left[\frac{\partial^2\hat{f}}{\partial x\partial x^T}(x^*)\right]^{-1}\frac{\partial(\hat{f}-f)}{\partial x}(x_{\min}).
    \end{eqnarray}
    By Theorem \ref{Th:derivative}, under the optimal choice $\lambda\asymp n^{-1}$, $\|\frac{\partial(\hat{f}-f)}{\partial x}(x_{\min})\|=O_\mathbb{P}(n^{-\frac{1}{2}+\frac{d+2}{4m}})$. Thus $\|\hat{x}_{\min}-x_{\min}\|=O_\mathbb{P}(n^{-\frac{1}{2}+\frac{d+2}{4m}})$.

    To show the rate of convergence of $f(\hat{x}_{\min})-f(x_{\min})$, we use the Taylor expansion of $f$ at $x_{\min}$ to obtain
    \[f(\hat{x}_{\min})-f(x_{\min})=(\hat{x}_{\min}-x_{\min})^T\frac{\partial^2{f}}{\partial x\partial x^T}(x_*)(\hat{x}_{\min}-x_{\min}),\]
    for some $x_*$ lying between $x_{\min}$ and $\hat{x}_{\min}$. Again, we have that $\frac{\partial^2{f}}{\partial x\partial x^T}(x_*)$ converges to $H$ weakly, and therefore $f(\hat{x}_{\min})-f(x_{\min})=O_\mathbb{P}(n^{-1+\frac{d+2}{2m}})$.

    By (\ref{nonlinearerror}) and Theorems \ref{Th:oBIAS} and \ref{Th:multivariateAN}, we have
    \begin{eqnarray}
        \operatorname{COV}^{-\frac{1}{2}}\frac{\partial^2\hat{f}}{\partial x\partial x^T}(x^*)\left(\hat{x}_{\min}-x_{\min}\right)\xrightarrow{\mathscr{L}}N(0,I).
    \end{eqnarray}
    To prove the desired result, it suffices to show that
    \begin{eqnarray}\label{nonlineare1}
        \left[\frac{\partial^2\hat{f}}{\partial x\partial x^T}(\hat{x}_{\min})\right]^{-1}\widehat{\operatorname{COV}}^{\frac{1}{2}}\operatorname{COV}^{-\frac{1}{2}}\frac{\partial^2\hat{f}}{\partial x\partial x^T}(x^*)\xrightarrow{p} I.
    \end{eqnarray}

 Define 
    \[\mathscr{C}(\cdot):=\frac{\partial K}{\partial x}(\cdot,X)(K(X,X)+\lambda n I)^{-2}\frac{\partial K}{\partial x^T}(X,\cdot).\]
    Because both $\frac{\partial^2\hat{f}}{\partial x\partial x^T}(\hat{x}_{\min})$ and $\frac{\partial^2\hat{f}}{\partial x\partial x^T}(x^*)$ converges to $H$ weakly and $\hat{\sigma}^2\xrightarrow{p}\sigma^2$, (\ref{nonlineare1}) is equivalent to
    \begin{eqnarray}
        [\mathscr{C}(\hat{x}_{\min})]^{\frac{1}{2}}[\mathscr{C}(x_{\min})]^{-\frac{1}{2}}\xrightarrow{p} I.
    \end{eqnarray}
    We shall use the operator norm over $\mathbb{R}^{d\times d}$, given by
    \[\|M\|_{op}:=\sup_{\mathbf{v}\in\mathbb{R}^d}\frac{\|M\mathbf{v}\|}{\|\mathbf{v}\|},\]
    which is equal to the greatest absolute eigenvalue of $M$. By the sub-multiplicativity of the operator norm,
    \begin{eqnarray*}
        &&\left\|[\mathscr{C}(\hat{x}_{\min})]^{\frac{1}{2}}[\mathscr{C}(x_{\min})]^{-\frac{1}{2}}-I\right\|_{op}\\
        &=&\left\|\left([\mathscr{C}(\hat{x}_{\min})]^{\frac{1}{2}}-[\mathscr{C}(x_{\min})]^{\frac{1}{2}}\right)[\mathscr{C}(x_{\min})]^{-\frac{1}{2}}\right\|_{op}\\
        &\leq&\left\|[\mathscr{C}(\hat{x}_{\min})]^{\frac{1}{2}}-[\mathscr{C}(x_{\min})]^{\frac{1}{2}}\right\|_{op}\left\|[\mathscr{C}(x_{\min})]^{-\frac{1}{2}}\right\|_{op}.
    \end{eqnarray*}
    Now let $\mathbf{a}$ be a unit eigenvector of $[\mathscr{C}(\hat{x}_{\min})]^{\frac{1}{2}}-[\mathscr{C}(x_{\min})]^{\frac{1}{2}}$ corresponding to an eigenvalue $\lambda_0$ such that $|\lambda_0|=\|[\mathscr{C}(\hat{x}_{\min})]^{\frac{1}{2}}-[\mathscr{C}(x_{\min})]^{\frac{1}{2}}\|_{op}$. Then, we have
    \begin{eqnarray*}
        &&\mathbf{a}^T\left(\mathscr{C}(\hat{x}_{\min})-\mathscr{C}(x_{\min})\right)\mathbf{a}
        =\mathbf{a}^T[\mathscr{C}(\hat{x}_{\min})]^{\frac{1}{2}}\left([\mathscr{C}(\hat{x}_{\min})]^{\frac{1}{2}}-[\mathscr{C}(x_{\min})]^{\frac{1}{2}}\right)\mathbf{a}\\&&+\mathbf{a}^T\left([\mathscr{C}(\hat{x}_{\min})]^{\frac{1}{2}}-[\mathscr{C}(x_{\min})]^{\frac{1}{2}}\right)[\mathscr{C}(x_{\min})]^{\frac{1}{2}}\mathbf{a}\\
        &=&\lambda_0\mathbf{a}^T\left([\mathscr{C}(\hat{x}_{\min})]^{\frac{1}{2}}+[\mathscr{C}(x_{\min})]^{\frac{1}{2}}\right)\mathbf{a}.
    \end{eqnarray*}
    Therefore,
    \begin{eqnarray*}
        \left\|[\mathscr{C}(\hat{x}_{\min})]^{\frac{1}{2}}-[\mathscr{C}(x_{\min})]^{\frac{1}{2}}\right\|_{op}&=&\frac{\left|\mathbf{a}^T\left(\mathscr{C}(\hat{x}_{\min})-\mathscr{C}(x_{\min})\right)\mathbf{a}\right|}{\mathbf{a}^T\left([\mathscr{C}(\hat{x}_{\min})]^{\frac{1}{2}}+[\mathscr{C}(x_{\min})]^{\frac{1}{2}}\right)\mathbf{a}}.
    \end{eqnarray*}

 Denote $\mathbf{a}=(a_1,\ldots,a_d)^T$, and 
    \[\mathbf{g}(x):=\sum_{i=1}^d a_i\frac{\partial K}{\partial \chi_i}(x,X), \text{ and } h(x):=\mathbf{g}(x)(K(X,X)+\lambda n I)^{-2}\mathbf{g}^T(x).\]
    Then by the mean value theorem, there exists $\tilde{x}$ between $\hat{x}_{\min}$ and $x_{\min}$, such that
    \begin{eqnarray*}
        &&\left|\mathbf{a}^T\left(\mathscr{C}(\hat{x}_{\min})-\mathscr{C}(x_{\min})\right)\mathbf{a}\right|=|h(\hat{x}_{\min})-h(x_{\min})|\\
        &=&\left|\frac{\partial h}{\partial x^T}(\tilde{x})(\hat{x}_{\min}-x_{\min})\right|\leq \left\|\frac{\partial h}{\partial x^T}(\tilde{x})\right\|\|\hat{x}_{\min}-x_{\min}\|
    \end{eqnarray*}
    By Cauchy-Schwarz inequality,
    \begin{eqnarray*}
               \left\|\frac{\partial h}{\partial x^T}(\tilde{x})\right\| &=&\left\|2\frac{\partial \mathbf{g}}{\partial x}(\tilde{x})(K(X,X)+\lambda n I)^{-2}\mathbf{g}^T(\tilde{x})\right\|\\
        &\leq& 2\left\|\frac{\partial \mathbf{g}}{\partial x}(\tilde{x})(K(X,X)+\lambda n I)^{-2}\frac{\partial \mathbf{g}^T}{\partial x}(\tilde{x})\right\|_{op}^{1/2}\cdot\\
        &&\left(\mathbf{g}(\tilde{x})(K(X,X)+\lambda n I)^{-2}\mathbf{g}^T(\tilde{x})\right)^{1/2}
    \end{eqnarray*}
    In the proof of Theorem \ref{Th:multivariateAN}, we proved the upper and lower bounds of the maximum and the minimum eigenvalues of the covariance matrices. Note that these bounds do not depend on the choice of $x$, and thus are also true for $\hat{x}_{\min}$ and $\tilde{x}$. Specifically, we have
    \begin{eqnarray*}
       && \left\|\frac{\partial \mathbf{g}}{\partial x}(\tilde{x})(K(X,X)+\lambda n I)^{-2}\frac{\partial \mathbf{g}^T}{\partial x}(\tilde{x})\right\|_{op}\lesssim n^{-\frac{2m-4-d}{2m}}.\\
       && \lambda_{\min}(\mathscr{C}(\hat{x}_{\min}))\gtrsim n^{-\frac{2m-2-d}{2m}}.\\
       && \lambda_{\min}(\mathscr{C}(x_{\min}))\gtrsim n^{-\frac{2m-2-d}{2m}}.
    \end{eqnarray*}
    Hence, we obtain
    \begin{eqnarray*}
        \left\|[\mathscr{C}(\hat{x}_{\min})]^{\frac{1}{2}}[\mathscr{C}(x_{\min})]^{-\frac{1}{2}}-I\right\|_{op}\lesssim n^{-\frac{2m-3-d}{2m}} n^{-\frac{2m-2-d}{4m}}n^{\frac{2m-2-d}{2m}}=n^{-\frac{2m-4-d}{4m}}\rightarrow 0,
    \end{eqnarray*}
    as $n\rightarrow\infty$. This completes the proof.
\end{proof}

\subsection{Proof of Theorem \ref{Prop:semi}} \label{sub:semipar}

First, note that similar to (\ref{u}),
\[(f-\mathbb{E}_E\hat{f})(X)=\lambda n (K(X,X)+\lambda n I)^{-1}f(X).\]
Therefore, by Cauchy-Schwarz inequality,
\begin{eqnarray}
    &&~~~\left|\frac{1}{n}\sum_{i=1}^n (\mathbb{E}_E \hat{f}-f)(x_i)(h/p_X)(x_i)\right|\nonumber\\
    &&=\left|\lambda  f^T(X)(K(X,X)+\lambda n I)^{-1}(h/p_X)(X)\right|\nonumber\\
    &&\begin{aligned}
        \leq~&\lambda \left(f^T(X)(K(X,X)+\lambda n I)^{-1}f(X)\right)^{1/2}\cdot\\&\left((h/p_X)^T(X)(K(X,X)+\lambda n I)^{-1}(h/p_X)(X)\right)^{1/2}.
    \end{aligned}
    \label{semi1}
\end{eqnarray}
The two factors on the right hand of the above inequality are related to the noiseless KRR. For notational clarity, we denote the noiseless KRR for $f$ as $\mathscr{A}f$, i.e.,
\begin{eqnarray}\label{Af}
    \mathscr{A}f:=\operatorname*{argmin}_{v\in\mathcal{H}}\|f-v\|_n^2+\lambda\|v\|^2_\mathcal{H}.
\end{eqnarray}
Using the solution $\mathscr{A}f=K(\cdot,X)(K(X,X)+\lambda n I)^{-1} f(X)$ and by direct calculations, we have
\begin{eqnarray}
    &&\|f-\mathscr{A}f\|_n^2+\lambda\|\mathscr{A}f\|^2_\mathcal{H}\nonumber\\
    &=&\lambda^2nf^T(X)(K(X,X)+\lambda n I)^{-2}f(X)+\nonumber\\
    &&\lambda f^T(X)(K(X,X)+\lambda n I)^{-1}K(X,X)(K(X,X)+\lambda n I)^{-1}f(X)\nonumber\\
    &=&\lambda f^T(X)(K(X,X)+\lambda n I)^{-1}f(X).\label{semi2}
\end{eqnarray}
Also, (\ref{Af}) implies that
\[\|f-\mathscr{A}f\|_n^2+\lambda\|\mathscr{A}f\|^2_\mathcal{H}\leq \lambda\|f\|^2_\mathcal{H},\]
which, together with (\ref{semi2}), leads to 
\begin{eqnarray}\label{semi3}
    f^T(X)(K(X,X)+\lambda n I)^{-1}f(X)\leq \|f\|^2_\mathcal{H}.
\end{eqnarray}
Similarly, we have
\begin{eqnarray}\label{semi4}
    (h/p_X)^T(X)(K(X,X)+\lambda n I)^{-1}(h/p_X)(X)\leq \|h/p_X\|^2_\mathcal{H}.
\end{eqnarray}
Combining (\ref{semi1}), (\ref{semi3}) and (\ref{semi4}) yields the desired result.

\section{Additional Figures for Numerical Results}\label{sec:numerical_plot}
This section presents additional experimental results that complement the main content.

Figure~\ref{fig:testfun_plot} depicts the test functions used in the experiments in Subsection \ref{subsection:optimal_point}, providing a visual reference for the simulation settings.

Figure~\ref{fig:erp_data} displays the ERP dataset, consisting of 72 single-trial waveforms and their grand average. The two vertical lines indicate the search window used to estimate the optimal point in our real-data analysis.

\begin{figure}
\centering
\includegraphics[width=13cm,height=10cm]{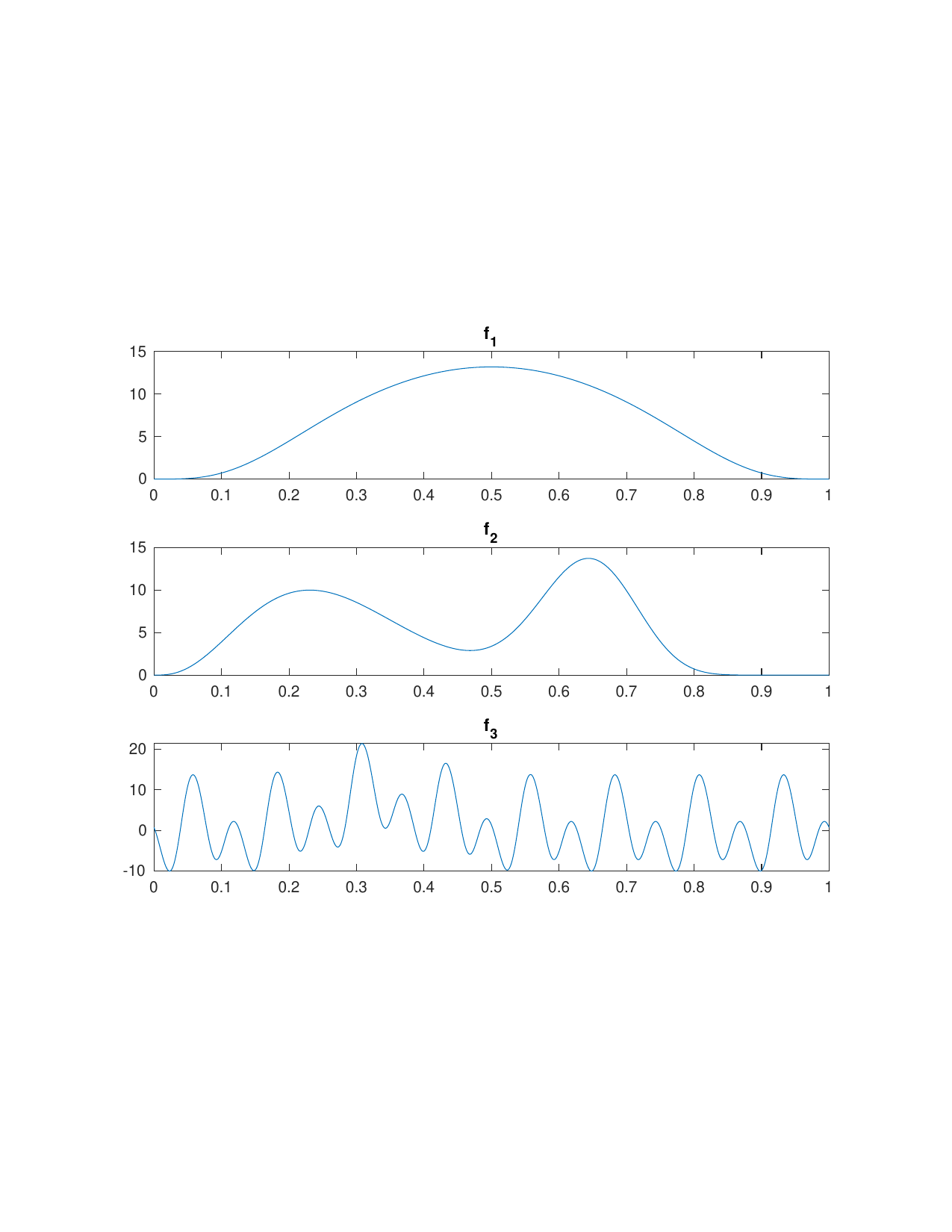}
\vspace*{-26mm}
\caption{Plots of Three Test Functions $f_1,f_2,f_3$ }
\label{fig:testfun_plot}
\end{figure}

\begin{figure}
    \centering
    \includegraphics[width=1.0\linewidth]{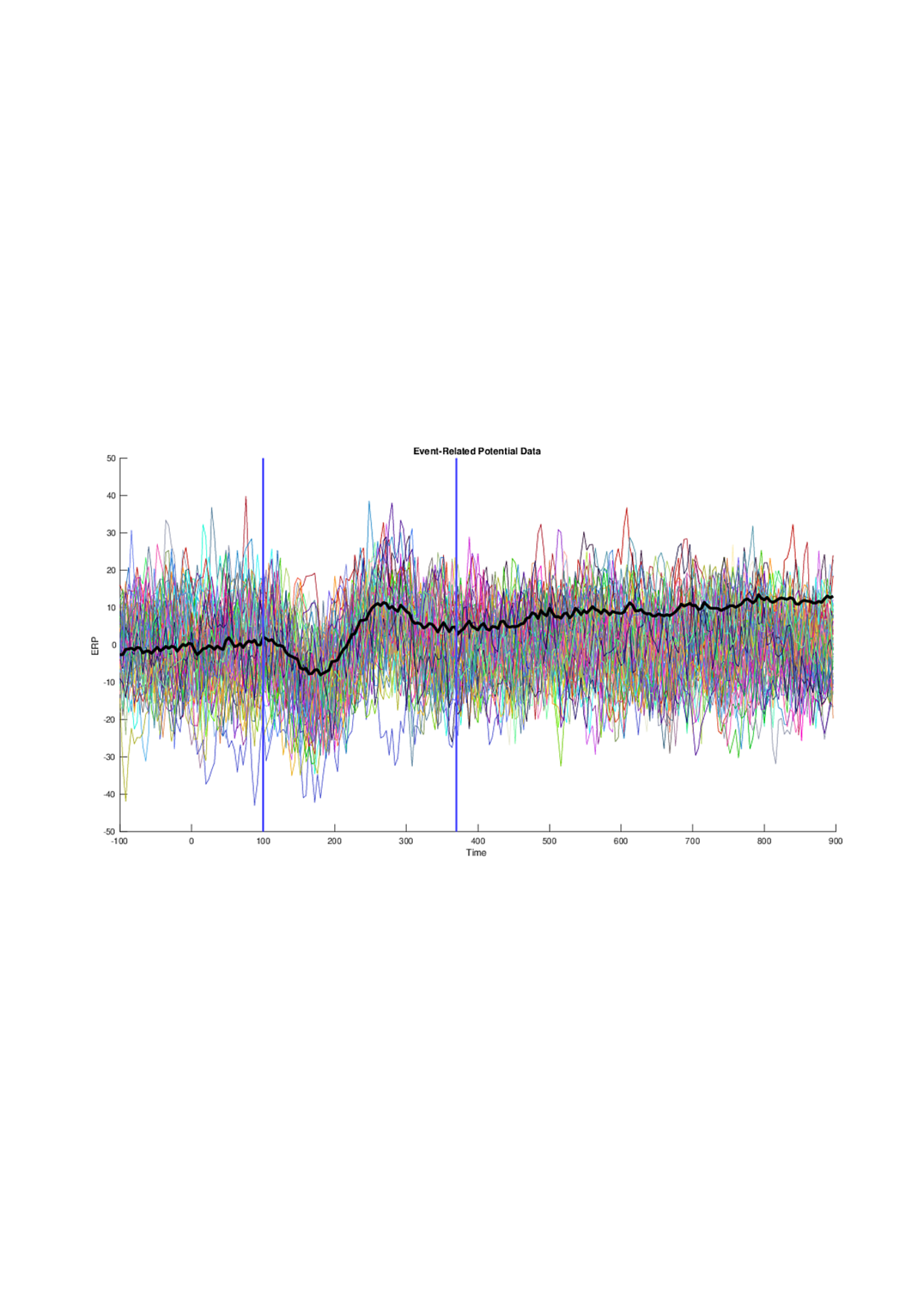}
    \vspace*{-76mm}
    \caption{The ERP data consists of 72 individual time series, each representing a single trial, along with the grand average time course computed from all trials. The time window between the two vertical lines indicates the search region.}
    \label{fig:erp_data}
\end{figure}



\bibliography{references}

\end{document}